\PassOptionsToPackage{dvips}{graphicx}
\PassOptionsToPackage{dvips}{xcolor}
\documentclass[12pt]{amsart}

\usepackage{graphicx}
\usepackage{amsmath,amsfonts,amsthm,amscd,amssymb,mathrsfs,bm,pb-diagram,here,extsizes,midpage,enumitem,mathtools}
\usepackage{setspace}
\usepackage[all,cmtip]{xy}

\usepackage{tikz}
\usepackage{tikz-cd}
\usepackage{blkarray}
\usepackage{multirow}

\usepackage{mathrsfs, arydshln}
\usepackage{graphicx}
\usepackage[all]{xy}
\newtheorem{theorem}{Theorem}

\newtheorem{proposition}[theorem]{Proposition}
\newtheorem{lemma}[theorem]{Lemma}
\newtheorem{definition}[theorem]{Definition}

\newtheorem{corollary}[theorem]{Corollary}

\newtheorem{remark}[theorem]{Remark}
\newcommand{\aaa}{\alpha}

\newcommand{\ccc}{\gamma}
\newcommand{\CCC}{\Gamma}
\newcommand{\ddd}{\delta}
\newcommand{\DDD}{\Delta}

\newcommand{\id}{{\rm{id}}}
\newcommand{\lmd}{\lambda}
\newcommand{\Lmd}{\Lambda}

\newcommand{\PP}{\mathbb{P}}
\newcommand{\CC}{\mathbb{C}}

\newcommand{\RR}{\mathbb{R}}
\newcommand{\ZZ}{\mathbb{Z}}

\newcommand{\FF}{\mathbb{F}}
\newcommand{\RP}{\mathbb{RP}}

\newcommand{\U}{{\rm{U}}}
\newcommand{\SU}{{\rm{SU}}}

\newcommand{\qdr}{\PP^1\times\PP^1}

\newcommand{\upone}{^{(1)}}
\newcommand{\uptwo}{^{(2)}}

\newcommand{\upm}{^{(m)}}
\newcommand{\upnn}{^{(n)}}

\renewcommand{\hat}{\widehat}
\renewcommand{\tilde}{\widetilde}

\newcommand{\ul}{\underline}

\newcommand{\mf}{\mathfrak}

\newcommand{\ol}{\overline}

\newcommand{\lras}{\,\longrightarrow\,}

\newcommand{\set}{\,|\,}

\newcommand{\proofend}{\hfill$\square$}

\newcommand{\inv}{^{-1}}

\newcommand{\ms}{\mathscr}
\newcommand{\minus}{\backslash}
\newcommand{\ptl}{\partial}

\newcommand{\qandq}{\quad{\text{and}}\quad}

\newcommand{\pr}{{\rm{pr}}}

\newcommand{\us}{^{\sigma}}
\newcommand{\uc}{^{\circ}}

\DeclareFontFamily{U}{mathx}{}
\DeclareFontShape{U}{mathx}{m}{n}{<-> mathx10}{}
\DeclareSymbolFont{mathx}{U}{mathx}{m}{n}
\DeclareMathAccent{\widehat}{0}{mathx}{"70}
\DeclareMathAccent{\widecheck}{0}{mathx}{"71}

\numberwithin{equation}{section}
\numberwithin{theorem}{section}

\begin{document}
\bibliographystyle{alpha} 
\title[]
{On the twistor spaces of ALE gravitational instantons
of type $A_{\rm odd}$}
\author{Nobuhiro Honda}
\address{Department of Mathematics, Institute
of Science Tokyo, 2-12-1, O-okayama, Meguro, 152-8551, JAPAN}
\email{honda@math.titech.ac.jp}

\thanks{The author was partially supported by JSPS KAKENHI Grant 22K03308.
\\
{\it{Mathematics Subject Classification}} (2020) 53C28, 53C50, 53C22}
\begin{abstract}
We study the twistor spaces of toric ALE gravitational instantons of type $A_{2n-1}$ and the associated non-standard minitwistor spaces introduced by Hitchin.
By analyzing the base locus of the linear system that induces the quotient meromorphic map from the compactified twistor space, we explicitly determine the images of certain distinguished twistor lines as hyperplane sections of the minitwistor space.
Using this family of special minitwistor lines as boundary data, we describe the $3$-dimensional family of real minitwistor lines arising from the instanton.
The central sphere in the gravitational instanton appears naturally throughout the analysis.\end{abstract}

\maketitle

\section{Introduction}
Let $\Gamma \subset \mathrm{SU}(2)$ be a finite cyclic subgroup, and let
$M$ be the minimal resolution of the quotient singularity
$\mathbb{C}^2/\Gamma$ at the origin.
Eguchi--Hanson \cite{EH78} and Gibbons--Hawking \cite{GH78} constructed complete
hyperk\"ahler metrics on $M$ that are asymptotic to the flat metric with Euclidean
volume growth, thereby providing the first examples of ALE gravitational instantons.
For instantons of type $A_k$, Hitchin \cite{Hi79} gave an explicit algebraic
construction of the associated twistor space.
This point of view was further developed by Kronheimer \cite{Kr89_1,Kr89_2},
who constructed and classified ALE hyperk\"ahler metrics on minimal resolutions of
$\mathbb{C}^2/\Gamma$ for arbitrary finite subgroups $\Gamma \subset \mathrm{SU}(2)$.

These gravitational instantons admit an $S^1$-action induced by scalar multiplication
on $\mathbb{C}^2$.
Throughout this paper, we refer to it as the {\em scalar $S^1$-action}.
This action preserves each component of the exceptional divisor of the resolution.
Except for the $A_{\rm even}$-type, there exists a unique component that is fixed pointwise by this action.
In \cite{Hi21}, Hitchin called this component the {\em central sphere} and
determined explicitly the restriction of the metric to it by using the simultaneous
resolution of the algebraic model of the twistor space.
Moreover, in \cite{Hi25}, he introduced a compactification of the twistor space and
obtained a compact complex surface that may be regarded as the quotient of the
compactified twistor space by a $\mathbb{C}^*$-action, namely the complexification of
the scalar $S^1$-action.
Hitchin further identified the linear system on this quotient surface to which the
image of a generic twistor line belongs, and showed that this image has nodes whose
number is determined uniquely by $\Gamma$.
In particular, the quotient surface is a minitwistor space in the sense of \cite{HN11}.

As noted above, the $A_k$ gravitational instanton admits a central sphere only when
$k$ is odd.
In this case, the element $-1 \in S^1$ acts trivially on the instanton, and we
therefore consider the induced effective action of $S^1/\{\pm1\} \simeq S^1$.
Besides the scalar action, the instanton admits another $S^1$-action preserving the
complex structures $I$, $J$, and $K$ associated with the hyperk\"ahler structure.
Consequently, the instanton is toric; we call this second action the
{\em tri-holomorphic $S^1$-action}.
The main objects of this paper are the twistor spaces and the associated minitwistor spaces
arising from toric ALE gravitational instantons of type $A_{2n-1}$ together with the scalar
$S^1$-action.

Throughout the paper, we denote by $Z$ the twistor space of a toric $A_{2n-1}$
ALE gravitational instanton.
In this case, the compactification of the twistor space is obtained from $Z$ by adding
three divisors (see Section~\ref{s:2} for details), and we denote it by $\tilde Z$.
The holomorphic map $Z \lras \PP^1$ associated with the hyperk\"ahler structure extends to
$\tilde Z$.
The twistor space $Z$ admits two $\CC^*$-actions, obtained by complexifying the scalar
$S^1$-action and the tri-holomorphic $S^1$-action, and these actions extend to $\tilde Z$.
We denote by $\CC^*_s$ (resp.\,$\CC^*_t$) the $\CC^*$-group obtained as the complexification of
the scalar (resp.\ tri-holomorphic) $S^1$-action.
The $\CC^*_s$-action on $Z$ covers the standard $\CC^*$-action on $\PP^1$.

Let $0$ and $\infty$ be the two fixed points of $\PP^1$.
The fibers over these points are biholomorphic to the minimal resolution of $\PP^2/\Gamma$
and to its complex conjugate, respectively, while all other fibers are mutually biholomorphic
via the $\CC^*_s$-action.
Each of these fibers is a smooth rational surface, namely the minitwistor space obtained in
\cite{Hi25}, and we denote it by $\tilde{\ms T}$.
The surface $\tilde{\ms T}$ contains two mutually disjoint $(-n)$-curves arising from the
compactification, and contracting them yields a surface $\ms T$ with two $A_{n,1}$ singularities.
By \cite[Proposition 6.1]{H25}, this surface is biholomorphic to the minitwistor space arising
from a hyperelliptic curve of genus $n-1$, including the real structure.
The surface $\ms T$ admits a natural embedding into $\PP^{n+2}$ induced by the complete linear
system generated by minitwistor lines, and in this paper we regard $\ms T$ as the minitwistor
space rather than $\tilde{\ms T}$.
Both $\tilde{\ms T}$ and $\ms T$ carry a residual $\CC^*$-action induced by the $\CC^*_t$-action
on $\tilde Z$.

We now briefly explain the main steps and results.
Using the divisors added in the compactification, we first define a linear system on $\tilde Z$
whose members are all $\CC^*_s$-invariant.
We denote by $\tilde\Psi$ the meromorphic map associated with this linear system.
By explicitly giving generators of the linear system (Proposition~\ref{p:gen}), we show that it
has dimension $(n+2)$ and that the image of $\tilde\Psi:\tilde Z \lras\PP^{n+2}$ is
precisely the minitwistor space $\ms T$ (Proposition~\ref{p:ZT}).
Since the generic fiber is irreducible, $\tilde\Psi$ may be regarded as the quotient map by the
$\CC^*_s$-action on $\tilde Z$; we call $\tilde\Psi$ the {\em meromorphic quotient map}.

In what follows, we identify the minimal resolution of $\CC^2/\Gamma$ with the fiber of
$Z \lras \PP^1$ over the point $u=0 \in \PP^1$.
In particular, the exceptional curves of the resolution, including the central sphere, lie on
this fiber.
In Section~\ref{ss:itl}, we determine explicit equations for the twistor lines that meet the
exceptional curve of the resolution (Proposition~\ref{p:tl}).
We also determine their images in the standard minitwistor space $\ms O(2)$ associated with $Z$.
These results will be used repeatedly in the subsequent analysis.

A key geometric input is that a generic point of the twistor space $Z$ converges to a point on
the central sphere in the limit of the $\CC^*_s$-action \cite{Hi25}.
Accordingly, the central sphere appears as a base curve of the above linear system on $\tilde Z$,
and it should be regarded as the principal component of the base locus.
In fact, the base locus consists of the entire exceptional divisor of the minimal resolution
(Proposition~\ref{p:bs}).
We begin by blowing up $\tilde Z$ along a chain of smooth rational curves (and its conjugate),
which is a slight extension of the chain of exceptional curves; however, new base curves then
appear on the exceptional divisors of this blowup (Proposition~\ref{p:bs1}).
Although further blowups are required, we show that the base locus always consists of chains of
smooth rational curves arising inductively, and that the complete elimination of the base locus
can be obtained by a successive blowup procedure along such chains.
As a consequence, many exceptional divisors are inserted into the fibers over
$u=0$ and $\infty$, while no modification occurs over $\CC^*=\PP^1\setminus\{0,\infty\}$.
Among the resulting exceptional components, the one lying over the central sphere is irreducible
and biholomorphic to $\tilde{\ms T}$ (Proposition~\ref{p:El}), and this feature will play a
decisive role.

We denote by $\tilde Z\upnn$ the space obtained after the complete elimination of the base locus.
Since the blowups are performed along chains of rational curves, the space $\tilde Z\upnn$
acquires several nodes.
Moreover, the fibers over $u=0,\infty\in\PP^1$ become highly reducible, making $\tilde Z\upnn$
seem difficult to handle at first sight.
Nevertheless, by applying suitable bimeromorphic transformations to $\tilde Z\upnn$ using these
nodes, the geometry simplifies considerably: all exceptional divisors, except those lying over the
central sphere, can be successively contracted to curves.
The resulting space still contains the proper transforms of the original fibers of
$\tilde Z \lras \PP^1$ over $u=0$ and $\infty$, and these divisors can also be contracted to curves,
while the component arising from the central sphere remains.
Consequently, the final space has irreducible fibers over $u=0$ and $\infty$, which are precisely the images of the exceptional divisor of the central sphere and its conjugate.
Furthermore, these two fibers remain isomorphic to $\tilde{\ms T}$, so that all fibers are
isomorphic to $\tilde{\ms T}$.
The $\CC^*_s$-action (as well as the real structure) survives throughout these bimeromorphic
transformations, and it follows that the resulting space is the product manifold
$\tilde{\ms T}\times\PP^1$ (Proposition~\ref{p:prod}).
Thus, {\em the compactified toric $A_{2n-1}$ twistor space becomes globally trivial over $\PP^1$
after suitable bimeromorphic modifications.}

In Section~\ref{s:mtl}, using these modifications, we investigate the image $\tilde\Psi(L)\subset\ms T$,
where $L$ is any twistor line in $Z$ that meets the (extension of the) chain of exceptional curves.
If $L$ intersects the central sphere, then $L$ is $\CC^*_s$-invariant, and hence its naive image under
$\tilde\Psi$ would collapse to a point.
To extract the correct geometric information, we introduce the {\em meromorphic image} as follows:
we first take the inverse image of $L$ under the blowups appearing in the elimination of the base
locus, and then take its image in the usual sense under the subsequent blowdowns and the projection
to $\ms T$.
This yields a curve in $\ms T$, which should be regarded as the true image of $L$ under $\tilde\Psi$.
We carry out this procedure not only for twistor lines meeting the central sphere but also for those
meeting other components of the chain.
Consequently, we explicitly determine the meromorphic image $\tilde\Psi(L)$ for any twistor line $L$
meeting the (extension of the) chain; see Propositions \ref{p:a}, \ref{p:i1} and \ref{p:l}.
Most of these images are reducible curves and contain a single non-reduced component.
These curves are special minitwistor lines in $\ms T$, and in the terminology of \cite{H25}, these might be called irregular minitwistor lines.
We identify them as explicit hyperplane sections of $\ms T$ and show that they constitute a
continuous family.

The toric $A_{2n-1}$ gravitational instanton is determined by $2n$ points lying on a line in Euclidean
space $\RR^3$, which serve as multi-monopole centers.
Viewing these points as points on the real circle (the equator) in $\PP^1$, we obtain a hyperelliptic
curve $\Sigma$ branched at these $2n$ points.
Let $\Lambda$ denote this copy of $\PP^1$, and embed it into $\PP^n$ as a rational normal curve of degree
$n$.
Then $\Sigma$ can be naturally realized in the projective cone ${\rm C}(\Lambda)\subset\PP^{n+1}$ over
$\Lambda$.
The minitwistor space $\ms T$ is a double cover of ${\rm C}(\Lambda)$ branched along $\Sigma$.
Since $\Sigma$ does not meet the vertex of ${\rm C}(\Lambda)$, the surface $\ms T$ has two $A_{n,1}$
singularities, and the minimal resolution of $\ms T$, equipped with the induced real structure, is
exactly Hitchin's minitwistor space $\tilde{\ms T}$.

In Section~\ref{ss:he}, we choose a fundamental domain $\Sigma''\subset\Sigma$ for the group action
generated by the hyperelliptic involution and the real structure.
This is a quarter of $\Sigma$, viewed as a manifold with corners with $2n$ edges.
In Section~\ref{ss:2mtl}, we obtain a 2-dimensional family of minitwistor lines parameterized by
$\Sigma''$.
More precisely, we study hyperplane sections of ${\rm C}(\Lambda)$ that are tangent to the branch curve
$\Sigma$ at $(n-1)$ points; their inverse images in $\ms T$ give members of the family, and the nodes of the resulting minitwistor lines lie over the tangency points.
The boundary of $\Sigma''$ then naturally becomes a parameter space for the above special minitwistor
lines.
As in the Lorentzian EW case discussed in \cite{H25}, we use the Abel--Jacobi map of $\Sigma$ together
with the doubling map on its Jacobian to extend the family from the boundary to the interior of
$\Sigma''$ (Proposition~\ref{p:ext}).

By the general result of Jones--Tod \cite{JT85} on the reduction of anti-self-dual structures to
Einstein--Weyl (EW) structures, the quotient of the present $A_{2n-1}$ gravitational instanton by the
scalar $S^1$-action carries an EW structure.
The projective surface $\ms T$ can be regarded as the minitwistor space of this EW space.
The EW space also admits an $S^1$-action induced by the tri-holomorphic $S^1$-action; we call it the
{\em residual $S^1$-action}.
Geometrically, this action may be viewed as a rotation around an axis in the EW space.
The axis is formed by the images of the exceptional divisors of the minimal resolution of
$\CC^2/\Gamma$ together with the two components arising from the coordinate axes in $\CC^2$, while the
central sphere has to be removed to obtain the axis since it is mapped to the conformal infinity of the EW space.

The EW space has orbifold singularities along most of the axis of rotation, since the scalar $S^1$-action
on the original components has a non-trivial finite stabilizer subgroup, except for the two components
adjacent to the central one.
We will confirm that the order of the stabilizer subgroup coincides with the multiplicity of the
non-reduced component of the meromorphic image $\tilde\Psi(L)$ obtained in Section~\ref{s:mtl}, where $L$
is a twistor line meeting the component in question.
We also note that the images of the two components from the coordinate axes in $\CC^2$ meet at the point
at infinity; the union of their images forms a {\em single} segment of the axis of rotation, located in
the middle of the whole axis, and the image of the point at infinity lies in the interior of this middle
segment.
The orbifold order is highest along this segment.
These points are discussed at the beginning of Section~\ref{ss:cplt}.

The minitwistor lines parameterized by the quarter $\Sigma''$ will constitute a {\em slice} in the EW space
with respect to the residual $S^1$-action.
In Section~\ref{ss:cplt}, we move the quarter $\Sigma''$ by the residual $S^1$-action to obtain a 3-dimensional
family of minitwistor lines in $\ms T$, and prove that this family precisely parameterizes the images
of twistor lines in $Z$ (Theorem~\ref{t:main}).

Finally, the connected component of the space of real minitwistor lines in the present case is different from the components in the Lorentzian case considered in \cite{H25}.
This difference arises because the present minitwistor lines have no real circle, due to the definiteness of the conformal structure, whereas those in \cite{H25} possess a real circle coming from the indefiniteness.
In both cases, the set of nodes of a (real) minitwistor line is real as a whole.
However, whether each individual node is real is a nontrivial issue.
In the Lorentzian case treated in \cite{H25}, all nodes of real minitwistor lines are real.
In Section \ref{ss:sing}, we show that, in contrast, in the present (Riemannian) case, some minitwistor lines have only real nodes, whereas others have non-real nodes.
Thus, a transition occurs within the same family of minitwistor lines. Such a transition can occur only through collisions of nodes.
This situation differs from the Lorentzian case, where collision can never occur and all nodes of minitwistor lines are uniformly real.
We can understand this transition clearly in the case of a small genus as shown at the end of Section \ref{s:EW}, whereas the situation becomes considerably more complicated in a higher genus.

\section{The twistor spaces, their compactifications and  minitwistors}\label{s:2}	
First, we recall the construction of the twistor space associated to the ALE gravitational instanton of type $A_k$ for arbitrary $k$, which was given by Hitchin \cite{Hi79}.

Let $k>0$ be an integer and $\CCC\subset\SU(2)$ a cyclic subgroup of order $(k+1)$. A gravitational instanton of type $A_k$ discussed in this paper is the minimal resolution of $\CC^2/\CCC$ equipped with a hyperk\"ahler metric with Euclidean volume growth, where the complex structure of the resolution is one of the compatible complex structures in the hyperk\"ahler family.
The twistor space of this gravitational instanton is constructed as follows.
Consider a hypersurface in the total space of the vector bundle $\ms O(k+1)\oplus\ms O(k+1)\oplus\ms O(2)$ over $\PP^1$ defined by the equation
\begin{align}\label{tw1}
xy = (z-a_1u)(z-a_2u)\dots(z-a_{k+1}u),
\end{align}
where $u$ is an affine coordinate on $\PP^1$, $x,y$ are fiber coordinates on $\ms O(k+1)$, $z$ is a fiber coordinate on $\ms O(2)$, 
 and $a_1,\dots, a_{k+1}$ are distinct real numbers determined from the hyperk\"ahler metric.
We may assume $a_1<a_2<\dots<a_{k+1}$.
The hypersurface \eqref{tw1} has compound $A_k$-singularities over $u=0,\infty\in\PP^1$ and is smooth away from these singularities.
The twistor space $Z$ of the ALE gravitational instanton of type $A_k$ is obtained from this hypersurface by taking 
suitable simultaneous (small) resolutions of these singularities.
We call the hypersurface \eqref{tw1} the {\em projective model} of the twistor space.
The holomorphic map $p:Z\lras\PP^1$ associated to the hyperk\"ahler structure is given by the projection $(x,y,z,u)\longmapsto u$ on the projective model.
% in the above coordinates.
The real structure $\sigma$ on the twistor space $Z$ is given, on the projective model, as
\begin{align}\label{rsZ}
(x,y,z,u)\stackrel{\sigma}\longmapsto \Big( (-1)^{k+1}\frac{\ol y}{\ol u^{k+1}},
\frac{\ol x}{\ol u^{k+1}}, -\frac{\ol z}{\ol u^{2}}, 
-\frac1{\ol u}\Big).
\end{align}
%The lift of this to the resolution is the real structure of the twistor space $Z$.

The $S^1$-action on $\CC^2$ which preserves the standard hyperk\"ahler structure induces a holomorphic $S^1$-action on $Z$ preserving each fiber of $p$,
which is given, on the projective model, by 
\begin{align}\label{act-t}
(x,y,z,u)\stackrel{t}\longmapsto
\big(tx, t\inv y, z, u\big),\quad t\in \U(1)=S^1.
\end{align}
This is indeed a $\CC^*$-action on the twistor space by just allowing $t\in\CC^*$.
%We denote $\CC^*_t$ for $\CC^*$ that acts on $Z$ by \eqref{act-t}.
Similarly, the scalar multiplication on $\CC^2$ induces an $S^1$ or $\CC^*$-action on $Z$, which is given by 
\begin{align}\label{act-s1}
(x,y,z,u)\stackrel{s}\longmapsto
\big(s^{k+1}x, s^{k+1}y, s^2 z, s^2u\big),\quad s\in
S^1  \subset
\CC^*
\end{align}
on the projective model.
%This commutes with $\sigma$ iff $|t|=1$. 
From \eqref{act-t} and \eqref{act-s1}, the twistor space $Z$ admits a $T^2$-action and a $T^2_{\CC}=(\CC^*\times\CC^*)$-action as its complexification.
%, where $T^2=\{(s,t)\in\CC^*_s\times\CC^*_t\set |s|=|t|=1\}$.

From the ALE property of the metric, the gravitational instanton can be compactified as an anti-self-dual orbifold by adding a point at infinity.
Correspondingly, the twistor space $Z$ can be compactified by adding the twistor line over the point at infinity.
%Let $Z'$ be the compactification.
The projection $p:Z\lras\PP^1$ lifts as a meromorphic map from the compactification, which has the added twistor line as the indeterminacy locus \cite{Kr89_2}.
This indeterminacy is eliminated by blowing up along the line,
obtaining a holomorphic map from the blowup to $\PP^1$.
But the blowup still has singularities along two 
$\PP^1$ which are two $T^2$-invariant fibers of the projection from the exceptional divisor to the twistor line at infinity.
These singularities can be resolved by blowing up these fibers.
Let $\tilde Z$ be the smooth compact complex threefold obtained this way, and $\tilde p:\tilde Z\lras \PP^1$ be the holomorphic map naturally induced from $p:Z\lras\PP^1$.
Each fiber of $\tilde p$ is identified with the compactification of a fiber of $p$ by the twistor line at infinity and two $\PP^1$ from the second blowup \cite[Figure 1]{Hi25}.
The real structure and the two $\CC^*$-actions \eqref{act-t} and \eqref{act-s1} extend to $\tilde Z$.
By the $\CC^*$-action \eqref{act-s1}, all fibers of $\tilde p$ are mutually biholomorphic except for the fibers over the two points $u=0,\infty\in\PP^1$.

The projection from the projective model of $Z$ to the $\ms O(2)$-factor
%, which is $(x,y,z,u)\longmapsto (z,u)$, 
induces a holomorphic map $\Phi:Z\lras\ms O(2)$.
This is the dimensional reduction \cite{JT85} of the twistor space to the minitwistor space $\ms O(2)$ of 
the Euclidean space $\RR^3$ under the $\CC^*$-action \eqref{act-t}.
The map $\Phi$ extends to a holomorphic map $\tilde\Phi:\tilde Z\lras\FF_2$, where $\FF_2$ is the Hirzebruch surface of degree two, which is a compactification of $\ms O(2)$ by a $(-2)$-curve attached at infinity.
This can still be regarded as a quotient map of $\tilde Z$ by the $\CC^*$-action \eqref{act-t}.
Throughout this paper, we denote $\bm E$ for the divisor in $\tilde Z$ which is the strict transform of the exceptional divisor of the (first) blowup along the twistor line at infinity. This is real (i.e.,\,$\sigma$-invariant) and biholomorphic to $\PP^1\times\PP^1$.
% and one of the projections from $\bm E$ to $\PP^1$ is identified with the restriction $p|_{\bm E}:\bm E\lras\PP^1$.
In terms of the above projection $\tilde\Phi$, if $\Xi_{\infty}$ denotes the $(-2)$-curve added at infinity to $\ms O(2)$, then 
\begin{align}\label{E}
\bm E = \tilde\Phi\inv(\Xi_{\infty}).
\end{align}
Also, we denote $\bm D$ and $\ol{\bm D}$ for the pair of the exceptional divisors of the second blowup in obtaining $\tilde Z$. 
(See Figure \ref{f:twistor} for the divisors $\bm E, \bm D$ and $\ol{\bm D}$.)
These are mutually disjoint sections of the above projection $\tilde\Phi:\tilde Z\lras\FF_2$, so they are biholomorphic to $\FF_2$.
We have $\tilde Z = Z\sqcup (\bm E\cup \bm D\cup \ol{\bm D})$.
This is the compactification given in \cite[Section 3]{Hi25}.
For a point $u\in\PP^1$, we denote by
$$\tilde Z_u:=\tilde p\inv(u)$$ for the fiber over $u$.

\begin{figure}
\includegraphics[height=3in]{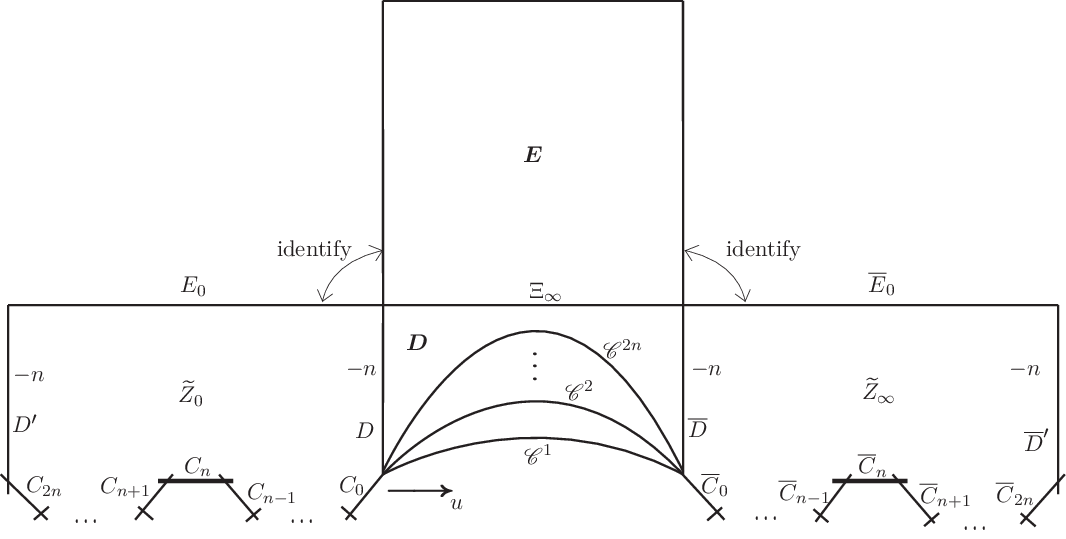}
\caption{The compactified twistor space $\tilde Z$}
\label{f:twistor}
\end{figure}

The two singularities of the fibers over $u=0,\infty$ in the projective model are the $A_{k}$-singularity, so the exceptional curve of the simultaneous resolution on $Z$ is a chain of $k$ smooth rational curves.
We denote 
    $C_1 + \dots  + C_{k}%\quad (l=g+1)
    $
for the chain over $u=0$ with the components being arranged in this order. Writing $\ol C_i = \sigma(C_i)$, 
the chain over $u=\infty$ may be written $\ol C_1 + \dots  + \ol C_{k}$.
This is the exceptional curve of the simultaneous resolution of the singularity over $u=\infty$.
By \cite[p.\,262]{Hi21}, under the correct choice of the small resolution, homogeneous coordinates on the component $C_i$ ($1\le i\le k$) are given by 
\begin{align}\label{nui}
\Big(x:\prod_{j=1}^{i} (z-a_ju)\Big).
\end{align}
%(See ** for detail.)
It follows that in a non-homogeneous coordinate, the $\CC^*$-action \eqref{act-s1} on the component $C_i$ is given by multiplication by $s^{k+1-2i}$.
Therefore, the $\CC^*$-action \eqref{act-s1} has a component which is pointwise fixed if and only if $k$ is odd. 
If $k$ is odd, putting 
\begin{align}\label{n}
k=2n-1,
\end{align}
the fixed component
is the middle one, which is $C_n$. This is called the {\em central sphere}
\cite{Hi21, Hi25}.
The existence of this component is crucial in \cite{Hi21,Hi25}
and also in this paper.
In the following, we always assume that $k$ is odd and use the letter $n$ to mean \eqref{n}.
%So if we define a $\CC^*$-subgroup $G_i$ by 
%\begin{align}\label{}
%G_i = \big\{ (s,t)\in \CC^*_s\times\CC^*_t\set s^{l-2i}t=1\big\},
%\end{align}
%then this is the stabilizer subgroup of $C_i$.
The $\CC^*$-action \eqref{act-s1} is non-effective if $k$ is odd, and re-defining $s$ as $s^2$, we obtain an effective $\CC^*$-action
\begin{align}\label{act-s}
(x,y,z,u)\stackrel{s}\longmapsto
\big(s^n x, s^ny,s z, su\big),\quad s\in\CC^*.
\end{align}
In the sequel, we denote $\CC^*_s$ (resp.\,$\CC^*_t$) to mean the group $\CC^*$ in \eqref{act-s} 
(resp.\,\eqref{act-t}) and 
$T^2_{\CC}$ to mean $\CC^*_s\times\CC^*_t$.
$Z$ has a $T^2_{\CC}$-action and it extends to the compactification $\tilde Z$.

In addition to the exceptional curves $C_1,\dots,C_{2n-1}$, we define $C_0$ and $C_{2n}$ to be smooth rational curves in $\tilde Z$ which are obtained from 
the curves $\{y=z=u=0\}$ and $\{x=z=u=0\}$ in the projective model \eqref{tw1} of $Z$ through the compactification and the resolutions.
We call each of them a {\em coordinate axis} %and {\em $y$-axis} respectively, 
because they correspond to two coordinate axes on $\CC^2$ by the quotient map to $\CC^2/\Gamma$.
%correspond to $x$-axis and $y$-axis in $\CC^3$ in which the $A_{2l-1}$-singularity is realized.
%We write $C_0$ and $C_{2l}$ for the $x$-axis and $y$-axis (in $\tilde Z$) respectively.
The sum 
$$
C_0 + C_1 + \dots + C_{2n-1} + C_{2n}
$$ is also a chain of rational curves.
All these components are $T^2_{\CC}$-invariant, although one of the two fixed points on the coordinate axis $C_0$ does not belong to the twistor space $Z$ and the same for another coodinate axis $C_{2n}$.

From \eqref{act-s}, a generic fiber of $\tilde p:\tilde Z\lras \PP^1$ can be thought of as an orbit space of the $\CC^*_s$-action, and Hitchin \cite{Hi25} obtained a compact minitwistor space from $\tilde Z$ as a fiber $\tilde Z_1=\tilde p\inv(1)$. 
More concretely, 
consider the fiber of the original projection $p:Z\lras\PP^1$ over the point $u=1\in\PP^1$, which is an affine surface
\begin{align}\label{Hi1}
xy = (z-a_1)(z-a_2)\dots(z-a_{2n})
\end{align}
in $\CC^3$.
This surface can be naturally compactified by thinking $z$ as an affine coordinate on $\PP^1$ and $(x,y)$ as affine fiber coordinates on the $\PP^2$-bundle $\mathbb P(\ms O(n)\oplus\ms O(n)\oplus\ms O)\lras\PP^1$ this time.
Let $S$ be the compact projective surface obtained this way.
This is non-singular, identified with the fiber $\tilde Z_1$, and has a conic bundle structure over $\PP^1$ whose coordinate is $z$.
Introducing a fiber coordinate $w$ on the $\ms O$-factor, the divisor added in the compactification consists of the fiber conic over the point $z=\infty$, which is the intersection with the divisor $\bm E$, and
two sections $D_1:=\{y=w=0\}$ and $D'_1:=\{x=w=0\}$.
These two sections are $(-n)$-curves on $S$ (see Figure \ref{f:mt}).
In \cite{Hi25} these are written $D_1$ and $D_2$.
As a conic bundle $S\lras\PP^1$, reducible fibers are exactly over the $2n$ points $z=a_1,\dots, a_{2n}$.
The twistor line at infinity appears as a regular fiber conic
over $z=\infty$.
We make distinction between the divisors $\bm D$ and $\ol{\bm D}$ 
by the properties $D_1=\bm D\cap S$ and $D'_1=\ol{\bm D}\cap S$.

\begin{figure}
\includegraphics{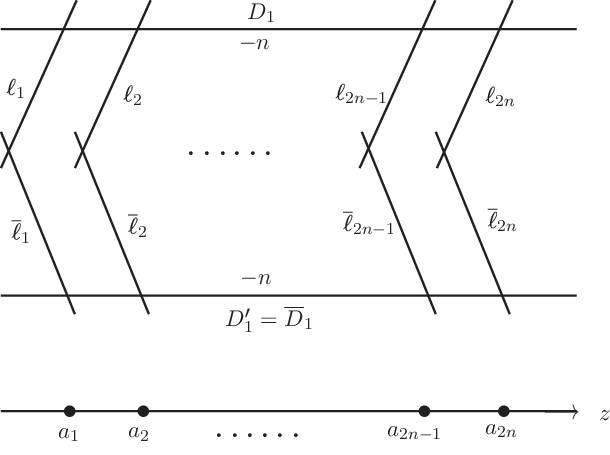}
\caption{The resolved minitwistor space $S=\tilde{\ms T}$}
\label{f:mt}
\end{figure}

While the affine surface \eqref{Hi1} or its compactification $S$ is non-real, they admit a natural real structure using the $\CC^*_s$-action.
Concretely, it is given by the composition of $\sigma$, which maps the fiber over $u=1$ to the fiber over $u=-1$, and the isomorphism between these fibers obtained by letting $s=-1$ in \eqref{act-s}.
In \cite{Hi25} this is written $\tau$.
In this paper, since we use the letter $\sigma$ to mean the real structure on the twistor space $Z$ or its compactification $\tilde Z$, we denote by $\sigma$ for this real structure on $S$.
This is explicitly given by 
\begin{align}\label{rs1}
(x,y,z)\stackrel{\sigma}\longmapsto \big((-1)^n\ol y,(-1)^n\ol x,\ol z\big).
\end{align}
Since the $(-n)$-curve $D'_1$ is equal to $\sigma(D_1)$, in the following, we mainly write $\ol D_1$ instead of $D'_1$.

For a later use, we provide another description of the minitwistor space $S$. 
Putting $f(z):=\prod_{i=1}^{2n}(z-a_i)$, by the variable change $x=v+iw, y=v-iw$, \eqref{Hi1} is transformed to $v^2+w^2=f(z)$, and in these new coordinates, the real structure \eqref{rs1} is given by $(v,w,z)\longmapsto ((-1)^n \ol v, (-1)^n \ol w, \ol z)$.
Rewriting the equation \eqref{Hi1} of $S$ to $w^2 = f(z)-v^2$,
define 
\begin{align}\label{Sgm}
\Sigma:=\big\{(v,z)\in\ms O_{\PP^1}(n)\set v^2 = f(z)\big\}.
\end{align}
This is a hyperelliptic curve branched at $a_1,\dots,a_{2n}$ and its genus $g$ is $(n-1)$.
Identifying $\ms O(n)$ with the cone ${\rm C}(\Lmd)$ over a rational normal curve $\Lmd\subset\PP^n$ minus the vertex, $\Sigma$ can be regarded as embedded in ${\rm C}(\Lmd)$.
From the equation $w^2 = f(z)-v^2$, if we compactify the affine surface \eqref{Hi1} in $\PP^{n+2}$ (instead of compactifying in the above $\PP^2$-bundle over $\PP^1$), we obtain a double covering of the cone ${\rm C}(\Lmd)$ branched at $\Sigma$.
We denote $\ms T$ for this double covering.
Since $\Sigma$ does not pass through the vertex of the cone, $\ms T$ has two cone singularities ($A_{n,1}$-singularities) over the vertex. 
The minimal resolution of $\ms T$ is biholomorphic to $S$, including the real structure \cite[Proposition 6.1]{H25}.
The exceptional curves of the resolution are the two $(-n)$-curves $D_1$ and $D'_1=\ol D_1$ on $S$. 

In summary, we have the following commutative diagrams of rational maps: 
\begin{equation}\label{cdm0}
\begin{tikzcd}
\PP^{n+2} \arrow[r,"\Pi"] \arrow[d] &
\PP^{n+1} \arrow[dl,"\pi"]\\
\PP^{n} &
\end{tikzcd}
\hspace{3mm}
\scalebox{1.5}{$\supset$}
\hspace{4mm}
\begin{tikzcd}
\ms T \arrow[r, "\Pi", "2:1"']
 \arrow[d] &
{\rm C}(\Lmd)\supset\Sigma \arrow[dl,"\pi"]\\
\Lmd &
\end{tikzcd}
\end{equation}
Each of $\Pi$ and $\pi$ in the left diagram is a projection from a point and the same notations in the right diagram are their restrictions to $\ms T$ and ${\rm C}(\Lmd)$ respectively.
The vertical map in the right diagram exhibits $\ms T$ as a rational conic bundle over $\Lmd\simeq\PP^1$.
In the following, we use the notation $\tilde{\ms T}$ for $S$ to indicate that it is the (minimal) resolution of $\ms T$.
The composition $\tilde{\ms T}\lras\ms T\lras\Lmd$ is holomorphic and may be identified with the above projection $S=\tilde{\ms T}\lras\PP^1$ which takes the $z$-coordinate.

We will also make use of the following realization of $\tilde{\ms T}$ as a birational change of $\qdr$. 
The reducible fibers of the last projection $\ms{\tilde T}\lras\Lmd$ are over the $2n$ points $a_1,\dots,a_{2n}$, and 
all of them consist of two $(-1)$-curves in $\tilde{\ms T}$.
For $1\le i\le 2n$, we denote $\ell_i$ and $\ol{\ell}_i$ for these $(-1)$-curves over $a_i$.
We make a distinction for these two components by the property that $\ell_i$ intersects the section $D_1$ (see Figure \ref{f:mt}.)
Explicitly, under the above definition of the curves $D_1$ and $D'_1$,
$\ell_i=\{y=z-a_i=0\}$ and $\ol{\ell}_i = \{x=z-a_i=0\}$ in the coordinate used in \eqref{Hi1}.
We use the same notation $\ell_i$ and $\ol\ell_i$ for the images of these $(-1)$-curves into $\ms T$ respectively.
These images are lines in $\ms T\subset\PP^{n+2}$ and they are all lines on $\ms T\subset\PP^{n+2}$.

\section{A linear system on the compactification}
In this section, we investigate a certain linear system on the compactified twistor space $\tilde Z$ which is useful to investigate properties of $Z$ and $\tilde Z$.
First, using the projection $\tilde p:\tilde Z\lras\PP^1$ and the divisor $\bm E$, we define a line bundle $F$ over $\tilde Z$ by
$$
F:=\tilde p^*\ms O_{\PP^1}(2) \otimes [\bm E].
$$
Using that $\bm E = \tilde\Phi\inv(\Xi_{\infty})$ as in \eqref{E},
if $\Xi_0$ denotes a $(+2)$-section of the projection $\FF_2\lras\PP^1$,
$F$ can be simply written as $\tilde\Phi^*\ms O_{\FF_2}(\Xi_0)$.
Note that $F\cdot L = 2$ for the intersection number with a twistor line $L\subset Z$ because $L$ does not intersect $\bm E$ and $L$ is a section of the projection $p: Z\lras\PP^1$.

\begin{proposition}\label{p:cb} For the anti-canonical bundle of $\tilde Z$, we have:
$$-K_{\tilde Z}\simeq 2F + \bm D + \ol{\bm D}.$$
\end{proposition}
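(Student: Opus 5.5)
The plan is to compute $K_{\tilde Z}$ in two stages. First I determine it on $Z$ from the projective model. Then I determine the coefficients of the added divisors $\bm E,\bm D,\ol{\bm D}$ by restricting to a generic fiber $\tilde Z_1=S$ and using intersection numbers with a twistor line.

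\emph{Step 1 (on $Z$).} Let $V$ be the total space of $\ms O(2n)\oplus\ms O(2n)\oplus\ms O(2)$ over $\PP^1$. Its canonical bundle is the pullback of $K_{\PP^1}\otimes\det^{-1}=\ms O(-2-4n-2)$. The hypersurface \eqref{tw1} is the zero locus of a section of the pullback of $\ms O(4n)$. Adjunction then gives $K=p^*\ms O(-4)$ on the smooth part of the projective model. The simultaneous resolution is small, hence crepant, so $K_Z\simeq p^*\ms O(-4)$. Concretely, this is witnessed by the $p^*\ms O(4)$-valued form $du\wedge dz\wedge dx/x$, which is nowhere vanishing on $Z$. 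As a consistency check, $-K_Z\cdot L=4$ for a twistor line, since $N_L=\ms O(1)^{\oplus2}$ and $T_L=\ms O(2)$.

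\emph{Step 2 (shape of $K_{\tilde Z}$).} Since $\tilde Z\setminus Z=\bm E\cup\bm D\cup\ol{\bm D}$ and these are irreducible divisors, I will write
$$-K_{\tilde Z}\simeq \tilde p^*\ms O(4)+a\bm E+b\bm D+\bar b\,\ol{\bm D}$$
for some integers $a,b,\bar b$. To justify this, I extend the form $du\wedge dz\wedge dx/x$ as a meromorphic section of $K_{\tilde Z}\otimes\tilde p^*\ms O(4)$. Its divisor is supported on $\bm E\cup\bm D\cup\ol{\bm D}$, because no other irreducible divisor is added. In particular, the fibers of $\tilde p$ over $0,\infty$ are irreducible surfaces (the resolution is small), so no fiber components can enter. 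The reality of $\sigma$ forces $b=\bar b$.

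Comparing with $2F+\bm D+\ol{\bm D}=\tilde p^*\ms O(4)+2\bm E+\bm D+\ol{\bm D}$, it remains to show that $a=2$ and $b=1$.

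\emph{Step 3 (coefficients via adjunction on a fiber).} For $S=\tilde Z_1$ the normal bundle is trivial, so $-K_{\tilde Z}|_S\simeq -K_S$. Moreover $\bm E|_S$ is the conic at infinity $Q=\{z=\infty\}$, $\bm D|_S=D_1$ and $\ol{\bm D}|_S=\ol D_1$, each restricted with multiplicity one. It therefore suffices to show
$$-K_S= 2Q+D_1+\ol D_1,$$
i.e.\ that the $2$-form $dz\wedge dx/x$ on the affine surface \eqref{Hi1} has a pole of order exactly $2$ along $Q$ and exactly $1$ along $D_1$ and $\ol D_1$. I would verify this in the charts of the $\PP^2$-bundle $\PP(\ms O(n)\oplus\ms O(n)\oplus\ms O)$: near $D_1$ use coordinates $(x/y, w/y,z)$, near $\ol D_1$ the symmetric ones, and near $Q$ set $z=1/\zeta$ with the $\ms O(n)$-twist $x=\zeta^{-n}x'$.

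As an independent numerical check, I would confirm the answer by intersection numbers. A fiber conic $\ell$ satisfies $-K_S\cdot\ell=2$, and indeed $Q\cdot\ell=0$ and $D_1\cdot\ell=\ol D_1\cdot\ell=1$. For a $(-n)$-curve, adjunction gives $-K_S\cdot D_1=2-n$, while $(2Q+D_1+\ol D_1)\cdot D_1=2-n$ as well, since $Q\cdot D_1=1$ and $\ol D_1\cdot D_1=0$. Since $\mathrm{Pic}(S)$ is generated by the fiber components together with $D_1$, these numerical checks plus the local pole computation pin down $a$ and $b$. Once $a=2$ and $b=1$ are established, the identity follows.

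\emph{Main obstacle.} I expect the main difficulty to be making Step 2 rigorous: the coefficient of $\bm E$ is detected only after passing through the orbifold compactification and two blowups. Restricting to the smooth fiber $S$, where everything is explicit, is my way around this. The remaining care is in checking that $\bm E$, $\bm D$ and $\ol{\bm D}$ meet $S$ transversally, so that their restrictions are reduced.
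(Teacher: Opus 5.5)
Your proposal is correct and follows the same route as the paper. Both write $-K_{\tilde Z}\simeq \tilde p^*\ms O(4)+a\bm E+b(\bm D+\ol{\bm D})$ and then fix $a=2$, $b=1$ by adjunction on the fiber $\tilde Z_1$, using $-K_{\tilde Z_1}\simeq 2Q+D_1+\ol D_1$. You differ only in deriving $-K_Z\simeq p^*\ms O(4)$ from adjunction on the projective model plus crepancy of the small resolution instead of quoting it, and in making the fiber computation explicit through the pole orders of $dz\wedge dx/x$. One small slip: the chart around $D_1=\{y=w=0\}$ is $(y/x,w/x,z)$, not $(x/y,w/y,z)$, which is the chart around $\ol D_1$; by symmetry this is harmless.
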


\proof 
By \cite{HKLR}, the anti-canonical bundle $-K_Z$ of the twistor space $Z$ itself is given by $p^*\ms O_{\PP^1}(4)$.
So for the compactification $\tilde Z$, using the reality, we can write
$-K_{\tilde Z}\simeq \tilde p^*\ms O_{\PP^1}(4)+ l\bm E + m(\bm D + \ol{\bm D})$ for some $l,m\in\ZZ$.
On the other hand, by adjunction, 
$-K_{\tilde Z}|_{\tilde Z_1} \simeq -K_{\tilde Z_1}$
for the fiber $\tilde Z_1=\tilde p\inv(1)$.
From the structure of $\tilde Z_1$ that can be seen from the equation \eqref{Hi1}, 
it is easy to see that $-K_{\tilde Z_1} \simeq 
(2\bm E + \bm D + \ol{\bm D})|_{\tilde Z_1}$.
As $\tilde p^*\ms O_{\PP^1}(1)$ is trivial over the fiber $\tilde Z_1$, we obtain $l=2$ and $m=1$.
From the definition of the line bundle $F$, this gives the desired isomorphism.
\proofend

\medskip
In the rest of this section, we give the linear system on $\tilde Z$ that induces the quotient map onto the minitwistor space $\ms T$.
We recall that the boundary divisor that compactifies $Z$ consists of three components $\bm E,\bm D$ and $\ol{\bm D}$.
Recall also that we are considering the gravitational instanton of type $A_{2n-1}$.
Using the number $n$, we define a line bundle $\bm L$ on $\tilde Z$ by 
\begin{align*}
    \bm L&:= nF\otimes [\bm D + \ol {\bm D}]\\
    &\simeq  \tilde p^*\ms O_{\PP^1}(2n) \otimes [n\bm E + \bm D + \ol {\bm D}].
\end{align*}
If $L\subset Z$ is a twistor line, then 
since $F\cdot L = 2$ and $\bm E\cdot L = \bm D\cdot L = \ol{\bm D}\cdot L = 0$,
\begin{align}\label{LL}
\bm L\cdot L = 2n.
\end{align}
Since all line bundles and divisors used here are $T^2_{\CC}$-invariant,
the $T^2_{\CC}$-action on $\tilde Z$ lifts to the line bundle $\bm L$. 
Regarding $\CC^*_s$ as a subgroup of $T^2_{\CC}$, we denote by $|\bm L|^{\CC^*_s}$ the linear system associated to the subspace $H^0(\tilde Z,\bm L)^{\CC^*_s}$ of $H^0(\tilde Z,\bm L)$ that consists of $\CC^*_s$-invariant sections of $\bm L$, and similarly for $|\bm L|^{T^2_{\CC}}$.
In the rest of this section, we prove:

\begin{proposition}\label{p:ZT}
We have $\dim |\bm L|^{\CC^*_s} = n+2$ and the image of the associated meromorphic mapping $$\tilde{\Psi}:\tilde Z\lras \PP^{n+2}$$ is the mintwistor space $\ms T$,
and the restriction $\tilde{\Psi}|_{\tilde {\ms T}}$ of $\tilde{\Psi}$ to the fiber $\tilde {\ms T}=\tilde p\inv(1)$ is identified with the minimal resolution of $\ms T$.
\end{proposition}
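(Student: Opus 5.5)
Since $\CC^*_s$ acts transitively on $\PP^1\setminus\{0,\infty\}$, a $\CC^*_s$-invariant section of $\bm L$ is determined by its restriction to the fiber $\tilde{\ms T}=\tilde Z_1$. The plan is therefore to compute both $H^0(\tilde Z,\bm L)^{\CC^*_s}$ and the restriction map to $\tilde Z_1$ explicitly, in the coordinates of the projective model \eqref{tw1} with the weights \eqref{act-s}. I will write invariant sections as polynomials in $x,y,z,u$ of the correct bidegree and divide by the appropriate power of $u$ to obtain weight zero.

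\emph{Step 1: write $\bm L|_{\tilde Z_1}$ and its sections.} On $\tilde Z_1$ the pullback $\tilde p^*\ms O(2n)$ is trivial, so $\bm L|_{\tilde Z_1}\simeq [n\bm E+\bm D+\ol{\bm D}]|_{S}$. This is $n$ times the fiber conic at $z=\infty$ plus $D_1+\ol D_1$. In the compactification of \eqref{Hi1} the following are sections of this bundle: $1$ (vanishing on the boundary), $z,\dots,z^n$ (poles along the fiber at infinity, of order $\le n$), and $x$ and $y$. The functions $x$ and $y$ have poles of order $n$ along the fiber at infinity; along $D_1$ and $\ol D_1$ we have $w=0$, so they acquire simple poles there. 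This gives the $n+3$ sections
$$1,z,\dots,z^n,x,y.$$
These are exactly the homogeneous coordinates of the embedding of $\ms T\subset\PP^{n+2}$ as the double cover of the cone ${\rm C}(\Lmd)$. Here $(1:z:\dots:z^n)$ is the rational normal curve $\Lmd$, $v=(x+y)/2$ is the cone coordinate, and $w=(x-y)/2i$ satisfies $w^2=f(z)-v^2$.

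I will check that they span $H^0(S,\bm L|_S)$. A section is a rational function on the affine surface $xy=f(z)$, hence a polynomial in $x,y,z$ modulo the relation, with pole order at most $n$ at $z=\infty$ and at most $1$ along $D_1,\ol D_1$. Monomials $x^a z^b$ have pole order $na+b$ along the fiber at infinity, which forces $a\le1$; together with $b\le n$ this leaves exactly the listed sections. This dimension count must be done carefully near the points where the fiber at infinity meets $D_1$.

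\emph{Step 2: lift to $\tilde Z$.} The invariant lifts are
$$u^n,\ u^{n-1}z,\ \dots,\ z^n,\ x,\ y,$$
all of weight $n$ under \eqref{act-s}. I will check that each extends holomorphically as a section of $\bm L$ on all of $\tilde Z$. This should follow from the definition $\bm L=\tilde p^*\ms O(2n)\otimes[n\bm E+\bm D+\ol{\bm D}]$: the sections $x$ and $y$ are sections of $\ms O(2n)$ on the projective model, with the correct poles along the boundary, and the small resolutions are isomorphisms in codimension one. Since restriction of invariant sections to $\tilde Z_1$ is injective, this gives $\dim|\bm L|^{\CC^*_s}=n+2$.

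\emph{Step 3: identify the image.} On $\tilde Z_1$ the map $\tilde\Psi$ is $(1:z:\dots:z^n:x:y)$. This is the map of $S$ onto $\ms T$ that contracts $D_1$ and $\ol D_1$, since both $x$ and $y$ have simple poles along $D_1$ and the curve is sent to a vertex point; this is the minimal resolution by \cite[Proposition 6.1]{H25}. By $\CC^*_s$-invariance every generic fiber has the same image, so $\tilde\Psi(\tilde Z)=\ms T$.

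I expect the main obstacle to be the exact dimension count in Step 1, namely excluding extra sections near the corners of the boundary divisor, together with the holomorphic extension in Step 2 across $\bm D$ and $\ol{\bm D}$.
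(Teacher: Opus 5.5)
Your argument is correct and takes a genuinely different route from the paper. The paper computes $H^0(\tilde Z,\bm L)^{\CC^*_s}$ cohomologically, in four stages:
\begin{itemize}
\item it restricts to $G=\tilde Z_0+\tilde Z_\infty+\bm E+\bm D+\ol{\bm D}$;
\item it kills $H^1$ of the kernel $\bm M\simeq\tilde\Phi^*\ms O_{\FF_2}((n-1)\Xi_0)$ by pushing down to $\FF_2$ and applying Kawamata--Viehweg;
\item it counts $n$ invariant sections of $\bm M$ and $3$ of $\bm L|_G$, the latter through $\CC^*_s$-orbit closures on $\tilde Z_0$;
\item it writes the generators as the divisors $mS_\lmd+(n-m)S_\infty+\bm D+\ol{\bm D}$, $X$, $\ol X$, and derives the quadric equation of the image from $\xi\ol\xi=c(\ddd\ol\ddd)^2\prod_i(v_1-a_iv_2)$.
\end{itemize}
You instead use that $\CC^*_s$ carries $\tilde Z_1$ onto every fiber over $\CC^*$, so restriction of invariant sections to $\tilde Z_1$ is injective. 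You then compute $h^0(\tilde Z_1,\bm L|_{\tilde Z_1})=n+3$ directly and get surjectivity from the explicit lifts $u^{n-k}z^k$, $x$, $y$. This avoids any vanishing theorem. It also shows at once that the restricted system on $\tilde{\ms T}$ is the complete system $|nF_\infty+D_1+\ol D_1|$, where $F_\infty=\bm E\cap\tilde Z_1$. So the image equation is just the homogenization of $xy=f(z)$, and $\ms T$ appears by its very definition. What the paper's route buys is the divisorial description of the generators and of their restrictions to $\tilde Z_0$ (Lemmas \ref{l:LZ} and \ref{l:X}), which is the input for the base locus in Proposition \ref{p:bs}. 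You recover this too, since $\mathrm{div}(y)=X$, $\mathrm{div}(x)=\ol X$ and $\mathrm{div}\big(u^{n-m}(z-\lmd u)^m\big)=mS_\lmd+(n-m)S_\infty+\bm D+\ol{\bm D}$.

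One statement must be corrected. In Step~3, and in the wording of Step~1, you say that both $x$ and $y$ have simple poles along $D_1$. This is impossible: $f(z)$ has neither zero nor pole at a generic point of $D_1$, so $xy=f(z)$ forces $\mathrm{ord}_{D_1}x+\mathrm{ord}_{D_1}y=0$. In fact $x$ has a simple pole and $y$ a simple zero along $D_1=\{y=w=0\}$, and the reverse holds along $\ol D_1$. As sections of $\bm L|_S$, $\mathrm{div}(y)=\sum_i\ell_i+2D_1$ and $\mathrm{div}(x)=\sum_i\ol\ell_i+2\ol D_1$. Under your stated reason, $D_1$ would go to $(0:\cdots:0:\phi x:\phi y)|_{D_1}$ for a local equation $\phi$ of $D_1$, which need not be a point. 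The correct reason is $\bm L|_S\cdot D_1=n-n=0$. Concretely, $D_1\mapsto(0:\cdots:0:1:0)$ and $\ol D_1\mapsto(0:\cdots:0:0:1)$, the two points of $\ms T$ over the vertex. Step~1 is unaffected, since only upper bounds on pole orders matter there.

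The two obstacles you anticipate are harmless:
\begin{itemize}
\item In Step~1, $S$ is smooth, and splitting a section into $\CC^*_t$-weight pieces $p_a(z)x^a$, $q_b(z)y^b$, $r(z)$ rules out cancellation, so the corners play no role.
\item In Step~2, every section is holomorphic on $Z$. Its orders along $\bm E,\bm D,\ol{\bm D}$ can be read off on $\tilde Z_1$, which meets these divisors transversally along $F_\infty$, $D_1$, $\ol D_1$.
\end{itemize}
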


To show this proposition,
we first define a divisor $G$ and a line bundle $\bm M$ on $\tilde Z$ by
\begin{align*}
G&:=\tilde Z_0+ \tilde Z_\infty+ \bm E+ \bm D+\ol {\bm D},\\
\bm M&:=\bm L\otimes [-G].
\end{align*}
The divisor $G$ is smooth normal crossing
and $T^2_{\CC}$-invariant (see Figure \ref{f:twistor}.)
In particular, the restriction exact sequence
\begin{align}\label{restG}
    0 \lras \ms O_{\tilde Z}(\bm M)\lras \ms O_{\tilde Z}(\bm L)\lras \ms O_{G}(\bm L)\lras 0
\end{align}
is $T^2_{\CC}$-equivariant.
Let $\Xi_0\subset\FF_2$ be a $(+2)$-section of the ruling $\FF_2\lras\PP^1$ which is $\CC^*_s$-invariant and $f$ a fiber class.
In terms of $\Xi_0$ and $g:=n-1$, since $\Xi_0\sim\Xi_{\infty}+2f$, we have linear equivalences
\begin{align}
    \bm M &\simeq g\bm E + \tilde p^*\ms O_{\PP^1}(2g)\notag\\
    &\simeq \tilde \Phi^*\ms O_{\FF_2}(g\Xi_{\infty}+ 2gf)\notag\\
    &\simeq \tilde \Phi^*\ms O_{\FF_2}(g\Xi_{0}).\label{M1}
\end{align}
Using these, we next show:

\begin{lemma} For any $q>0$, 
$
H^q(\tilde Z, \ms O_{\tilde Z}(\bm M)) =0
$
and there is a short exact sequence
\begin{align}\label{restG1}
0 \lras H^0\big(\ms O_{\tilde Z}(\bm M)\big)\lras 
H^0\big(\ms O_{\tilde Z}(\bm L)\big)\lras 
H^0\big(\ms O_G(\bm L)\big)
\lras 0,
\end{align}
which is $T^2_{\CC}$- (and hence $\CC^*_s$-) equivariant.
\end{lemma}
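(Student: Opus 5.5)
The vanishing will follow from \eqref{M1}, which writes $\bm M$ as the pullback $\tilde\Phi^*\ms O_{\FF_2}(g\Xi_0)$ under the holomorphic map $\tilde\Phi:\tilde Z\lras\FF_2$. The exact sequence \eqref{restG1} is then the long exact cohomology sequence of \eqref{restG}, cut off by $H^1(\bm M)=0$. Equivariance is automatic, because \eqref{restG} is a $T^2_{\CC}$-equivariant sequence of sheaves and the induced maps on cohomology commute with the action.

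For the vanishing, I will use the Leray spectral sequence for $\tilde\Phi$ together with the projection formula:
\[
R^q\tilde\Phi_*\ms O_{\tilde Z}(\bm M)\simeq \ms O_{\FF_2}(g\Xi_0)\otimes R^q\tilde\Phi_*\ms O_{\tilde Z}.
\]
It then suffices to prove two things. First, $\tilde\Phi_*\ms O_{\tilde Z}=\ms O_{\FF_2}$ and $R^q\tilde\Phi_*\ms O_{\tilde Z}=0$ for $q>0$. Second, $H^q(\FF_2,\ms O(g\Xi_0))=0$ for $q>0$.

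The second claim is standard. Since $\Xi_0$ is a section of positive self-intersection $2$ and is base point free, the line bundle $g\Xi_0$ is nef. We have $-K_{\FF_2}=2\Xi_0$, so $g\Xi_0-K_{\FF_2}=(g+2)\Xi_0$, which is nef and big. Kawamata--Viehweg vanishing then applies. Alternatively, one can compute directly: push forward to $\PP^1$ to get $\bigoplus_{j=0}^{g}\ms O(2g-2j)$, which has no $H^1$.

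For the first claim, $\tilde\Phi$ is the quotient by $\CC^*_t$ with connected fibers. The fibers are compactified conic-type curves: over a point of $\ms O(2)$, the fiber of the projective model is the conic $xy=\text{const}$ in the $(x,y)$-plane. Over the discriminant, where $z=a_iu$, this conic degenerates into two lines, and after the small resolution and compactification it becomes a chain of rational curves. Over $\Xi_\infty$ the fiber lies in $\bm E$, which is a $\PP^1$-fibration. So every fiber is a tree of smooth rational curves, that is, a curve of arithmetic genus $0$, and therefore $h^1(\ms O_{\text{fiber}})=0$. Since $\tilde Z$ and $\FF_2$ are smooth and $\tilde\Phi$ is flat (equidimensional with smooth source and target), cohomology and base change gives $R^1\tilde\Phi_*\ms O=0$ and $\tilde\Phi_*\ms O=\ms O$. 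Higher $R^q$ vanish because the fibers are one-dimensional. A cleaner alternative: $\tilde Z$ is smooth and rational, and $\tilde\Phi$ is a rational curve fibration with $-K_{\tilde Z}$ relatively nef and big along fibers (by Proposition~\ref{p:cb}, $-K$ restricted to a general fiber has degree $2$). Relative Kawamata--Viehweg, or the fact that rational singularities are preserved, then gives $R^q\tilde\Phi_*\ms O=0$.

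The main obstacle is verifying that \emph{every} fiber of $\tilde\Phi$, in particular over the points of $\Xi_\infty$ and over the $2n$ discriminant curves at $u=0,\infty$ where the exceptional chain and $C_0,C_{2n}$ lie, is a connected curve with $H^1(\ms O)=0$. I would handle this through the relative anticanonical argument rather than by case-by-case inspection. Once the vanishing holds, the Leray spectral sequence degenerates and gives $H^q(\tilde Z,\bm M)=H^q(\FF_2,\ms O(g\Xi_0))=0$ for $q>0$. In particular $H^1(\bm M)=0$, which makes $H^0(\bm L)\to H^0(\ms O_G(\bm L))$ surjective and yields \eqref{restG1}.
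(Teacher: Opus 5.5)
Your proposal is correct and is essentially the paper's proof: the projection formula applied to $\bm M\simeq\tilde\Phi^*\ms O_{\FF_2}(g\Xi_0)$, vanishing of $R^q\tilde\Phi_*\ms O_{\tilde Z}$ for $q>0$ because every fiber of $\tilde\Phi$ is a smooth rational curve or a chain of them, the Leray spectral sequence, and Kawamata--Viehweg on $\FF_2$ via $g\Xi_0-K_{\FF_2}\simeq(g+2)\Xi_0$. Your flatness remark makes explicit what the paper's appeal to Grauert's theorem uses. One small point on your preferred relative Kawamata--Viehweg shortcut: degree $2$ on a general fiber only gives $\tilde\Phi$-bigness of $-K_{\tilde Z}$, and you also need $\tilde\Phi$-nefness on every component of the special fibers. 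That nefness does follow from Proposition~\ref{p:cb}, since $F$ is pulled back from $\FF_2$ and $\bm D,\ol{\bm D}$ are sections of $\tilde\Phi$, so they contain no fiber component.
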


\proof
The exact sequence \eqref{restG1} follows from the cohomology exact sequence of \eqref{restG} and the former vanishing in the case $q=1$.
This sequence is $T^2_{\CC}$-equivariant because the sequence \eqref{restG} is $T^2_{\CC}$-equivariant.

For the former vanishing, 
since all fibers of $\tilde\Phi$ are either a smooth rational curve or a chain of smooth rational curves,
$H^q(\tilde \Phi\inv(y),\ms O_{\tilde \Phi\inv(y)})=0$ for all $q>0$ and all $y\in\FF_2$.
From \eqref{M1}, the projection formula and 
Grauert's theorem about direct image sheaves, this means 
\begin{align*}
R^q \tilde\Phi_*\ms O_{\tilde Z}(\bm M)
&\simeq 
R^q \tilde\Phi_*\big(\tilde\Phi^*\ms O_{\FF_2}(g\Xi_{0})\big)\\
&\simeq 
\begin{cases}
\ms O_{\FF_2}(g\Xi_{0}) & q=0,\\
0 & q>0.
\end{cases}
\end{align*}
From Leray spectral sequence, this implies isomorphisms
\begin{align}\label{M2}
    H^q\big(\tilde Z, \ms O_{\tilde Z}(\bm M)\big) \simeq 
    H^q\big(\FF_2,\ms O_{\FF_2}(g\Xi_{0})\big),
    \quad \forall q\ge 0.
\end{align}
As $-K_{\FF_2}\simeq 2\Xi_0$,
$g\Xi_0 - K_{\FF_2} \simeq (g+2)\Xi_0$ and this is nef and big. Hence, the RHS of \eqref{M2} vanishes when $q>0$
by Kawamata-Viehweg.
\proofend

\begin{lemma} \label{l:Md} 
$
\dim H^0\big(\tilde Z, \ms O_{\tilde Z}(\bm M)\big)
^{\CC^*_s} = g+1\,\, (=n).
$
\end{lemma}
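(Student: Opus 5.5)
The plan is to push the computation down to $\FF_2$ using the isomorphism \eqref{M2} for $q=0$, and then count invariant sections there. The pullback $\tilde\Phi^*$ gives an isomorphism $H^0(\FF_2,\ms O_{\FF_2}(g\Xi_0))\simeq H^0(\tilde Z,\ms O_{\tilde Z}(\bm M))$. The map $\tilde\Phi$ is $\CC^*_s$-equivariant if we let $\CC^*_s$ act on $\ms O(2)\subset\FF_2$ by $(z,u)\mapsto(sz,su)$; this is read off from \eqref{act-s}. So this isomorphism is $\CC^*_s$-equivariant, once the linearization of $\ms O_{\FF_2}(g\Xi_0)$ is chosen to match the one on $\bm M=\bm L\otimes[-G]$.

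First I will write the sections of $\ms O_{\FF_2}(g\Xi_0)$ explicitly. Restricted to $\ms O(2)=\FF_2\setminus\Xi_\infty$, a section is a polynomial in the fibre coordinate $z$ of degree at most $g$ whose coefficients are sections of $\ms O_{\PP^1}(2(g-j))$. In the affine coordinate $u$ this reads
$$
P(z,u)=\sum_{j=0}^{g} p_j(u)\,z^j,\qquad \deg p_j\le 2(g-j).
$$
The total count is $\sum_j(2g-2j+1)=(g+1)^2$, which agrees with $h^0(\FF_2,g\Xi_0)$.

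Next I decompose this space into $\CC^*_s$-weight spaces. The monomial $z^ju^k$, with $k\le 2(g-j)$, has weight $j+k$ up to an overall shift coming from the linearization. The linearization on $\bm M$ is inherited from the canonical $T^2_{\CC}$-linearizations of $[\bm E],[\bm D],[\ol{\bm D}],[\tilde Z_0],[\tilde Z_\infty]$ and of $\tilde p^*\ms O(2n)$, the ones fixed when defining $|\bm L|^{\CC^*_s}$. I will trace these through \eqref{M1} and check that the $\CC^*_s$-invariant sections correspond exactly to the polynomials $P$ that are homogeneous of total degree $g$ in $(z,u)$. For such polynomials the constraint $k=g-j\le 2(g-j)$ holds automatically for every $0\le j\le g$. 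Hence the invariant space is spanned by $z^ju^{g-j}$, $0\le j\le g$, and has dimension $g+1=n$.

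The main obstacle is pinning down this normalization of the weight, since a different shift would give a different dimension. To fix it, I will identify the canonical section of $\tilde Z_0+\tilde Z_\infty+\bm E+\bm D+\ol{\bm D}$ as $\CC^*_s$-invariant. I will then compare weights on the fibre $\tilde Z_1$, using the action \eqref{act-s} and the fact that $u$ and $z$ both have weight one. The goal is to see that the invariant sections of $\bm L$ restricted from $z^ju^{g-j}$, times the canonical section of $G$, are precisely the weight-zero ones.
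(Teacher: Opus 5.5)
Your proof is correct, but it takes a different route from the paper's. Both start from the $\CC^*_s$-equivariant isomorphism \eqref{M2} with $q=0$.

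The paper then inducts on $i$, using the restriction sequences $0\to\ms O_{\FF_2}((i-1)\Xi_0)\to\ms O_{\FF_2}(i\Xi_0)\to\ms O_{\Xi_0}(2i)\to0$ along an invariant $(+2)$-section. Kawamata--Viehweg kills $H^1$, taking invariants is exact, and the invariant part of $H^0(\ms O_{\Xi_0}(2i))$ is one-dimensional, so each step adds exactly one dimension.

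You instead write $H^0(\FF_2,\ms O(g\Xi_0))$ as the polynomials $\sum_j p_j(u)z^j$ with $\deg p_j\le 2(g-j)$ and read off the weight spaces directly. The weight-$d$ space is spanned by the $z^ju^{d-j}$ and has dimension $\min(d,2g-d)+1$, which is maximal, equal to $g+1$, exactly at $d=g$.

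What each route buys:
\begin{itemize}
\item Yours needs no vanishing theorem: for $q=0$, \eqref{M2} only uses $\tilde\Phi_*\ms O_{\tilde Z}=\ms O_{\FF_2}$ and the projection formula.
\item Yours makes visible that the answer depends on the choice of lift of the action, which the paper fixes silently in the phrase ``only constants are invariant''.
\item Yours immediately produces the basis $(z-\lmd u)^m u^{g-m}$, i.e.\ the divisors $mS_\lmd+(g-m)S_\infty$ that Proposition~\ref{p:gen} uses later.
\item The paper's argument is shorter and coordinate-free.
\end{itemize}

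Your plan for pinning down the normalization should be tightened in two places. First, weights cannot be compared on $\tilde Z_1$, because that fibre is not $\CC^*_s$-stable. Second, ``the canonical linearization of $\tilde p^*\ms O(2n)$'' is not well defined by itself: using $2n\tilde Z_0$ or $n\tilde Z_0+n\tilde Z_\infty$ as the invariant divisor gives lifts that differ by a character.

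The clean fix is to adopt the convention implicit in the paper (cf.\ Lemma~\ref{l:X}):
\begin{itemize}
\item the lift on $\bm L$ is the one making the sections that define \eqref{gen1} invariant;
\item the lift on $[G]$ makes its canonical section invariant, which is needed for \eqref{restG} to be equivariant.
\end{itemize}
For $0\le m<n$, the corresponding divisor in \eqref{gen1} equals $G+mS_\lmd+(g-m)S_\infty$. Hence the sections $(z-\lmd u)^m u^{g-m}$ of $\bm M$ are invariant. Since they all have total degree $g$, the invariant subspace is exactly the degree-$g$ weight space, of dimension $g+1$.
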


\proof

\medskip
We use the isomorphism \eqref{M2} for $q=0$, which is $\CC^*_s$-equivariant
since the projection $\tilde\Phi:\tilde Z\lras\FF_2$ is $\CC^*_s$-equivariant.
Still letting $\Xi_0$ be the $\CC^*_s$-invariant section, for any $i\ge 0$,  we have the restriction sequence
\begin{align*}
0 \lras \ms O_{\FF_2}((i-1)\Xi_0)
\lras \ms O_{\FF_2}(i\Xi_0)
\lras \ms O_{\Xi_0}(i\Xi_0)
\lras 0,
\end{align*}
which is exact and $\CC^*_s$-equivariant.
Using Kawamata-Vieweg as above, we readily have $H^1(\ms O_{\FF_2}((i-1)\Xi_0))=0$ for any $i\ge0$.
As $\Xi_0^2 = 2$, $\ms O_{\Xi_0}(i\Xi_0)\simeq\ms O_{\PP^1}(2i)$ and since $\CC^*_s$ acts on $\Xi_0\simeq\PP^1$ in the standard way, among elements of $H^0(\ms O_{\PP^1}(2i))\simeq\CC^{2i+1}$, only constants are $\CC^*_s$-invariant.
From this, using the cohomology exact sequence of the above sequence
for all $0\le i\le g$, we obtain that
$%\begin{align*}
\dim H^0(\FF_2,\ms O_{\FF_2}(g\Xi_{0}))
^{\CC^*_s} = g+1.$
%\end{align*}
From the isomorphism \eqref{M2} for $q=0$, this means the desired conclusion.
\proofend

\medskip
From the lemma and the exact sequence \eqref{restG1} which is 
$\CC^*_s$-equivariant, to prove $\dim |\bm L|^{\CC^*_s} = n+2$ as in
Proposition \ref{p:ZT}, it is enough to show: 

\begin{lemma}\label{l:LG}
$H^0(\ms O_G(\bm L))^{\CC^*_s}\simeq\CC^3$.
\end{lemma}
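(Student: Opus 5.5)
We plan to compute $H^0(\ms O_G(\bm L))^{\CC^*_s}$ by restricting to the five components of $G=\tilde Z_0+\tilde Z_\infty+\bm E+\bm D+\ol{\bm D}$ and gluing along their intersection curves. Since $\CC^*_s$ is reductive, taking invariants is exact. So it suffices to determine the invariant sections on each component and then impose agreement on the double curves.

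The first step is to compute the restriction of $\bm L$ to each component, using $\bm L\simeq \tilde p^*\ms O(2n)\otimes[n\bm E+\bm D+\ol{\bm D}]$ and the normal bundles read off from Figure \ref{f:twistor}.

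\emph{The divisors $\bm D\simeq\FF_2$ and $\ol{\bm D}\simeq\FF_2$.} Each is a section of $\tilde\Phi$, so $nF|_{\bm D}=\ms O_{\FF_2}(n\Xi_0)$. We also need the normal bundle $[\bm D]|_{\bm D}$. We expect it to be $-\Xi_0$ or a similar class, determined by checking the restriction to the fiber $\tilde Z_1=S$, where $D_1^2=-n$. The $\CC^*_s$-invariant sections of $\ms O_{\FF_2}(m\Xi_0)$ have dimension $m+1$, by the same argument as in Lemma \ref{l:Md}.

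\emph{The divisor $\bm E\simeq\PP^1\times\PP^1$.} Here $\tilde p$ is one of the rulings. $\CC^*_s$ acts on the $\tilde p$-direction by the standard action, and on the other ruling (the fiber direction of $\tilde\Phi$ over $\Xi_\infty$) with weights to be computed.

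\emph{The fibers $\tilde Z_0$ and $\tilde Z_\infty$.} These are pointwise-untwisted fibers on which $\CC^*_s$ acts through the residual action. The bundle $\tilde p^*\ms O(2n)$ restricts trivially, but its linearization carries a character: the fiber of $\ms O(1)$ at $u=0$ and at $u=\infty$ has opposite weights. So the invariant sections on $\tilde Z_0$ are the sections of $[n\bm E+\bm D+\ol{\bm D}]|_{\tilde Z_0}$ of a specific weight $\pm n$.

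Equivalently, one can use the residual exact sequences and peel off components one at a time. Order them as $\tilde Z_0,\tilde Z_\infty$ first and then the boundary divisors. For each step one needs vanishing of the relevant $H^1$ of invariant parts, so that the dimensions add up.

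We expect the count to come out as follows. The weight constraint should kill everything on $\tilde Z_0$ and $\tilde Z_\infty$ except sections forced by their restriction to the boundary, since the fibers over $0$ and $\infty$ are attracting/repelling fixed fibers. The boundary piece $\bm E+\bm D+\ol{\bm D}$ should then contribute exactly three invariant sections. Heuristically these three are the sections with divisors $\bm D+\ldots$, $\ol{\bm D}+\ldots$, and the one corresponding to the $w$-coordinate. In coordinates they are the extra three coordinates of $\PP^{n+2}$ beyond the $n$ coming from $\bm M$, matching $\ms T\subset\PP^{n+2}$ as a double cover of the cone ${\rm C}(\Lmd)\subset\PP^{n+1}$ over $\Lmd\subset\PP^n$.

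A useful cross-check is the restriction to the fiber $S=\tilde Z_1$. There $\bm L|_S=n\bm E|_S+D_1+\ol D_1$, and $h^0$ of this bundle should be $n+3$, which equals $n$ (from $\bm M$) plus $3$. Since $S$ meets every $\CC^*_s$-orbit closure generically, restriction to $S$ should be injective on invariant sections. This gives the upper bound $\dim\le 3$ directly, once one shows that invariant sections of $\bm L$ vanishing on $G$ all come from $\bm M$.

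The main obstacle will be the gluing, specifically the explicit weights and normal bundles along $\tilde Z_0\cap\bm E$, $\tilde Z_0\cap\bm D$, and the corresponding curves over $\infty$. These require the local model of the two blowups at the twistor line at infinity. I would do this first in explicit coordinates from the projective model \eqref{tw1}, extended with $w$, and only then assemble the dimension count.
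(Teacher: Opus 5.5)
\textbf{There is a genuine gap: the step that carries the content of the lemma is only asserted, so as written this is an outline, not a proof.}

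The boundary part is routine once the normal bundles are right. Note that $\bm D|_{\bm D}\simeq\ms O_{\FF_2}(-n\Xi_0)$, not $-\Xi_0$. Restricting to $S$ only fixes the $\Xi_0$-coefficient as $-n$; the fact that $\bm D\cap\bm E$ has self-intersection $0$ in $\bm E$ kills the fiber coefficient. Hence $\bm L|_{\bm D}\simeq\bm L|_{\ol{\bm D}}\simeq\ms O$ and $\bm L|_{\bm E}\simeq\ms O_{\bm E}(0,2)$, so $H^0(\bm L|_{\bm D\cup\bm E\cup\ol{\bm D}})\simeq\CC^3$ with trivial $\CC^*_s$-action.

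What remains is that each such section extends \emph{uniquely} to a $\CC^*_s$-invariant section of $\bm L|_{\tilde Z_0}\simeq nE_0+D+D'$, where $E_0=\bm E\cap\tilde Z_0$ (and conjugately over $\tilde Z_\infty$). You only assert this (``the weight constraint should kill everything \ldots''), and the attracting/repelling heuristic does not establish it. Indeed $\tilde Z_0$ is not pointwise fixed: its $\CC^*_s$-orbits run from the central sphere $C_n$ to the fixed curve $E_0$.

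The missing idea is the orbit-closure argument used in the paper:
\begin{itemize}
\item A section of $\bm L|_{E_0}\simeq\ms O(2)$ vanishes at two points, and an invariant divisor extending it must contain the orbit closures $\DDD,\DDD'$ through them.
\item These are linearly equivalent to $D+\sum_{i=1}^n iC_{n-i}$ and $D'+\sum_{i=1}^n iC_{n+i}$.
\item Using $E_0\sim\sum_{i=0}^{2n}C_i$, the residual class is $nC_n+\sum_{i=1}^n(n-i)(C_{n-i}+C_{n+i})$. This class contains only itself, because the chain of $(-2)$-curves is negative definite.
\end{itemize}
This gives existence and uniqueness of the extension at once.

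Your cross-check via $S=\tilde Z_1$ is a good idea, but it only yields half. Since $\CC^*_s\cdot S$ is dense, restriction to $S$ is injective on invariant sections. Moreover $h^0(S,\bm L|_S)=n+3$: restricting from $\PP(\ms O(n)^{\oplus 2}\oplus\ms O)$, the sections are $x$, $y$ and $w\,p(z)$ with $\deg p\le n$. Lemma \ref{l:Md} and \eqref{restG1} then give $\dim H^0(\ms O_G(\bm L))^{\CC^*_s}\le 3$. Your caveat there is vacuous, since $\bm M=\bm L-G$.

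The lower bound is absent. To finish along your route you need three invariant sections of $\bm L$ with independent restrictions to $G$. For example, take $X=\sum_i S_i^{+}+2\bm D$, $\ol X$ and $nS_\lmd+\bm D+\ol{\bm D}$:
\begin{itemize}
\item $X|_S$ is the divisor of $y$. Since $\tilde Z_0$ and $\tilde Z_\infty$ are irreducible, this gives $[X]\otimes\bm L^{-1}\simeq\tilde p^*\ms O(d)$.
\item $d=0$, because $X\cdot L=\bm L\cdot L=2n$ for a twistor line $L$.
\item The defining section is invariant, not merely semi-invariant, because $\bm L$ has weight $0$ at general points of $E_0$.
\item The three restrictions to $E_0$ have divisors $2\cdot 0$, $2\cdot\infty$ and $0+\infty$, where $0=D\cap E_0$ and $\infty=D'\cap E_0$, so they are independent.
\end{itemize}
Completed this way, your route would be a legitimate alternative to the paper's orbit-closure analysis on $\tilde Z_0$, trading it for an explicit computation on $S$. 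As submitted, neither that lower bound nor the paper's extension argument is supplied.
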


\proof
First, we determine the restriction of $\bm L$ to 
the components of the divisor $G$.
Using that $\bm D$ intersects the fiber $\tilde Z_0=\tilde p\inv(0)$ transversely along a $(-n)$-curve on $\tilde Z_0$
and also that $\bm D$ intersects $\bm E$ transversely along a $(0,1)$-curve on $\bm E\simeq\Xi_{\infty}\times\PP^1$, we obtain an isomorphism
\begin{align}\label{DD}
\bm D|_{\bm D}\simeq\ms O_{\FF_2}(-n\Xi_0).
\end{align}
Further, we have $F\simeq\tilde\Phi^*\ms O_{\FF_2}(\Xi_0)$.
From these, using reality also, we readily obtain 
\begin{align}\label{LD}
\bm L|_{\bm D} &\simeq \bm L|_{\ol {\bm D}} \simeq \ms O_{\FF_2}.
\end{align}
Next, under the isomorphism $\bm E\simeq\Xi_{\infty}\times\PP^1$, the intersections $\bm E\cap \bm D$ and
$\bm E\cap \ol{\bm D}$ are $(0,1)$-curves and each of these intersections is transversal.
From this, again using $F\simeq\tilde\Phi^*\ms O_{\FF_2}(\Xi_0)$ and that $\Xi_0\cap \Xi_{\infty}=\emptyset$,  we obtain 
\begin{align}\label{ED}
\bm L|_{\bm E}\simeq \ms O_{\bm E}(0,2).
\end{align}
Next, if $E_0$ denotes the restriction $\bm E|_{\tilde Z_0}$ which is a regular fiber of $\tilde\Phi|_{\tilde Z_0}:\tilde Z_0\lras\PP^1$ and putting $D=\bm D|_{\tilde Z_0}$ and $D'=\ol{\bm D}|_{\tilde Z_0}$ for simplicity (see Figure \ref{f:twistor}), then we obtain 
\begin{align}\label{LZ}
\bm L|_{\tilde Z_0}  \simeq n E_0+ D + D'.
\end{align}

From \eqref{LD} and \eqref{ED}, we have
$$
H^0\big(\bm L|_{\bm D\cup\bm E\cup \ol{\bm D}}\big)\simeq
H^0\big(\bm L|_{\bm E}\big)
\simeq\CC^3.
$$
Further, since $\CC^*_s$ acts on $\bm E\simeq\Xi_{\infty}\times\PP^1$
as the product of the standard action on the $\Xi_{\infty}$-factor and the trivial action on $\PP^1$-factor,
from \eqref{ED}, the $\CC^*_s$-action on $H^0(\bm L|_{\bm E})$ is trivial. Therefore, so is the one on $H^0(\bm L|_{\bm D\cup\bm E\cup \ol{\bm D}})\simeq\CC^3$.

We claim that any section of the line bundle $\bm L$
defined over the union $\bm D\cup\bm E\cup \ol{\bm D}$, which is always  
$\CC^*_s$-invariant as above, extends
to a $\CC^*_s$-invariant section defined over the two divisors $\tilde Z_0$ and $\tilde Z_{\infty}$ in a unique way.
This implies that $H^0(\bm L|_G)^{\CC^*_s}\simeq H^0(\bm L|_{\bm D\cup\bm E\cup \ol{\bm D}})$, which means $H^0(\bm L|_G)^{\CC^*_s}\simeq\CC^3$ as required.
To prove the claim, it suffices to show that any section of 
$\bm L|_{E_0}\simeq\ms O_{E_0}(2)$, where $E_0=\bm E\cap \tilde Z_0$ as above,
 extends uniquely to a $\CC^*_s$-invariant section of $\bm L|_{\tilde Z_0}$.
For this, we use the fact that the closure of generic $\CC^*_s$-orbit lying on $\tilde Z_0$ transversally intersects the central sphere $C_n$ and also another fixed curve $E_0$.
Using that $\sum_{i=1}^{2n-1}C_i$ is the exceptional curve of $A_{2n-1}$-singularity, we readily see that 
the closure of generic $\CC^*_s$-orbit closure lying on $\tilde Z_0$ is linearly equivalent to the curves
\begin{align}\label{DD'}
D+ \sum_{i=1}^{n} i\, C_{n-i}
\qandq
D'+ \sum_{i=1}^{n} i\, C_{n+i}.
\end{align}
Since $\bm L|_{E_0}\simeq\ms O_{E_0}(2)$,
generic section of $\bm L|_{E_0}$ vanishes at two points, so it determines
two $\CC^*_s$-orbit closures on $\tilde Z_0$.
Taking the two curves \eqref{DD'} as representatives of these orbit closures and noting the linear equivalence ${E_0}\sim \sum_{i=0}^{2n}C_i$ on $\tilde Z_0$, for the residual class, using \eqref{LZ}, we have
\begin{align*}
\bm L|_{\tilde Z_0} - 
\left(
D+ \sum_{i=1}^{n} i\, C_{n-i}
+
D'+ \sum_{i=1}^{n} i\, C_{n+i}
\right)
&\simeq 
n{E_0} - \sum_{i=1}^{n} i\, C_{n-i} -  \sum_{i=1}^{n} i\, C_{n+i}\\
&\simeq
nC_n + \sum_{i=1}^{n} (n-i) (C_{n-i}+C_{n+i}).
\end{align*}
Using that $C_i^2 = -2$ for all $0< i < 2n$,
we can see that the linear equivalent class of the last curve consists of the curve itself.
Therefore, generic section of $\bm L|_{E_0}$ uniquely extends to the whole $\tilde Z_0$ and the extension is of the form
\begin{align}\label{orbit}
\DDD + \DDD' + nC_n + \sum_{i=1}^{n} (n-i) (C_{n-i}+C_{n+i})
\end{align}
for the orbit closures $\DDD$ and $\DDD'$ that intersect $E_0$ at
the zeroes of the prescribed section of $\bm L|_{E_0}$.
Hence we have obtained the claim, and therefore $H^0(\bm L)^{\CC^*_s}\simeq\CC^{n+3}$.
\proofend

\medskip
From the proof, we readily obtain 

\begin{lemma}\label{l:LZ}
$\dim |\bm L|_{\tilde Z_0}|^{\CC^*_s} = 2$, and if $D=\bm D\cap \tilde Z_0$ and $D'= \ol {\bm D}\cap \tilde Z_0$ as before (see Figure \ref{f:twistor}), then we can take the following three curves as generators of $|\bm L|_{\tilde Z_0}|^{\CC^*_s}$:
\begin{align}\label{LZ2}
2D + \sum_{i=0}^{2n-1}(2n-i)C_i,
\quad
2D' + \sum_{i=1}^{2n}iC_i,
\qandq
D+ D' + n\sum_{i=0}^{2n}C_i.
\end{align}
\end{lemma}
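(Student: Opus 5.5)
The plan is to read off Lemma \ref{l:LZ} from the proof of Lemma \ref{l:LG}. There we showed that every section of $\bm L|_{E_0}\simeq\ms O_{E_0}(2)$ extends uniquely to a $\CC^*_s$-invariant section of $\bm L|_{\tilde Z_0}$. Conversely, a $\CC^*_s$-invariant section of $\bm L|_{\tilde Z_0}$ that vanishes on $E_0$ must vanish identically. Indeed, generic $\CC^*_s$-orbit closures on $\tilde Z_0$ meet $E_0$, and an invariant section vanishing at one point of an orbit vanishes on the whole orbit. So the restriction map
\[
H^0\big(\bm L|_{\tilde Z_0}\big)^{\CC^*_s}\lras H^0\big(\bm L|_{E_0}\big)\simeq\CC^3
\]
is an isomorphism. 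This gives $\dim |\bm L|_{\tilde Z_0}|^{\CC^*_s}=2$.

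For the generators, I will use the explicit form \eqref{orbit} of the extended divisors, together with the linear equivalences \eqref{DD'}. The point is to identify three divisors in $|\bm L|_{\tilde Z_0}|^{\CC^*_s}$ whose restrictions to $E_0$ give three independent sections of $\ms O_{E_0}(2)$.

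The fixed points of $\CC^*_s$ on $E_0\simeq\PP^1$ are $E_0\cap D$ and $E_0\cap D'$. The natural candidates are therefore:
\begin{itemize}
\item the section vanishing doubly at $E_0\cap D$, where both orbit closures $\DDD,\DDD'$ degenerate to the orbit-closure representative $D+\sum_{i=1}^n iC_{n-i}$ of \eqref{DD'};
\item the section vanishing doubly at $E_0\cap D'$;
\item the section vanishing once at each of these two points.
\end{itemize}
Any section of $\ms O_{\PP^1}(2)$ is a combination of $X^2, Y^2, XY$, so these three restrictions span $H^0(\bm L|_{E_0})$.

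For the first, substitute $\DDD=\DDD'=D+\sum_{i=1}^n iC_{n-i}$ into \eqref{orbit}. The coefficient of $C_{n-i}$ becomes $2i+(n-i)=n+i=2n-(n-i)$, so with $j=n-i$ it is $2n-j$ for $0\le j\le n-1$. The coefficient of $C_n$ is $n=2n-n$, and the coefficient of $C_{n+i}$ is $n-i=2n-(n+i)$. Reading off, this gives $2D+\sum_{j=0}^{2n}(2n-j)C_j$, where the $C_{2n}$-term has coefficient zero. That is the first curve in \eqref{LZ2}. The second curve follows symmetrically, via the reality or the symmetry $i\mapsto 2n-i$.

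For the third, take $\DDD=D+\sum iC_{n-i}$ and $\DDD'=D'+\sum iC_{n+i}$. Each $C_j$ then gets coefficient $i+(n-i)=n$, giving $D+D'+n\sum_{i=0}^{2n}C_i$.

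The main obstacle is justifying that these degenerate divisors really are the unique extensions of the corresponding special sections. The worry is that the limit of orbit closures as the zero point tends to $E_0\cap D$ is precisely the chain $D+\sum iC_{n-i}$ with those multiplicities. I would handle this directly. Each of the three listed divisors is $T^2_{\CC}$-invariant, since all its components are invariant. Each is linearly equivalent to $\bm L|_{\tilde Z_0}$, by \eqref{LZ} together with $E_0\sim\sum_{i=0}^{2n} C_i$ and the equivalences \eqref{DD'}; this is the computation already done in the proof of Lemma \ref{l:LG}. Finally, each meets $E_0$ only in the asserted points with the asserted multiplicities, since only $D$, $D'$, $C_0$ and $C_{2n}$ meet $E_0$. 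Invariance of a divisor implies invariance of its section up to character, and the restriction to $E_0$ being invariant forces the character to be trivial. Independence then follows from the isomorphism established above.
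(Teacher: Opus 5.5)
Your route is the paper's: its proof takes the sections of $\bm L|_{E_0}\simeq\ms O(2)$ with zero divisors $2\cdot 0$, $2\cdot\infty$, $0+\infty$ (where $0=D\cap E_0$, $\infty=D'\cap E_0$) and reads the three curves off \eqref{orbit}. Your coefficient bookkeeping is correct. Your direct check (linear equivalence via \eqref{LZ} and \eqref{DD'}, then a trivial character via restriction to $E_0$) is a sensible way to avoid arguing about degenerating orbit closures.

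Several supporting statements are wrong, however, and two of them matter.

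\emph{The action on $E_0$.} $\CC^*_s$ fixes $E_0$ pointwise. It acts trivially on the $\PP^1$-factor of $\bm E\simeq\Xi_\infty\times\PP^1$: by \eqref{act-s}, $x/z^n$ and $y/z^n$ are invariant, and they restrict to a coordinate on $E_0$. The proof of Lemma \ref{l:LG} indeed calls $E_0$ a fixed curve. The points $E_0\cap D$ and $E_0\cap D'$ are the fixed points of $\CC^*_t$, not of $\CC^*_s$. This is not cosmetic. Your closing character argument needs $\CC^*_s$ to act trivially on $H^0(\bm L|_{E_0})$. If $\CC^*_s$ acted on $E_0$ with only the two fixed points you name, the sections with zero divisors $2\cdot 0$, $0+\infty$, $2\cdot\infty$ would carry three distinct characters. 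Then at most one of your three divisors could be cut out by an invariant section.

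\emph{Which curves meet $E_0$.} $C_0$ and $C_{2n}$ do not meet $E_0$. The whole chain $\sum_{i=0}^{2n}C_i$ is the fibre of $\tilde\Phi|_{\tilde Z_0}$ over $z=0$, while $E_0$ is the fibre over $z=\infty$; this is why $E_0\sim\sum_{i=0}^{2n} C_i$. As you phrased it, the first divisor would meet $E_0$ with total multiplicity at least $2n+2>2$. The correct statement is that only $D$ and $D'$ meet $E_0$, each once and transversally.

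\emph{Injectivity.} Your argument is invalid as written. For a generic orbit closure $\DDD$, the point $\DDD\cap E_0$ is a fixed point in the closure, not a point of the orbit. So invariance does not propagate a zero there along $\DDD$. The same reasoning applied at the other end $\DDD\cap C_n$ would show that every invariant section vanishes. Yet all three curves in \eqref{LZ2} contain $C_n$ with multiplicity $n$ and are nonzero.

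What distinguishes $E_0$ from $C_n$ is a weight count:
\begin{itemize}
\item From $\DDD\sim D+\sum_{i=1}^n iC_{n-i}$ and \eqref{LZ} you get $E_0\cdot\DDD=1$ and $D\cdot\DDD=D'\cdot\DDD=0$, so $\bm L\cdot\DDD=n$.
\item $H^0(\bm L|_{\bm E})$ is a trivial $\CC^*_s$-module and $\bm L|_{\bm E}$ is globally generated. Hence $\CC^*_s$ acts trivially on the fibres of $\bm L$ over the fixed curve $E_0$.
\item On $\DDD\simeq\PP^1$, where $\CC^*_s$ acts with weight one, the fibre weight at $\DDD\cap C_n$ is therefore $\pm n\neq 0$.
\item So the invariant sections of $\bm L|_{\DDD}\simeq\ms O(n)$ form a line, spanned by the section with divisor $n(\DDD\cap C_n)$, which does not vanish at $\DDD\cap E_0$.
\end{itemize}
Thus an invariant section vanishing on $E_0$ vanishes on every generic $\DDD$, hence identically. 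Alternatively, simply quote the uniqueness asserted in the proof of Lemma \ref{l:LG}. With these corrections your argument is complete.
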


\proof
Write $0$ (resp.\,$\infty$) for the intersection point $D\cap {E_0}$ (resp.\,$D'\cap {E_0}$) and take $2\cdot 0, 2\cdot \infty$
and $0 + \infty$ as the zero divisors of generators of $H^0(\bm L|_{E_0})\simeq
H^0(\ms O_{\PP^1}(2)).$ 
From \eqref{orbit}, these give the three divisors
\eqref{LZ2} as generators of the linear system $|\bm L|_{\tilde Z_0}|^{\CC^*_s}$.
\proofend

\medskip
For the proof of Proposition \ref{p:ZT}, we next prove that 
the image of 
the meromorphic map $\tilde{\Psi}:\tilde Z\lras \PP^{n+2}$ induced by the linear system $|\bm L|^{\CC^*_s}$ is isomorphic to the minitwistor space $\ms T$.
For this purpose, we need explicit generators of the
linear system $|\bm L|^{\CC^*_s}\simeq\PP^{n+2}$.
We prepare some notation to obtain them.
First, for any $\lmd\in\CC$, we define a curve $\ms C_{\lmd}$ on the surface $\FF_2$ by
\begin{align}\label{Clmd}
\ms C_{\lmd}:=\big\{(z,u)\in\FF_2\set z=\lmd u\big\}.
\end{align}
All these are $(+2)$-sections, and 
from the $\CC^*_s$-action \eqref{act-s}, they are $\CC^*_s$-invariant.
When $\lmd=\infty$, letting $\Xi_{\infty}$ be the $(-2)$-section $\{z=\infty\}$ on $\FF_2$ as before, we define
\begin{align}\label{Cinf}
\ms C_{\infty} = \Xi_{\infty}+\{u=0\}+\{u=\infty\}.
\end{align}
This is also $\CC^*_s$-invariant and linearly equivalent to the curves $\ms C_{\lmd}$.
The set $\{\ms C_{\lmd}\set \lmd\in\CC\cup\{\infty\}\}$ is a pencil on $\FF_2$ and its base locus consists of two points
$(z,u) = (0,0)$ and $(z,u)=(0,\infty)$.
(See Figure \ref{f:twistor}.)

Pulling back this pencil by $\tilde\Phi:\tilde Z\lras\FF_2$, we obtain a pencil on $\tilde Z$ consisting of $T^2_{\CC}$-invariant divisors.
%Since all members of this pencil on $\tilde Z$ is $T^2_{\CC}$-invariant, 
We call it the {\em $T^2$-invariant pencil} on $\tilde Z$.
Since the line bundle $F$ on $\tilde Z$ can be written $[\tilde\Phi\inv(\ms C_{\lmd})]$, the $T^2$-invariant pencil is a subsystem of $|F|$.
For any $\lmd\in\CC\cup\{\infty\}$, we set 
\begin{align}\label{Slmd}
S_{\lmd}:=\tilde\Phi\inv(\ms C_{\lmd}).
\end{align}
Then $S_{\infty}=\tilde Z_0 + \tilde Z_{\infty} + \bm E.$
Of course, the $T^2$-invariant pencil on $\tilde Z$ is exactly
$\{S_{\lmd}\set \lmd\in\CC\cup\{\infty\}\}$.
All $S_{\lmd}$ are (not necessarily irreducible) toric surfaces by the $T^2_{\CC}$-action.

Recall that the projective model of the twistor space $Z$ is defined by the equation 
\begin{align}\label{tw3}
xy = (z-a_1u)(z-a_2u)\dots(z-a_{2n}u).
\end{align}
For each index $1\le i\le 2n$, we put 
$$
\ms C^i:=\big\{(z,u)\in\FF_2\set z=a_iu\big\}.
$$
(See Figure \ref{f:twistor}.)
These are distinguished members of the above pencil on $\FF_2$ in the sense that $\tilde\Phi\inv(\ms C^i)$ is reducible, consisting of two irreducible components defined by
\begin{align}\label{Spm}
\{y = z-a_iu = 0\}
\qandq
\{x = z-a_iu = 0\}.
\end{align}
In the following, we denote $S^+_i$ and $S^-_i$ for the divisors in $\tilde Z$ that correspond to these divisors in the projective model respectively.
This means $\ell_i = S_i^+\cap \tilde Z_1$ and $\ol\ell_i = S_i^-\cap \tilde Z_1$.
Further, from the distinction of the two boundary divisors
$\bm D$ and $\ol{\bm D}$, 
$\bm D\cap S^+_i$ and $\ol{\bm D}\cap S^-_i$ 
are non-empty so that $\bm D\cap S^-_i=\ol{\bm D}\cap S^+_i=\emptyset$.
Of course,  $\tilde\Phi\inv(\ms C^i) = S^+_i+S^-_i$, and $S^+_i$ and $S^-_i$ are irreducible toric surface by the $T^2_{\CC}$-action.
Further, from $F\cdot L =2$ for a twistor line $L\subset Z$, $S^+_i\cdot L = S^-_i\cdot L = 1$ for any index $i$.
We will confirm that the transformation of the curve
\begin{align}\label{Li0}
S_i^+\cap S_i^-=\big\{ x=y=z-a_iu = 0\big\},
\quad 1\le i\le 2n
\end{align}
into $Z$ is the twistor line through the point $C_{i-1}\cap C_{i}$
(Proposition \ref{p:tl}).

From the equation \eqref{tw3}, for any index $1\le i\le 2n$, the curve \eqref{Li0} passes through the two singularities of the projective model of $Z$. 
From \cite[p.\,262]{Hi21}, the small resolution of the singularity lying over the point $u=0$ is explicitly given as the graph of the rational map from the projective model to $({\PP^1})^{2n-1}$ ($(2n-1)$-fold product of $\PP^1)$ whose $i$-th factor is given by
\begin{align}\label{hc}
\Big(
x:\prod_{j=1}^i (z-a_ju)
\Big).
\end{align}
This means that if we denote $0:=(1:0)\in\PP^1$ and $\infty:=(0:1)\in\PP^1$, then the exceptional curve of the resolution is given by, in $({\PP^1})^{2n-1}$,  
\begin{align}\label{ec}
\underbrace{\infty \times \dots \times \infty}_{i-1} \times \underset{\substack{
\text{$i$-th}
}}{\mathbb{P}^1} \times \underbrace{0 \times \dots \times 0}_{2n-1-i},
\end{align}
and this is exactly the component $C_i$ of the chain.
Furthermore, under the above distinction between $S^+_i$ and $S^-_i$, 
among the components $C_0,C_1,\dots, C_{2n}$, 
\begin{align}\label{incl}
\setlength{\arraycolsep}{-10pt} 
\left\{
\begin{array}{@{}l@{}}
\text{$S^+_i$ includes $C_0,C_1,\dots, C_{i-1}$,} \\
\text{$S^-_i$ includes $C_{i},C_{i+1},\dots,C_{2n}$.} \\
\end{array}
\right.
\end{align}

With these preparations, we show
\begin{lemma}\label{l:X}
Let $\lmd$ be an arbitrary real number.
The following three divisors on $\tilde Z$ belong to 
the linear system $|\bm L|^{\CC^*_s}$ and 
restrict to 
the three curves on $\tilde Z_0$ given in Lemma \ref{l:LZ} respectively:
\begin{align}\label{gen2}
X:=\sum_{i=1}^{2n} S_i^+ + 2 \bm D,\quad
\ol X:=\sum_{i=1}^{2n} S_i^- + 2 \ol{\bm D},\quad
nS_{\lmd} + \bm D + \ol{\bm D}.
\end{align}
\end{lemma}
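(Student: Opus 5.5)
The plan is to prove three things for each divisor in \eqref{gen2}: it lies in $|\bm L|$, its defining section is $\CC^*_s$-invariant, and its restriction to $\tilde Z_0$ is the corresponding curve in \eqref{LZ2}.

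\emph{Invariance.} All components in \eqref{gen2} ($S_i^\pm$, $S_\lmd$, $\bm D$, $\ol{\bm D}$) are $T^2_{\CC}$-invariant. Hence each defining section of $\bm L$ is a $\CC^*_s$-eigensection, and it remains to see that the weight is zero. For this I restrict to $\bm E$. By the proof of Lemma~\ref{l:LG}, $\CC^*_s$ acts trivially on $H^0(\bm L|_{\bm E})$. None of the divisors in \eqref{gen2} contains $\bm E$, so their sections restrict to nonzero elements there, and the weight must be $0$.

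\emph{Linear equivalence.} The third divisor is immediate: $S_\lmd=\tilde\Phi^{-1}(\ms C_\lmd)\in|F|$, so $nS_\lmd+\bm D+\ol{\bm D}\in|nF+\bm D+\ol{\bm D}|=|\bm L|$. For $X$ I need
\[
\sum_{i=1}^{2n} S_i^+ \sim nF+\ol{\bm D}-\bm D .
\]
The case of $\ol X$ then follows by applying $\sigma$. The first sum-relation comes from $\sum_i(S_i^++S_i^-)=\sum_i\tilde\Phi^{*}\ms C^i\sim 2nF$, so the content lies in splitting this symmetrically.

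To get the splitting I use the fiber coordinate $y$ of $\ms O(2n)$ on the projective model. It is a section of $p^*\ms O_{\PP^1}(2n)$ on $Z$ whose zero divisor in $Z$ is $\sum_i S_i^+$, because $\{y=0\}$ splits as the components $\{y=z-a_iu=0\}$, each reduced. On $\tilde Z$ this becomes a meromorphic section of $\tilde p^*\ms O(2n)$ with divisor
\[
\sum_i S_i^+ + \alpha\bm E+\beta\bm D+\gamma\ol{\bm D}.
\]
I determine $\alpha,\beta,\gamma$ by restricting to the fiber $\tilde Z_1=S$. There $y$ is the affine coordinate on the $\PP(\ms O(n)\oplus\ms O(n)\oplus\ms O)$-bundle. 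In the homogeneous coordinates $(x:y:w)$ it vanishes along $D_1=\{y=w=0\}$ to order $1$ relative to $w$, has a pole of order $n$ along the fiber at $z=\infty$ (since $y$ has weight $n$ in $z$), and the relevant orders along $\ol D_1=\{x=w=0\}$ come from $y/w$ having a pole there.

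This local computation, or equivalently intersecting both sides with suitable curves such as a fiber conic, $D_1$, $\ol D_1$ and $\ell_i$, and using that $\mathrm{Pic}$ of $\tilde Z$ is detected by them together with the fiber class, should give
\[
\sum_i S_i^+\sim \tilde p^*\ms O(2n)-n\bm E \cdot(-1)+\ol{\bm D}-\bm D,
\]
that is, $\sum_i S_i^+ + 2\bm D\sim\bm L$. Getting these boundary coefficients right, especially the sign asymmetry between $\bm D$ and $\ol{\bm D}$, is the step I expect to be the main obstacle. As a check I would verify it numerically on $S$ against Figure~\ref{f:mt}: each $\ell_i$ meets $D_1$ but not $\ol D_1$.

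\emph{Restrictions to $\tilde Z_0$.} For $S_\lmd$, the curve $\ms C_\lmd$ meets $\{u=0\}$ transversally at $(z,u)=(0,0)$. Its preimage in $\tilde Z_0$ is the reduced chain $\sum_{i=0}^{2n}C_i$, and this yields $D+D'+n\sum C_i$.

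For $X$, by \eqref{incl} the restriction $S_i^+|_{\tilde Z_0}$ is $C_0+\dots+C_{i-1}$. These appear with multiplicity one, which I would check by the toric (monomial) local description in the charts \eqref{hc}. Summing over $i$, the coefficient of $C_j$ is $\#\{i: i-1\ge j\}=2n-j$ for $0\le j\le 2n-1$. Adding $2D$ gives the first curve in \eqref{LZ2}. Applying $\sigma$, or running the symmetric count with \eqref{incl}, gives the restriction of $\ol X$.
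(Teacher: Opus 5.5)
Your proposal is correct. For the membership $X\in|\bm L|$ it takes a different route from the paper.

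\textbf{The paper's route.} The paper argues indirectly. It computes $X|_{\tilde Z_0}=2D+\sum_{i=0}^{2n-1}(2n-i)C_i$ and recognizes this as a generator in Lemma~\ref{l:LZ}. It then asserts $X\sim\bm L+\tilde p^*\ms O_{\PP^1}(d)$ and fixes $d=0$ from $X\cdot L=\bm L\cdot L=2n$ for a twistor line $L$. Invariance is taken as immediate from the invariance of $X$.

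\textbf{Your route.} You obtain the linear equivalence directly from the fiber coordinate $y$, viewed as a meromorphic section of $\tilde p^*\ms O(2n)$. The coefficients you hedged on do come out as you guessed. On $S=\tilde Z_1$ one has $y/w=f(z)\,(w/x)$ near $D_1$ and $y/w=(w/y)^{-1}$ near $\ol D_1$. So $y$ has a simple zero along $D_1$, a simple pole along $\ol D_1$, and a pole of order $n$ along $\bm E\cap S$.

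Equivalently, $\mathrm{div}(y|_S)=\sum_i\ell_i+\alpha(\bm E\cap S)+\beta D_1+\gamma\ol D_1$ is numerically trivial on $S$. Intersecting with $\ell_i$, with a fiber conic and with $D_1$ forces $\beta=1$, $\gamma=-1$ and $\alpha=-n$. This needs no claim about $\mathrm{Pic}(\tilde Z)$ being detected by curves.

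Hence $\mathrm{div}(y)=\sum_iS_i^+-n\bm E+\bm D-\ol{\bm D}$. Consequently $X$ is the divisor of the explicit section $y\,\epsilon^n\delta\ol\delta\in H^0(\bm L)$, where $\epsilon$ defines $\bm E$.

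\textbf{The key point.} The step that replaces the paper's twistor-line computation of $d$ is this: $\tilde Z_0$ and $\tilde Z_\infty$ meet $Z$, where $y$ is holomorphic with divisor exactly $\sum_iS_i^+$. So the fibers cannot enter $\mathrm{div}(y)$, and only $\bm E,\bm D,\ol{\bm D}$ need to be examined.

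\textbf{Comparison.} Your approach is independent of Lemma~\ref{l:LZ} and produces the section explicitly. It also avoids a step the paper leaves implicit, namely that agreement of restrictions to the single fiber $\tilde Z_0$ forces $X-\bm L$ to be pulled back from $\PP^1$. Your weight-zero argument, restricting to $\bm E$ where $X|_{\bm E}=2(\bm D\cap\bm E)\neq0$, likewise fills in what the paper states in one line. The paper's route, in exchange, reuses the restriction computation it needs anyway for the second assertion.

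\textbf{A simplification.} Multiplicity one of $S_i^+|_{\tilde Z_0}$ needs no chart computation. The divisor $(S_i^++S_i^-)|_{\tilde Z_0}$ is the pullback of the transversal point $\ms C^i\cap\{u=0\}$, i.e.\ the reduced chain $\sum_{j=0}^{2n}C_j$, and \eqref{incl} splits it.
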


\proof
From the above includedness \eqref{incl}, noting that both $S^+_i\cap \tilde Z_0$ and $S^-_i\cap \tilde Z_0$ are included in the chain $\cup_{i=0}^{2n}C_i$, 
we readily have $\sum_{i=1}^{2n} S_i^+|_{\tilde Z_0} = \sum_{i=0}^{2n-1}(2n-i)C_i$.
As $\bm D|_{\tilde Z_0} = D$, this implies $X|_{\tilde Z_0} =
2D+ \sum_{i=0}^{2n-1}(2n-i)C_i$.
Since this is exactly a generator of $|\bm L|_{\tilde Z_0}|^{\CC^*_s}$ of Lemma \ref{l:LZ}, this means a linear equivalence 
$X\sim\bm L + \tilde p^*\ms O_{\PP^1}(d)$ for some $d\in\ZZ$.
But both $X\cdot L$ and $\bm L\cdot L$ are $2n$
as $S_i^+\cdot L = 1$, $F\cdot L = 2$,
$\bm D\cdot L = \ol{\bm D}\cdot L = 0$
and $\bm L\cdot L = 2n$ (see \eqref{LL}), it follows that $d=0$. 
So $X\in |\bm L|$. 
As $X$ is $\CC^*_s$-invariant, this implies $X\in |\bm L|^{\CC^*_s}$.
The property   $\ol X\in |\bm L|^{\CC^*_s}$ follows from 
this and the real structure.
The remaining property
$nS_{\lmd} + \bm D + \ol{\bm D}\in|\bm L|^{\CC^*_s}$ is obvious.
\proofend

\medskip
From Lemmas \ref{l:LZ} and \ref{l:X}, we readily obtain the following

\begin{proposition}\label{p:bs}
The base locus of the linear system $|\bm L|^{\CC^*_s}$ on $\tilde Z$ is exactly the exceptional curves of the simultaneous resolution of the projective model \eqref{tw1} of the twistor space $Z$.
\end{proposition}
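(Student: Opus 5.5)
The plan is to prove the two inclusions separately: every member of $|\bm L|^{\CC^*_s}$ contains the chains $C_1+\dots+C_{2n-1}$ and $\ol C_1+\dots+\ol C_{2n-1}$, and no other point of $\tilde Z$ lies on all members. Reality reduces everything to the fiber $\tilde Z_0$ and the open part $\tilde p\inv(\CC^*)$, since $\sigma$ exchanges $\tilde Z_0$ and $\tilde Z_\infty$ and preserves $|\bm L|^{\CC^*_s}$.

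\emph{Containment.} Take a member $Y\in|\bm L|^{\CC^*_s}$ and use the equivariant sequence \eqref{restG1}. If the section defining $Y$ comes from $H^0(\bm M)$, then $Y\supset G\supset\tilde Z_0$, and $Y$ contains the chain. Otherwise $Y|_{\tilde Z_0}$ lies in $|\bm L|_{\tilde Z_0}|^{\CC^*_s}$. By Lemma~\ref{l:LZ}, this system is spanned by the three curves \eqref{LZ2}, so every member contains their common fixed part. Reading off coefficients, $C_i$ occurs with positive coefficient in the first curve for $0\le i\le 2n-1$, in the second for $1\le i\le 2n$, and in the third for all $i$. The common fixed part therefore contains $C_1+\dots+C_{2n-1}$, and the conjugate chain is handled by reality.

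\emph{No further base points on $\tilde Z_0$.} By Lemma~\ref{l:X}, the divisors $X$, $\ol X$ and $nS_\lmd+\bm D+\ol{\bm D}$ restrict to the three curves \eqref{LZ2}, so it suffices to intersect those curves. I will use that $D\cap D'=\emptyset$, that $D$ meets the chain only along $C_0$, and that $D'$ meets it only along $C_{2n}$; this holds because $D$ (resp.\ $D'$) is the boundary $(-n)$-curve meeting the $\CC^*_s$-orbit closures through $C_0$ (resp.\ $C_{2n}$), compare \eqref{DD'}. Intersecting the supports of the first two curves then gives $\bigl(\bigcup_{i=1}^{2n-1}C_i\bigr)\cup(C_0\cap C_1)\cup(C_{2n-1}\cap C_{2n})$, which is contained in $C_1\cup\dots\cup C_{2n-1}$. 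The pieces $D$ and $C_0$ drop out because the second curve avoids them, and symmetrically $D'$ and $C_{2n}$ drop out.

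\emph{No base points off $\tilde Z_0\cup\tilde Z_\infty$.} Away from $u=0,\infty$ the three divisors \eqref{gen2} suffice. The divisor $X$ contains $\bm D$, while $\ol X=\sum S_i^-+2\ol{\bm D}$ is disjoint from $\bm D$, because $\bm D\cap S_i^-=\emptyset$ and $\bm D\cap\ol{\bm D}=\emptyset$. Likewise $\ol{\bm D}$ is avoided by $X$. The remaining common zeros of $X$ and $\ol X$ lie in $\bigcup_{i,j}S_i^+\cap S_j^-\subset\tilde\Phi\inv\bigl(\bigcup_{i,j}(\ms C^i\cup\ms C^j)\bigr)$. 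Choose $\lmd$ real, different from all $a_i$. Then $S_\lmd=\tilde\Phi\inv(\ms C_\lmd)$, and $\ms C_\lmd$ meets $\ms C^i$ only at the two base points $(z,u)=(0,0),(0,\infty)$ of the pencil, which lie over $u=0,\infty$. Also $\ms C_\lmd\cap\Xi_\infty=\emptyset$, so points of $\bm E$ are excluded as well. Hence, off $\tilde Z_0\cup\tilde Z_\infty$, the three divisors have no common point.

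The main obstacle is the bookkeeping on $\tilde Z_0$, namely checking that $D$, $D'$, $C_0$, $C_{2n}$ and their intersection points with neighbouring curves are genuinely excluded. This depends on the precise incidences of the boundary curves with the chain. I would verify these incidences from the explicit coordinates \eqref{hc} and \eqref{ec} together with the inclusions \eqref{incl}.
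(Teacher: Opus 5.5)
Your proposal is correct. On $\tilde Z_0$ it follows the paper's argument: intersect the three curves of Lemma~\ref{l:LZ}, lift them to members of $|\bm L|^{\CC^*_s}$ via Lemma~\ref{l:X}, and transport the result to $\tilde Z_\infty$ by $\sigma$. You assign the roles of the two lemmas more cleanly than the paper does. That every invariant section on $\tilde Z_0$ lies in the span of the three curves gives containment of the chain. Lemma~\ref{l:X} rules out further base points on $\tilde Z_0$.

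The genuine difference is the region $\tilde p\inv(\CC^*)$. The paper handles it in one sentence: the base locus is $\CC^*_s$-invariant and real, hence lies in $\tilde Z_0\cup\tilde Z_\infty$. As written, that inference needs more input. An invariant set meeting $\tilde Z_1$ simply sweeps through every $\tilde Z_u$ with $u\in\CC^*$; for instance, $\bm D\cup\ol{\bm D}$ is invariant and real. Your explicit check fills exactly this gap, so on this point your route is the more complete one. The check runs as follows: $\ol X$ misses $\bm D$ and $X$ misses $\ol{\bm D}$, so $X\cap\ol X=\bigcup_{i,j}S_i^+\cap S_j^-$. Then $S_\lmd$ with $\lmd\ne a_i$ meets $\tilde\Phi\inv(\ms C^i)$ only over the two base points of the pencil.

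Three minor remarks:
\begin{itemize}
\item In the containment step, a section vanishing on $\tilde Z_0$ but not on $G$ causes no trouble, since then $Y\supset\tilde Z_0$ anyway.
\item The incidences you need on $\tilde Z_0$ follow at once. Since $\bm D\cap Z=\emptyset$, $D$ misses $C_1,\dots,C_{2n-1}$. Since $\bm D\cap S_1^-=\emptyset$ and $C_{2n}\subset S_1^-$ by \eqref{incl}, $D$ also misses $C_{2n}$. Finally $D\cap D'=\emptyset$ because $\bm D\cap\ol{\bm D}=\emptyset$. The statements for $D'$ follow symmetrically.
\item Your mention of $\bm E$ in the last step is redundant, since $\ms C^i\cap\Xi_\infty=\emptyset$ already keeps $S_i^\pm$ away from $\bm E$.
\end{itemize}
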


\proof Since the linear system in question is $\CC^*_s$-invariant and real, so is its base locus.
Hence, the base locus of $|\bm L|^{\CC^*_s}$ is contained in $\tilde Z_0\cup\tilde Z_{\infty}$.
It is easy to see that the intersection of the three curves in Lemma \ref{l:LZ} is exactly the chain of the exceptional curves of the simultaneous resolution.
From these, the base locus of $|\bm L|^{\CC^*_s}$ is included in the union of the chains of the exceptional curves.
But the inclusion has to be an equality because Lemma \ref{l:X} means that the restriction homomorphism $H^0(\tilde Z,\bm L)^{\CC^*_s}
\lras H^0(\tilde Z_0,\bm L)^{\CC^*_s}$ is surjective.
\proofend 

\medskip
Next, we give explicit generators of 
the linear system $|\bm L|^{\CC^*_s}$.
Recall that $T^2_{\CC}$ denotes $\CC^*_s\times\CC^*_t$.
\begin{proposition}\label{p:gen}
(i) The linear system $|\bm L|^{T^2_{\CC}}$ is $n$-dimensional and generated by the following $(n+1)$ real divisors
\begin{align}\label{gen1}
    mS_{\lmd}
    + (n-m)S_{\infty} + \bm D + \ol {\bm D},\quad 0\le m\le n,
\end{align}
    where $\lmd$ is any fixed real number.
 (ii)    The linear system $|\bm L|^{\CC^*_s}$ is $(n+2)$-dimensional and generated by the $(n+1)$ divisors \eqref{gen1} and the two divisors $X$ and $\ol X$ of Lemma \ref{l:X}.
\end{proposition}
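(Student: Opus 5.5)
Both parts will come from the equivariant short exact sequence \eqref{restG1}, refined by the weights of the torus action. For (i) I will use the same splitting. Sections of $\bm M$ correspond, via \eqref{M2}, to sections of $\ms O_{\FF_2}(g\Xi_0)$. These are pulled back by $\tilde\Phi$, which is $T^2_{\CC}$-equivariant with $\CC^*_t$ acting trivially on $\FF_2$. Hence every $\CC^*_s$-invariant section of $\bm M$ is automatically $T^2_{\CC}$-invariant, and Lemma \ref{l:Md} gives an $n$-dimensional space.

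Concretely, the invariant sections of $\ms O_{\FF_2}(g\Xi_0)$ are the products $\ms C_\lmd^{m}\,\ms C_\infty^{g-m}$ for $0\le m\le g$. This holds because the pencil $\{\ms C_\lmd\}$ is $\CC^*_s$-invariant, so the symmetric powers of its two generators span an $(g+1)$-dimensional space of invariant sections, which by the dimension count is everything. Multiplying by the defining section of $G=S_\infty+\bm D+\ol{\bm D}$ gives the divisors
\[
mS_\lmd+(g-m)S_\infty+S_\infty+\bm D+\ol{\bm D}, \qquad 0\le m\le n-1.
\]
These are exactly the members of \eqref{gen1} with $m\le n-1$.

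On the $G$-side, $H^0(\bm L|_G)^{\CC^*_s}\simeq H^0(\bm L|_{\bm E})\simeq\CC^3$, by the proof of Lemma \ref{l:LG}. I will determine the $\CC^*_t$-weights on this $\CC^3$. By \eqref{ED}, $\bm L|_{\bm E}\simeq\ms O(0,2)$ is pulled back from the $\PP^1$-factor of $\bm E$. On that factor $\CC^*_t$ acts with fixed points $\bm E\cap\bm D$ and $\bm E\cap\ol{\bm D}$, and by \eqref{act-t} it acts nontrivially, as it scales $x/y$. The three weights are therefore distinct, say $-1,0,1$ after normalizing the linearization. So the $T^2_{\CC}$-invariant part of $H^0(\bm L|_G)$ is one-dimensional, and it is spanned by the section vanishing at both points. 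This is the restriction of $nS_\lmd+\bm D+\ol{\bm D}$, which is the case $m=n$ of \eqref{gen1}.

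Taking $T^2_{\CC}$-invariants is exact, since $T^2_{\CC}$ is reductive. Hence $\dim H^0(\bm L)^{T^2_{\CC}}=n+1$, so the linear system is $n$-dimensional. The $n+1$ divisors above span this space: $n$ of them come from the sub $H^0(\bm M)$, and the last one maps to a nonzero element of the quotient. Reality is clear because $\lmd$ is real and $S_\infty$, $\bm D+\ol{\bm D}$ are real.

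For (ii), Proposition \ref{p:ZT}'s count $\dim H^0(\bm L)^{\CC^*_s}=n+3$ is already established by Lemmas \ref{l:Md} and \ref{l:LG}. It then suffices to show that $X$ and $\ol X$ restrict on $\bm E$ to the two remaining weight vectors, namely the sections with double zeros at $\bm E\cap\bm D$ and at $\bm E\cap\ol{\bm D}$ respectively. This follows from Lemma \ref{l:X}: the restrictions of $X$, $\ol X$ to $\tilde Z_0$ are the first two curves in \eqref{LZ2}, and their traces on $E_0$ are $2D\cap E_0$ and $2D'\cap E_0$. The images of the three divisors $X$, $\ol X$, $nS_\lmd+\bm D+\ol{\bm D}$ in $H^0(\bm L|_G)^{\CC^*_s}\simeq\CC^3$ are therefore linearly independent. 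Together with the $n$ generators of the sub $H^0(\bm M)^{\CC^*_s}$ from (i), exactness of \eqref{restG1} gives a basis.

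The step I expect to need most care is the weight computation: one has to fix the $\CC^*_t$-linearization on $\bm L$ and verify that it acts nontrivially on the $\PP^1$-factor of $\bm E$. If the weights are awkward to compute directly, I will instead argue using the explicit sections: $X$ is $\CC^*_t$-semi-invariant with nonzero weight, since it contains $2\bm D$ but not $\ol{\bm D}$, while $nS_\lmd+\bm D+\ol{\bm D}$ is invariant. This suffices to separate the invariant part.
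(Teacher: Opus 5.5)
Your proposal is correct and follows essentially the paper's route. Both use the $\CC^*_s$-invariant part of \eqref{restG1}, take the $n$ generators $mS_{\lmd}+(n-1-m)S_{\infty}$ of $|\bm M|^{\CC^*_s}$ shifted by $G$, and identify the three remaining generators by their restrictions; you use the trace on $\bm E$ (equivalently on $E_0$), where the paper uses the restrictions to $\tilde Z_0$ from Lemma \ref{l:LZ}.

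Your $\CC^*_t$-weight computation on $H^0(\bm L|_{\bm E})$, combined with exactness of taking $T^2_{\CC}$-invariants, adds something useful. It supplies the upper bound $\dim H^0(\bm L)^{T^2_{\CC}}\le n+1$ in (i), whereas the paper only exhibits the $n+1$ independent divisors and leaves this bound implicit.
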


\proof Taking the $\CC^*_s$-fixed part of the sequence \eqref{restG1}, we have the exact sequence
\begin{align}\label{restG2}
0 \lras H^0\big(\ms O_{\tilde Z}(\bm M)\big)^{\CC^*_s}\lras 
H^0\big(\ms O_{\tilde Z}(\bm L)\big)^{\CC^*_s}\lras 
H^0\big(\ms O_G(\bm L)\big)^{\CC^*_s}
\lras 0.
\end{align}
Recalling that $\bm M\simeq \tilde \Phi^*\ms O_{\FF_2}((n-1)\Xi_{0})$ as in \eqref{M1} and $|\bm M|^{\CC^*_s}$ is $(n-1)$-dimensional as a linear system from Lemma \ref{l:Md}, as generators of 
$|\bm M|^{\CC^*_s}$, we can take the $n$ divisors
\begin{align}\label{gM}
m S_{\lmd} + (n-1-m) S_{\infty},
\quad 0\le m < n,
\end{align}
    where ${\lmd}$ is any fixed real number.    
    Adding the divisor $G = S_{\infty} + \bm D +\ol {\bm D} $ to these, we obtain the divisors \eqref{gen1} except $nS_{\lmd}+\bm D + \ol {\bm D}$ from the case $m=n$.
    Since the sequence \eqref{restG1} is $T^2_{\CC}$-equivariant and $|\bm M|^{\CC^*_s} = |\bm M|^{T^2_{\CC}}$ as $\tilde\Phi$ is $T^2_{\CC}$-equivariant, these $n$ divisors belong to $|\bm L|^{T^2_{\CC}}$.
Further, the divisor $nS_{\lmd}+\bm D + \ol {\bm D}$ also belongs to the same linear system and the $n+1$ divisors \eqref{gen1} are linearly independent.
Thus, we obtain (i).

For (ii), from (i) and the exact sequence \eqref{restG2}, $\dim |\bm L|^{\CC^*_s} = n+3$.
Adding $G$ to each of the divisors \eqref{gM}, we obtain $n$ elements of $|\bm L|^{\CC^*_s}$ that are linearly independent.
Further, we can readily see that the restrictions to $\tilde Z_0$ of each of the three divisors in Lemma \ref{l:X} are exactly the three generators of $|\bm L|_{\tilde Z_0}|^{\CC^*_s}$ given in Lemma \ref{l:LZ}.
Therefore, the restrictions have to be generators of $|\bm L|_G|^{\CC^*_s}$ that are linearly independent.
This is exactly the assertion about the generators of $|\bm L|^{\CC^*_s}$ as in the present proposition.
\proofend

\medskip
Using the two divisors $X$ and $\ol X$ in Lemma \ref{l:X}, we are ready to complete a proof of Proposition \ref{p:ZT}.

\medskip
\noindent
{\em Completion of proof of Proposition \ref{p:ZT}.}
As before, let ${\tilde \Psi}:\tilde Z\lras \PP^{n+2}$ be the meromorphic quotient map
induced by the linear system $|\bm L|^{\CC^*_s}$.
It remains to show that ${\tilde \Psi}(\tilde Z) = \ms T$.
The following proof is quite similar to \cite[Proposition 2.10]{Hon10}
that determines the equations of the minitwistor space obtained from 
an arbitrary Joyce metric \cite{J95}.

First, since the $(n+1)$ sections of $\bm L$ that define generators \eqref{gen1} satisfy the same relations as the $(n+1)$ monomials $V^n,V^{n-1}W, \dots, W^n$ of degree $n$, 
the image ${\tilde \Psi}(\tilde Z)$ lies on the scroll of planes over a rational normal curve in $\PP^n$.
If $\Lmd\subset\PP^n$ means this curve and
$\pr:\PP^{n+2}\lras\PP^n$ is the projection which is induced by the inclusion $H^0(\bm L)^{T^2_{\CC}}\subset H^0(\bm L)^{\CC^*_s}$
under the identifications
$\PP^n=\PP(H^0(\bm L)^{T^2_{\CC}})^*$
and $\PP^{n+2}=\PP(H^0(\bm L)^{\CC^*_s})^*$,
we have ${\tilde \Psi}(\tilde Z)\subset\pr\inv(\Lmd)$.

Let $\xi$ and $\ol\xi$ be sections of $\bm L$ that define the divisors $X$ and $\ol X$ respectively,
and $\ddd$ and $\ol\ddd$ be those that define $\bm D$ and $\ol{\bm D}$
respectively.
The product $\xi\ol\xi$ belongs to $H^0(2\bm L)$ and defines the divisor $X+\ol X$.
Fix any $\lmd\in\RR$ and let $v_1,v_2\in H^0(F)^{T^2_{\CC}}$ be elements such that $(v_1) = S_{\lmd}$ and $(v_2) = S_{\infty}$.
These are basis of $H^0(F)^{T^2_{\CC}}$.
Then from Proposition \ref{p:gen} (i), if we put $z_m := v_1^mv_2^{n-m}\delta\ol\delta$ for $0\le m\le n$, then $z_0,\dots, z_n$ are 
basis of $H^0(\bm L)^{T^2_{\CC}}$.
 For each index $1\le i\le 2n$, let $e_i$ and $\ol e_i$ be defining sections of the components $S_i^+$ and $S_i^-$ of $\tilde\Phi\inv(\ms C^i)$ (see \eqref{Spm}), and 
put $s_i:=e_i\ol e_i\in H^0(F)^{T^2_{\CC}}$.
Since the reducible divisor $S_i^++S_i^-$ is defined by $z=a_iu$ as in \eqref{Spm}, 
under a suitable choice of $v_1$ (i.e.,\,the real number $\lmd$), we may write 
$s_i = v_1 - a_i v_2$ for $1\le i\le 2n$.
From the explicit form \eqref{gen2} of $X$ and $\ol X$, we have, for some real number $c\neq 0$,
\begin{align}
\xi\ol\xi &= c 
\delta^2\ol\delta^2 
\prod_{i=1}^{2n}s_i\notag\\
&= c(\delta\ol\delta)^2 
\prod_{i=1}^{2n}(v_1-a_i v_2).
\label{dd}
\end{align}
Expanding the product, we obtain a homogeneous polynomial of $v_1$ and $v_2$ of degree $2n$.
Each monomial in this polynomial can be written (not in a unique way)
as the product of two elements of the form 
$z_i/(\ddd\ol\ddd)$. %\frac{z_i}{\ddd\ol\ddd}$.
Hence, there exists a quadratic polynomial $Q$ of $(n+1)$ variables with real coefficients, such that 
$$
\prod_{i=1}^{2n}
(v_1 - \lmd_i v_2)= Q\Big(
\frac{z_0}{\ddd\ol\ddd},
\frac{z_1}{\ddd\ol\ddd},
\dots,
\frac{z_n}{\ddd\ol\ddd}
\Big).
$$
Since $Q$ is quadratic, a multiplication by $(\ddd\ol\ddd)^2$ to the RHS gives 
$Q(z_0,z_1,\dots, z_n)$.
Hence, by letting the coefficient $c$ in \eqref{dd} be absorbed in
$Q$, we obtain that the image ${\tilde \Psi}(\tilde Z)\subset\PP^{n+2}$ satisfies the equation
\begin{align}\label{T1}
z_{n+1}z_{n+2} = Q(z_0,z_1,\dots, z_n). 
\end{align}
If we write the LHS as $w_{n+1}^2 - w_{n+2}^2$ by putting $z_{n+1}=w_{n+1} - w_{n+2}$ and $z_{n+2} = w_{n+1} + w_{n+2}$, then this equation can be written $w_{n+2}^2 = w_{n+1}^2 -Q(z_0,z_1,\dots, z_n).$
Since ${\tilde \Psi}(\tilde Z)$ lies over the rational normal curve $\Lmd\subset\PP^n$, this means that ${\tilde \Psi}(\tilde Z)$ is contained in the double cover of the cone ${\rm C}(\Lmd)\subset\PP^{n+1}$ branched along 
the intersection with the quadric $\{w_{n+1}^2=Q(z_0,\dots,z_n)\}\subset\PP^{n+1}$.  
By the projection to $\Lmd$, this intersection is a double cover of $\Lmd$ branched at the intersection with the quadric $\{Q=0\}$ in $\PP^n$, and from \eqref{dd}, the last intersection consists of the points $a_1,\dots,a_{2n}$.
Therefore the projection from the double cover of ${\rm C}(\Lmd)$ to $\Lmd$ has these $2n$ points as its discriminant locus.
From the description of the minitwistor space ${\ms T}$ given at the end of Section \ref{s:2}, 
this implies that the surface $\tilde\Psi(\tilde Z)$ is exactly $\ms T$.

Next, we show that the map $\tilde\Psi$ is onto $\ms T$.
Since the base locus of $|\bm L|^{\CC^*_s}$ is contained in the two fibers $\tilde Z_0$ and $\tilde Z_{\infty}$, its restriction to the fiber $\tilde{\ms T}=\tilde p\inv(1)$ is base point free.
The generators of the restriction system $(|\bm L|^{\CC^*_s})|_{\tilde{\ms T}}$ 
can be obtained explicitly from Proposition \ref{p:gen} just as restrictions, and using them, the image $\tilde\Psi(\tilde{\ms T})$ satisfies the same equation \eqref{T1} as $\tilde\Psi(\tilde Z)$.
Further, using these generators, we readily see that the restriction $\tilde\Psi|_{\tilde{\ms T}}$ is exactly the contraction map of the two $(-n)$-curves $D_1=\bm D\cap \tilde{\ms T}$ and $\ol D_1=\ol{\bm D}\cap \tilde{\ms T}$.
In particular, $\tilde\Phi(\tilde {\ms T}) = \ms T$.
Hence, $\tilde\Phi(\tilde Z) = \ms T$
and he restriction map $\tilde\Phi|_{\tilde {\ms T}}:\tilde{\ms T}\lras\ms T$ is identified with the minimal resolution of $\ms T$.
A proof of Proposition \ref{p:ZT} is thus completed.
\proofend

\medskip
From this proposition, since $\tilde\Psi:\tilde Z\lras\ms T$ is $\CC^*_s$-equivariant, a generic fiber of $\tilde\Psi$ is irreducible. Therefore, we call the meromorphic mapping $\tilde\Psi$ as the {\em meromorphic quotient map}.

\section{The images of twistor lines to the minitwistor space}

\subsection{Equations of $\CC^*$-invariant twistor lines}\label{ss:itl}
We recall that on the fiber $\tilde Z_0=\tilde p\inv(0)$, there is a chain of smooth rational curves $C_0+ C_1+\dots + C_{2n}$ and
each component is $T^2_{\CC}$-invariant.
The group $\CC^*$ in the title means the stabilizer subgroup of a component of this chain and hence it depends on the component.
Note that $C_i\subset Z$ for every index $0\le i<2n$ but not for $i=0,2n$ because one of the two $T^2$-fixed points on each of these components does not belong to $Z$.

First, we will obtain the equations of twistor lines that intersect the above chain, by using the projection $\Phi: Z\lras\ms O(2)$.
Let $(x,y,z,u)$ be the coordinates we have used. In particular, $(z,u)$ are coordinates on $\ms O(2)$.
In the following, for any index $1\le i\le 2n$, we denote $L_i\subset Z$ for the twistor line through the $T^2$-fixed point $C_{i-1}\cap C_i$.
These $2n$ twistor lines are very special in that they are $T^2_{\CC}$-invariant.
The following statement is a slight refinement of (3.3) in \cite{Hi79}.

\begin{proposition}\label{p:tl}
For any index $1\le i\le 2n$, the twistor line $L_i\subset Z$ is 
the transformation of the curve
\begin{align}\label{Li}
\{ x=y=z-a_iu = 0\}
\end{align}
in the projective model \eqref{tw1}.
If $L\subset Z$ is a twistor line that intersects $C_i$ for some $0\le i\le 2n$ and it is none of $L_1,\dots,L_{2n}$, then it is the transformation of the curve
\begin{align}\label{xuyuzu}
x(u) = \sqrt{\prod_{1\le j\le 2n}|\lmd-a_j|}\cdot e^{\sqrt{-1}\theta}u^i,
\quad
y(u) = \sqrt{\prod_{1\le j\le 2n}|\lmd-a_j|}\cdot e^{-\sqrt{-1}\theta}u^{2n-i},
\quad 
z(u)=\lmd u
\end{align}
for some real numbers $\lmd\in (a_i,a_{i+1})$ and $\theta$.
%
%Conversely, for any index $0\le i\le 2n$, every rational curve of the form \eqref{xuyuzu} is a twistor line through a point of the curve $C_i$.
\end{proposition}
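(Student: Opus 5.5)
The plan is to work entirely on the projective model \eqref{tw1} with $k=2n-1$. I will use the standard description of twistor lines in Hitchin's construction: a twistor line is a real holomorphic section $u\mapsto(x(u),y(u),z(u))$ of $\ms O(2n)\oplus\ms O(2n)\oplus\ms O(2)$ lying on the hypersurface $xy=\prod_j(z-a_ju)$, lifted to the simultaneous resolutions over $u=0,\infty$. So $z(u)=\alpha+\beta u+\gamma u^2$ is a real section of $\ms O(2)$, meaning it is $\sigma$-invariant under \eqref{rsZ}. The product $\prod_j(z(u)-a_ju)$ is a polynomial of degree $\le 4n$, and $x,y$ are polynomials of degree $\le 2n$ whose product equals it, with reality forcing $y(u)=(-1)^{2n}u^{2n}\ol{x(-1/\ol u)}$.

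First I would use the hypothesis that $L$ meets the chain. Every component $C_i$, $0\le i\le 2n$, lies over the point $x=y=z=u=0$ of the projective model. Hence $L$ meeting the chain forces $z(0)=0$, i.e.\ $\alpha=0$. Reality then gives $\gamma=-\ol\alpha=0$, so $z(u)=\lmd u$ with $\lmd$ real. Thus $\Phi(L)$ is one of the curves $\ms C_\lmd$ of \eqref{Clmd}, and
\[
\prod_j(z-a_ju)=u^{2n}\prod_j(\lmd-a_j).
\]

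Two cases follow. If $\lmd=a_i$ for some $i$, the right-hand side vanishes identically. Reality links $x$ and $y$ (each is the conjugate-twisted image of the other), so $xy\equiv0$ forces $x\equiv y\equiv0$, and the curve is \eqref{Li}. Its lift passes through the point $C_{i-1}\cap C_i$: I would check this by evaluating the coordinates \eqref{hc} along $x=0$, $z=a_iu$ near $u=0$, where the $j$-th factor is $\infty$ for $j\ge i$ and $0$ for $j<i$, so by \eqref{ec} it hits $C_{i-1}\cap C_i$. Since this curve is a real section, it is the twistor line $L_i$.

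If $\lmd\ne a_j$ for all $j$, then $xy=cu^{2n}$ with $c=\prod_j(\lmd-a_j)\neq0$. Hence $x=\mu u^m$ and $y=(c/\mu)u^{2n-m}$ for some $0\le m\le 2n$ and $\mu\neq0$. Reality, $y(u)=u^{2n}\ol{x(-1/\ol u)}$, gives $c/\mu=(-1)^m\ol\mu$, i.e.\ $|\mu|^2=(-1)^mc$. This determines $|\mu|=\sqrt{\prod|\lmd-a_j|}$ and a sign condition on $m$, and it leaves the phase $e^{\sqrt{-1}\theta}$ free.

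The main obstacle is to match the exponent $m$ with the index $i$ of the component met and with the interval $\lmd\in(a_i,a_{i+1})$. For the component, I would compute the limit of the coordinates \eqref{hc} along the curve as $u\to0$. The $j$-th factor is $(\mu u^m:u^j\prod_{l\le j}(\lmd-a_l))$, which tends to $0$ if $j>m$ and to $\infty$ if $j<m$. At $j=m$ it gives the finite nonzero value $\mu/\prod_{l\le m}(\lmd-a_l)$. By \eqref{ec} the limit is therefore an interior point of $C_m$ (the endpoint cases $m=0,2n$ are treated the same way, landing on the coordinate axes). So $m=i$.

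For the interval, the sign condition reads $(-1)^i\prod_j(\lmd-a_j)>0$. I would instead seek a genuine constraint from the resolution over $u=\infty$: the same computation in the conjugate chart forces the curve through $\ol C_{2n-i}$ consistently. Together with the requirement that the twisted product $\prod_{l\le i}(\lmd-a_l)$ appearing in the limit be compatible with the real structure on the resolved fibers, this should pin $\lmd$ into $(a_i,a_{i+1})$ (with $a_0=-\infty$, $a_{2n+1}=+\infty$). If the sign bookkeeping alone is insufficient, I would use the smoothness and normal-bundle $\ms O(1)^{\oplus2}$ condition of twistor lines on the resolved space as the deciding criterion. Absorbing the residual sign into $\theta$ then yields the form \eqref{xuyuzu}.
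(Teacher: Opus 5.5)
The genuine gap is the last step, $\lmd\in(a_i,a_{i+1})$, and neither route you suggest can close it. Everything before that is fine; getting $z=\lmd u$ directly from $z(0)=0$ and the reality of $z$ is a more elementary substitute for the paper's $G_i$-orbit-closure argument.

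Reality yields only $(-1)^i\prod_j(\lmd-a_j)>0$, which is your condition and is equivalent to \eqref{c}. This holds on every interval $(a_k,a_{k+1})$ with $k\equiv i \pmod 2$, where $a_0=-\infty$ and $a_{2n+1}=\infty$. The chart over $u=\infty$ adds nothing: the section is already $\sigma$-invariant, and the resolution over $\infty$ is the $\sigma$-image of the one over $0$.

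No local property of the curve can decide either. For $i=0$, a point $x=\mu\neq0$ of $C_0$ lies on two real sections of your form, with $\lmd_-<a_1$ and $\lmd_+>a_{2n}$ and $\prod_j(\lmd_\pm-a_j)=|\mu|^2$. Both stay in the smooth locus of the projective model. For $n=1$, the kernel of $\ms O(2)^{\oplus3}\to\ms O(4)$, $(\dot x,\dot y,\dot z)\mapsto y\dot x+x\dot y-(2z-(a_1+a_2)u)\dot z$, is $\ms O(1)^{\oplus2}$ for both. Since distinct twistor lines are disjoint, at most one of them is a twistor line, so smoothness and the normal bundle are not a criterion. For the same reason, ``this curve is a real section, hence it is $L_i$'' is not a valid inference. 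Your first part should instead read: $L_i$ passes through a $T^2$-fixed point, so by your dichotomy it has $\lmd=a_j$, and then $j=i$.

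What is missing is a global continuity input, which is how the paper argues. The twistor line through a point depends continuously on the point, so $\lmd$ is a continuous function on $C_i$ minus its two $T^2$-fixed points, which is connected. By your dichotomy it never takes a value $a_j$, so it stays in one interval $(a_k,a_{k+1})$. As the point tends to $C_{i-1}\cap C_i$ (to $C_0\cap C_1$ if $i=0$), the line tends to $L_i$ (resp.\ $L_1$). Hence $a_i$ (resp.\ $a_1$) is an endpoint of that interval, and parity leaves only $k=i$. The paper phrases this through the strict monotonicity of the right-hand side of \eqref{c} on $(a_i,a_{i+1})$, with limits $\infty$ and $0$ at the ends.

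Two smaller points.
\begin{itemize}
\item In your computation for $L_i$, the values $0$ and $\infty$ are swapped relative to \eqref{ec} and to your own later computation. Also, for $j\ge i$ the pair $(x:\prod_{l\le j}(z-a_lu))$ is $(0:0)$ along the curve, so you must use $(\prod_{l>j}(z-a_lu):y)$ there.
\item The sign in $y=(-1)^i\ol\mu\,u^{2n-i}$ cannot be absorbed into $\theta$, since $\theta\mapsto\theta+\pi$ flips the sign of both $x$ and $y$. For odd $i$, the $y$ in \eqref{xuyuzu} needs the factor $(-1)^i$ to lie on $xy=\prod_j(z-a_ju)$.
\end{itemize}
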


Note that from \eqref{Li} and the last equation $z=\lmd u$ in \eqref{xuyuzu}, in terms of the curve $\ms C_{\lmd}$ defined in \eqref{Clmd}, a twistor line $L$ that intersects $C_i$
satisfies
\begin{align}\label{iml}
\Phi(L) = \ms C_{\lmd},\quad
a_i\le \lmd \le a_{i+1},
\end{align}
and one of the equalities holds iff $L=L_i$ or $L_{i+1}$ respectively.

\proof
For each index $0\le i\le 2n$, 
let $G_i\simeq\CC^*$ be the subgroup of $T^2_{\CC}=\{(s,t)\in \CC^*_s\times\CC^*_t\}$ which fixes every point of $C_i$.
Also, as before, we denote 
$\CC^*_t=\{(1,t)\in T^2_{\CC}\set t\in\CC^*\}$.
The map $\Phi:Z\lras\ms O(2)$ is a quotient map of the $\CC^*_t$-action.
If a twistor line $L\subset Z$ intersects a component $C_i$, then $L$ is an orbit closure of the $G_i$-action (even when $i=0,2n$). 
Since the projection $\Phi$ is $T^2_{\CC}$-equivariant with the action on $\ms O(2)$ being the one by $T^2_{\CC}/\CC^*_t\simeq\CC^*_s$, as $G_i$ is mapped onto the quotient group, the image $\Phi(L)$ is an orbit closure of the $\CC^*_s$-action on $\ms O(2)$.
Therefore, taking the real structure \eqref{rsZ} into account, in the coordinates $(z,u)$ on $\ms O(2)$, $\Phi(L)$ is defined by an equation of the form $z=\lmd u$ for some $\lmd\in \RR$.

Substituting this into the equation \eqref{tw1} of the projective model of the twistor space, we obtain that (the transformation of) $L$ satisfies an equation
\begin{align}\label{Slmd2}
xy= (\lmd-a_1)\dots(\lmd-a_{2n})u^{2n}.
\end{align}
As $x,y\in \ms O(2n)$, 
$x = x(u)$ and $y=y(u)$ are respectively polynomials of degree $d$ and $2n-d$ for some $d$ with $0\le d\le 2n$.
If $i\neq 0,2n$, then 
because coordinates on $C_i$ is given as in \eqref{nui}, 
if $c_i\in \CC$ means the value of the coordinate of the intersection $C_i\cap L$ in non-homogeneous form and assuming that $L$ is not $L_i$ nor $L_{i+1}$ so that $c_i\neq 0,\infty$, 
we have $d=i$ and $L$ satisfies the equations
\begin{align}\label{xuyu}
x(u) = c_i\prod_{j=1}^i(\lmd-a_j)\cdot u^i
\qandq
y(u) = c_i\inv\prod_{j=i+1}^{2n}(\lmd-a_j)\cdot u^{2n-i}.
\end{align}
Noting that a coordinate on the axis $C_0$ (resp.\,$C_{2n}$) is $x$
(resp.\,$y$), it is easy to see that \eqref{xuyu} is valid even when $i=0,2n$.

Using that this is real (i.e.\,$\sigma$-invariant), from the explicit form \eqref{rsZ} of $\sigma$, we deduce the requirement
\begin{align}\label{c}
|c_i|^2 = (-1)^i \frac{\prod_{j=i+1}^{2n}(\lmd-a_j)}
{\prod_{j=1}^{i}(\lmd-a_j)}.
\end{align}
It is elementary to see that the RHS is strictly positive if $\lmd\not\in\{a_1,a_2,\dots,a_{2n}\}$,
strictly monotone (decreasing) on each interval $(a_i,a_{i+1})$ for any $0\le i\le 2n$ if we put $a_0=-\infty$ and $a_{2n+1}=\infty$,
and tends to $\infty$ as $\lmd\searrow a_i$ and to $0$ as $\lmd\nearrow a_{i+1}$.
The complex number $c_i$ takes an arbitrary value of $\CC^*$ as the intersection point varies on the whole of $C_i\minus\{C_i\cap C_{i-1}, C_i\cap C_{i+1}\}\simeq\CC^*$.
%(if we assume that the intersection is not a fixed point).
From \eqref{c} and the above properties of the RHS of \eqref{c}, 
this means that, for a fixed $i$, $\lmd$ can vary only in the interval $(a_i,a_{i+1})$ as the intersection point with $L$ moves on $C_i$, and $\lmd$ approaches $a_i$ (resp.\,$a_{i+1}$) as the intersection point approaches $C_i\cap C_{i-1}$ (resp.\,$C_i\cap C_{i+1}$).
In particular, the image $\Phi(L_i)$ of the invariant twistor line $L_i$ ($1\le i\le 2n$) has to be a curve $z=a_iu$
and $L_i$ is the transformation of the curve $\{x=y=0\}$ over this curve. This gives \eqref{Li}.
If $L$ intersects $C_i$ ($0\le i\le 2n)$ but is not $L_i$ nor $L_{i+1}$, then letting $\lmd$
be a real number belonging to the interval $(a_i,a_{i+1})$
which is uniquely determined by the equation \eqref{c} thanks to the above monotonicity of the RHS of \eqref{c},
then substituting $c_i$ that is determined from \eqref{c} into \eqref{xuyu},
we obtain \eqref{xuyuzu}. 
%
%The reverse direction is obvious from the above argument because any point of $C_i$ is passed by some twistor line
%(unless $i=0,2n$ and the point is one of the two points on $C_0$ or $C_{2n}$ which does not belong to the twistor space $Z$.)
\proofend

\medskip
%The next corollary is obvious from \eqref{c}:
%
%\begin{corollary}
%For each real number $\lmd$
%different from $a_1,\dots,a_{2n}$, there exist $S^1$-s worth of twistor lines in $Z$ which are mapped to the curve $\ms C_{\lmd}=\{z=\lmd u\}$ by the projection $\Phi:Z\lras\ms O(2)$.
%\end{corollary}

Using this, we determine the structure of a generic member of the $T^2_{\CC}$-invariant pencil $|F|^{T^2_{\CC}}$ as follows:

\begin{proposition}\label{p:S}
If $\lmd\in \CC$ is none of $a_1,a_2,\dots,a_{2n}$, then the divisor $S_{\lmd}=\tilde\Phi\inv(\ms C_{\lmd})$ is biholomorphic to a toric surface obtained from $\qdr$ by blowing up each of the four fixed points on the toric surface $\qdr$ $n$ times, 
where the iterated blowups are always done in the direction of the $(0,1)$-curve passing through the point.
\end{proposition}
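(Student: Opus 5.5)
The plan is to describe $S_\lambda$ as a $T^2_{\CC}$-toric surface fibred over the curve $\ms C_\lambda\simeq\PP^1$ via $\tilde\Phi|_{S_\lambda}$, and to read off its fan from the structure of the fibres over the two $\CC^*_s$-fixed points $u=0,\infty$ of $\ms C_\lambda$.

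First, over $\ms C_\lambda\cap\ms O(2)$ minus $u=0,\infty$, the surface is given in the projective model by $xy=c\,u^{2n}$ with $c=\prod_j(\lambda-a_j)\neq0$, so the fibre of $\tilde\Phi$ over each such point is a smooth conic $xy=c u^{2n}$. After the compactification by $\bm D,\ol{\bm D}$ this conic becomes a $\PP^1$ with the two points $x=\infty$ and $y=\infty$ lying on $\bm D$ and $\ol{\bm D}$ respectively. Hence $S_\lambda$ is a $\PP^1$-bundle over $\ms C_\lambda$ away from $u=0,\infty$. The curves $\bm D\cap S_\lambda$ and $\ol{\bm D}\cap S_\lambda$ are two disjoint sections, since $\bm D,\ol{\bm D}$ are sections of $\tilde\Phi$.

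Second, I would analyse the fibre over $u=0$. Since $\ms C_\lambda$ passes through $(z,u)=(0,0)$ and meets $\ms C^i$ only there when $\lambda\neq a_i$, the fibre is $\tilde\Phi^{-1}(0,0)\cap S_\lambda$. This is the chain $C_0+\dots+C_{2n}$, because $\tilde\Phi^{-1}(0,0)$ is this chain. The surface $S_\lambda$ is the proper transform of $\{z=\lambda u\}$, which near $u=0$ is locally the $A_{2n-1}$ surface $xy=cu^{2n}$, resolved inside $Z$ by the small resolution. I would show that the small resolution restricted to $S_\lambda$ is the minimal resolution of $xy=u^{2n}$. This can be checked in the coordinates \eqref{hc}, where the $i$-th factor becomes $(x: \prod_{j\le i}(\lambda-a_j)u^i)$, i.e.\ the standard toric resolution. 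So the fibre over $u=0$ is a chain whose interior curves $C_1,\dots,C_{2n-1}$ are $(-2)$-curves on $S_\lambda$. The end curves $C_0,C_{2n}$ meet the sections $\bm D\cap S_\lambda$ and $\ol{\bm D}\cap S_\lambda$. The same holds over $u=\infty$ by the real structure, using the chain $\ol C_0+\dots+\ol C_{2n}$. The fan of $S_\lambda$ therefore has the rays of a $\PP^1$-bundle, namely $(0,\pm1)$ and $(\pm1,*)$, with $2n-1$ extra rays inserted in each of two opposite cones, giving a cycle of $4n+4$ rays.

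Third, I would match this with the blowup description. I compute self-intersections: the fibre class over $u=0$ is $\sum_i C_i$ with multiplicity one each, which forces $C_0^2=C_{2n}^2=-1$ on $S_\lambda$ with interior $-2$'s. The two sections $\bm D\cap S_\lambda$ and $\ol{\bm D}\cap S_\lambda$ have self-intersection $-n$ on $S_\lambda$; this I would get from \eqref{DD} restricted to $\ms C_\lambda$, using $\Xi_0\cdot\ms C_\lambda=2$ or similar. I would then contract $C_0,C_1,\dots,C_{n-1}$ successively from the $D$-side and $C_{2n},\dots,C_{n+1}$ from the $D'$-side. This leaves $C_n$ as a $0$-curve and the sections as $0$-curves, and does the same over $u=\infty$, giving $\qdr$. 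Reversing this exhibits $S_\lambda$ as $\qdr$ blown up $n$ times iteratively at each of the four fixed points, each time along the $(0,1)$-direction, i.e.\ the fibre direction of $\tilde\Phi$. The four fixed points are the points where the fibres over $u=0,\infty$ meet the two sections.

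The main obstacle is the second step: verifying that $S_\lambda$ is smooth along the chain and that the induced resolution is exactly the toric minimal one, with the correct self-intersection bookkeeping at the ends $C_0,C_{2n}$ near the compactifying divisors. I would handle this by writing the $2n$ affine charts from \eqref{hc}, each of the form $\CC^2$ with the $T^2$-weights read off from \eqref{act-t} and \eqref{act-s}, and checking that consecutive weight vectors form a $\ZZ$-basis.
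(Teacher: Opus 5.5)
Your outline is sound and close to the paper's, but one value in it is wrong, and as written it breaks your final step. The sections $D_{\lmd}=\bm D\cap S_{\lmd}$ and $\ol D_{\lmd}$ have self-intersection $-2n$, not $-n$. Indeed, $\bm D$ meets $S_{\lmd}$ transversally along the copy of $\ms C_{\lmd}$ in $\bm D\simeq\FF_2$, so \eqref{DD} gives $D_{\lmd}^2=-n\,\Xi_0\cdot\ms C_{\lmd}=-n\,\Xi_0^2=-2n$. This is the computation you point to, and the one the paper makes; you evaluated it wrongly. Your own fan also forces it: two singular fibres that are reduced chains of length $2n+1$ give $D_{\lmd}^2+\ol D_{\lmd}^2=-4n$. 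In your contraction each section meets the end curve on its side of \emph{both} singular fibres, so it gains $n$ over $u=0$ and another $n$ over $u=\infty$. Starting from $-n$ you would end with two disjoint sections of self-intersection $+n$, which is impossible on a Hirzebruch surface. Starting from $-2n$ you get the $0$-sections of $\qdr$ you claim.

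There is a smaller slip of the same kind. Reverse your contraction: the infinitely near centres at each fixed point lie on the strict transform of $D_{\lmd}$ or $\ol D_{\lmd}$, not on the fibre. That is why each section drops by $n$ at each fixed point, while $C_n$ only drops to $-2$ and the fibre stays reduced. So, relative to the ruling $S_{\lmd}\to\ms C_{\lmd}$, the iterated blowups go along the sections. Your gloss ``i.e.\ the fibre direction of $\tilde\Phi$'' is therefore backwards. This is harmless for the abstract isomorphism type, since the two factors of $\qdr$ can be swapped.

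With these corrections your proof works, and it differs from the paper's mainly in bookkeeping. The paper gets smoothness from the same observation you use: the simultaneous resolution restricts to the minimal resolution of $xy=cu^{2n}$. It uses $D_{\lmd}^2=-2n$ and the $(-2)$-curves $C_1,\dots,C_{2n-1}$. It counts boundary curves through $K_{S_{\lmd}}^2=8-4n$, obtained by adjunction from Proposition~\ref{p:cb}, and then leaves the identification with the blown-up $\qdr$ to the reader. You instead use that $u$ vanishes to order one on every component of the fibres over $u=0,\infty$. This gives the $(-1)$-ends $C_0,C_{2n}$ directly and avoids the canonical-class computation. You then exhibit the contraction to $\qdr$ explicitly, which makes the paper's ``combining these'' step concrete.
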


\proof
For the smoothness of $S_{\lmd}$ for $\lmd$ as in the proposition,
using that $S_{\lmd}$ is the transformation of the surface defined by the equation \eqref{Slmd2}, it is easy to see that $S_{\lmd}$ is smooth except possibly at points of the exceptional curves
$C_1,\dots, C_{2n-1}$ and their conjugate.
But since the simultaneous resolution of the projective model \eqref{tw1} of $Z$ restricts to the minimal resolution of the surface \eqref{Slmd2}, $S_{\lmd}$ is smooth also on these curves.
Further, it is a toric surface since it is $T^2_{\CC}$-invariant.

Next, we identify the structure of $S_{\lmd}$ as a toric surface.
We put $D_{\lmd} = \bm D|_{S_{\lmd}}$, which is a smooth section of $\tilde\Phi|_{S_{\lmd}}:S_{\lmd}\lras\ms C_{\lmd}$.
Then the self-intersection number of $D_{\lmd}$ in $S_{\lmd}$ can be calculated as
$$
D_{\lmd}^2 = (\bm D|_{S_{\lmd}})^2 = \bm D\cdot \bm D\cdot S_{\lmd} = \bm D|_{\bm D}\cdot {S_{\lmd}}|_{\bm D}
= \bm D|_{\bm D}\cdot D_{\lmd},
$$
where the last two intersection numbers are in $\bm D\simeq\FF_2$.
As in \eqref{DD}, $\bm D|_{\bm D}\simeq\ms O_{\FF_2}(-n\Xi_0)$. 
On the other hand, $D_{\lmd}$ is a $(+2)$-section of the ruling $\bm D\simeq\FF_2\lras\PP^1$.
Hence, we obtain $D_{\lmd}^2 = -n\Xi_0^2 = -2n.$
By reality, this implies $\ol D_{\lmd}^2 = -2n$.
From Proposition \ref{p:cb} and adjunction,
noting that $F|_{S_{\lmd}}$ may be represented by two fibers of $\tilde\Phi|_{S_{\lmd}}:S_{\lmd}\lras\ms C_{\lmd}$ so that $(F|_{S_{\lmd}})^2 = 0$, 
\begin{align*}
K_{S_{\lmd}}^2 &= \big((K_{\tilde Z} + F)|_{S_{\lmd}}\big)^2
\quad(\because {\text{adjunction formula}})\\
& = \big((-F-\bm D-\ol{\bm D})|_{S_{\lmd}}\big)^2
\quad(\because {\text{Proposition \ref{p:cb}}})
\\ &=  D_{\lmd}^2 + \ol D_{\lmd}^2 + 2F|_{S_{\lmd}}\cdot D_{\lmd}
+ 2F|_{S_{\lmd}}\cdot \ol D_{\lmd}\quad(\because (F|_{S_{\lmd}})^2 = 0 {\text{ and $\bm D\cap \ol{\bm D}=\emptyset$}})
\\
&= -2n - 2n + 2\cdot 2 + 2\cdot 2\\
 & = 8 - 4n.
\end{align*}
Furthermore, the curves $C_1,\dots, C_{2n-1}$ are $(-2)$-curves on $S_{\lmd}$ as the simultaneous resolution of the projective model of $Z$ restricts to the minimal resolution of the surface \eqref{Slmd2}.
Combining these, we readily obtain that $S_{\lmd}$ is as in the proposition.
\proofend

\subsection{Elimination of the base locus}\label{ss:elm}
As we have seen, the linear system $|\bm L|^{\CC^*_s}$ on $\tilde Z$ induces the meromorphic quotient map $\tilde{\Psi}:\tilde Z\lras\ms T$ to the minitwistor space (Proposition \ref{p:ZT}) and the base locus of the linear system is exactly the exceptional curves of the simultaneous resolution of the projective model of the twistor space
(Proposition \ref{p:bs}).
In this section, we explicitly give a sequence of blowups that completely eliminates the base locus of the linear system.
Often we only indicate the operation for elimination of the base locus over the point $u=0\in\PP^1$ because the operation over the point $u=\infty$ is automatically determined from the operation over the point $u=0$ by the real structure.
As an important remark, the chain $C_0 + C_1 + \dots + C_{2n-1} + C_{2n}$ is ``symmetric'' about the central sphere $C_n$ and all operations will be given in a way that they preserve this symmetry.
We will illustrate the changes that occur on the fiber over the point $z=a_1$ in the case $n=4$.

For the first operation, we blow up $\tilde Z$ not along the base locus of 
$|\bm L|^{\CC^*_s}$ itself but along longer chains $\sum_{0\le i\le 2n}C_i$ and $\sum_{0\le i\le 2n}\ol C_i$.
These are rather the base locus of the invariant pencil $|F|^{T^2_{\CC}}$ on $\tilde Z$.
Let 
\begin{align*}
\hat Z\lras \tilde Z,\quad
E_i\text{ and } \ol E_i\quad(0\le i\le 2n)
\end{align*}
be this blowup and the exceptional divisors over $C_i$ and $\ol C_i$ respectively.
All $E_i$ and $\ol E_i$ are biholomorphic to $\qdr$ and one of the two factors may be identified with $C_i$ and $\ol C_i$ respectively by the projection to $\tilde Z$.
We put
\begin{align}\label{E2}
E:=E_n + \sum_{i=1}^{n}\big(E_{n-i} + E_{n+i}\big)
\end{align}
and similarly for $\ol E$,
where we are respecting the symmetry about the central component $E_n$.
Note that $E$ is different from the divisor $\bm E$ appearing in the compactification of the twistor space. 
The space $\hat Z$ has an ordinary double point over each singularity of the chains, and using the same symbol for the strict transform of $L_i$, they are exactly the intersection points $L_i\cap E$ and $L_i\cap \ol E$ for $1\le i\le 2n$.
The singularity $L_i\cap E$ of $\hat Z$ is shared by the four $T^2_{\CC}$-invariant divisors 
$E_{i-1}, E_i, S^+_i$ and $S^-_i$.
(See Figure \ref{f:web} for the case $n=4$ and $i=1$.)
Here and in what follows, we do not change symbols for the strict transforms of $S_i^+$ and $S_i^-$ for simplicity.

\begin{figure}
\includegraphics[height=8in]{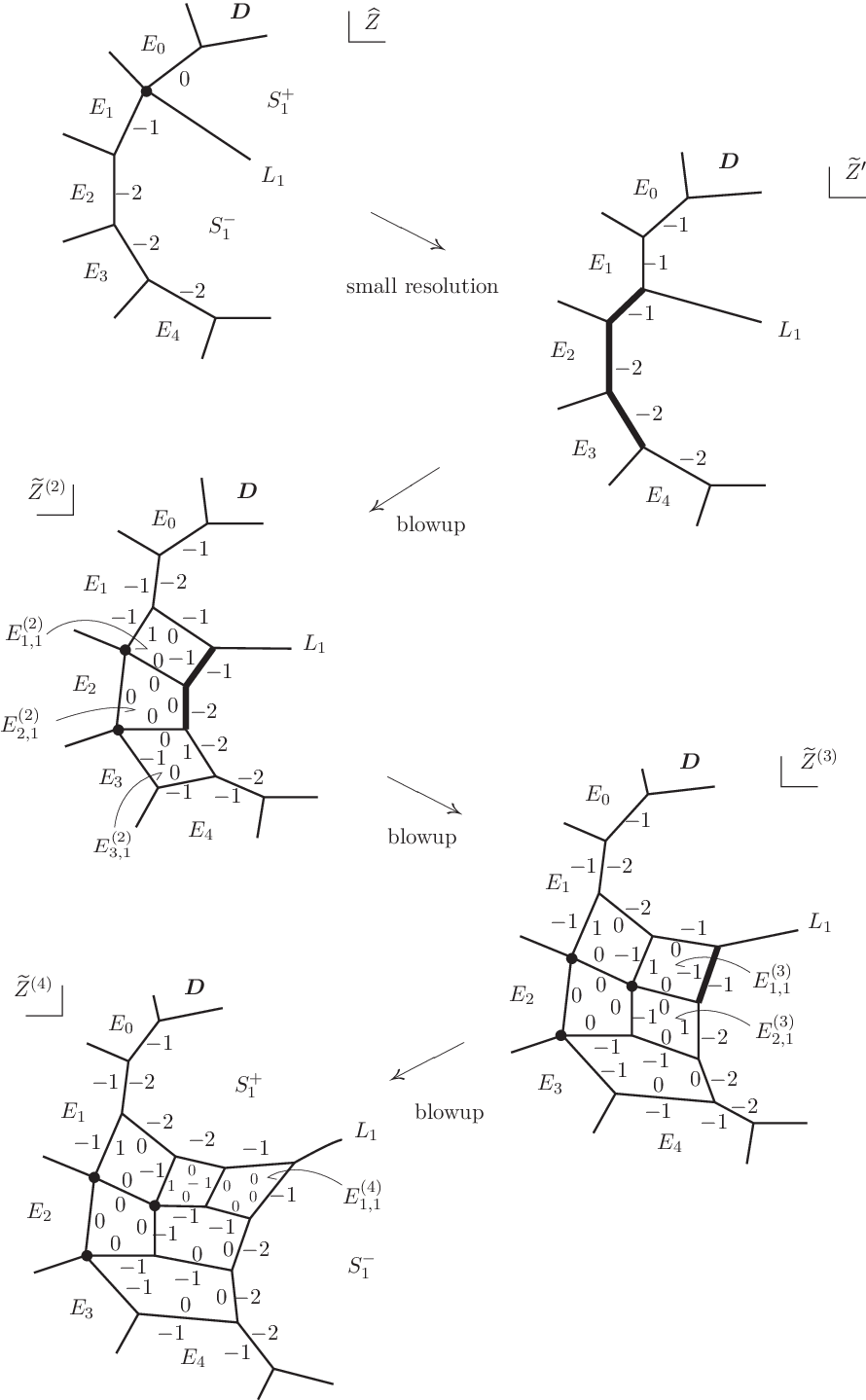}
\caption{The elimination of the base locus
}
\label{f:web}
\end{figure}

In the following, we call an ordinary double point simply a node.
For each node of $\hat Z$ that belongs to the sum $E$, taking the symmetry about $C_n$ into account, we take the small resolution such that:
\begin{align}\label{sr}
\setlength{\arraycolsep}{-10pt} 
\left\{
\begin{array}{@{}l@{}}
\text{for $1\le i\le n$, the divisors $E_i$ and $S_i^+$ are blown up,} \\
\text{for $n<i\le 2n$, the other pair $E_{i-1}$ and $S_i^-$ are blown up.} \\
\end{array}
\right.
\end{align}
(See Figure \ref{f:web} for the case $n=4$ and $i=1$.)
In particular, the end components $E_0$ and $E_{2n}$ of $E$ are not blown up,
the central component $E_n$ is blown up twice (at the intersection with $L_n$ and $L_{n+1}$),
and all other components $E_i$ are blown up once (at the intersection with $L_i$).
We denote $\tilde Z'\lras\hat Z$ for these small resolutions as well as the ones over $u=\infty$ and let
$$
\mu_1:\tilde Z'\lras \tilde Z
$$
be the composition $\tilde Z'\lras\hat Z\lras \tilde Z$.
The space $\tilde Z'$ is smooth and the exceptional locus of $\mu_1$ can be written as in \eqref{E2} and its conjugate, using the same symbols for the strict transforms of the components into $\tilde Z'$.
Since both changes $\hat Z\lras\tilde Z$ and $\tilde Z'\lras\hat Z$ are done in a $\sigma$-invariant and $T^2_{\CC}$-invariant locus, the real structure and the $T^2_{\CC}$-action on $\tilde Z$ naturally lift upto $\tilde Z'$
via $\hat Z$. 

\begin{figure}
\includegraphics{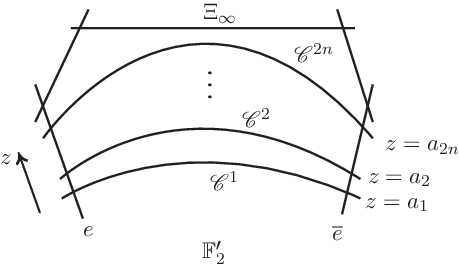}
\caption{The blowup of $\bm D\simeq\FF_2$}
\label{f:F}
\end{figure}

We recall that $\tilde Z$ has a projection $\tilde\Phi:\tilde Z\lras\FF_2$. The center of the first blowup $\hat Z\lras \tilde Z$ is precisely the two fibers of $\tilde\Phi$ over the base points of the pencil $\{\ms C_{\lmd}\set \lmd\in\CC\cup\{\infty\}\}$.
(See \eqref{Clmd} and \eqref{Cinf}.)
In terms of the coordinates $(z,u)$, the last base points are $(z,u)=(0,0)$ and $(0,\infty)$.
So if ${\FF}'_2\lras\FF_2$ is the blowup of $\FF_2$ at these two points, $\tilde\Phi$ lifts to a holomorphic map $\hat\Phi:\hat Z\lras \FF'_2$ and we obtain the commutative diagram of holomorphic maps
\begin{equation}\label{bd}
\begin{tikzcd}
\tilde Z' \arrow[r] \arrow[dr,"\tilde\Phi'"] \arrow[ddr,swap,"\tilde f'"] \arrow[drrr, bend left = 50, "\tilde p'"] 
& \hat Z \arrow[r] \arrow[d,"\hat\Phi"] & \tilde Z \arrow[d,swap,"\tilde\Phi"] \arrow[dr,swap,"\tilde p"]\\
& \FF'_2 \arrow[r] \arrow[d]& \FF_2 \arrow[r] &\PP^1\\
& \PP^1
\end{tikzcd}
\end{equation}
Here, the map $\tilde p'$ is the composition of three maps via $\hat Z$ and $\tilde Z$,
$\tilde\Phi'$ is the composition $\tilde Z'\lras\hat Z\lras \FF'_2$,
the horizontal map $\FF_2\lras\PP^1$ is the projection of the ruling $\FF_2\lras\PP^1$ which takes the $u$-coordinate, the vertical map 
$\FF'_2\lras\PP^1$ is a holomorphic map induced by the strict transform of the pencil $\{\ms C_{\lmd}\set \lmd\in\CC\cup\{\infty\}\}$ on $\FF_2$, and $\tilde f'$ is the compositioin $\tilde Z'\lras\FF'_2\lras\PP^1$.
We denote $e$ and $\ol e$ for the exceptional curves 
of the blowup $\FF'_2\lras\FF_2$ over $u=0$ and $u=\infty$ respectively.
(See Figure \ref{f:F}.)
These are sections of the vertical map 
$\FF'_2\lras\PP^1$ in \eqref{bd}.
Note that there exists a natural isomorphism between $e\simeq\PP^1$ and the fiber of the composition $\FF'_2\lras\FF_2\lras\PP^1$ over the point $u=1$,
where the second map $\FF_2\lras\PP^1$ is the ruling map.
In the intermediate space $\hat Z$, there are natural identifications $E_i\simeq C_i\times e$ and $\ol E_i\simeq\ol C_i\times\ol e$ for any index $0\le i\le 2n$.
The map $\tilde f'$ is exactly the one induced by the pencil formed by the strict transforms of members of the invariant pencil
$|F|^{T^2_{\CC}}$ on $\tilde Z$.
We also call this pencil on $\tilde Z'$ by the same name.
This is base point free.
The coordinate $z$ can be used as (non-homogeneous) coordinate on the target space of $\tilde f'$, and $\tilde f'$ has singular fibers exactly over the $(2n+1)$ points $z=a_1,a_2,\dots,a_{2n}, \infty$.
The fiber over $z=a_i$ is (the strict transform of)
the divisor $S_i^+ + S_i^-$ and the fiber over $z=\infty$ is 
(the strict transform of)
the divisor $\tilde Z_0 + \tilde Z_{\infty} + \bm E $.
From Proposition \ref{p:S}, all other fibers of $\tilde f'$ are smooth toric surfaces which are mutually isomorphic.

The $(n+1)$ generators \eqref{gen1} of the linear system $|\bm L|^{T^2_{\CC}}$ on $\tilde Z$ contains every component of the center of the first blowup $\hat Z\lras\tilde Z$ with the same multiplicity $n$.
On the other hand, from the definition of the divisors $X$ and $\ol X$ of $|\bm L|^{\CC^*_s}$ in \eqref{gen2} and the inclusion \eqref{incl}, we readily see that the two generators $X$ and $\ol X$ contain the component $C_i$ ($0\le i\le 2n$) with multiplicity exactly $(2n-i)$ and $i$ respectively. From these and the real structure, it follows that
the least multiplicity of the component $C_i$ 
for members of $|\bm L|^{\CC^*_s}$ is $n - |n-i|$ for any index $0\le i\le 2n$.
In particular, the central sphere $C_n$ is contained with the highest multiplicity $n$.
(This is consistent with the fact that the central spheres $C_n$ and $\ol C_n$ are the source and sink of the $\CC^*_s$-action \cite{Hi25}.)

Therefore, if we define a line bundle $\bm L'$ on $\tilde Z'$ as
\begin{align}\label{L'}
\bm L'
&=\mu_1^*\bm L
-
\sum_{i=1}^{2n-1}\big(n - |n-i|\big) \big(E_i+\ol E_i\big)
\end{align}
then the linear system $|\bm L'|^{\CC^*_s}$ has no fixed component.
Obviously, the base locus of $|\bm L'|^{\CC^*_s}$ is contained in the exceptional divisors of $\mu_1$.
We next determined it precisely.
We define curves on $\tilde Z'$ by
\begin{align}\label{C'}
C'_{i,j} := \begin{cases}
E_i \cap S^-_j  & {\text{if $1\le j\le i < n$}},\\
E_i\cap S^+_j & {\text{if $n<i< j\le 2n$}}.
\end{cases}
\end{align}
Each of these is a smooth rational curve,
and is mapped isomorphically onto $C_i$ by $\mu_1:\tilde Z'\lras \tilde Z$.
On $\tilde Z'$, the divisor $S^+_j+S_j^-$ is the fiber of $\tilde f'$ over $z=a_j$, and the intersections $S^+_j\cap E$ and $S^-_j\cap E$ divide the fiber chain of the projection $\tilde \Phi'|_E:E\lras e$ over the point $\ms C^j\cap e$ into two subchains.

\begin{proposition}\label{p:bs1}
The base locus of the linear system $|\bm L'|^{\CC^*_s}$ on $\tilde Z'$
is contained in the intersection chains $\cup_{j=1}^{2n}(S_j^+ \cup S_j^-)\cap E$ and $\cup_{j=1}^{2n}(S_j^+ \cup S_j^-)\cap \ol E$, and the part contained in $E$ consists of the following $(n-1)$ chains of rational curves
\begin{align}
C'_{1,1}\cup C'_{2,1}\cup C'_{3,1}\cup\dots\cup C'_{n-1,1}
 &\;\cdots\; (n-1)\text{ components, lying on }S_1^-, \label{cbc1}\\
C'_{2,2}\cup C'_{3,2}\cup\dots\cup C'_{n-1,2}
 &\;\cdots\; (n-2)\text{ components, lying on }S_2^-, \label{cbc2}\\
\vdots\notag\\
C'_{n-2,n-2}\cup C'_{n-1,n-2}
 &\;\cdots\; 2\text{ components, lying on }S_{n-2}^-, \label{cbc3}\\
C'_{n-1,n-1}
 &\;\cdots\; 1\text{ component, lying on }S_{n-1}^-, \label{cbc4}
\end{align}
and also, symmetrically about the central component $E_n$, the following $(n-1)$ chains of rational curves
\begin{align}
C'_{n+1,n+2}
 &\;\cdots\; 1\text{ component, lying on }S_{n+2}^+, \label{cbc5}\\
C'_{n+2,n+3}\cup C'_{n+1,n+3}
 &\;\cdots\; 2\text{ components, lying on }S_{n+3}^+, \label{cbc6}\\
\vdots\notag\\
C'_{2n-2,2n-1}\cup C'_{2n-3,2n-1}\cup\dots\cup C'_{n+1,2n-1}
 &\;\cdots\; (n-2)\text{ components, lying on }S_{2n-1}^+, \label{cbc7}\\
C'_{2n-1,2n}\cup C'_{2n-2,2n}\cup C'_{2n-3,2n}\cup\dots\cup C'_{n+1,2n}
 &\;\cdots\; (n-1)\text{ components, lying on }S_{2n}^+. \label{cbc8}
\end{align}
\end{proposition}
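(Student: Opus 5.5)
The plan is to compute, for each of the $n+3$ generators of $|\bm L|^{\CC^*_s}$ given in Proposition \ref{p:gen}, the total transform under $\mu_1$ minus the divisor $\sum_i (n-|n-i|)(E_i+\ol E_i)$ from \eqref{L'}. This yields an explicit basis of $|\bm L'|^{\CC^*_s}$, and the base locus is read off as its common zero set. By Proposition \ref{p:bs} and $\mu_1$ being an isomorphism off $E\cup\ol E$, the base locus of $|\bm L'|^{\CC^*_s}$ lies in $E\cup\ol E$. By the real structure it suffices to treat $E$, and by the symmetry about $C_n$ (which exchanges $X$ and $\ol X$, $S^+_j$ and $S^-_{2n+1-j}$, and is respected by the choice \eqref{sr}) it suffices to treat $E_0,\dots,E_n$.

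\emph{Residual multiplicities.} All members of the $T^2$-invariant pencil contain every $C_i$, since $C_i$ lies in the $\tilde\Phi$-fiber over a base point of $\{\ms C_\lmd\}$. Hence each generator \eqref{gen1} contains $C_i$ with multiplicity $n$, and after the subtraction it still contains $E_i$ with multiplicity $|n-i|$. From \eqref{incl}, $X$ contains $C_i$ with multiplicity $2n-i$, so $X'$ contains $E_i$ with multiplicity $2(n-i)$ for $i<n$ and $0$ for $i\ge n$. Symmetrically, $\ol X'$ contains $E_i$ with multiplicity $0$ for $i\le n$.

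Consequently, for $0\le i<n$ every basis member except $\ol X'$ contains $E_i$. So the base locus on $E_i$ is exactly the curve $\ol X'|_{E_i}$, once we check that $\ol X'$ does not contain $E_{i\pm1}$, which holds for $i\pm1\le n$. On $E_n$ no basis member contains $E_n$, and the base locus there is the common intersection of the restrictions of the members to $E_n$.

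\emph{Computing $\ol X'|_{E_i}$ for $i<n$.} Write $\ol X=\sum_j S_j^-+2\ol{\bm D}$. The divisor $\ol{\bm D}$ is disjoint from $C_i$. For $j\le i$, the surface $S_j^-$ contains $C_i$ and is smooth along it (it is toric and is a component of the fiber of $\tilde f'$ over $a_j$). Thus its total transform is its strict transform plus $E_i$, and these $i$ copies of $E_i$ are exactly cancelled by the subtraction. What remains on $E_i$ is $\sum_{j\le i}S_j^-\cap E_i=\bigcup_{j\le i}C'_{i,j}$, plus possibly contributions from strict transforms $S_j^-$ with $j>i$ and from the small resolution \eqref{sr} at the nodes $L_i\cap E$ and $L_{i+1}\cap E$. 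Reorganising $\bigcup_{1\le j\le i\le n-1}C'_{i,j}$ by $j$ gives exactly the chains \eqref{cbc1}--\eqref{cbc4}, each lying on $S_j^-$. All of these lie on the fibers $(S_j^+\cup S_j^-)\cap E$, which gives the first assertion.

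\emph{The cases $E_0$ and $E_n$, and the main obstacle.} The step I expect to be delicate is the local analysis at the nodes $L_j\cap E$. One must show that, with the choice \eqref{sr} (at $L_j$ for $j\le n$, the divisors $E_j$ and $S_j^+$ are blown up), the strict transform $S_{i+1}^-$ meets $E_i$ only in isolated points, so that it contributes no curve. For $i=0$ this is what makes $\ol X'\cap E_0=\emptyset$. No $S^-_j$ contains $C_0$, and $S_1^-$ could only touch $E_0$ at the node $L_1\cap E$, where the small resolution separates $S_1^-$ from $E_0$. I would verify this in local toric coordinates: at such a node the four divisors $E_{j-1},E_j,S_j^+,S_j^-$ look like $\{ab=cd\}$ with $E_{j-1},E_j,S_j^\pm$ the four coordinate planes. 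The chosen small resolution makes $E_{j-1}\cap S_j^-$ a single point rather than a curve, and $E_j\cap S_j^-$ a curve, namely $C'_{j,j}$ when $j<n$.

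For $E_n$ I would restrict three members to $E_n\simeq C_n\times e$ (blown up twice). The member $nS_\lmd+\bm D+\ol{\bm D}$ restricts to $n$ times the fiber over $\lmd$. The restriction of $X'$ is supported on fibers over $a_j$ with $j\le n$ together with the intersections with the blown-up nodes, and that of $\ol X'$ on fibers over $a_j$ with $j\ge n+1$, again adjusted by the two small resolutions at $L_n$ and $L_{n+1}$. Since these fibers are pairwise disjoint for generic $\lmd$, $E_n$ carries no base locus. The remaining care is again ensuring the small resolution creates no common curve along $E_n\cap E_{n\pm1}$.
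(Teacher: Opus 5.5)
Your approach is the paper's: transform the $n+3$ generators of Proposition~\ref{p:gen} by \eqref{L'} and intersect the results. Your multiplicities agree with \eqref{gen3} and \eqref{gen4}. The observation that for $i<n$ every member except $\ol X'$ contains $E_i$, so that the base locus on $E_i$ is $\ol X'\cap E_i$, is sound, and together with your local model at the nodes it is a more explicit version of the paper's one-line check. One correction in that model: with \eqref{sr}, at $L_j\cap E$ ($j\le n$) the planes $E_j,S_j^+$ of one ruling are blown up. The planes $E_{j-1},S_j^-$ of the other ruling then become \emph{disjoint}; they do not meet in ``a single point''. Disjointness is what you need, since a point of $E_0\cap S_1^-$ would be a base point on $E_0$.

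The real error is in the $E_n$ paragraph.
\begin{itemize}
\item Every member \eqref{gen4} contains $E_{n-1}$ and $E_{n+1}$ with multiplicity one. So $nS_\lmd+\bm D+\ol{\bm D}$ restricts to $E_n$ as $nF_\lmd+(E_{n-1}\cap E_n)+(E_{n+1}\cap E_n)$, not as $nF_\lmd$ (here $F_\lmd$ is the fiber over $\lmd\in e$).
\item Likewise $X'|_{E_n}\ge 2(E_{n-1}\cap E_n)$ and $\ol X'|_{E_n}\ge 2(E_{n+1}\cap E_n)$.
\item You also have the fibers swapped. Since $S_j^+\supset C_n$ iff $j\ge n+1$, it is $X'$ that picks up the fibers over $a_j$ with $j\ge n+1$.
\end{itemize}
Consequently ``$E_n$ carries no base locus'' is false. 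The points $(C_{n-1}\cap C_n)\times\{a_j\}$ for $j<n$ and $(C_n\cap C_{n+1})\times\{a_j\}$ for $j>n+1$ are base points. They are the endpoints of the chains, which $\mu_2$ later blows up on $E_n$ (cf.\ the proof of Proposition~\ref{p:El}).

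The repair is short.
\begin{itemize}
\item The moving parts $nF_\lmd$ and $nF_\infty$ are disjoint, so the base locus on $E_n$ lies in $(E_{n-1}\cup E_{n+1})\cap E_n$.
\item $\ol X'$ contains neither $E_{n-1}$ nor $E_n$. By your own computation on $E_{n-1}$, the only curves of $\ol X'\cap E_{n-1}$ are the $C'_{n-1,j}$, which map onto $C_{n-1}$. The curve $E_{n-1}\cap E_n$ instead lies over the point $C_{n-1}\cap C_n$. Hence $\ol X'$ meets $E_{n-1}\cap E_n$ only at the endpoints of the $C'_{n-1,j}$.
\item Symmetrically, $X'$ meets $E_{n+1}\cap E_n$ only at the endpoints of the $C'_{n+1,j}$.
\end{itemize}
So $E_n$ contains no base curve, and every base point on it lies on the listed chains, which is exactly what the proposition requires.
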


Of course, from the real structure, the part of the base locus that is contained in $\ol E$ 
is given as the complex conjugate of these loci.
Note that the proposition means that on $\tilde Z'$ there is no base point at all on the divisors $S_n^++S_n^-$ and $S_{n+1}^++S_{n+1}^-$.

\proof
From \eqref{incl} and \eqref{L'}, the transformation of the divisor $X$ in \eqref{gen2} into $\tilde Z'$ can be calculated as
\begin{align}\label{gen3}
\tilde X':=\sum_{j=1}^{2n} S_j^+ 
+ 2\sum_{i=1}^n i \big(E_{n-i} + \ol E_{n+i}\big)
+ 2\bm D. 
\end{align}
Note that this does not contain $E_{n+i}$ nor $\ol E_{n-i}$ as a component for any $i\ge 0$.
The transformation of the generator $\ol X$ is the complex conjugate of \eqref{gen3}.
The intersection of these two transformations can be confirmed to consist of the chains in the proposition exactly.

Also it is easy from \eqref{L'} to see that the transformations of the $(n+1)$ generators in \eqref{gen1} are given by
\begin{align}\label{gen4}
(n-m)S_{\infty}
    + mS_{\lmd} + \bm D + \ol {\bm D}
    + \sum_{i=0}^{2n}|n-i|(E_i + \ol E_i),\quad 0\le m\le n.\end{align}
In particular, they contain $E_i$ and $\ol E_i$ for any $0<i<2n$ with $i\neq n$.
Since the above intersection is clearly contained in these $E_i$ and $\ol E_i$, it is indeed the base locus of 
$|\bm L'|^{\CC^*_s}$.
\proofend

\medskip
Thus, the base locus on $\tilde Z'$ consists of several chains of rational curves, each of which is contained in either $S^+_j$ or $S^-_j$ for some $1\le j\le 2n$ with $j\neq n,n+1$. This means that {\em the base locus on $\tilde Z$ splits into several chains by $\mu_1:\tilde Z'\lras \tilde Z$
 but they are still on $S_j^+$ or $S_j^-$ for some $j$}.
When $n=2$, they are only on $S_1^+\cup S_1^-$ and $S_4^+\cup S_4^-$ and the chains consist of a single curve respectively.
(They are $C'_{1,1}$ and $C'_{3,4}$ and their conjugate.)

As the second step of the elimination of the base locus of $|\bm L|^{\CC^*_s}$, let 
$$
\mu_2:\tilde Z\uptwo \lras\tilde Z'
\qandq
E\uptwo_{i,j}, \ol E\uptwo_{i,j}
$$
be the blowup along all chains of the base locus of 
$|\bm L'|^{\CC^*_s}$ obtained in Proposition \ref{p:bs1} and the exceptional divisors over the curves $C'_{i,j}$ and $\ol C'_{i,j}$ respectively. 
Again, the real structure and the $T^2_{\CC}$-action naturally lift on $\tilde Z\uptwo$. 
By $\mu_2$, a string of the exceptional divisors
\begin{align}\label{exe01}
E\uptwo_{i,i}\cup E\uptwo_{i+1,i}\cup \dots \cup E\uptwo_{n-1,i}
\end{align}
are inserted in between $S^-_i$ with $i<n$ and the exceptional divisor $E$, and 
\begin{align}\label{exe02}
E\uptwo_{i-1,i}\cup E\uptwo_{i-2,i}\cup\dots\cup E\uptwo_{n+1,i}
\end{align}
are inserted in between $S_i^+$ with $i>n+1$ and $E$.
(See Figure \ref{f:web} for the case $n=4$ and $i=1$.)
As a consequence, these divisors are separated in $\tilde Z\uptwo$.
In terms of the map $\tilde f':\tilde Z'\lras\PP^1$ in the diagram \eqref{bd}, 
the strings \eqref{exe01} and \eqref{exe02} are inserted in the fiber $(\tilde f')\inv(a_i)$.

Since the blowup centers of $\mu_2$ are chains if $n>2$, just like $\hat Z$,  $\tilde Z\uptwo$ has nodes over the singularities of the chains if $n>2$.
(In Figure \ref{f:web}, they are indicated by the dotted points.)
If $n=2$, $\tilde Z\uptwo$ has no node.
On the other hand, this time, a component of the exceptional divisor of $\mu_2$ is not isomorphic to $\PP^1\times\PP^1$ if it is any one of the two end components of the string.
(For the cases $i=n-1$ and $i=n+2$, where the string consists of a single curve, it is not interpreted as an end component and it is isomorphic to $\qdr$.)
Namely, for the former string \eqref{exe01}, the components $E\uptwo_{i,i}$ and $E\uptwo_{n-1,i}$ are isomorphic to $\FF_1$ if $i\neq n-1$, while all other components are isomorphic to $\PP^1\times\PP^1$.
The same thing holds for the latter string \eqref{exe02}.

We use the same symbols for the exceptional divisors $E_i, \ol E_i\subset \tilde Z'$ and their strict transforms into $\tilde Z\uptwo$.
Then in $\tilde Z\uptwo$, the central components $E_n$ and $\ol E_n$ have the following property, which will be important later.
\begin{proposition}\label{p:El}
In the variety $\tilde Z\uptwo$, the central divisors $E_n$ and $\ol E_n$ are isomorphic to the general fiber $\tilde{\ms T}=\tilde p\inv(1)$.
\end{proposition}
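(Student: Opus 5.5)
The plan is to follow the divisor $E_n$ through the two modifications $\tilde Z'\lras\hat Z\lras\tilde Z$ and $\mu_2:\tilde Z\uptwo\lras\tilde Z'$. At each step we record which points of $E_n$ are blown up. Finally we compare the resulting surface with the description of $S=\tilde{\ms T}$ as a conic bundle over the $z$-line with $2n$ reducible fibres. By the real structure it suffices to treat $E_n$.

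\emph{Step 1 (the surface $E_n\subset\hat Z$).} In $\hat Z$ we have $E_n\simeq C_n\times e\simeq\qdr$. Here the projection to $e$ is the restriction of $\hat\Phi$, and $e$ is identified with the $z$-line, i.e.\ the fibre of $\FF_2\lras\PP^1$ over $u=1$. Write $p_-:=C_{n-1}\cap C_n$ and $p_+:=C_n\cap C_{n+1}$. Then $\{p_\pm\}\times e$ are two disjoint $(0)$-sections of $E_n\lras e$, namely $E_n\cap E_{n-1}$ and $E_n\cap E_{n+1}$. By Proposition~\ref{p:tl}, the twistor line $L_i$ lies over $\ms C^i=\{z=a_i u\}$. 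Hence the node $L_n\cap E$ is the point $(p_-,a_n)\in E_n$, and $L_{n+1}\cap E$ is $(p_+,a_{n+1})$. By \eqref{sr}, the small resolutions at these two nodes blow up $E_n$. So in $\tilde Z'$ the surface $E_n$ is $\qdr$ blown up at $(p_-,a_n)$ and $(p_+,a_{n+1})$, and $\mu_1$ does nothing else to $E_n$. The other nodes on $E_n$'s neighbours are either not on $E_n$ or are resolved by blowing up the other pair of divisors.

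\emph{Step 2 (effect of $\mu_2$).} By Proposition~\ref{p:bs1}, no centre of $\mu_2$ lies in $E_n$. However, for $1\le j\le n-1$ the end curve $C'_{n-1,j}=E_{n-1}\cap S^-_j$ maps isomorphically onto $C_{n-1}$. It therefore meets $E_n$ in exactly one point, namely $(p_-,a_j)$, the point of $E_n\cap E_{n-1}$ over $z=a_j$. Symmetrically, for $n+2\le j\le 2n$ the curve $C'_{n+1,j}$ meets $E_n$ at $(p_+,a_j)$. I will check in local toric coordinates that each such intersection is transversal. I will also check that these points are not among the nodes of $\tilde Z\uptwo$, since those lie over the interior singular points of the chains, away from $E_n$. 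Granting this, blowing up a smooth curve meeting a smooth divisor transversally at one point restricts, on the divisor, to the blowup at that point. This step is the main obstacle: one must verify transversality and that $E_n$ stays smooth at these points, which I would do using the $T^2_{\CC}$-invariance, since everything is toric near $p_\pm$. Conclusion: $E_n\subset\tilde Z\uptwo$ is $\qdr$ blown up at $n$ points $(p_-,a_1),\dots,(p_-,a_n)$ on one $(0)$-section and $n$ points $(p_+,a_{n+1}),\dots,(p_+,a_{2n})$ on the other, all lying on distinct fibres.

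\emph{Step 3 (comparison with $S$).} In the surface just obtained, the two sections become disjoint $(-n)$-curves. The ruling over $e$ has exactly $2n$ reducible fibres, over $a_1,\dots,a_{2n}$, each a pair of $(-1)$-curves. On $S$, as described in Section~\ref{s:2}, $D_1$ and $\ol D_1$ are disjoint $(-n)$-sections, and the reducible fibres lie over the same $a_i$. Contract, for $i\le n$, the component of the fibre over $a_i$ that meets $D_1$. For $i>n$, contract the component meeting $\ol D_1$. This yields a $\PP^1$-bundle over $\PP^1$ with two disjoint $(0)$-sections, i.e.\ $\qdr$. The surface $S$ is recovered by blowing up $n$ points on one of these sections and $n$ on the other, over the prescribed $a_i$. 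Such a surface is determined up to isomorphism by these data. Indeed, fix fibre coordinates with the two sections at $0$ and $\infty$; then the blown-up points are forced, and the only choices are the base points $a_i$, which agree in both cases because $e$ carries the coordinate $z$. Hence $E_n\simeq\tilde{\ms T}$. Applying $\sigma$ gives the same statement for $\ol E_n$.
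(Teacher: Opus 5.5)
Your proposal is correct and follows the paper's own argument. Like the paper, you record the two points of $E_n\simeq\qdr$ blown up by the small resolutions at $L_n\cap E$ and $L_{n+1}\cap E$, then the $2(n-1)$ points where the end curves $C'_{n-1,j}$ and $C'_{n+1,j}$ of the $\mu_2$-centres meet $E_n$, and you match the resulting ruled surface (two disjoint $(-n)$-sections, $2n$ reducible fibres over $a_1,\dots,a_{2n}$) with $\tilde{\ms T}$. Your transversality check and the explicit blow-down to $\qdr$ only fill in details the paper leaves implicit; your range $n+2\le j\le 2n$ for the second set of points is the correct one, since the point over $a_{n+1}$ was already blown up by the small resolution.
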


\proof
Since all the blowups and the small resolutions preserve the real structure, it suffices to show the claim for $E_n$.
On the space $\hat Z$, the divisor $E_n$ is isomorphic to $C_n\times e\simeq\qdr$, where $e$ was one of the exceptional curves of the blowup $\FF'_2\lras\FF_2$.
%naturally identified with the base $\PP^1$ of the map $f$. 
From the choice of the small resolutions of $\hat Z$, by $\tilde Z'\lras \hat Z$, the divisor $E_n$ is blown up at two points 
$(C_{n-1}\cap C_n)\times \{a_{n}\}$ and $(C_{n}\cap C_{n+1})\times \{a_{n+1}\}$ under the above identification.
Next, by the blowup $\mu_2:\tilde Z\uptwo \lras \tilde Z'$, 
$E_n$ is further blown up at distinct $2(n-1)$ points;
explicitly, the $(n-1)$ points $(C_{n-1}\cap C_n)\times \{a_{i}\}$
for $1\le i<n$ and other $(n-1)$ points $(C_{n}\cap C_{n+1})\times \{a_{i}\}$
for $n<i\le 2n$, still under the above identification.

Therefore, if $\tilde f\uptwo:\tilde Z\uptwo\lras\PP^1$ denotes the composition $\tilde Z\uptwo\stackrel {\mu_2}\lras\tilde Z'\stackrel {\tilde f'}\lras\PP^1$,
then in $\tilde Z\uptwo$, the restriction of $\tilde f\uptwo$
to $E_n$ has reducible fibers exactly over $z=a_i$
($1\le i\le 2n$), and the strict transforms of the two sections
$(C_{n-1}\cap C_n)\times\PP^1$ and 
$(C_{n}\cap C_{n+1})\times\PP^1$ are sections of $\tilde f'|_{E_n}:E_n\lras\PP^1$ whose self-intersection numbers are $(-n)$.
On the other hand, the surface $\tilde{\ms T}$ also has the same structure from the equation \eqref{Hi1} (or Figure \ref{f:mt}).
Hence $E_n$ is isomorphic to $\tilde{\ms T}$
\proofend

\medskip
Next we investigate the base locus on $\tilde Z\uptwo$.
From \eqref{gen3}, each chain given in Proposition \ref{p:bs1} is contained in either $\tilde X'$ or $\sigma(\tilde X')$ with multiplicity exactly one.
For example, each component of the first chain
$C'_{1,1}\cup C'_{2,1}\cup C'_{3,1}\cup\dots\cup C'_{n-1,1}$
is contained in $\sigma(\tilde X')$ with multiplicity exactly one because they are included only in the component $S_1^-$.
This means that the least multiplicity for generators of the system $|\bm L'|$ along components of all chains of base cuves is exactly one.
Hence, defining a line bundle 
$\bm L\uptwo$ on $\tilde Z\uptwo$ as the pullback $\mu_2^*\bm L'$ minus the sum of all exceptional divisors of $\mu_2$ with multiplicity one,
then the linear system $|\bm L\uptwo\,|^{\CC^*_s}$ has no fixed component.

Since the multiplicities of the two components 
$E_{n-1}$ and $E_{n+1}$ for the generators \eqref{gen4} are both one,
their transformations into $\tilde Z\uptwo$ 
do not contain $E\uptwo_{n-1,j}$ ($1\le j<n$) and $E\uptwo_{n+1,j}$ ($n+1<j\le 2n$) as components. 
Using this, again by calculating the intersection of the generators of $|\bm L\uptwo|^{\CC^*_s}$, we can see that, if we define curves in $\tilde Z\uptwo$ by $C\uptwo_{i,j}:=E\uptwo_{i,j}\cap S^-_{j}$ for $j< n$ and
 $C\uptwo_{i,j}:=E
 \uptwo_{i,j}\cap S^+_j $ for $j> n$, then 
the base locus of $|\bm L\uptwo\,|^{\CC^*_s}$ consists of the chains
\begin{align}\label{ch1}
C\uptwo_{i,i}\cup C\uptwo_{i+1,i}\cup\dots\cup C\uptwo_{n-2,i}\quad
i< n-1, 
\end{align}
which is one shorter than the string \eqref{exe01}, and
\begin{align}\label{ch2}
C\uptwo_{i-1,i}\cup C\uptwo_{i-2,i}\cup\dots\cup C\uptwo_{n+2,i}\quad
i>n+2,
\end{align}
which is one shorter than the string \eqref{exe02},
and of course the complex conjugate of these chains.
Note that none of the nodes of $\tilde Z\uptwo$ belong to these chains.
Note also that the divisor $E\uptwo_{i,j}$ intersects $S_m^-$ or $S_m^+$ only when $m=j$.
The chains \eqref{ch1} and \eqref{ch2} are again $T^2_{\CC}$-invariant.
Thus, {\em by the blowup $\mu\uptwo:\tilde Z\uptwo\lras \tilde Z'$, new base curves appear on $S^+_j$ and $S^-_j$ again as chains, but with length one shorter than the previous chain, with the lost component being at the side of the central component.}
(See Figure \ref{f:web} for the case $n=4$ and $i=1$.)
In particular, the base curves on $E_{n-1}$ and $E_{n+1}$
(or equivalently, those on $S^-_{n-1}$ and $S^+_{n+2}$), both of which consist of a single curve, are eliminated by $\mu_2$.
In particular, the base locus is completely eliminated on $\tilde Z\uptwo$ in the case $n=2$.

We can repeat this procedure of blowup until there is no base curve.
In fact, because on $\tilde Z'$ the longest chains of base curves lies on $S^+_1\cup S^-_1$ and $S_{2n}^+\cup S_{2n}^-$, this process of blowup finishes when the base curves on these components are eliminated, and since the length of the longest chain is exactly $(n-1)$ as in \eqref{cbc1} and \eqref{cbc8}, we finish the process of elimination when we blowup $(n-1)$ times from $\tilde Z'$; namely, in the above notation, when we reach the space $\tilde Z^{(n)}$.
The exceptional divisor on $\tilde Z^{(n)}$ that lies over the curve $C'_{i,j}\subset \tilde Z'$ is a string of divisors which is explicitly given by
\begin{align}\label{exe1}
E\uptwo_{i,j}\cup E^{(3)}_{i,j}\cup\dots\cup E^{(n-i+1)}_{i,j}&
{\text{ if $1\le j\le i < n$}},\\
E\uptwo_{i,j}\cup E^{(3)}_{i,j}\cup\dots\cup E^{(i-n+1)}_{i,j}&
{\text{ if $n<i\le j\le 2n$}}.\label{exe2}
\end{align}
These are strings of divisors which grow ``orthogonally'' to the strings of the exceptional divisors of each blowup $\mu_m$.

In each step of the blowup $\mu_m:\tilde Z\upm\lras\tilde Z^{(m-1)}$
all the exceptional divisors $E\upm_{i,j}$ are ruled surfaces, but one of the end components is blown up through the next blowup $\mu_{m+1}:\tilde Z^{(m+1)}\lras\tilde Z\upm$ because an end component of the exceptional divisor of 
$\mu_m$ intersects an end component of the center of $\mu_{m+1}$ transversally at one point. (See Figure \ref{f:web}.)
So in $\tilde Z\upnn$, each component is not necessarily isomorphic to a ruled surface and some of them are blown-up ruled surfaces.

Thus, the base locus of the linear system $|\bm L|^{\CC^*_s}$ on $\tilde Z$ is completely eliminated on $\tilde Z^{(n)}$.
This space has many nodes (if $n>2$) because we have blown up chains of curves and did not resolve the resulting nodes at all.
We also note that in the blowups $\mu_3,\mu_4,\dots,\mu_n$, no change occurs on the central components $E_n$ and $\ol E_n$ because all blown-up chains do not intersect these components.
So from Proposition \ref{p:El}, in $\tilde Z^{(n)}$ also, 
the divisors $E_n$ and $\ol E_n$ are isomorphic to $\tilde{\ms T}$.
Thus, {\em the (blown up) minitwistor space $\tilde{\ms T}$ appears as a component of the exceptional divisors of the blowups for eliminating the indeterminacy locus of the meromorphic quotient map $\tilde{\Psi}:\tilde Z\lras \ms T$.}

Now it is not difficult to prove the following.
Let $\tilde\Psi\upnn:\tilde Z\upnn\lras \ms T$ be the composition $\tilde\Psi\circ\mu_1\circ\mu_{2}\circ\dots\circ \mu_n$. 
This is the elimination of the indeterminacy of $\tilde\Psi$ and hence has no base point.

\begin{proposition}\label{p:mr}
The restriction of the holomorphic map 
$\tilde\Psi\upnn:\tilde Z\upnn\lras \ms T$ to (the strict transforms of) the central components $E_n$ and $\ol E_n$ may also be identified with the minimal resolution of $\ms T$.
\end{proposition}

\proof
From reality, it suffices to prove for $E_n$.
Since every blowup appearing after the space $\tilde Z\uptwo$ does not affect $E_n$ as above, 
it is enough to prove the same assertion for $\tilde\Psi\uptwo:=\tilde\Psi\circ\mu_1\circ\mu_2:\tilde Z\uptwo\lras\ms T$.
In the argument for the elimination of the base locus, we obtained concrete generators of the linear system $|\bm L\uptwo|^{\CC^*_s}$, and it is immediate to express their restrictions to $E_n\subset\tilde Z\uptwo$ explicitly.
From this, we obtain that the restrictions are the same as the restrictions of the generators of $|\bm L|^{\CC^*_s}$ obtained in Proposition \ref{p:gen} to the fiber $\tilde{\ms T} = \tilde p\inv(1)$.
As in the final part of the proof of Proposition \ref{p:ZT}, these restrictions induce the contraction mapping of the two $(-n)$-curves
on $E_n\simeq\tilde{\ms T}$ and this is identified with the minimal resolution of $\ms T$.
\proofend

\medskip
Let $\tilde p\upnn:\tilde Z\upnn\lras\PP^1$ be the composition $\tilde p\circ \mu_1\circ\dots\circ\mu_n$. Since all the modifications to obtain $\tilde Z\upnn$ from $\tilde Z$ are done in the fibers $\tilde p\inv(0)$ and $\tilde p\inv(\infty)$.
we have a holomorphic isomorphism $(\tilde p\upnn)\inv (\CC^*)\simeq\CC^*\times\tilde{\ms T}$ by the $\CC^*_s$-action.
From the previous proposition, the remaining fibers (over $0$ and $\infty$) contain 
the component which is biholomorphic to
$\tilde{\ms T}$.
From these, it would be natural to expect that the space $\tilde Z\upnn$ is bimeromorphic to the product $\tilde{\ms T}\times\PP^1$ through some explicit bimeromorphic changes.
In the next two subsections, we show that this is really the case.

\subsection{Modifications using the nodes}
In this subsection, we apply some bimeromorphic changes to $\tilde Z\upnn$, using all its nodes.
%The operations are illustrated in Figure \ref{f:web2} again in the case $n=4$ for the fiber over the point $z=a_1$.

\begin{figure}
\includegraphics{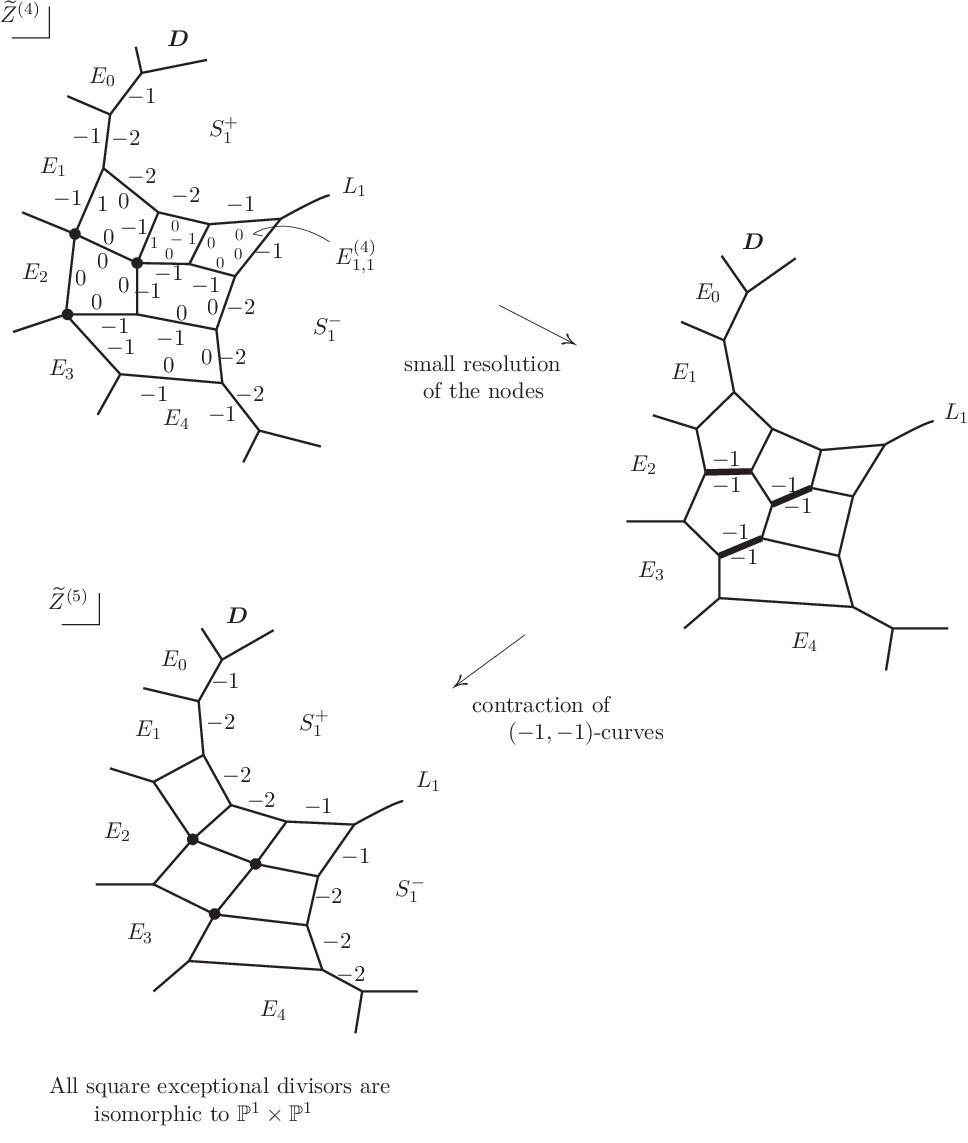}
\caption{The transformations using the nodes.
}
\label{f:web2}
\end{figure}

We recall that each node of $\tilde Z\upnn$ arises by the blowup $\mu_m:\tilde Z\upm\lras
\tilde Z^{(m-1)}$ for some uniquely determined number $1<m< n$,
where the case $m=n$ is excluded because the center of $\mu_n$ consists of smooth rational curves, 
and the node appears over a singularity of the chain
which is a connected component of the base locus of $|\bm L^{(m-1)}|^{\CC^*_s}$. 
Using the lift of the map $\tilde f':\tilde Z'\lras\PP^1$ in \eqref{bd} to $\tilde Z\upnn$ which is given by $\tilde f\upnn:= \tilde f'\circ \mu_1\circ\dots\circ\mu_n$, the nodes belong to the reducible fibers $(\tilde f\upnn)\inv(a_j)$ with
$j=1,2,\dots,2n$, but the fibers over the two points $z=a_{n+1}, a_{n+2}$ have no node because, as we have already mentioned, only the blowup $\mu_2:\tilde Z\uptwo\lras\tilde Z$ changes the fibers over these points and further, all the blowup centers on these two fibers are irreducible (and smooth).
In particular, in the case $n=2$, no change will be made.
In the following, we discuss only the nodes
that lie over the chain $C_0\cup C_1\cup\dots \cup C_{2n}\subset Z$, because the operation for the nodes lying over the conjugate chain is automatically determined from the former by the real structure.
The operations below are illustrated in Figure \ref{f:web2} in the case $n=4$ for the fiber over the point $z=a_1$.

Any singularity of the chains in $\tilde Z\upm$ lying over $C_0\cup\dots\cup C_{2n}$ can be written $C^{(m)}_{i,j}\cap C^{(m)}_{i+1,j}$ for some $i,j$ and $m$.
This node belongs to the intersection curve
$E\upm_{i,j}\cap E\upm_{i+1,j}$, which is the inverse image of $C^{(m-1)}_{i,j}\cap C^{(m-1)}_{i+1,j}$.
If $i<n$, then we take the small resolution of the node $C^{(m)}_{i,j}\cap C^{(m)}_{i+1,j}$ which blows up the component $E\upm_{i,j}$, and 
if $i>n$, then we take the small resolution of the node which blows up another component $E\upm_{i+1,j}$.
Then the symmetry about the central component $E_n$ is preserved.
This small resolution makes
the intersection curve $E\upm_{i,j}\cap E\upm_{i+1,j}$ a $(-1,-1)$-curve 
on the small resolution.
Namely, it becomes a smooth rational curve with normal bundle $\ms O(-1)\oplus\ms O(-1)$. Hence, we can contract it to a node.
Let $\tilde Z^{(n+1)}$ be the threefold obtained by applying this change (of small resolution and contraction of the $(-1,-1)$-curve) to all these nodes as well as all their images by the real structure.
The number of the nodes is preserved from $\tilde Z\upnn$ to $\tilde Z^{(n+1)}$.
As we have mentioned, not all components $E\upm_{i,j}$ in $\tilde Z\upnn$ are isomorphic to a ruled surface, but {\em the transformation from $\tilde Z\upnn$ to $\tilde Z^{(n+1)}$ makes all these components isomorphic to $\qdr$}.
The real structure and the $T^2_{\CC}$-action is succeeded on $\tilde Z^{(n+1)}$.

\subsection{Blowdown to $\tilde{\ms T}\times\PP^1$}
In this subsection, we show that the variety $\tilde Z^{(n+1)}$ can be successively blowdown to the product $\tilde{\ms T}\times\PP^1$.
The operations are illustrated in Figure \ref{f:web3} again in the case $n=4$ for the fiber over the point $z=a_1$.

\begin{figure}
\includegraphics{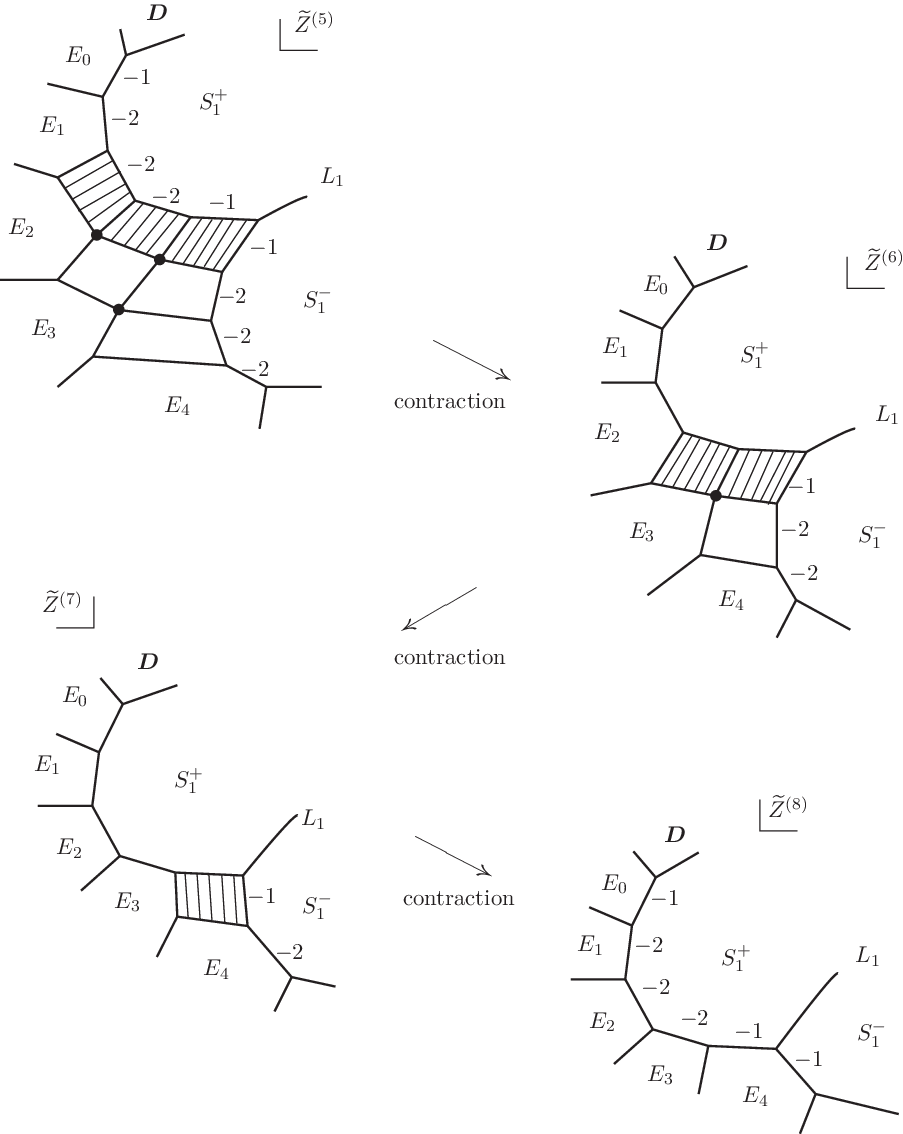}
\caption{Blowdown to $\tilde Z^{(2n)}$}
\label{f:web3}
\end{figure}

For this, among many of these $\qdr$ in $\tilde Z^{(n+1)}$, we take a look at the string of components 
that are over the curve $C'_{i,i}\subset \tilde Z'$, for each $i<n$ and $C'_{i,i+1}\subset \tilde Z'$ for $i>n$.
These curves are one of the end components of the chain of base curves on $\tilde Z'$ given in Proposition \ref{p:bs1}, and they are written as the first component in \eqref{cbc1}--\eqref{cbc4} for the former and \eqref{cbc5}--\eqref{cbc8} for the latter.
These strings of the exceptional divisors 
are already shown in \eqref{exe1} and \eqref{exe2} 
putting $j=i$ for the former and $j=i+1$ for the latter.
In particular, they consist of $|n-i|$ components.
The former string (resp.\,the latter string) faces $S^+_i$ (resp.\,$S^-_i$) in the sense that each component
intersects $S_i^+$ (resp.\,$S^-_i$) in a curve.

As above, for every $i,j,m$, the transformation of the component $E\upm_{i,j}\subset \tilde Z\upnn$ into $\tilde Z^{(n+1)}$ 
is isomorphic to $\qdr$.
By calculating the normal bundles of these divisors in 
$\tilde Z^{(n+1)}$, we can see that 
{\em the string \eqref{exe1} with $j=i$
can be simultaneously blown down to a chain of rational curves in $S^+_i$ of the same length}. (See Figure \ref{f:web3}.)
From the symmetry about the central component $E_n$, 
{\em the string \eqref{exe2} with $j=i+1$ can also be blown down to a chain of rational curves in $S_i^-$ of the same length.}
By the real structure, the conjugate of these two strings
can also be blown down to chains of rational curves in $S_i^+$ or $S_i^-$.
Let $\tilde Z^{(n+2)}$ be the variety obtained by blowing down these four strings of $\qdr$. 
The real structure and the $T^2_{\CC}$-action descend on $\tilde Z^{(n+2)}$.
This blowdown does not change the structure of the remaining $\qdr$ and therefore they remain to be isomorphic to $\qdr$. 
Among these $\qdr$ in $\tilde Z^{(n+2)}$, if $i<n$, then
the ones lying over the curve $C'_{i+1,i}\subset \tilde Z'$ 
(i.e.,\,the second components in \eqref{cbc1}--\eqref{cbc3})
constitute a string of length $(n-i-1)$, which is facing $S_i^+$, just like the former string over $C'_{i,i}$.
By calculating the normal bundles, this string of divisors can also be simultaneously blown down to a chain of rational curves in $S_i^+$ of the same length. (See Figure \ref{f:web3}.)
By the symmetry about the central component,
if $i>n$, then the string of $\qdr$ lying over $C'_{i,i+2}$ 
(i.e.,\,the second components in \eqref{cbc6}--\eqref{cbc8})
can also be blown down to a chain of rational curves in $S_i^-$ of the same length.
By the real structure, the string over $\ol C'_{i+1,i}$ for $i<n$
and the string over $\ol C'_{i,i+2}$ for $i>n+1$ can also be blown down to chains of rational curves in $S_i^-$ and $S_i^+$ respectively.
Let $\tilde Z^{(n+3)}$ be the blowdown of these four strings of $\qdr$ in $\tilde Z^{(n+2)}$.
Again, $\tilde Z^{(n+3)}$ remains to have a $T^2_{\CC}$-action and the real structure.

We can repeat this blowdown process for the strings of $\qdr$ that face $S_i^+$ or $S_i^-$ until all the $\qdr$ are blown down to rational curves in $S_i^+$ or $S_i^-$.
Again, this process finishes in $(n-1)$ times, so we denote
$\tilde Z^{(2n)}$ for the resulting space, so that in this space, no $\qdr$, namely no component of the exceptional divisor of 
$\tilde Z^{(n)}\lras \tilde Z'$, remains.
Note that {\em all nodes of $\tilde Z^{(n+1)}$ disappear in this process and the space $\tilde Z^{(2n)}$ is smooth.}

The manifold $\tilde Z^{(2n)}$ remains to have a $T^2_{\CC}$-action and a real structure, as well as holomorphic maps $\tilde f^{(2n)}:\tilde Z^{(2n)}\lras\PP^1$ and 
$\tilde\Phi^{(2n)}:\tilde Z^{(2n)}\lras\FF'_2$
which are naturally induced from
the holomorphic maps $\tilde f':\tilde Z'\lras\PP^1$ in the diagram \eqref{bd}
and the projection $\tilde\Phi':\tilde Z'\lras\FF'_2$ respectively.
Since all the operations from $\tilde Z'$ to $\tilde Z^{(2n)}$ are done in the fibers of $\tilde f'$ over the $2n$ points $z=a_1,\dots,a_{2n}$, 
the two spaces $\tilde Z'$ and $\tilde Z^{(2n)}$ are biholomorphic to each other outside these fibers.
Next, we take a look at the effect of the transformations on the components $S_i^+$ and $S_i^-$ of these reducible fibers.
In the process from $\tilde Z'$ to $\tilde Z\upnn$, 
if $i\le n$ (resp.\,if $i\ge n+1$), then the divisor $S^+_i$ is blown up $(n-i)$ times (resp.\,$(i-n-1)$ times) and each blowup is done at the intersection of the twistor line $L_i$ with the newest exceptional divisor of the blowup,
which is over $C_1\cup\dots\cup C_{2n}$ (resp.\,over $\ol C_1\cup\dots\cup \ol C_{2n}$); see Figure \ref{f:web}.
In particular, the two divisors $S_n^+$ and $S_{n+1}^+$ receive no effect from the blowups up to $\tilde Z\upnn$.
The effect on the other components $S_i^-$ is known just by taking the complex conjugate.

In the next process from $\tilde Z\upnn$ to $\tilde Z^{(n+1)}$,
$S_i^+$ and $S_i^-$ receive no effect for any $i$. (See Figure \ref{f:web2}.)
In the subsequent process from $\tilde Z^{(n+1)}$ to $\tilde Z^{(2n)}$, 
if $i\le n$ (resp.\,if $i\ge n+1$), then $S_i^-$ is blown down $(n-i)$ times (resp.\,$(i-n-1)$ times) and each of the blowdown contracts the $(-1)$-curve which intersects $L_i$ and 
which is over $C_1\cup\dots\cup C_{2n}$ (resp.\,$\ol C_1\cup\dots\cup \ol C_{2n}$); see Figure \ref{f:web3}.
By reality, analogous blowdowns occur for $S_i^+$.
In particular, again, the divisors $S_i^+$ and $S_i^-$ receive no changes for $i=n,n+1$.

From these, it is not difficult to see that {\em 
in the space $\tilde Z^{(2n)}$, all reducible fibers $S_i^++ S_i^-$ ($1\le i\le 2n$) are biholomorphic to each other.} 
Therefore, as a whole, recalling that $E_i$ and $\ol E_i$ were the exceptional divisors over the curves $C_i$ and $\ol C_i$ in $\tilde Z$ respectively, it would be possible to say:

\begin{quote} 
In the initial space $\tilde Z'$, for every index $1\le i\le 2n$, the $T^2_{\CC}$-invariant twistor line $L_i$  is a `bridge' that connects the components $E_i$ and $\ol E_i$, and in the last space $\tilde Z^{(2n)}$, $L_i$ is a `bridge' that connects the central components $E_n$ and $\ol E_n$, for any index $i$.
(Compare $\tilde Z'$ in Figure \ref{f:web} and $\tilde Z^{(8)}$ in Figure \ref{f:web3}.)
\end{quote}

The structure of $E_i$ and $\ol E_i$ ($0\le i\le 2n)$ in the space
$\tilde Z^{(2n)}$ can also be seen from the series of procedures, and we obtain that if $i\neq n$, then they are isomorphic to $\qdr$, whose two factors can be identified with the base of the projection
$\tilde f^{(2n)}:\tilde Z^{(2n)}\lras\PP^1$ and the original curve $C_i$ or $\ol C_i$.
For the central divisors, $E_n$ and $\ol E_n$ in $\tilde Z^{(2n)}$ are isomorphic to $\tilde{\ms T}$ because in $\tilde Z\upnn$ they were isomorphic to $\tilde{\ms T}$ and  the change from $\tilde Z\upnn$ to $\tilde Z^{(n+1)}$ and the blowdowns
from $\tilde Z^{(n+1)}$ to $\tilde Z^{(2n)}$ do not change the structure of $E_n$ and $\ol E_n$ at all.
On $\tilde Z^{(2n)}$, the intersection $(S_j^+\cup S_j^-)\cap E_i$ has the following structure:
\begin{itemize}\setlength{\itemsep}{-2pt}
  \item if $i=0,2n$, then it is a $(-1)$-curve on $S_j^+$ or $S_j^-$,
  \item if $i\neq 0,n,2n$, then it is a $(-2)$-curve on $S_j^+$ or $S_j^-$,
  \item  if $i=n$, then it consists of
two components, one of which is a $(-1)$-curve in $S_j^+$ and another one is a $(-1)$-curve in $S_j^-$,
\end{itemize}
(See Figure \ref{f:web3}.) This means that the mutually adjacent components $E_0,E_1,\dots, E_{n-1}\simeq\qdr$ can be successively blown down to curves in this order in the direction of fibers of $\tilde f^{(2n)}$, and similarly for 
the components $E_{2n},E_{2n-1},\dots, E_{n+1}\simeq\qdr$.
Let $\tilde Z^{(2n)}\lras \tilde Z^{(2n+1)}$ be the composition of all these blowdowns and their complex conjugate.
The variety $\tilde Z^{(2n+1)}$ has no exceptonal divisor of the form $E_i$ and $E^{(m)}_{i,j}$
except for $E_n$ and $\ol E_n$, and is also smooth, admitting a $T^2_{\CC}$-action and the real structure.
The maps $\tilde f^{(2n)}$ and $\tilde\Phi^{(2n)}$ descends to holomorphic maps
$\tilde f^{(2n+1)}:\tilde Z^{(2n+1)}\lras\PP^1$ and 
$\tilde\Phi^{(2n+1)}:\tilde Z^{(2n+1)}\lras\FF'_2$, and recalling that $e$ and $\ol e$ are the exceptional curves of the blowup $\FF'_2\lras\FF_2$,
we have $(\tilde\Phi^{(2n+1)})\inv(e) = E_n$
and $(\tilde\Phi^{(2n+1)})\inv(\ol e) = \ol E_n$.
Further, $(\tilde f^{(2n+1)})\inv(a_i) = S_i^++ S_i^-$ for any $i$ but 
$S_i^+$ and $S_i^-$ are biholomorphic to $\qdr$ as a consequence of the contraction of $E_i$ and $\ol E_i$ with
$i\neq n$.

For any $m\le 2n+1$, let $\tilde p^{(m)}:\tilde Z^{(m)}\lras\PP^1$ be the holomorphic map induced from the map $\tilde p:\tilde Z\lras\PP^1$ that takes the $u$-coordinate.
The map $\tilde p'=\tilde p\upone:\tilde Z'\lras\PP^1$ has reducible fibers exactly over the two points $u=0,\infty$, and they are
$\tilde Z_0+\sum_{i=0}^{2n}E_i$ and $\tilde Z_{\infty}+\sum_{i=0}^{2n}\ol E_i$ respectively.
The map $\tilde p^{(2n+1)}$ also has reducible fibers exactly over
$u=0,\infty$, but as a consequence of the blowdowns from $Z^{(n+1)}$ to $Z^{(2n+1)}$ they are $\tilde Z_0 + E_n$
and $\tilde Z_{\infty} + \ol E_n$ respectively, and in $\tilde Z^{(2n+1)}$, the components $\tilde Z_0$ and $\tilde Z_{\infty}$ are also isomorphic to $\qdr$.
Moreover, with the help of the intersection with (the transformation of) the divisors $\bm D$
or $\ol{\bm D}$, we readily obtain that the normal bundle
of $\tilde Z_0$ and $\tilde Z_{\infty}$ in $\tilde Z^{(2n+1)}$ is isomorphic to $\ms O(-1,0)$, where $\ms O(1,0)$ refers to the fiber class of 
$\tilde\Phi^{(2n+1)}|_{\tilde Z_0}:\tilde Z_0\lras\PP^1$ and 
$\tilde\Phi^{(2n+1)}|_{\tilde Z_{\infty}}:\tilde Z_{\infty}\lras\PP^1$.
Therefore, these two divisors can be blown down in another direction.
Let $\tilde Z^{(2n+1)}\lras \tilde Z^{(2n+2)}$ be these blowdowns.
$\tilde Z^{(2n+2)}$ is still smooth and has a $T^2_{\CC}$-action and the real structure. We then have:

\begin{proposition}\label{p:prod}
%As before, let $\tilde{\ms T}$ be the minimal resolution of the minitwistor space $\ms T$.
The manifold $\tilde Z^{(2n+2)}$ is biholomorphic to $\tilde{\ms T}\times\PP^1$.
\end{proposition}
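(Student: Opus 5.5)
Write $Z^\sharp:=\tilde Z^{(2n+2)}$ and $p^\sharp:=\tilde p^{(2n+2)}:Z^\sharp\lras\PP^1$. The plan is to show that $p^\sharp$ is a holomorphic fibration all of whose fibers are isomorphic to $\tilde{\ms T}$, and then trivialize it with the $\CC^*_s$-action.

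The fibers are identified as follows.
\begin{itemize}
\item Over $\CC^*$ nothing has been modified, so $(p^\sharp)^{-1}(\CC^*)\simeq\CC^*\times\tilde{\ms T}$, via $(s,x)\mapsto s\cdot x$ with $x\in\tilde p\inv(1)$.
\item After contracting $\tilde Z_0$ and $\tilde Z_\infty$, the fiber over $u=0$ is the reduced divisor $E_n$. By Proposition \ref{p:El}, which persists through all later steps as noted in the text, $E_n\simeq\tilde{\ms T}$. By reality the fiber over $u=\infty$ is $\ol E_n\simeq\tilde{\ms T}$.
\item The multiplicity of $E_n$ in $(p^\sharp)^*(0)$ is one. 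I would check this with a twistor line $L$ meeting $C_n$: it meets the fiber transversally in $Z$, and its strict transform meets $E_n$ once transversally.
\end{itemize}
Hence $p^\sharp$ is a proper submersion with all fibers isomorphic to $\tilde{\ms T}$.

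Next, construct a global trivialization. The $\CC^*_s$-action on $Z^\sharp$ covers the standard action $u\mapsto su$ on $\PP^1$, so the fixed fibers $E_n$ and $\ol E_n$ are $\CC^*_s$-invariant. Consider the map
\[
\rho_0:(p^\sharp)^{-1}(\PP^1\setminus\{\infty\})\lras E_n,\qquad \rho_0(x):=\lim_{s\to0}s\cdot x .
\]
I expect to prove it is holomorphic using a Białynicki-Birula type argument. Since $E_n$ is a smooth fixed component whose normal bundle is trivial (it is a reduced fiber), the weights of $\CC^*_s$ on the normal direction are all $+1$, matching the weight on the base coordinate $u$. Therefore the plus-cell of $E_n$ is an equivariant line bundle over $E_n$, namely the full neighbourhood $(p^\sharp)^{-1}(\CC)$. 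It is trivial, being isomorphic to $E_n\times\CC$ through $(\rho_0,p^\sharp)$.

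The pair $(\rho_0,p^\sharp)$ is then a biholomorphism $(p^\sharp)^{-1}(\CC)\simeq E_n\times\CC$. In the same way, $\rho_\infty(x):=\lim_{s\to\infty}s\cdot x$ gives $(p^\sharp)^{-1}(\PP^1\setminus\{0\})\simeq\ol E_n\times(\PP^1\setminus\{0\})$. Both maps restrict over $\CC^*$ to the orbit trivialization. So the transition function over $\CC^*$ is the constant map $E_n\lras\ol E_n$ that sends $\lim_{s\to0}s\cdot x$ to $\lim_{s\to\infty}s\cdot x$, namely the identification of both with $\tilde p\inv(1)$ through the flow. Constant transition functions mean the two charts glue to $\tilde{\ms T}\times\PP^1$.

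The main obstacle is proving that $\lim_{s\to0}s\cdot x$ exists in $E_n$ for every $x$ with $u\neq\infty$, and that the resulting map is holomorphic. This amounts to checking that $E_n$ is the sink (respectively source) of the action near $u=0$, with no other fixed components left in the fiber. Hitchin's fact \cite{Hi25}, that generic orbits limit onto the central sphere, handles generic points. Non-generic points, in particular those on $S_i^\pm$ and the lines $L_i$, must be checked using the explicit description of $Z^\sharp$ near $E_n$: the fiber there is just $E_n$, and the weight of $\CC^*_s$ on its normal bundle is $1$. If the limiting argument proves delicate, a fallback is to use Fischer--Grauert local triviality of the submersion $p^\sharp$ and show the global triviality through the $\CC^*_s$-equivariance.
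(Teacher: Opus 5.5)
Your proposal is correct and follows essentially the paper's argument. All fibers of $\tilde p^{(2n+2)}$ are identified with $\tilde{\ms T}$, and the $\CC^*_s$-orbit trivialization over $\CC^*$ is extended across $u=0,\infty$ using that $\CC^*_s$ acts trivially on $E_n$ and $\ol E_n$; your limit maps $\rho_0,\rho_\infty$ and your multiplicity-one check spell out the extension step that the paper only states after invoking Fischer--Grauert.

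The one input you assume rather than prove is that $E_n$ is pointwise fixed. The paper gets this from $C_n$ being the sink of $\CC^*_s$: both normal weights of $C_n$ equal $1$, so $\PP(N_{C_n})$ is pointwise fixed, and so are its later point blowups. Once you have this, your \emph{main obstacle} disappears. The generating vector field vanishes on the reduced fiber $E_n=\{u=0\}$, so it equals $u\eta$ with $\eta$ a holomorphic lift of $\partial_u$. The flow of $\eta$ is complete by properness and gives $(p^\sharp)^{-1}(\CC)\simeq E_n\times\CC$, with no separate check needed at non-generic points.
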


\proof
For simplicity, we denote $Y$ for the space $\tilde Z^{(2n+2)}$ and 
$Y_u$ for the fiber of the map $\tilde p^{(2n+1)}:Y\lras\PP^1$ over $u\in\PP^1$. 
For any point $u\in\PP^1\minus\{0,\infty\}$, $Y_u$ is biholomorphic to the fiber $\tilde Z_u$ of the original map $\tilde p:\tilde Z\lras \PP^1$.
Therefore $Y_u\simeq\tilde{\ms T}$ for any $u\in\PP^1\minus\{0,\infty\}$.
For the fiber over $u=0$, as we have already noted, 
the divisor $E_n$ in $\tilde Z^{(2n)}$ is biholomorphic to $\tilde{\ms T}$.
Also, from the above description of the last two blowdowns
$\tilde Z^{(2n)}\lras\tilde Z^{(2n+1)}\lras \tilde Z^{(2n+2)}$, 
these do not change the structure of $E_n$ and $\ol E_n$.
Hence, the fibers $Y_0$ and $Y_{\infty}$ are also isomorphic to $\tilde{\ms T}$.
Thus, all fibers of $\tilde p^{(2n+2)}:Y\lras\PP^1$ are biholomorphic to $\tilde{\ms T}$. Therefore, by a theorem of Fischer-Grauert \cite{FG}, $Y$ is a holomorphic $\tilde{\ms T}$-bundle over $\PP^1$.

Further, $Y$ admits a $\CC^*_s$-action which is just the resriction of the $T^2_{\CC}$-action to the subgroup $\CC^*_s$, and this $\CC^*$-action covers the standard $\CC^*$-action on $\PP^1$ that fixes $0$ and $\infty$.
Therefore, there is a holomorphic isomorphism $Y\minus(Y_0\cup Y_{\infty})\simeq\tilde{\ms T}\times\CC^*$.
On the other hand, since the central spheres $C_n,\ol C_n\subset Z$ are sink and source of the $\CC^*_s$-action \cite{Hi21},
$\CC^*_s$ acts trivially on the exceptional divisor $E_n$ and $\ol E_n$.
Hence, the isomorphism $Y\minus(Y_0\cup Y_{\infty})\simeq\tilde{\ms T}\times\CC^*$ extends to an isomorphism $Y\simeq\tilde{\ms T}\times\PP^1$, as required.
\proofend

\medskip
The product of the minimal resolution map and the identity map gives a holomorphic map $\tilde{\ms T}\times\PP^1\lras\ms T\times\PP^1$.
The isomorphism $\tilde Z^{(2n+2)}\lras\tilde{\ms T}\times\PP^1$
in the previous proposition, followed by this product map gives a holomorphic map $\tilde Z^{(2n+2)}\lras\ms T\times\PP^1$.
Obviously, this is also bimeromorphic.
By composition with the bimeromorphic map between $\tilde Z$ to $\tilde Z^{(2n+2)}$, we obtain a bimeromorphic identification between $\tilde Z$ and $\ms T\times\PP^1$.
From the argument so far, under this identification, the meromorphic quotient map
$\tilde\Psi:\tilde Z\lras\ms T$ can be regarded as just a projection to a factor:
\begin{proposition}\label{p:pr}
There exists the following commutative diagram of meromorphic maps:
\begin{equation}\label{cdm}
\begin{tikzcd}
\tilde Z \arrow[r,"\tilde\Psi"] \arrow[d] &
\ms T \\
\tilde {\ms T}\times\PP^1 \arrow[ur,swap,"\pr'"]&
\end{tikzcd}
\end{equation}
where the vertical map is the bimeromorphic transformation which is the composition of all bimeromorphic changes from $\tilde Z$ to $\tilde Z^{(2n+2)}\simeq \tilde {\ms T}\times\PP^1$,
and $\pr'$ is the composition of the projection to the $\tilde{\ms T}$-factor and the minimal resolution map $\tilde{\ms T}\lras\ms T$.
\end{proposition}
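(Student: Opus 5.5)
The plan is to factor $\tilde\Psi$ through the chain of modifications
$$\tilde Z \dashleftarrow \tilde Z\upnn \dashrightarrow \tilde Z^{(n+1)} \lras \tilde Z^{(2n)} \lras \tilde Z^{(2n+1)} \lras \tilde Z^{(2n+2)}\simeq\tilde{\ms T}\times\PP^1,$$
and to check that the holomorphic map $\tilde\Psi\upnn:\tilde Z\upnn\lras\ms T$ descends along every step to the right.

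\emph{Step 1: descent through the bimeromorphic steps.} Two meromorphic maps from an irreducible variety that agree on a dense open set coincide. Every modification is an isomorphism over $(\tilde p)\inv(\CC^*)$, so $\tilde\Psi\upnn$ induces a meromorphic map $\Psi_Y:Y=\tilde Z^{(2n+2)}\lras\ms T$. Composing with the vertical map in \eqref{cdm}, $\Psi_Y$ agrees with $\tilde\Psi$ on this open set. Hence it suffices to prove $\Psi_Y=\pr'$ on a dense open subset of $Y$.

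\emph{Step 2: comparison on the open set.} Use the $\CC^*_s$-action on $Y$, which by Proposition \ref{p:prod} we may take to be the action on the $\PP^1$ factor. The map $\Psi_Y$ is $\CC^*_s$-invariant, since the linear system $|\bm L|^{\CC^*_s}$ consists of invariant sections. Therefore on $Y\minus(Y_0\cup Y_\infty)\simeq\tilde{\ms T}\times\CC^*$ the map $\Psi_Y$ factors through the projection to $\tilde{\ms T}$. Its restriction to the slice $Y_1=\tilde Z_1=\tilde{\ms T}$ is the minimal resolution by Proposition \ref{p:ZT}. We need the identification $Y\minus(Y_0\cup Y_\infty)\simeq\tilde{\ms T}\times\CC^*$ to be normalized so that $\{u=1\}$ corresponds to $\tilde{\ms T}\times\{1\}$ via the identity, i.e.\ the trivialization is the one $(q,s)\mapsto s\cdot q$ used in Proposition \ref{p:prod}. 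With this normalization, $\Psi_Y(s\cdot q)=\Psi_Y(q)$, which is the image of $q$ under the minimal resolution, i.e.\ $\pr'(q,s)$. So $\Psi_Y=\pr'$ on a dense open set, and hence everywhere as meromorphic maps.

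\emph{Step 3: consistency over $u=0$ and $u=\infty$.} The identification of $Y$ with $\tilde{\ms T}\times\PP^1$ over $u=0$ comes from extending the $\CC^*$-trivialization, with $Y_0=E_n$. So we should check that the extended map agrees with $\tilde\Psi\upnn|_{E_n}$, which by Proposition \ref{p:mr} is the minimal resolution. This is not logically needed once Step 2 holds, but it confirms that $\Psi_Y$ is in fact holomorphic and equal to $\pr'$ on all of $Y$.

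\emph{Main obstacle.} The real subtlety is Step 1: we must know that $\tilde\Psi\upnn$ is compatible with the flops in the passage $\tilde Z\upnn\dashrightarrow\tilde Z^{(n+1)}$ and with the contractions of the $E^{(m)}_{i,j}$, $E_i$ ($i\neq n$) and $\tilde Z_0,\tilde Z_\infty$. The density argument sidesteps this at the meromorphic level, which is all the statement asks. For holomorphicity, one would argue that $\tilde\Psi\upnn$ is constant along each contracted fiber: these divisors are $\CC^*_s$-invariant with $\CC^*_s$-orbits sweeping the contracted directions.
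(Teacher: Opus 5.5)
Your argument is correct and is essentially the paper's own: every modification is an isomorphism over $\tilde p\inv(\CC^*)\simeq\tilde{\ms T}\times\CC^*$, so it suffices to compare the two maps there. On that open set $\tilde\Psi$ is $\CC^*_s$-invariant and restricts to the minimal resolution on $\tilde Z_1$ by Proposition \ref{p:ZT}, so commutativity as meromorphic maps follows by density. You rightly make explicit that the trivialization of Proposition \ref{p:prod} must be the one $(q,s)\mapsto s\cdot q$ normalized at $u=1$, which the paper leaves implicit; your Step 3 and the holomorphicity remark are, as you note, not needed.
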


\iffalse
\proof
From the $\CC^*_s$-action on $\tilde Z$, there exists a holomorphic isomorphism $\tilde Z\minus (\tilde Z_0\cup \tilde Z_{\infty})
\simeq \tilde {\ms T}\times \CC^*$. 
Let $\tilde\Psi\uc$ be the restriction of $\tilde\Psi$ to the subset 
$\tilde Z \minus (\tilde Z_0\cup \tilde Z_{\infty})$.
So we may consider the following diagram of holomorphic maps
\begin{equation}\label{cdm2}
\begin{tikzcd}
\tilde Z \minus (\tilde Z_0\cup \tilde Z_{\infty})
\arrow[r,"\tilde\Psi\uc"] \arrow[d,"\simeq"] &
\ms T \\
\tilde {\ms T}\times\CC^* \arrow[ur,swap,"\pr'"]&
\end{tikzcd}
\end{equation}
From Proposition \ref{p:ZT}, $\tilde\Psi\uc$ restricts to the minimal resolution map
$\tilde Z_1=\tilde p\inv(1)=\tilde{\ms T}\lras\ms T$.
Since all blowups and blowdowns from $\tilde Z$ to 
$\tilde Z^{(2n+2)}$ are done in the two fibers $\tilde Z_0\cup \tilde Z_{\infty}$, the vertical map can be identified with the restriction of the bimeromorphic map from $\tilde Z$ to 
$\tilde Z^{(2n+2)}$ to the complement of $\tilde Z_0\cup \tilde Z_{\infty}$.
It follows from these that the restriction of the diagram \eqref{cdm2} to the fiber $\tilde Z_1$ is commutative if we replace $\tilde{\ms T}\times\PP^1$ by $\tilde{\ms T}\times\{1\}$. 
Therefore, since both the vertical map and $\Psi'$ are $\CC^*_s$-equivariant, the diagram \eqref{cdm2} is commutative.
Hence, the diagram \eqref{cdm}, which restricts to \eqref{cdm2}, is also commutative.
\proofend
\fi

\section{The images of twistor lines}\label{s:mtl}
We recall from Proposition \ref{p:bs} that the indeterminacy locus of the meromorphic quotient map $\tilde\Psi:\tilde Z\lras\ms T$
consists of the chain $C_1\cup\dots\cup C_{2n-1}$ (considered as a subset of the fiber $\tilde Z_0 = \tilde p\inv(0)$) and their conjugate.
In this section, we investigate the images of twistor lines 
that meet a slightly longer chain $C_0\cup C_1\cup\dots\cup C_{2n}$,
under the map $\tilde\Psi$.
If a twistor line $L\subset Z$ intersects the end component $C_0$ or $C_{2n}$, then $\tilde\Psi$ has no indeterminacy on $L$ and the image $\tilde\Psi(L)$ makes sense as usual.
But if $C$ intersects $C_1\cup\dots \cup C_{2n-1}$, then $\tilde\Psi$ has a pair of indeterminacy points on $L$ and the image has to be in the bimeromorphic sense, which will be explained later.
We will show that all these images are curves in $\ms T$ which can be explicitly described.

Since the map $\tilde\Psi$ is a quotient map under the $\CC^*_s$-action on $\tilde Z$ and elements of its $S^1$-subgroup map a twistor line to a twistor line, the images of twistor lines through the same $S^1$-orbit are all equal.
Since $\CC^*_s$ acts non-trivially on every $C_i$ except the central sphere $C_n$, this implies that the images of twistor lines through a component $C_i$ with $i\neq n$ constitute a family parameterized by an interval.
In contrast, the images of twistor lines through $C_n$ will be curves parameterized by $C_n\simeq S^2$ itself.

We denote $f_{\lmd}$ for the fiber of the projection $\tilde{\ms T}\lras\PP^1$ over the point $z=\lmd\in(\PP^1)^{\sigma}$.
Recall from Section \ref{ss:itl} that for each index $1\le i\le 2n$, $L_i$ denotes the twistor line through $C_{i-1}\cap C_i$, which is defined by $x=y=z-a_i=0$ in the projective model.

We begin with the case where a twistor line $L\subset Z$ intersects one of the two ``axes''; namely the end components $C_0$ and $C_{2n}$.
In this case, as above, the image $\tilde\Psi(L)$ makes sense as usual unless $L=L_1$ or $L_{2n}$.

\begin{proposition}\label{p:a}
Let $L\subset Z$ be a twistor line that intersects the axis $C_0$ (resp.\,the axis $C_{2n}$)
but not equal to $L_1$ (resp.\,$L_{2n}$).
Let $\lmd\in (-\infty,a_1)$ (resp.\,$\lmd\in (a_{2n},\infty)$) be
the real number such that $\Phi(L) = \ms C_{\lmd}$ (see \eqref{iml}).
Then 
$
\tilde\Psi(L) = f_{\lmd}.
$
Further, the map $\tilde\Psi|_L:L \lras f_{\lmd}$ is $n:1$. 
\end{proposition}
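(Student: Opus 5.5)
By symmetry (the real structure together with the symmetry $C_i\leftrightarrow C_{2n-i}$, which exchanges $x$ and $y$), it suffices to treat the case where $L$ meets $C_0$. In that case $\lmd<a_1$, and by Proposition \ref{p:tl} (case $i=0$) $L$ is the transformation of the curve
$$x(u)=c\,,\qquad y(u)=c'u^{2n},\qquad z(u)=\lmd u,$$
with $cc'=\prod_j(\lmd-a_j)\neq0$.

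The plan is to compute $\tilde\Psi$ on the open part $L\cap\{u\neq0,\infty\}$ directly, using the $\CC^*_s$-action to move each point into the fiber $\tilde Z_1=\tilde{\ms T}$. Proposition \ref{p:ZT} says that $\tilde\Psi$ restricted to $\tilde Z_1$ is the minimal resolution. Since $\tilde\Psi$ is $\CC^*_s$-invariant, we have $\tilde\Psi(p)=\tilde\Psi(s\cdot p)$, and we choose $s=u^{-1}$ so that the point lies over $u=1$. By \eqref{act-s}, the point $(x,y,z,u)$ of $L$ is sent to
$$\big(u^{-n}c,\; u^{-n}c'u^{2n},\; \lmd,\; 1\big)=\big(c\,u^{-n},\; c'u^{n},\; \lmd,\; 1\big).$$
This lies on the conic $\{z=\lmd\}$ of $\tilde{\ms T}$, which is exactly the fiber $f_\lmd$. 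Moreover, $xy=cc'=f(\lmd)$ is consistent with \eqref{Hi1}. Hence $\tilde\Psi(L)\subset f_\lmd$, where we pass to the image in $\ms T$ via the resolution, which is an isomorphism near $f_\lmd$ for generic points. By irreducibility, and since the map is nonconstant, we get $\tilde\Psi(L)=f_\lmd$.

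For the degree, $f_\lmd$ is a smooth conic ($\lmd\neq a_i$), and we may parametrize it by $x$ on $\CC^*$, where $y=f(\lmd)/x$. The induced map $u\mapsto c\,u^{-n}$ is $n:1$ on $\CC^*$, and it extends to $\PP^1\to\PP^1$ of degree $n$. One should check that the two points of $L$ over $u=0,\infty$ are not indeterminacy points. Indeed, the point over $u=0$ lies on $C_0$ (not in the base locus by Proposition \ref{p:bs}, since $L\neq L_1$ avoids $C_0\cap C_1$), and the point over $u=\infty$ lies on $\ol C_{2n}$, similarly away from the base chain. Hence $\tilde\Psi|_L$ is holomorphic, and its degree is computed on the dense open set, giving $n:1$.

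The main subtlety is justifying that the composition of the $\CC^*_s$-translation with Hitchin's identification of the fiber $\tilde Z_1$ agrees with $\tilde\Psi$. This follows from the $\CC^*_s$-invariance of the members of $|\bm L|^{\CC^*_s}$: the sections are invariant, so the induced map is constant on orbits. As a consistency check, the same degree follows from \eqref{LL}: $\bm L\cdot L=2n$, and $f_\lmd$ has degree $2$ in $\PP^{n+2}$, giving $2n/2=n$.
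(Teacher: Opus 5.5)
Your proof is correct, but it is more explicit than the paper's.

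\textbf{The paper's route.} It argues by equivariance. $L$ is an orbit closure of the stabilizer $G_0\subset T^2_{\CC}$ of $C_0$, and $\tilde\Psi$ is $G_0$-equivariant. Up to a finite group, $G_0$ acts on $\ms T$ through the residual action $(x,y,z)\mapsto(tx,t^{-1}y,z)$, which preserves the fibers of the $z$-projection. So $\tilde\Psi(L)$ is a real irreducible fiber, and the single point $L\cap\tilde Z_1$ identifies it as $f_\lmd$. The degree is then $\bm L\cdot L/\deg f_\lmd=2n/2$, which is valid because $L$ misses the base locus.

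\textbf{Your route.} You use only the $\CC^*_s$-invariance of $\tilde\Psi$ and the equation $z=\lmd u$ to move every point of $L\cap\{u\neq0,\infty\}$ into $\tilde Z_1\cap\{z=\lmd\}$. You read the degree off the formula $x=c\,u^{-n}$ from Proposition~\ref{p:tl}, and check separately that the points of $L$ over $u=0,\infty$ are not indeterminacy points.

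\textbf{What each buys.} Your route is more elementary, and the containment step needs nothing beyond $\Phi(L)=\ms C_\lmd$. The same computation, with $x\propto u^{i-n}$, gives the ``real image'' $f_\lmd$ and the covering degree $|n-i|$ for lines through any $C_i$, and a constant map when $i=n$. The paper obtains that degree in Proposition~\ref{p:i1} via $\bm L'\cdot L'$. The paper's route needs no parametrization of $L$, and its intersection-number count is the form that carries over to the blown-up spaces used later.

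\textbf{Two small slips}, neither affecting the argument.
\begin{enumerate}
\item Since $L$ is real, $L\cap\tilde Z_\infty=\sigma(L\cap C_0)$ lies on $\ol C_0=\sigma(C_0)$, not on $\ol C_{2n}$. It is still not a base point, because the base locus is $\sigma$-invariant and $L\cap C_0\neq C_0\cap C_1$.
\item Your symmetry reduction is loosely stated: $\sigma$ does not exchange $C_0$ with $C_{2n}$. Also, the swap $x\leftrightarrow y$ is not an automorphism of $Z$ with its chosen small resolution, since it reverses the order of the $a_j$. The reduction is unnecessary anyway: the $C_{2n}$ case is the identical computation with $x(u)=c\prod_j(\lmd-a_j)\,u^{2n}$ and $y(u)=c^{-1}$, taken from \eqref{xuyu} with $i=2n$.
\end{enumerate}
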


\proof
Let $G_i\simeq\CC^*$ be the stabilizer subgroup of $T^2_{\CC}$ along the component $C_i$.
The map $\tilde\Psi:\tilde Z\lras\ms T$ is $T^2_{\CC}$-equivalent and hence $G_i$-equivariant. Up to a finite subgroup of $\CC^*$, if $i\neq n$, then the $G_i$-action on $\ms T$ is identified with the $\CC^*$-action which 
is given by $(x,y,z)\longmapsto (tx,t\inv y, z)$ from \eqref{act-t}, which preserves each fiber of the rational map $\ms T\lras\PP^1$ that takes the $z$-coordinate.
Hence, the image $\tilde\Psi(L)$ has to be a fiber of this map, and the fiber has to be irreducible and real since $L$ is so.
To show that this fiber is exactly $f_{\lmd}$ as in the proposition,
consider the intersection point $q:=L\cap \tilde Z_1$, where $\tilde Z_1=\tilde p\inv(1)$.
Since $\Phi(L)=\ms C_{\lmd}=\{z=\lmd u\}$, the $z$-coordinate of the point $q$ is $\lmd$.
So $q\in f_{\lmd}$, which means $\tilde\Psi(L)=f_{\lmd}$.
 
As in \eqref{LL}, $\bm L\cdot L = 2n$.
Since $\tilde{\Psi}(L)$ is the image in the usual sense and it is a conic as above, this means that the map $\tilde\Psi|_L:L\lras f_{\lmd}$ is of degree $2n/2 = n$, as required.
\proofend

\medskip
Recall that $\pi:\PP^{n+1}\lras\PP^n$ and $\Pi:\PP^{n+2}\lras\PP^{n+1}$ denote the projection from a point, which restricts to the double covering ${\ms T}\lras {\rm C}(\Lmd)$ and the rational projection $\ms T\lras\Lmd$, respectively. (See the diagrams \eqref{cdm0}.)
Regarding $\Psi(\tilde L)$ as a multiple curve $nf_{\lmd}$ from the previous proposition, we immediately obtain the following.

\begin{corollary}\label{c:im1}
Let $L$ and $\lmd$ be as in the previous proposition, and
$h\subset\PP^{n+1}$ the inverse image of
the osculating hyperplane of $\Lmd\subset\PP^n$ at the point 
$\lmd$, under the projection $\pi:\PP^{n+1}\lras\PP^n$ .
We then have
$$\tilde\Psi(L)=\Pi\inv\big(h\cap {\rm C}(\Lmd)\big).$$
\end{corollary}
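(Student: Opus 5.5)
The plan is to read the hyperplane section $\Pi^{-1}(h\cap{\rm C}(\Lmd))$ off the equations of $\ms T$ in $\PP^{n+2}$ from the proof of Proposition \ref{p:ZT}, and then compare it as a divisor with $nf_{\lmd}$, which is what Proposition \ref{p:a} says $\tilde\Psi(L)$ is.

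First, I identify the relevant hyperplane in $\PP^n$. The coordinates $z_m=v_1^mv_2^{n-m}\ddd\ol\ddd$ ($0\le m\le n$) embed $\Lmd$ as the rational normal curve $(V^n:V^{n-1}W:\dots:W^n)$. Here the point $\lmd\in\Lmd$ corresponds to the value $z=\lmd$ of the affine coordinate on the target of $\tilde f'$, which is also the $z$-coordinate on $\tilde{\ms T}\lras\Lmd$. The osculating hyperplane of $\Lmd$ at this point is the unique hyperplane $H_\lmd\subset\PP^n$ meeting $\Lmd$ only at $\lmd$, with multiplicity $n$. In terms of sections it is cut out by $(v_1-\lmd v_2)^n(\ddd\ol\ddd)$, written as a linear combination of $z_0,\dots,z_n$. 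Its pullback $h=\pi^{-1}(H_\lmd)\subset\PP^{n+1}$ is a hyperplane through the vertex of the cone, so $h\cap{\rm C}(\Lmd)$ is $n$ times the ruling line over $\lmd$, as a divisor on the cone.

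Second, I pull back to $\ms T$ by $\Pi$. Since $\Pi$ is the restriction of the projection from a point and $\pi\circ\Pi$ is the conic bundle map $\ms T\lras\Lmd$, the divisor $\Pi^{-1}(h\cap{\rm C}(\Lmd))$ on $\ms T$ is $n$ times the fibre of $\ms T\lras\Lmd$ over $\lmd$. Because $\lmd\notin\{a_1,\dots,a_{2n}\}$, the ruling line is not tangent to the branch curve $\Sigma$ in a degenerate way: it meets $\Sigma$ in two distinct points. So its preimage is the smooth conic $f_{\lmd}$, and the hyperplane section equals $nf_{\lmd}$.

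Third, I compare with Proposition \ref{p:a}, which gives $\tilde\Psi(L)=f_{\lmd}$ with $\tilde\Psi|_L$ of degree $n$. The push-forward $\tilde\Psi_*L$ is therefore $nf_{\lmd}$, and this matches the hyperplane section above. As a consistency check on degrees, the pullback of a hyperplane of $\PP^{n+2}$ has intersection $\bm L\cdot L=2n$ with $L$, which agrees with $n\cdot\deg f_{\lmd}=2n$.

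The only delicate point is making sure the parameter $\lmd$ on $\Lmd$ coming from the basis $v_1,v_2$ (with $s_i=v_1-a_iv_2$) agrees with the $z$-coordinate on $\tilde{\ms T}$. This follows from the identification of $S_{\lmd}$ with the divisor $z=\lmd u$ and from the restriction of the generators to $\tilde Z_1$ in the proof of Proposition \ref{p:ZT}.
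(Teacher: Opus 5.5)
Your proposal is correct and follows the paper's own (essentially one-line) argument: Proposition \ref{p:a} gives $\tilde\Psi_*L=nf_{\lmd}$, and the pullback under $\pi\circ\Pi$ of the osculating hyperplane at $\lmd$ is $n$ times the fibre over $\lmd$, which is the smooth conic $f_{\lmd}$ since $\lmd\notin\{a_1,\dots,a_{2n}\}$. Your extra step, identifying that hyperplane with the section $(v_1-\lmd v_2)^n\ddd\ol\ddd$, i.e.\ with the member $nS_{\lmd}+\bm D+\ol{\bm D}$ of $|\bm L|^{\CC^*_s}$, only makes explicit what the paper leaves implicit.
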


In general, if $\Phi:X\lras Y$ is a meromorphic mapping between compact complex spaces,
then for any irreducible analytic subset $Z\subset X$ which is not entirely contained in the indeterminacy locus of $\Phi$, 
the image
$\Phi(Z)\subset Y$ is defined as follows.
Take an elimination of the indeterminacy locus $\mu:\tilde X\lras X$ of $\Phi$, so that the composition $\Phi\circ\mu:\tilde X\lras Y$ has no indeterminacy point.
Then 
$$
\Phi(Z):= (\Phi\circ\mu)\big(\mu\inv(Z)\big).
$$
This is independent of the choice of the elimination $\mu$.
When $\Phi$ has no indeterminacy point on $Z$, by letting $\mu=\id$, this coincides with the image in the usual sense.
If $\Phi$ has an indeterminacy point on $Z$, then this is often called the {\em meromorphic image} of $Z$.
This is always connected if $Z$ is connected, but although $Z$ is supposed to be irreducible, the image can be reducible because it can have a component from the inverse image of the intersection of $Z$ and the indeterminacy locus of $\Phi$. On the other hand, the closure of the image $\Phi(Z\minus V)$ in $Y$, where $V$ is the indeterminacy locus of $\Phi$, is an irreducible analytic subset of $Y$ (if $Z$ is irreducible). We call this the {\em ``real image''} of $Z$, where the quotation marks are used to avoid confusion with $\sigma$-invariance.
Note that this is not necessarily an irreducible component of the meromorphic image because it can be entirely included in an irreducible component of the meromorphic image that comes from the indeterminacy locus.
In our situation that will be studied below, we will find that this really happens.

Back to our meromorphic map $\tilde\Psi:\tilde Z\lras\ms T$, if $L\subset Z$ is a twistor line that intersects the chain $C_1\cup C_2\cup\dots\cup C_{2n-1}$, then the image $\tilde\Psi(L)$ should be considered as the meromorphic image.
The case where $L$ intersects the central sphere $C_n$ is special because $L$ is $\CC^*_s$-invariant and will be discussed later.
We next determine the meromorphic image of $L$ which intersects the above chain at a point not belonging to $C_n$ and which is different from any $L_i$.
In this case, the ``real image'' of $L$ remains to be a curve because the twistor line is not $\CC^*_s$-invariant.
Recall that the projection $\pi\circ\Pi:\ms T\lras\Lmd$ has reducible fibers precisely over the points $z=a_1,\dots,a_{2n}$ and we denote them 
$$f_{a_i} = \ell_i + \ol\ell_i.$$
Both $\ell_i$ and $\ol\ell_i$ are lines with respect to the embedding $\ms T\subset\PP^{n+2}$.

\begin{proposition}\label{p:i1}
Suppose that $L$ is a twistor line that intersects the curve $C_i$ for some $i$ with $0< i<n$ or $n<i<2n$ and is equal to none of $L_j$
($1\le j\le 2n$).
Let $\lmd\in(a_i,a_{i+1})$ be the real number that satisfies $\Phi(L) = \ms C_{\lmd}$.
Then the ``real image'' of $L$ by $\tilde\Psi$ is the fiber conic $f_{\lmd}$, while the meromorphic image $\tilde\Psi(L)$ is the reducible curve 
\begin{align}\label{mi1}
f_{\lmd} \cup \bigcup_{j=1}^{i} \,(\ell_j \cup \ol \ell_j)
\quad{\text{or}}\quad 
f_{\lmd} \cup \bigcup_{j=i}^{2n} \,(\ell_j \cup \ol \ell_j),
\end{align}
according as $0< i<n$ or $n<i<2n$ respectively.
Further, by $\tilde\Psi$, $L$ is $|n-i|:1$ over the conic $f_{\lmd}$.
\end{proposition}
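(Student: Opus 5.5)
By the real structure and the symmetry about the central component $E_n$, it suffices to treat $0<i<n$. I split the argument into the ``real image'', the degree, and the extra components produced by blowing up the two indeterminacy points of $L$.

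\emph{The ``real image''.} Repeat the argument of Proposition \ref{p:a}. The stabilizer $G_i$ of $C_i$ preserves $L$, and $\tilde\Psi$ is $G_i$-equivariant. Since $i\neq n$, $G_i$ acts on $\ms T$ through the $\CC^*_t$-type action $(x,y,z)\mapsto(tx,t\inv y,z)$, which preserves the fibers of $\ms T\lras\Lmd$. Hence the closure of $\tilde\Psi(L\minus\{\text{base points}\})$ is an irreducible real fiber. It passes through $\tilde\Psi(L\cap\tilde Z_1)$, whose $z$-coordinate is $\lmd$ because $\Phi(L)=\ms C_\lmd$. It is irreducible because $\lmd\in(a_i,a_{i+1})$ is not a branch value. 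So it equals $f_\lmd$.

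\emph{The degree.} I intend to compute the degree on the strict transform $L'$ of $L$ in $\tilde Z\upnn$, using the pulled-back system. Here $L$ meets $C_i$ and $\ol C_i$ transversally at one point each. By \eqref{L'}, $\bm L'\cdot L' = 2n-2(n-|n-i|)=2|n-i|$. The later blowups $\mu_m$ have centres lying in $E_{i'}\cap S^\pm_j$ with $j\neq$ the index determined by $\lmd$. Since $\Phi(L)=\ms C_\lmd$ with $\lmd\neq a_j$, $L'$ is disjoint from all the $S^\pm_j$, so it misses these centres and the degree does not drop further. As $f_\lmd$ is a conic, $L'\lras f_\lmd$ has degree $2|n-i|/2=|n-i|$.

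\emph{The extra components.} By definition, $\tilde\Psi(L)$ is the union of $f_\lmd$ with the images of the fibres of $\mu\inv$ over the two points $L\cap C_i$ and $L\cap\ol C_i$. By reality it suffices to handle $p=L\cap C_i$, and then add conjugates. Over $p$, $\mu_1$ inserts the fibre $\{p\}\times e\subset E_i$, a $\PP^1$, together with possibly the fibres over $p$ of the small resolutions and of the later blowups. Since $p$ is a general point of $C_i$, the later blowups only affect this fibre at the points $\{p\}\times\{a_j\}$, $j\le i$, which lie on the chains $C'_{i,j}$ (Proposition \ref{p:bs1}). The fibre over $p$ is therefore $\{p\}\times e$ together with, for each $j\le i$, a string of curves lying in $E^{(m)}_{i,j}$. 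I then evaluate $\tilde\Psi\upnn$ on these curves using the explicit generators \eqref{gen3}, \eqref{gen4} restricted to $E_i$ and to the $E^{(m)}_{i,j}$.

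On $\{p\}\times e$: the $T^2_\CC$-invariant generators \eqref{gen4} restrict to the pencil of $\Lmd$ coordinates. $X$ and $\ol X$, after subtracting multiplicities, restrict to sections vanishing exactly at the points $a_j$. So this curve maps $G_i$-equivariantly onto a curve over $\Lmd$ that meets the fibre over $z=a_j$ at $\ell_j\cap\ol\ell_j$, or is constant. I expect it is collapsed, or lies in the union of fibres, and contributes nothing new. The curves inside the $E^{(m)}_{i,j}$ lie over $z=a_j$, are $G_i$-invariant, and must map onto components of $f_{a_j}=\ell_j\cup\ol\ell_j$. Together with the conjugate point $\ol p$, both $\ell_j$ and $\ol\ell_j$ appear for every $j\le i$, and no $j>i$ occurs, because the blowups over $C_i$ only involve $S^-_j$ with $j\le i$. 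This gives \eqref{mi1}.

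The main obstacle is this last bookkeeping. One has to check that each $j\le i$ really gives a non-constant image and that both lines $\ell_j,\ol\ell_j$ are reached (one from $p$, one from $\ol p$). I would verify this by restricting $\xi,\ol\xi$ to $E^{(2)}_{i,j}$ and its successors and checking that their ratio is non-constant there. Alternatively, one can use connectedness of the meromorphic image together with the count $\bm L\cdot L=2n$ distributed as $2|n-i|+2i$ when $i<n$, with each line absorbing degree one.
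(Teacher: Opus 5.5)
Your treatment of the ``real image'' and of the degree $|n-i|$ follows the paper's argument. Your observation that $L'$ lies in $(\tilde f')^{-1}(\lmd)$, and so misses every later centre, is a useful precision.

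The gap is in the identification of the extra components, which is the real content of \eqref{mi1}.

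First, your description of $\Gamma_i=\{p\}\times e$ is wrong. Every member of \eqref{gen4} contains $E_i$ with multiplicity $|n-i|\ge 1$, and $\tilde X'$ contains it with multiplicity $2(n-i)$. So on $E_i$ every generator except $\ol\xi$ vanishes identically. The $T^2_{\CC}$-invariant sections therefore do not give the $\Lmd$-coordinate there, and all of $E_i$ is sent to a single singular point of $\ms T$. Taken at face value, your computation would produce an extra component dominating $\Lmd$; you only ``expect'' the collapse.

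Second, and more seriously, you never prove that for each $j\le i$ some curve over $p$ is mapped onto $\ell_j$ or $\ol\ell_j$. You flag this yourself as the main obstacle. It is also not true that all curves over $p$ in the $E^{(m)}_{i,j}$ map onto lines. The fibres over $q'_{i,j}=\Gamma_i\cap C'_{i,j}$ in all but the last component of the string \eqref{exe1} are contracted.

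Your fallbacks do not help. Connectedness is vacuous here, because every fibre of $\ms T\lras\Lmd$ and every line $\ell_j,\ol\ell_j$ passes through a singular point of $\ms T$. The count $2n=2|n-i|+2i$ on a limiting cycle cannot decide which lines occur: for $i=2$ it does not distinguish $2(\ell_1+\ol\ell_1)$ from $\ell_1+\ol\ell_1+\ell_2+\ol\ell_2$.

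The paper settles this by following these curves through the blowdowns $\tilde Z\upnn\lras\cdots\lras\tilde Z^{(2n+2)}\simeq\tilde{\ms T}\times\PP^1$ and invoking Proposition \ref{p:pr}. The fibre over $q'_{i,j}$ in the last component of \eqref{exe1} becomes the line $\ell_j\subset E_n\simeq\tilde{\ms T}$. The curve $\Gamma_i$ becomes $\bm D\cap E_n$, which is contracted to a singular point. The fibres in the earlier components go to points.

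Within your own framework, an intersection count would also do the job:
\begin{itemize}
\item $\bm L'\cdot\Gamma_i=n-|n-i|=i$, and $\Gamma_i$ meets each of the $i$ centres $C'_{i,j}$ once, so its strict transform has degree $0$.
\item A fibre of $E^{(m)}_{i,j}$ over the point above $q'_{i,j}$ has $\bm L^{(m)}$-degree $1$, and this drops to $0$ exactly when $C^{(m)}_{i,j}$ is blown up next.
\end{itemize}
Hence only the fibre in the last component has degree $1$. It lies in $(\tilde f^{(n)})^{-1}(a_j)$ and maps isomorphically onto a $G_i$-invariant line over $a_j$, i.e.\ onto $\ell_j$ or $\ol\ell_j$. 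Reality then supplies the conjugate line from $\ol p$.
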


\proof
We only prove the case $i<n$, as the case $i>n$ can be shown similarly.
The ``real image'' of $L$ is the fiber conic $f_{\lmd}$ for a similar reason to the case $L\cap C_0\neq\emptyset$ as in 
Proposition \ref{p:a}, and we omit the detail.

We put $q_i:= L\cap C_i$, and let $\CCC_i\subset E_i$ be the fiber over $q_i$ of the projection $E_i\lras C_i$, where as before $E_i$ is the exceptional divisor over $C_i$. 
From the list of the chains of base curves on $\tilde Z'$
in Proposition \ref{p:bs1},
all base curves of $|\bm L'|^{\CC^*_s}$ that intersect $\CCC_i$ are $C'_{i,1}, C'_{i,2}, \dots, C'_{i,i}$.
Put $q'_{i,j}:=\CCC_i\cap C'_{i,j}$ for these intersection points.
These points determine curves on all the exceptional divisors $E\uptwo_{i,j},
\dots E^{(n-j+1)}_{i,j}$ over the curve $C'_{i,j}$ as the fibers over the point $q'_{i,j}$. 
Among these fibers, only the one on the last component $E^{(n-j+1)}_{i,j}$ is mapped onto a curve on the central component $E_n\subset \tilde Z^{(2n+2)}$ by the sequence of blowdowns $\tilde Z\upnn\lras\dots\lras\tilde Z^{(2n+2)}$ in the previous section, and the last curve is exactly the line $\ell_i\subset E_n\simeq\tilde{\ms T}$.
Therefore, the meromorphic image $\tilde\Psi(L)$ includes the $i$ lines $\ell_1,\dots, \ell_i$. 
On the other hand, the strict transform of $\CCC_i$ into $\tilde Z\upnn$ is mapped to one of the two $(-n)$-curves in $E_n\simeq\tilde{\ms T}$ (namely, $\bm D\cap E_n\subset \tilde Z^{(2n+2)}$)
by the composition $\tilde Z\upnn\lras\dots\lras\tilde Z^{(2n+2)}%\simeq \tilde{\ms T}\times\PP^1
$ and it is contracted to a singularity of $\ms T$ by the map $\pr'$ in the diagram \eqref{cdm}.
This belongs to the line $\ell_i$, so (by Proposition \ref{p:pr}) the image of $\CCC_i\subset \tilde Z\upnn$ does not appear as a component of the meromorphic image of $L$.
Furthermore, the other fibers of $E^{(m)}_{i,j}\lras C'_{i,j}$
with $m< n-j+1$ is mapped to a point on the line $\ell_i$ by the sequence of blowdowns and therefore it also does not appear as a component of $\tilde\Psi(L)$.
Combining all these, we obtain that the meromorphic image $\tilde\Psi(L)$ is as in the former of \eqref{mi1}.

For the degree of $L$ over $f_{\lmd}$,
let $L'\subset \tilde Z'$ be the strict transform of $L$ into $\tilde Z'$ under $\mu_1:\tilde Z'\lras\tilde Z$.
From the explicit form \eqref{L'} of the transformation $\bm L'$ on $\tilde Z'$, as $L'$ only intersects the components $E_i$ and $\ol E_i$ respectively at one point and the intersection consists of one point and is transversal, we have
\begin{align*}
\bm L' \cdot L' &= \bm L\cdot L - (n-|n-i|) (E_i+\ol E_i)\cdot  L'\\
&= 2n - 2(n-|n-i|)\\
&= 2|n-i|.
\end{align*}
Since the blowup after $\tilde Z'$ does not affect $L$, dividing by the degree of the image conic $f_{\lmd}$, $L$ is $|n-i|:1$ over $f_{\lmd}$.
\proofend

\medskip
From this, similarly to the previous case of $L$ intersecting $C_0$ or $C_{2n}$ as in Corollary \ref{c:im1}, we immediately obtain the following

\begin{corollary}\label{c:im2}
Let $L$ and $\lmd$ be as in the previous proposition.
Then again
$$
\tilde\Psi(L)=\Pi\inv\big(h\cap {\rm C}(\Lmd)\big)
$$ 
holds, where $h\subset\PP^{n+1}$ is the inverse image by $\pi:\PP^{n+1}\lras\PP^n$ of the  hyperplane $\underline h$ in $\PP^n$ passing through the points
\begin{itemize}\setlength{\itemsep}{-2pt}
\item $a_1,a_2,\dots, a_i$ if $i<n$,
\item $a_{2n},a_{2n-1},\dots, a_{i+1}$ if $i>n$,
\end{itemize}
as well as the point $\lmd$, where at $\lmd$ it is tangent to $\Lmd$ with order $|n-i|$. 
\end{corollary}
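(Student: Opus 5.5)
We read Proposition \ref{p:i1} as giving $\tilde\Psi(L)$ as a divisor on $\ms T$. The multiplicities come from the degree computation there and from the fact that each line $\ell_j$ (and, we expect, its conjugate $\ol\ell_j$) appears once. So in the case $i<n$ we regard
$$\tilde\Psi(L)=|n-i|\,f_{\lmd}+\sum_{j=1}^{i}(\ell_j+\ol\ell_j),$$
a curve of total degree $2|n-i|+2i=2n$. This equals the degree of a hyperplane section of $\ms T\subset\PP^{n+2}$, since $\ms T$ has degree $2n$: it is a double cover of the cone ${\rm C}(\Lmd)$, whose degree is $n$. The plan is to show that this divisor equals $\Pi\inv(h\cap{\rm C}(\Lmd))$ for the hyperplane $h=\pi\inv(\underline h)$ in the statement. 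The case $i>n$ is symmetric, with the points $a_{i+1},\dots,a_{2n}$ and multiplicity $i-n$.

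First, we construct $\underline h$. The hyperplanes of $\PP^n$ pull back to $\Lmd\simeq\PP^1$ as divisors of degree $n$, and every effective degree-$n$ divisor on $\Lmd$ is cut out by a unique hyperplane, because $\Lmd$ is a rational normal curve. We take the divisor $a_1+\dots+a_i+(n-i)\lmd$, which has degree $n$, and let $\underline h$ be the corresponding hyperplane. It passes through $a_1,\dots,a_i$ and meets $\Lmd$ at $\lmd$ with order exactly $n-i$. It is not the osculating hyperplane (which has order $n$) unless $i=0$; that case is Corollary \ref{c:im1}.

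Second, we compute the section. Since $h=\pi\inv(\underline h)$, the intersection $h\cap{\rm C}(\Lmd)$ is the union of the cone rulings over the points of $\underline h\cap\Lmd$, with the multiplicities of that divisor. Explicitly, it is the divisor $\sum_{j\le i}r_{a_j}+(n-i)r_{\lmd}$, where $r_\mu$ denotes the ruling over $\mu$. Pulling back by the double cover $\Pi$, the preimage of $r_\mu$ is the fiber of $\ms T\lras\Lmd$ over $\mu$, namely $f_\mu$. For $\mu=a_j$ this fiber is $\ell_j+\ol\ell_j$, because $r_{a_j}$ is tangent to the branch curve $\Sigma$ at $a_j$. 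For $\mu=\lmd$ it is the smooth conic $f_{\lmd}$. Hence
$$\Pi\inv\big(h\cap{\rm C}(\Lmd)\big)=(n-i)f_{\lmd}+\sum_{j=1}^{i}(\ell_j+\ol\ell_j),$$
which is the required identity.

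The main obstacle is a gap in Proposition \ref{p:i1}: it lists only the lines $\ell_1,\dots,\ell_i$ explicitly in its proof, while \eqref{mi1} also includes $\ol\ell_j$, and it does not give multiplicities. To close this, we would argue as follows. The meromorphic image is the image of a curve under a morphism from the resolution $\tilde Z^{(n)}$, so it is linearly equivalent to the image of a general nearby twistor line. That image is a hyperplane section, so the meromorphic image is itself a hyperplane section of degree $2n$. Its support is contained in the fibers of $\pi\circ\Pi$ over $\lmd,a_1,\dots,a_i$, and a hyperplane section supported on fibers must be a pullback of a degree-$n$ divisor on $\Lmd$. Therefore each $a_j$ fiber appears as the whole fiber $\ell_j+\ol\ell_j$. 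By reality the coefficients on $\ell_j$ and $\ol\ell_j$ are also equal. Since the coefficient of $f_{\lmd}$ is $n-i$ by the degree count, the total degree forces each $a_j$ to appear with coefficient one.
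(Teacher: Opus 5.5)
Your proposal is correct and is essentially the paper's argument: the paper likewise reads Proposition~\ref{p:i1} as the divisor $|n-i|f_{\lmd}+\sum_{j}(\ell_j+\ol\ell_j)$, with the multiplicity coming from $\bm L'\cdot L'=2|n-i|$, and recognizes it as the pullback under $\pi\circ\Pi$ of the degree-$n$ divisor $a_1+\dots+a_i+(n-i)\lmd$ on the rational normal curve $\Lmd$, which is cut out by a unique hyperplane $\underline h$. Your extra paragraph justifying the multiplicities goes beyond what the paper writes, since the paper takes them for granted; your degree count also confirms that for $i>n$ the union in \eqref{mi1} should run over $j=i+1,\dots,2n$, as in the corollary, rather than $j=i,\dots,2n$.
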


From the well-known property of the rational normal curve, the hyperplane $\underline h\subset\PP^n$ in the proposition uniquely exists.
Obviously, the hyperplane varies continuously as the point $\lmd$
varies on the arc $(a_i,a_{i+1})$, but from the explicit description in Corollaries \ref{c:im1} and \ref{c:im2}, 
we readily obtain that {\em the hyperplane $\underline h$ (and hence $h=\pi\inv(\underline h)\subset\PP^{n+1}$ also) varies continuously even when the point $\lmd$ moves across an endpoint of the arc.}

The remaining case that a twistor line $L\subset Z$ intersects the central sphere $C_n$ is most interesting.
In this case, as we mentioned, the ``real image'' of $L$ by $\tilde\Psi$ is a single point as $L$ is an orbit closure of the $\CC^*_s$-action.
Identifying the minimal resolution $\tilde{\ms T}$ of $\ms T$ with the fiber $\tilde Z_1=\tilde p\inv(1)$, the ``real image'' of $L$ by $\tilde\Psi$ is the intersection point $L\cap \tilde Z_1$, and this point is mapped to a special connected component of the real locus $\ms T\us$ of $\ms T$ which is also called the central sphere \cite{Hi21,Hi25}.
If $L\neq L_n,L_{n+1}$, then the meromorphic image $\tilde\Psi(L)$ consists of two smooth rational curves which are mutually conjugate by the real structure as follows.

\begin{proposition}\label{p:l}
If $L\subset Z$ is a twistor line that intersects the central sphere $C_n$ but is different from $L_n$ and $L_{n+1}$, then the meromorphic image $\tilde\Psi(L)$ is a reducible curve of the form $\CCC\cup\ol\CCC$, where $\CCC$ is a section of $\pi\circ\Pi:\ms T\lras\Lmd$ that intersects the $2n$ lines $\ol\ell_i$ and $\ell_{n+i}$ at one point for $i=1,\dots,n$ 
and that do not intersect the residual $2n$ lines 
$\ell_i$ and $\ol\ell_{n+i}$ for $i=1,\dots,n$.
\end{proposition}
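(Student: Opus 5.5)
Because $L$ meets the central sphere $C_n$ at a point $q_n$ other than $C_{n-1}\cap C_n$ and $C_n\cap C_{n+1}$, it is $\CC^*_s$-invariant. Its ``real image'' is therefore a single point, and the whole meromorphic image comes from the indeterminacy points $q_n=L\cap C_n$ and $\ol q_n=L\cap\ol C_n$. So I would compute $\tilde\Psi(L)$ via the factorization in Proposition \ref{p:pr}, tracking $\mu\inv(L)$ through $\tilde Z'\lras\cdots\lras\tilde Z^{(2n+2)}\simeq\tilde{\ms T}\times\PP^1$ and then applying $\pr'$. By the real structure it suffices to understand the part lying over $q_n$, and to show that it produces a single curve $\CCC\subset E_n\simeq\tilde{\ms T}$. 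The part over $\ol q_n$ then gives $\ol\CCC=\sigma(\CCC)$.

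\textbf{Identifying $\CCC$.} In $\hat Z$ the divisor $E_n$ is $C_n\times e$, and $\mu_1\inv(q_n)\cap E_n$ is the fiber $\CCC_n:=\{q_n\}\times e$. This curve is a section of $\tilde f'|_{E_n}:E_n\lras\PP^1$, because $e$ is identified with the $z$-line. The strict transform of $L$ in $\tilde Z'$ meets $E_n$ transversally at one point of $\CCC_n$. Moreover, $E_n$ is the only component over $C_n$ that carries positive multiplicity in \eqref{L'}, so it is the only place where the image can be non-trivial. Since $q_n$ is not a $T^2_{\CC}$-fixed point, $\CCC_n$ avoids the $2(n-1)+2$ blowup points on $E_n$ listed in the proof of Proposition \ref{p:El}. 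Those points lie on the two sections $(C_{n-1}\cap C_n)\times\PP^1$ and $(C_n\cap C_{n+1})\times\PP^1$, which are exactly the $(-n)$-curves $D_1,\ol D_1$. The later blowups and blowdowns do not touch $E_n$ (as noted before Proposition \ref{p:mr}), so $\CCC_n$ survives as a smooth section in $E_n\subset\tilde Z^{(2n+2)}$, disjoint from $D_1\cup\ol D_1$. By Proposition \ref{p:mr} it maps isomorphically to a section $\CCC$ of $\pi\circ\Pi:\ms T\lras\Lmd$. I would also check that no other part of $\mu\inv(q_n)$ contributes a curve. The components over $C_n$ in $\tilde Z\upnn$ other than $E_n$ are contracted to curves, and their fibers through the relevant points are mapped to points, by the same kind of argument as in Proposition \ref{p:i1}. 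So $\tilde\Psi(L)=\CCC\cup\ol\CCC$.

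\textbf{Which lines $\CCC$ meets.} Over $z=a_i$, the reducible fiber of $E_n\simeq\tilde{\ms T}$ consists of the strict transform of the old fiber $C_n\times\{a_i\}$ together with the exceptional curve of the blowup at $(C_{n-1}\cap C_n)\times\{a_i\}$ for $i\le n$, or at $(C_n\cap C_{n+1})\times\{a_i\}$ for $i>n$. The section $\CCC_n$ meets the strict transform of $C_n\times\{a_i\}$ at one point and misses the exceptional curve. So the question reduces to which of $\ell_i,\ol\ell_i$ each of these two curves is. This is decided by the convention that $\ell_i$ meets $D_1=\bm D\cap E_n$. For $i\le n$ the exceptional curve meets the section $(C_{n-1}\cap C_n)\times\PP^1$; by \eqref{DD'}, $D$ is adjacent to the $C_{n-i}$ side of the chain, so this section is the one identified with $D_1$. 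Hence the exceptional curve is $\ell_i$, the strict transform is $\ol\ell_i$, and $\CCC$ meets $\ol\ell_i$ but not $\ell_i$. For $i>n$ the exceptional curve meets the other section $\ol D_1$, so it is $\ol\ell_i$, and $\CCC$ meets $\ell_i$ but not $\ol\ell_i$. This gives the intersection pattern in the statement, and applying $\sigma$ yields the reversed pattern for $\ol\CCC$.

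The main obstacle is this identification. One must match the side of the chain (the $C_{n-1}$-side versus the $C_{n+1}$-side) with $D_1$ versus $\ol D_1$ consistently with the isomorphism of Proposition \ref{p:El}. I would settle it by checking which of $S_i^\pm$ contains each blown-up point, using \eqref{incl} and the small-resolution choice \eqref{sr}, and recalling that $\ell_i=S_i^+\cap\tilde Z_1$. A secondary point is confirming that the other exceptional components contribute only points.
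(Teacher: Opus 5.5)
Your proposal is correct and is essentially the paper's proof: $\CCC$ is the fiber of $E_n\lras C_n$ over $q=L\cap C_n$; it misses every point of $E_n$ blown up by the small resolution and by $\mu_2$, whose exceptional curves become $\ell_1,\dots,\ell_n$ and $\ol\ell_{n+1},\dots,\ol\ell_{2n}$; $\ol\CCC$ comes from $\ol q$ by reality; and $q,\ol q$ are the only indeterminacy points on $L$. The paper simply asserts the $\ell_i$ versus $\ol\ell_i$ labeling, and the cleanest way to settle it is \eqref{incl} rather than locating the blown-up points (the points of $E_n$ blown up by $\mu_2$ lie on $S_i^-$, yet the resulting exceptional curves end up in $S_i^+$ after the later blowdowns): the strict transform of $C_n\times\{a_i\}$ that $\CCC$ meets lies in $S_i^-$ for $i\le n$ and in $S_i^+$ for $i>n$, and in $\tilde{\ms T}\times\PP^1$ these divisors are $\ol\ell_i\times\PP^1$ and $\ell_i\times\PP^1$ respectively.
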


Since $\tilde\Psi$ preserves the real structure, the ``real image'' of $L$, which has to be a point as above, is real. From the proposition, this point has to be one point of $\CCC\cap\ol\CCC$ which belongs to the central sphere in $\ms T$.

\medskip\noindent
{\em Proof of Proposition \ref{p:l}.}
Again we use the sequence of explicit blowups of $\tilde Z$ that eliminates the indeterminacy of $\tilde\Psi$.
Put $q:=L\cap C_n$. Then $\ol q = L\cap \ol C_n$,
and $q$ and $\ol q$ are all indeterminacy points on $L$.
As above, the image of $L\minus \{q,\ol q\}$ has to be a point that belongs to the central sphere.
Let $\CCC$ be the fiber of the projection $E_n\lras C_n$ over the point $q$ by the first blowup $\hat Z\lras\tilde Z$.
Then $\ol\CCC\subset \ol E_n$ is the fiber of $\ol E_n\lras \ol C_n$ over the point $\ol q$.
Among the rational curves which are the exceptional curves of the small resolution $\tilde Z'\lras\hat Z$,
only the ones over the two points $z=a_n$ and $z=a_{n+1}$ are inserted in $E_n$, and these two curves are finally mapped onto the lines $\ell_n$ and $\ol\ell_{n+1}$ in $\ms T$ respectively.
The next map $\mu_2:\tilde Z\uptwo\lras \tilde Z'$ blows up $2(n-1)$ points of $E_n$ and the exceptional curves are finally mapped onto the lines $\ell_1,\dots,\ell_{n-1}$ and $\ol\ell_1,\dots,\ol\ell_{n-1}$ in $\ms T$.
Therefore, the image of $\CCC$ to $\ms T$ does not intersect the $2n$ lines $\ell_1,\dots,\ell_n$
and $\ol\ell_{n+1},\dots,\ol\ell_{2n}$ and does intersect the residual $2n$ lines $\ol\ell_1,\dots,\ol\ell_n$ and $\ell_{n+1},\dots,\ell_{2n}$.

Since the meromorphic map $\tilde\Psi$ preserves the real structure,
the meromorphic image $\tilde\Psi(L)$ is real. Therefore, the conjugate curve $\ol\CCC$ also has to be included in $\tilde\Psi(L)$ and it has to be from the point $\ol q$.
Thus, $\CCC\cup\ol\CCC\subset \tilde\Psi(L)$.
Since $\tilde\Psi$ has the two points $q$ and $\ol q= L\cap \ol C_n$ as its all base points on $L$, the sum $\CCC+\ol\CCC$ is the entire meromorphic image.
\proofend

\medskip
The meromorphic images of $L_n$ and $L_{n+1}$ are given as follows:

\begin{proposition}\label{p:itl}
If a twistor line $L\subset Z$ is $L_n$ or $L_{n+1}$, then the meromorphic image $\tilde\Psi(L)$ is, respectively 
\begin{align}\label{mi2}
\bigcup_{j=1}^{n} \,(\ell_j \cup \ol \ell_j)
\quad{\text{or}}\quad 
\bigcup_{j=n+1}^{2n}  (\ell_j \cup \ol \ell_j).
\end{align}
\end{proposition}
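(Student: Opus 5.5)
We treat $L=L_n$; the case $L_{n+1}$ follows by the symmetry of all constructions about $E_n$, which exchanges the roles of $S^+$ and $S^-$. The plan is to compute the meromorphic image as in the proofs of Propositions \ref{p:i1} and \ref{p:l}: take the total transform of $L_n$ in $\tilde Z\upnn$, push it forward to $\tilde Z^{(2n+2)}\simeq\tilde{\ms T}\times\PP^1$, and project by $\pr'$ using Proposition \ref{p:pr}. Since $L_n$ is $T^2_{\CC}$-invariant and $\sigma$-invariant, the meromorphic image is a real, $\CC^*_t$-invariant, connected union of curves.

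\emph{Step 1: the indeterminacy points.} $L_n$ meets the chain only at $C_{n-1}\cap C_n$, and by reality the conjugate chain only at $\ol C_{n-1}\cap\ol C_n$. Off these two points, $L_n$ lies in $S_n^+\cap S_n^-=\{x=y=z-a_nu=0\}$, which is $\CC^*_s$-invariant, so its ``real image'' is a point. This point is the real point of $f_{a_n}$, namely $\ell_n\cap\ol\ell_n$. So all components of $\tilde\Psi(L_n)$ come from the fibres of the modifications over the two indeterminacy points.

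\emph{Step 2: the fibre over $q=C_{n-1}\cap C_n$.} In $\hat Z$ the preimage of $q$ is the fibre $\{q\}\times e$ in $E_{n-1}\cup E_n$; in particular $E_{n-1}\cap E_n$ contains it. By \eqref{sr}, the node $L_n\cap E$ is resolved by blowing up $E_n$ and $S_n^+$. Hence the exceptional curve over the node lies in $E_n$, and by the proof of Proposition \ref{p:l} it maps to $\ell_n$. The other curves over $q$ are as follows.
\begin{itemize}
\item The curve $(C_{n-1}\cap C_n)\times e\subset E_n$ becomes a $(-n)$-section of $E_n\simeq\tilde{\ms T}$, namely the one meeting the $\ell_j$. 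It is contracted to a vertex of $\ms T$.
\item The $\mu_2$-exceptional curves at the points $(C_{n-1}\cap C_n)\times\{a_j\}$, $j<n$, map to $\ell_1,\dots,\ell_{n-1}$ (proof of Proposition \ref{p:El}).
\item Everything lying in $E_{n-1}$ or in the $E^{(m)}_{n-1,j}$ is blown down to curves over $C_{n-1}$ or $C'_{n-1,j}$. As in the proof of Proposition \ref{p:i1}, these map to points of the lines $\ell_j$, or to the vertex.
\end{itemize}
So the contribution of $q$ is exactly $\ell_1\cup\dots\cup\ell_n$.

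\emph{Step 3: reality.} Applying $\sigma$, the contribution of $\ol q$ is $\ol\ell_1\cup\dots\cup\ol\ell_n$. Their union is connected, passing through the vertex and the real point $\ell_n\cap\ol\ell_n$, and it contains the ``real image''. This gives the first curve in \eqref{mi2}. As a consistency check, compute the degree via \eqref{L'}: $\bm L'\cdot L_n'=2n-(n-1)\cdot2=2$, which is not what we need, so one should instead compare with the limit of Corollary \ref{c:im2} as $\lmd\searrow a_n$ from the $C_{n-1}$ side. There $f_{\lmd}\cup\bigcup_{j\le n-1}(\ell_j\cup\ol\ell_j)$ degenerates, with $f_{\lmd}\to\ell_n+\ol\ell_n$, and this gives the same curve, confirming the answer by continuity of the hyperplane $\underline h$.

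The main obstacle is Step 2, specifically tracking the $(-1,-1)$-curves over the node at $L_n\cap E$ through the later flops and blowdowns. One must show that the curves lying in $E_{n-1}$ and in the strings over $C'_{n-1,j}$ do not survive as components. I would handle this by the same fibre-by-fibre bookkeeping as in Figure \ref{f:web3}, restricted to the fibre $z=a_n$.
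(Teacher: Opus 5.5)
Your proposal is correct and is essentially the paper's proof written out in more detail. The fibre of $E_n\to C_n$ over $q=C_{n-1}\cap C_n$ passes through the $n$ points over $a_1,\dots,a_n$ that are blown up by the small resolution and by $\mu_2$; their exceptional curves go to $\ell_1,\dots,\ell_n$, the fibre itself is contracted to a vertex of $\ms T$, and $\ol q$ contributes the conjugate lines.

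Two corrections, both outside the main argument:

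\emph{Your ``main obstacle'' is vacuous.} In $\tilde Z^{(n)}$ the entire preimage of $q$ already lies in $E_n$. No centre of $\mu_3,\dots,\mu_n$ lies over $q$, and $\mu_2$ only adds the fibres of $E^{(2)}_{n-1,j}$ over $(q,a_j)$. Those fibres are curves in $E_n$ mapping onto $\ell_j$, which contradicts your third bullet if it is read literally. Proposition \ref{p:mr} therefore finishes the computation directly, with no tracking of flops or blowdowns.

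\emph{Your degree check is miscomputed.} $L'_n$ meets $E_n$ and $\ol E_n$ through the small-resolution curves, not $E_{n-1}$ and $\ol E_{n-1}$. This gives $\bm L'\cdot L'_n=2n-2n=0$, which is exactly what the point-like ``real image'' requires. Also, the limit from the $C_{n-1}$ side in Corollary \ref{c:im2} is $\lmd\nearrow a_n$, not $\lmd\searrow a_n$.
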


\proof
We only prove the case $L=L_n$ as the case
$L=L_{n+1}$ can be shown similarly.
We use the notations in the previous proof.
This time, the fiber $\CCC$ of the projection $E_n\lras C_n$ over the point $q=C_{n-1}\cap C_n$, where $E_n\subset \hat Z$, passes through the $n$ points over $z=a_1,\dots,a_n$ to be blown up by $\tilde Z'\lras\hat Z$ and $\tilde Z\uptwo\lras\tilde Z'$.
So the transformation of $E_n$ into $\tilde Z\uptwo$ includes the exceptional curves 
which are finally mapped onto $\ell_1,\ell_2,\dots,\ell_n$ in $\ms T$.
Hence, these $n$ lines are included in the meromorphic image of $L$.
Further, this time, the curve $\CCC$ in $E_n\subset\tilde Z^{(2n+2)}$ is contracted to a singularity of $\ms T$.
From these, taking the contribution from another base point $\ol q$ into account, we obtain that the meromorphic image $\tilde\Psi(L)$ is as in the former of \eqref{mi2}.
\proofend

\medskip
Since the curve $\Gamma$ in the proof of Propositions \ref{p:l} and \ref{p:itl} faithfully varies as the intersection point $L\cap C_n$ varies, the meromorphic image of twistor lines through the central sphere $C_n$ constitute a real 2-dimensional family of reducible curves on $\ms T$, and the parameter space of this family is naturally identified with the curve $C_n$ itself.
Except for the two members that are from $L_n$ and $L_{n+1}$, each member of this family consists of two smooth rational curves $\CCC$ and $\ol\CCC$, and they constitute a pair of pencils in the usual sense which are mutually $\sigma$-conjugate.
This pair of pencils is exactly the one discussed in \cite{Hi21} and \cite[Section 6]{Hi25}.

We note that although the two curves \eqref{mi2} look quite different from $\CCC+\ol\CCC$ in Proposition \ref{p:l}, each of them can be naturally regarded as a limit of the latter curves.
More concretely, if we put $\CCC= \sum_{i=1}^n\ell_i$ or $\CCC= \sum_{i=n+1}^{2n}\ell_i$, then 
the curves \eqref{mi2} can be written as $\CCC+\ol\CCC$ respectively, and moreover, these $\Gamma$ are limits of the curve $\Gamma$ in Proposition \ref{p:l}.
Hence, in the following, we do not treat the case $L= L_n, L_{n+1}$ separately.

So as in Propositions \ref{p:l} and \ref{p:itl}, let $L\subset Z$ be a twistor line that intersects $C_n$ and write $\tilde\Psi(L) = \CCC + \ol\CCC\subset\ms T$ including the case $L=L_n, L_{n+1}$ as above.
Next we show that the images of $\CCC$ and $\ol\CCC$ by the double covering $\Pi:\ms T\lras {\rm C}(\Lmd)$ have some nice property.
Recall that this covering has the hyperelliptic curve $\Sigma\subset {\rm C}(\Lmd)$ branched at $a_1,\dots,a_{2n}$ as the branch divisor. (See \eqref{Sgm}.)
It is convenient to introduce the following
\begin{definition}
{\em
We say that a hyperplane $h\subset\PP^{n+1}$ is {\em evenly tangent to $\Sigma$} if $h$ is tangent to $\Sigma$ at every intersection point and if the contact order is even at every tangent point.
}
\end{definition}

Since $\deg(\Sigma) = 2n$, most generically, an evenly tangential hyperplane has exactly $n$ tangent points to $\Sigma$.
Note that the inverse image of a hyperplane in $\PP^n$ passing through $n$ points among $a_1,a_2,\dots,a_{2n}$ under the projection $\pi:\PP^{n+1}\lras\PP^n$ is always evenly tangent to $\Sigma$, because such a hyperplane contains the generating lines of the cone over the $n$ points and therefore is tangent to $\Sigma$ at ramification points.

\begin{proposition}\label{p:tan}
If a twistor line $L\subset Z$ intersects the central sphere $C_n$ and write $\tilde\Psi(L)=\CCC+\ol\CCC$ as above,
then the images $\Pi(\CCC)$ and $\Pi(\ol\CCC)$ are sections of ${\rm C}(\Lmd)$ by hyperplanes which are
evenly tangent to the hyperelliptic curve $\Sigma$.
\end{proposition}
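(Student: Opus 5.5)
We plan to show that $\CCC$ (and hence $\ol\CCC$, by reality) is a hyperplane section of $\ms T$ lying in a single sheet, i.e.\ that $\CCC+\ol\CCC=\Pi\inv(h\cap{\rm C}(\Lmd))$ for some hyperplane $h\subset\PP^{n+1}$. Once this holds, evenness of contact follows formally. Suppose $h\cap{\rm C}(\Lmd)$ is a curve whose preimage under the double cover $\Pi$ splits into two components $\CCC$ and $\ol\CCC$, each mapped isomorphically onto $\Pi(\CCC)=\Pi(\ol\CCC)$. Then the restriction of the branch divisor $\Sigma$ to $h\cap{\rm C}(\Lmd)$ must be divisible by two. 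Otherwise the double cover of the (rational) section curve would be irreducible. This means $h$ meets $\Sigma$ only with even contact orders, which is exactly the condition of being evenly tangent.

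First we compute the class of $\CCC+\ol\CCC$. By Proposition~\ref{p:pr}, the meromorphic image of any twistor line is the limit of images of generic twistor lines, since meromorphic images vary in a flat family. Generic images have degree $\bm L\cdot L=2n$ by \eqref{LL}. Alternatively, Corollaries~\ref{c:im1} and \ref{c:im2} show that all members of the family through $C_0\cup\dots\cup C_{2n}$ are hyperplane sections $\Pi\inv(h\cap{\rm C}(\Lmd))$, and the family is connected and continuous across $\lmd=a_n$ and $\lmd=a_{n+1}$. Since hyperplane sections of $\ms T$ form a closed linear system, $\tilde\Psi(L)$ for $L$ through $C_n$, being a limit of these, is itself a hyperplane section $H\cap\ms T$ with $H\subset\PP^{n+2}$. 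A more direct route is to verify that $\CCC+\ol\CCC$ is cut out by the restriction to $E_n\simeq\tilde{\ms T}$ of some member of $|\bm L\upnn|^{\CC^*_s}$ containing the strict transform of $L$. Proposition~\ref{p:mr} identifies these restrictions with hyperplane sections.

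Next we show that $H$ can be chosen through the vertex of the projection $\Pi$, so that $H=\Pi\inv(h)$. The real structure, together with the covering involution $\iota$ of $\Pi$, is the key. By Proposition~\ref{p:l}, $\CCC$ is a section of $\pi\circ\Pi$ that meets $\ol\ell_i$ and $\ell_{n+i}$ but not $\ell_i$ and $\ol\ell_{n+i}$ for $1\le i\le n$. Since $\iota$ exchanges $\ell_i$ and $\ol\ell_i$, the curve $\iota(\CCC)$ is a section with the complementary incidence pattern, as is $\ol\CCC=\sigma(\CCC)$. We would then compare classes in $\mathrm{Pic}(\tilde{\ms T})$, which is generated by a fiber, a section and the $\ell_i$. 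Sections of the conic bundle are determined up to linear equivalence by which line they meet in each reducible fiber, together with their intersection numbers with $D_1$ and $\ol D_1$; the latter are fixed by the hyperplane-class computation. This gives $\iota(\CCC)\sim\ol\CCC$. Hence $\CCC+\iota(\CCC)$ lies in the same hyperplane class as $\CCC+\ol\CCC$ and is $\iota$-invariant. An $\iota$-invariant hyperplane section is the pullback of a hyperplane section of ${\rm C}(\Lmd)$, so $\Pi(\CCC)$ is a hyperplane section $h\cap{\rm C}(\Lmd)$.

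The main obstacle is this linear-equivalence step. We must make sure that $\iota(\CCC)$ and $\ol\CCC$ are not merely equivalent but actually fit into one hyperplane section together with $\CCC$, that is, that $\Pi(\CCC)=\Pi(\ol\CCC)$ rather than just being hyperplane-equivalent. We would handle this by observing that $\Pi(\CCC)$ is a section of the cone of degree $n$, hence lies in a unique hyperplane $h$ not through the vertex. Then $\Pi\inv(h\cap{\rm C}(\Lmd))\supset\CCC$ has degree $2n$ and contains the residual curve $\iota(\CCC)$. Applying $\sigma$, which commutes with $\Pi$, gives a corresponding hyperplane for $\ol\CCC$. The evenness argument from the first paragraph then applies to $\Pi(\CCC)$ via the splitting $\CCC\cup\iota(\CCC)$, and likewise to $\Pi(\ol\CCC)$.
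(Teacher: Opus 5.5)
\paragraph{The error in your plan.} The target you announce in your first paragraph, and again call the main obstacle at the end, is false in general. For a twistor line $L$ through a general point of $C_n$ one has $\Pi(\CCC)\neq\Pi(\ol\CCC)$, so $\CCC+\ol\CCC$ is \emph{not} of the form $\Pi^{-1}(h\cap{\rm C}(\Lmd))$. The paper points this out immediately after the statement. Proposition~\ref{p:ccl} then shows that $\Pi(\CCC)=\Pi(\ol\CCC)$ holds exactly when $L\cap C_n$ lies on a particular circle $\gamma_n\subset C_n$.

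Your third paragraph does not establish the coincidence either. The divisor $\CCC+\ol\CCC$ is fixed, so there is no freedom to ``choose'' its hyperplane so that it passes through the center of $\Pi$. What you actually prove is that a \emph{different} divisor in the same class, namely $\CCC+\iota(\CCC)$, is pulled back from the cone. The linear equivalence $\iota(\CCC)\sim\ol\CCC$ does not give $\iota(\CCC)=\ol\CCC$.

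\paragraph{What survives.} The statement does not need this coincidence. It only asks that $\Pi(\CCC)$ and $\Pi(\ol\CCC)$ each be cut out by \emph{some} evenly tangent hyperplane. These two hyperplanes are $\sigma$-conjugate and in general distinct. The second half of your last paragraph proves exactly this, and it is a complete proof once the false goal is deleted:
\begin{itemize}
\item $\Pi(\CCC)=h\cap{\rm C}(\Lmd)$ for some hyperplane $h$ missing the vertex.
\item $\Pi^{-1}(h\cap{\rm C}(\Lmd))=\CCC+\iota(\CCC)$, and $\CCC$ maps isomorphically onto this smooth rational curve.
\item Hence $\Sigma$ restricts to this curve as twice an effective divisor, so every contact order is even.
\item Applying $\sigma$ gives the same conclusion for $\ol\CCC$.
\end{itemize}

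You can also drop your second paragraph and the Picard-group comparison. Proposition~\ref{p:l} and its proof already give that $\CCC$ is a section of $\pi\circ\Pi$ avoiding both singular points of $\ms T$. Hence $\Pi(\CCC)$ is a section of the cone avoiding the vertex. On $\FF_n$, the minimal resolution of the cone, such a section is linearly equivalent to the pullback of the hyperplane class, so it is a hyperplane section.

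The ``alternative'' justification in your second paragraph would not work anyway, for two reasons. Twistor lines through a general point of $C_n$ are not limits of lines through the $C_i$ with $i\neq n$, since those limits only reach $L_n$ and $L_{n+1}$. And a degree count alone does not determine the divisor class.

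\paragraph{Comparison with the paper.} Corrected this way, your argument is essentially the converse half of the paper's proof. The paper first characterizes the components of $\Pi^{-1}(h)$, for evenly tangent $h$ missing the vertex, by how they meet the $4n$ lines $\ell_i,\ol\ell_i$. It then checks that $\CCC$ has this incidence pattern, using Proposition~\ref{p:l}. The step where the pattern yields a hyperplane is only stated briefly there; your section-plus-vertex-avoidance argument makes it transparent. The incidence pattern itself is not needed for this proposition, though it feeds the equal-division correspondence in Proposition~\ref{p:gm}.
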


Note that we are not asserting that the hyperplane is real, or equivalently, that $\Pi(\CCC) = \Pi(\ol\CCC)$ holds.
If $\Pi(\CCC) \neq \Pi(\ol\CCC)$, then the meromorphic image $\tilde\Psi(L)$ is not a hyperplane section of $\ms T$.
We will soon show that the coincidence $\Pi(\CCC) = \Pi(\ol\CCC)$ holds only for $L$ such that the intersection point $L\cap C_n$ belongs to a particular circle in $C_n$, which goes through the two points $C_n\cap C_{n-1}$ and $C_n\cap C_{n+1}$.
(But the role of this circle will be auxiliary.)

\medskip
\noindent
{\em Proof of Proposition \ref{p:tan}.}
If $L=L_n$, then as above $\CCC= \sum_{i=1}^n\ell_i$ and hence $\Pi(\CCC)$ is the sum of the generating lines of the cone ${\rm C}(\Lmd)$ over the points $a_1,\dots,a_n$.
This is cut of ${\rm C}(\Lmd)$ by the hyperplane $h_0$ which is obtained from the hyperplane in $\PP^n$ spanned by the $n$ points $a_1,\dots,a_n$ by taking the inverse image under $\pi:\PP^{n+1}\lras\PP^n$.
The hyperplane $h_0$ is evenly tangent to $\Sigma$ as above.
The case $L=L_{n+1}$ can be shown similarly just by replacing $a_1,\dots,a_n$ with 
the residual $n$ points $a_{n+1},\dots,a_{2n}$.

In the sequel,  we suppose that $h\subset\PP^{n+1}$ is (not necessarily real) hyperplane that is evenly tangent to $\Sigma$ and which does not pass through the vertex of the cone ${\rm C}(\Lmd)$,
and find a property of the inverse image $\Pi\inv(h)\subset\ms T$
that characterizes a component of the curve $\Pi\inv(h)$.

The cut ${\rm C}(\Lmd)\cap h$ is a smooth rational curve. From the evenly tangential condition, $\Pi\inv(h)$ consists of two irreducible components and both of them are smooth rational curves.
Let $\CCC_1$ and $\CCC_2$ be these components, so that $\Pi\inv(h) = \CCC_1+\CCC_2$.
Then for any $1\le i\le 2n$, $\CCC_1$ and $\CCC_2$ intersect at least one of the two lines $\ell_i$ and $\ol \ell_i$.
If $\CCC_1$ or $\CCC_2$ would intersect both $\ell_i$ and $\ol \ell_i$ for some $1\le i\le 2n$, then $h$ would pass through the branch point of $\pi:\Sigma\lras\Lmd$ over the point $z=a_i$ from tangency, which implies that $h$ passes through the vertex of ${\rm C}(\Lmd)$. Hence, $\CCC_1$ and $\CCC_2$ intersect exactly one of $\ell_i$ and $\ol\ell_i$ for any $1\le i\le 2n$.
Of course, $\Gamma_1$ and $\Gamma_2$ do not intersect the same line.
Further, using the realization of the minimal resolution $\tilde{\ms T}$ as a blowup of $\qdr$ as explained at the end of Section \ref{s:2}, we can prove that the lines that intersects $\CCC_1$ 
are $n$ lines $\ell_{i_1},\dots,\ell_{i_n}$ 
for some indices $1\le i_1< \dots< i_n\le 2n$ and also $n$ lines 
$\ol\ell_{j_1},\dots,\ol\ell_{j_n}$ where $\{i_1,\dots,i_n\}\cup\{j_1,\dots,j_n\} = \{1,2,\dots,2n\}$.
This is the property of the components of $\Pi\inv(h)$ for an evenly tangential hyperplane $h$ that does not pass through the vertex of ${\rm C}(\Lmd)$.

Conversely, let $\Gamma_1$ be an irreducible curve on $\ms T$ that does not pass through the singularities of $\ms T$ and that intersects $2n$ lines among the $4n$ lines on $\ms T$ in the above way.
Then the curve $\Gamma_2:=\Pi\inv(\Pi(\Gamma_1))-\Gamma_1$ intersects the $4n$ lines on $\ms T$ in the complementray way to $\Gamma_1$.
Moreover, the way of the intersection with the lines means that there exists a hyperplane $h\subset\PP^{n+1}$ such that $\Gamma_1+\Gamma_2=\Pi\inv(h)$.
Furthermore, the reducibility of $\Gamma_1+\Gamma_2$ means that any intersection point of $h\cap\Sigma$ is tangential and that the contact order is even.
Therefore, $h$ is evenly tangent to $\Sigma$ and $\Gamma_1$ is an irreducible component of $\Pi\inv(h)$.
Thus, we have obtained the desired characterization.

Back to the components $\CCC$ and $\ol\CCC$ of $\tilde\Psi(L)$, by Proposition \ref{p:l}, $\CCC$ intersects the $n+n$ lines $\ol\ell_1,\dots,\ol\ell_n$ and $\ell_{n+1},\dots,\ell_{2n}$ and does not intersect the residual $n+n$ lines. 
This is a special case of the above way of intersection.
Further, from the proof of the same proposition, $\CCC$ does not pass through the two singularities of $\ms T$.
Therefore, from the above characterization in this proof, $\CCC$ is a component of $\Pi\inv(h)$ for a hyperplane $h\subset\PP^{n+1}$ that is evenly tangent to $\Sigma$ and that is not through the vertex of ${\rm C}(\Lmd)$.
By the same argument, $\ol\CCC$ is also obtained in such a way
(but it is not necessarily obtained from the same $h$ as $\CCC$.)
\proofend

\medskip
In order to show the existence of the circle in $C_n$ mentioned right after the previous proposition, we prove the following property about real evenly tangent hyperplanes.

\begin{proposition}\label{p:gm}
{\em Real} hyperplanes in $\PP^{n+1}$ that are evenly tangent to the hyperelliptic curve $\Sigma$ constitute (various) real 1-dimensional families parameterized by $S^1$.
There exists a natural one-to-one correspondence between the set of such $S^1$-families and the set of equal divisions of the set of $2n$ generating lines of the cone ${\rm C}(\Lmd)\subset\PP^{n+1}$ over the points $a_1,a_2,\dots, a_{2n}$.
\end{proposition}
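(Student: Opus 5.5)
We translate evenly tangent hyperplanes into divisor data on $\Sigma$ and then impose reality. A hyperplane $h\subset\PP^{n+1}$ not through the vertex cuts ${\rm C}(\Lmd)$ in a curve whose intersection with $\Sigma$ is a divisor in the class $|nH|$, where $H$ is the hyperplane class. On $\Sigma\subset\ms O(n)$ this is the class $\ms O_\Sigma(2\cdot\pi^*\ms O(n/2))$, i.e.\ the class of $\pi^*\ms O_{\Lmd}(n)$ twice over; concretely it is the class of the divisor of $v-p(z)$ for a polynomial $p$ of degree $\le n$. Even tangency means $h\cap\Sigma=2D$ with $D$ effective of degree $n$. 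So $2D\sim n\,g^1_2$, and therefore $D-\frac n2 g^1_2$ (or, when $n$ is odd, $D-\frac{n-1}2g^1_2-w$ for a fixed Weierstrass point $w$) is a $2$-torsion point of the Jacobian. Conversely, each $D$ with $2D\sim nH|_\Sigma$ determines a unique $h$. This is because $h^0(\Sigma,nH)-1=n+1$ equals the dimension of the space of hyperplanes of $\PP^{n+1}$, and restriction is injective since $\Sigma$ is nondegenerate of degree $2n$.

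The next step is to decompose the family by torsion class. For a fixed $2$-torsion class $\eta$, the divisors $D$ with $D\sim\frac n2g^1_2+\eta$ form the complete linear system $|D_\eta|$ of degree $n=g+1$. By Riemann--Roch this has dimension $\ge1$, generically exactly $1$, so each torsion class gives a $\PP^1$ of evenly tangent hyperplanes. The $2$-torsion classes of a hyperelliptic Jacobian are represented by differences of Weierstrass points. The classes admitting such an effective $D$ of degree $n$ with $h^0=2$ correspond to equal divisions $\{1,\dots,2n\}=I\sqcup J$ with $|I|=|J|=n$, identified modulo swapping $I$ and $J$. One realizes the representative $D_I=\sum_{i\in I}w_i$, which recovers the hyperplane $h_0$ spanned by the generators over $a_i$, $i\in I$, as in the proof of Proposition~\ref{p:tan}. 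The complementary division gives the same class because $\sum_{all} w_i\sim n g^1_2$. The whole pencil $|D_I|$ also contains $D_J$.

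Then I would impose reality. The real structure on $\Sigma$, induced from \eqref{rs1} with $v\mapsto(-1)^n\ol v$, acts on these pencils. Since each $w_i$ is real (the $a_i$ are real), the classes $D_I$ are real, so each pencil $|D_I|\simeq\PP^1$ is $\sigma$-invariant. Its real points form either $\RR\PP^1\simeq S^1$ or the empty set, and the pencil contains the real members $D_I$ and $D_J$, so the real locus is a circle. The real evenly tangent hyperplanes through the vertex would have to be treated separately. These are inverse images of hyperplanes in $\PP^n$ through $n$ of the $a_i$ counted with tangency, and I would check that they are limits already lying in these circles.

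The main obstacle is exhaustiveness and injectivity. I need to show that every real evenly tangent $h$ lies in some $|D_I|$, and that distinct divisions give distinct, disjoint circles, i.e.\ that $h^0(D_I)=2$ exactly and not more. For exhaustiveness, I would argue by continuity. Moving $D$ in a real family, the class is locally constant, and real effective $D$ of degree $n$ on $\Sigma$ must degenerate to sums of real points. The real locus of $\Sigma$ in the relevant real structure is the set of branch points only, because the Riemannian definiteness gives no real circle, as noted in the introduction. So the degeneration forces $D$ to contain branch points, which pins down the torsion class as some $D_I$. For $h^0(D_I)=2$, I would use the hyperelliptic Clifford computation: $D_I$ is a sum of distinct Weierstrass points of degree $g+1$ not containing $g^1_2$, so $h^0=2$.
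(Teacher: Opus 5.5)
Your route is genuinely different from the paper's. You sort evenly tangent hyperplanes by the $2$-torsion class of $D$ on $\Sigma$ and take real members of the resulting linear systems. The paper works on $\ms T$ instead: for $h$ off the vertex it writes $\Pi^{-1}(h)=\Gamma+\ol\Gamma$, gets $\Gamma^2=0$, and uses the pencil $|\Gamma|$ with the antiholomorphic involution $\chi\circ\sigma$. It then reads off the division from the $\CC^*_t$-fixed members $\sum\ell_{i_m}$ and $\sum\ol\ell_{j_m}$.

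Your approach is viable, but the step that singles out the equal divisions is wrong as written. Every class is that of $\sum_{i\in I}w_i+k\,g^1_2$ with $|I|=n-2k$, $k\ge 0$. For $k\ge1$ one computes $h^0=k+1$, and the system is $\sum_{i\in I}w_i+|k\,g^1_2|$. So ``dimension generically exactly $1$'' fails for $k\ge2$. Also ``$h^0=2$ exactly for equal divisions'' fails for $k=1$; for $n=2$, $|g^1_2|$ is a pencil.

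The correct selector is the vertex, not $h^0$. For $k\ge1$ every member is $\tau$-invariant, $2D=\pi^*(\sum_{i\in I}a_i+2E)$ with $E$ effective of degree $k$ on $\Lmd$, and the hyperplane passes through the vertex. For $k=0$ the pencil $|D_I|$ is base point free. Since $\tau$ changes the sign of $\prod_{i\in I}(z-a_i)/v$, $\tau$ acts on $|D_I|$ with exactly two fixed members, $D_I$ and $D_{I^c}$. Hence all other members give hyperplanes off the vertex. The same observation gives disjointness of the circles for different divisions: a $D_{I'}\in|D_I|$ would be a $\tau$-fixed member. That is a separate matter from $h^0(D_I)=2$, which you conflate with it.

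Two further steps fail.

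\emph{Vertex hyperplanes.} They are not ``limits already lying in these circles.'' Take the inverse images under $\pi$ of hyperplanes through the $a_i$, $i\in I$, that are tangent to $\Lmd$ with even order elsewhere. Examples are the hyperplanes of Corollaries \ref{c:im1} and \ref{c:im2} whenever the contact order at $\lmd$ is even, or, for $n=2$, the inverse images of the real tangent lines of the conic $\Lmd$. These are real and evenly tangent, and lie in none of the equal-division circles. So the statement must be read, as the paper's proof does from its first line, for hyperplanes not through the vertex. Each circle then contains only the two vertex members $h_0$ and $h_\infty$.

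\emph{Exhaustiveness.} Your argument rests on a false premise: $\Sigma^{\sigma}$ is not just the branch points but the union of the $n$ real circles $\Sigma^{\sigma}_i$ (Section \ref{ss:he}). The ``no real circle'' remark in the introduction refers to minitwistor lines in $\ms T$. The argument is also unnecessary. Once the vertex criterion is in place, the standard description of $J(\Sigma)[2]$ makes exhaustiveness and injectivity formal, and what you get is essentially Proposition \ref{p:Dt}.

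Finally, fix the slips ``class $|nH|$'' and ``twice over'': the correct statement is $h|_\Sigma\in|n\,g^1_2|$.
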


\proof
First, we show that real evenly tangential hyperplanes in $\PP^{n+1}$ constitute families parameterized by $S^1$. Let $h$ be such a hyperplane that does not pass through the vertex of ${\rm C}(\Lmd)$. 
As $h$ is real, we may write $\Pi\inv(h) = \Gamma + \ol\Gamma$.
Since $\deg(\Sigma) = 2n$, the sum of all contact orders of $h$ to $\Sigma$ is $2n$.
From the even tangency, this means that $\Gamma\cdot\ol\Gamma = n$ for the intersection number on $\ms T$.
Therefore, again using the reality, we have $(\Gamma+\ol\Gamma)^2 = 2\Gamma^2 + 2\Gamma\cdot\ol\Gamma
= 2\Gamma^2 + 2n$. 
On the other hand, as $\Pi\inv(h) = \Gamma+\ol\Gamma$, we have
$$
(\Gamma+\ol\Gamma)^2 = 2 h^2 = 2\deg\Lmd = 2n.
$$
Hence, we obtain $\Gamma^2 = 0$ and hence $\ol\Gamma^2 =0$.
From this, as $\CCC$ and $\ol\CCC$ are smooth rational curves and $\ms T$ is a rational surface, the complete linear systems $|\CCC|$ and $|\ol\CCC|$ are both pencils. 
We write $\{\CCC_t\set t\in\PP^1\}$ for the former pencil.
Then $|\ol\CCC|=\{\ol\CCC_t\set t\in\PP^1\}$. 

To obtain the $S^1$-family of real and evenly tangential hyperplanes from these pencils on $\ms T$, we next let $\chi:\ms T\lras\ms T$ be the covering transformation of 
the double covering $\Pi:\ms T\lras {\rm C}(\Lmd)$.
In terms of the coordinates used in \eqref{Hi1},
$\chi(x,y,z) = (-y,-x,z)$.
From this and \eqref{rs1}, we readily see that $\chi\circ\sigma=\sigma\circ\chi$.
Leting $\CCC$ be the curve on $\ms T$ determined from a real evenly tangential hyperplane $h$ as above, since $\chi(\CCC) = \ol\CCC$, $\chi$ induces a holomorphic identification $|\CCC|\lras|\ol\CCC|$, for which we use the same letter $\chi$.
Similarly, we have an anti-holomorphic identification 
$\sigma:|\CCC|\lras|\ol\CCC|$.
Then the composition $\chi\circ\sigma$ gives an anti-holomorphic isomorphism from $|\Gamma|$ to itself which is involutive using 
$\chi\circ\sigma=\sigma\circ\chi$.
Hence, as a map from $|\CCC|\simeq\PP^1$ to itself, $\chi\circ\sigma$ is identified with either the standard complex conjugation or the antipodal map.
But since it has the curve $\CCC$ as an invariant element, 
it has to be identified with the complex conjugation.
Thus, the pencil $|\Gamma|\simeq\PP^1$ has a natural real structure, which has ``real'' members parameterized by $S^1$.
By the construction, such a member is characterized by the property 
(not $\ol\CCC_t=\CCC_t$ but) $\chi(\CCC_t) = \ol\CCC_t$.
For such a member $\CCC_t$, the image $\Pi(\CCC_t)=\Pi(\ol\CCC_t)$ is a real hyperplane section of ${\rm C}(\Lmd)$ which is evenly tangent to $\Sigma$.
This is the $S^1$-family of such hyperplanes as in the proposition.

To obtain the equal division of the $2n$ generating lines of the cone ${\rm C}(\Lmd)$ from each of these $S^1$-families, the group $\CC^*_t$ in \eqref{act-t} acts on the parameter space of the above pencil $|\Gamma|$ and its two invariant members can be written
\begin{align}\label{Gm}
\CCC_0:=\sum_{m=1}^n \ell_{i_m} \qandq 
\CCC_{\infty}:=\sum_{m=1}^n \ol\ell_{j_m}
\end{align}
for some indices $1\le i_1< \dots< i_n\le 2n$ and $1\le j_1<\dots<j_n\le 2n$ such that $\{i_1,\dots,i_n\}\cup\{j_1,\dots,j_n\} = \{1,2,\dots,2n\}$.
(Note that in $\CCC_{\infty}$, the complex conjugation is taken.)
Then the equal division $\{a_1,a_2,\dots, a_{2n}\} = \{a_{i_1},\dots, a_{i_n}\}\cup\{a_{j_1},\dots, a_{j_n}\}$ gives the required one of the $2n$ generating lines of ${\rm C}(\Lmd)$.
Conversely, for a given equal division $\{a_1,a_2,\dots, a_{2n}\} = \{a_{i_1},\dots, a_{i_n}\}\cup\{a_{j_1},\dots, a_{j_n}\}$, the sums \eqref{Gm} generate a complete pencil on $\ms T$, and choosing the $S^1$-subfamily of this pencil by using the covering transformation $\chi$ as above, we obtain an $S^1$-family $\{\Gamma_t+\ol{\Gamma}_t\set t\in S^1\}$ whose image to ${\rm C}(\Lmd)$ is a restriction to ${\rm C}(\Lmd)$ of an $S^1$-family of real evenly tangential hyperplanes in $\PP^{n+1}$.
Obviously, these two directions are converse to each other and we obtain the desired one-to-one correspondence.
\proofend

\medskip
Let $\{h_t\set t\in S^1\}$ be any one of the $S^1$-families of real evenly tangential hyperplanes to $\Sigma$.
Then for any $t\in S^1$ we can write $\Pi\inv(h_t) = \CCC_t+ \ol\CCC_t$ where $\ol\CCC_t = \chi(\CCC_t)$ as in the previous proof.
Any point of the intersection $\CCC_t\cap\ol\CCC_t$ is identified with a tangent point of the hyperplane $h_t$ to $\Sigma$ and the contact order is twice the local intersection number between $\CCC_t$ and $\ol\CCC_t$.
Since $\CCC_t\cdot\ol\CCC_t=n$ as in the previous proof, this implies that we can write 
$h_t|_{\Sigma} = 2D_t$ for some effective divisor $D_t$ of degree $n$ on $\Sigma$. Since $h_t$ is real, $D_t$ is also real.
For any $1\le i\le 2n$, put $r_i = \ell_i\cap \ol\ell_i$. These are the ramification points of the double covering $\pi:\Sigma\lras\Lmd\simeq\PP^1$.
As in \cite[Proposition 2.3]{H25}, if $\{i_1,\dots,i_n\}\cup\{j_1,\dots,j_n\} = \{1,2,\dots,2n\}$ is an equal division of the set $\{1,2,\dots,2n\}$, then we have a linear equivalence
$\sum_{l=1}^n r_{i_l}\sim \sum_{l=1}^n r_{j_l}$ and 
$\dim |\sum_{l=1}^n r_{i_l}|=\dim|\sum_{l=1}^n r_{j_l}|=1$.
Since all these points are real, this pencil has a natural real structure.

\begin{proposition}\label{p:Dt}
If $1\le i_1<i_2<\dots<i_n\le n$ are the indices that correspond to an $S^1$-family $\{h_t\set t\in S^1\}$ of real evenly tangential hyperplanes to $\Sigma$ and $D_t=\frac12 h_t|_{\Sigma}$ as above, then $D_t \sim \sum_{l=1}^n r_{i_l}$ and the $S^1$-family $\{D_t\set t\in S^1\}$ is the set of real members of the pencil $|\sum_{l=1}^n r_{i_l}|$.
\end{proposition}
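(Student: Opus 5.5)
The plan is to compare the divisors $D_t$ with the invariant members $\CCC_0$ and $\CCC_\infty$ of the pencil $|\CCC_t|$ on $\ms T$, from the proof of Proposition \ref{p:gm}. All members $\Pi\inv(h_t)$ of the family are linearly equivalent on $\ms T$, because the $\CCC_t$ move in a pencil and $\ol\CCC_t=\chi(\CCC_t)$ moves in the conjugate pencil. The first step is to pass to the limit members. Consider the member of the $S^1$-family at which $\CCC_t$ specializes to $\CCC_0=\sum_m\ell_{i_m}$; then $\ol\CCC_t=\chi(\CCC_0)=\sum_m\ol\ell_{i_m}$. The $\CC^*_t$-fixed members of $|\CCC|$ are real in the sense $\chi\sigma$-invariant, since $\chi\circ\sigma$ commutes with the $\CC^*_t$-action up to inversion and preserves the fixed set. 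Hence they lie in the real circle, which is the closure of the $S^1$-family. For this member the hyperplane $h$ is $\pi\inv$ of the hyperplane of $\PP^n$ spanned by $a_{i_1},\dots,a_{i_n}$. That hyperplane contains the generating lines over these points, and $h|_\Sigma=2\sum_l r_{i_l}$ because each generating line meets $\Sigma$ only at $r_{i_l}$ with multiplicity two. So $D=\sum_l r_{i_l}$ at this member.

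The second step gives the linear equivalence. The divisors $h_t|_\Sigma=2D_t$ are all cut by hyperplanes, so $2D_t\sim 2\sum r_{i_l}$ holds trivially. To get $D_t\sim\sum r_{i_l}$ itself I use the surface. On $\ms T$ the curve $\CCC_t\cap\Pi\inv(\Sigma)$, restricted to the ramification curve $\tilde\Sigma\simeq\Sigma$ (where $\Pi$ is bijective), gives a divisor $\CCC_t|_{\tilde\Sigma}$. Since $\CCC_t$ and $\ol\CCC_t=\chi(\CCC_t)$ agree on the $\chi$-fixed curve $\tilde\Sigma$, we get $\CCC_t|_{\tilde\Sigma}=\ol\CCC_t|_{\tilde\Sigma}$. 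Their sum is $h_t|_\Sigma=2D_t$, so $\CCC_t|_{\tilde\Sigma}=D_t$. I must check that $\CCC_t$ avoids the singular points, which holds for $h_t$ not through the vertex. Because the $\CCC_t$ are linearly equivalent on $\ms T$, their restrictions are linearly equivalent on $\Sigma$. At $\CCC_0$ the restriction is $\sum r_{i_l}$, so $D_t\sim\sum_l r_{i_l}$.

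The third step identifies the family. The map $t\mapsto D_t$ is the restriction map $|\CCC|\to|\sum r_{i_l}|$ between two $\PP^1$'s. It is nonconstant, since distinct $h_t$ give distinct $D_t$ ($h_t$ is determined by $2D_t$ for $n\ge2$, as $n+1$ conditions on the hyperplane span). A linear map of pencils that is injective is an isomorphism. It intertwines the real structure $\chi\circ\sigma$ with the natural real structure on $|\sum r_{i_l}|$. Both are conjugations fixing $\sum r_{i_l}$ and $\sum r_{j_l}$, the latter being the image of $\CCC_\infty$. Therefore real members correspond to real members, and the $S^1$-family $\{D_t\}$ is exactly the real locus of the pencil.

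I expect the main obstacle to be the restriction step. One difficulty is linearity of restriction when $\CCC_t$ might contain a component meeting $\tilde\Sigma$ nontransversally or pass through a singular point, which needs a genericity or continuity argument. The other is confirming that the two real structures match, rather than one being antipodal; here I would use the fact that both have fixed points.
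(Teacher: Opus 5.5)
Your main line is correct, but you obtain the linear equivalence by a different route from the paper.

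\emph{The paper's route.} It stays on $\Sigma$. All $h_t$ are hyperplanes, so $2D_t\sim 2D_0$ with $D_0=\sum_l r_{i_l}$. Continuity of $t\mapsto D_t$ then upgrades this to $D_t\sim D_0$; in effect, the class of $D_t-D_0$ is $2$-torsion and hence locally constant. Finally, $\dim|D_0|=1$ and the reality of each $D_t$ give the statement about real members.

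\emph{Your route.} You restrict the pencil $|\CCC|$ on $\ms T$ to the ramification curve $\tilde\Sigma\simeq\Sigma$. Since the deck involution $\chi$ is the identity on $\tilde\Sigma$ and $\ol\CCC_t=\chi(\CCC_t)$, you get $\CCC_t|_{\tilde\Sigma}=D_t$ exactly. So $D_t\sim D_0$ is inherited from $\CCC_t\sim\CCC_0$ with no topological input, and you get an explicit linear isomorphism $|\CCC|\to|D_0|$ rather than a continuous map of circles.

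The paper's argument is shorter and never uses the surface pencil. Yours is algebraic and pins down the parametrization.

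The obstacle you anticipate about singular points and non-transverse contact does not arise. $\tilde\Sigma$ lies in the smooth locus of $\ms T$ and is not a component of any member of $|\CCC|$, since members are rational curves or sums of lines while $g\ge 1$. Hence restriction of divisor classes to $\tilde\Sigma$ is well defined and respects linear equivalence. The same remark shows that $H^0(\ms O(\CCC))\to H^0(\ms O_{\tilde\Sigma}(\CCC))$ is injective. That gives your isomorphism of pencils directly, without the argument that $h_t$ is determined by $2D_t$.

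One justification fails as written. In the third step, ``both are conjugations fixing $\sum r_{i_l}$ and $\sum r_{j_l}$'' does not show that restriction intertwines $\chi\circ\sigma$ with the real structure on $|D_0|$. For example, $u\mapsto\ol u$ and $u\mapsto-\ol u$ on $\PP^1$ have the same fixed points $0,\infty$ but different real circles. Such a pair really occurs here: $\tau$ fixes every $r_i$, preserves $|D_0|$, and acts on it by $u\mapsto-u$, because it changes the sign of $v$ in the defining function $\prod_l(z-a_{i_l})/v$. So $\sigma$ and $\sigma\circ\tau$ induce two such structures. The danger is therefore a rotated real circle, not an antipodal map, and ``both have fixed points'' cannot rule it out.

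The repair uses your own key fact. For any $\CCC'\in|\CCC|$ we have $(\chi\sigma\CCC')|_{\tilde\Sigma}=(\sigma\CCC')|_{\tilde\Sigma}=\sigma(\CCC'|_{\tilde\Sigma})$, because $\chi$ is trivial on $\tilde\Sigma$ and $\sigma$ preserves $\tilde\Sigma$. Alternatively, as in the paper, each $D_t$ is real because $h_t$ is. Your isomorphism then maps the circle of the family injectively into the real circle of $|D_0|$, hence onto it.

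Similarly, in the first step, ``preserves the fixed set'' would still allow $\chi\sigma$ to swap $\CCC_0$ and $\CCC_\infty$. Instead, check directly that $\chi\sigma(\ell_i)=\chi(\ol\ell_i)=\ell_i$.
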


\proof
Let $h_0$ (resp.\,$h_{\infty}$) be the inverse image under $\pi:\PP^{n+1}\lras\PP^n$ of the hyperplane in $\PP^n$ spanned by the $n$ points $a_{i_1},\dots, a_{i_n}$ (resp.\,$a_{j_1},\dots, a_{j_n}$).
Then using the notation from \eqref{Gm},  $\Pi\inv(h_t) = \Gamma_t+\ol\Gamma_t$ for $t=0,\infty$, $h_0|_{\Sigma} = \sum_{l=1}^n 2r_{i_l}$ and $h_{\infty}|_{\Sigma} = \sum_{l=1}^n 2r_{j_l}$
Hence, $D_0 = \sum_{l=1}^n r_{i_l}$ and $D_{\infty} = \sum_{l=1}^n r_{j_l}$.
Since all $h_t$ are mutually linearly equivalent, so are the restrictions $h_t|_{\Sigma}$. Hence, the fact $\dim|\sum_{l=1}^n r_{i_l}|=1$ and a continuity of the family $\{D_t\set t\in S^1\}$ forces that all $D_t$ belong to this pencil.
Since all $D_t$ are real as above, the assertion of the proposition follows.
\proofend

\medskip
With this understanding, the existence of a special circle in $C_n$ is easy to prove:
\begin{proposition}\label{p:ccl}
There exists a smooth circle $\gamma_n\subset C_n$ such that the meromorphic image $\tilde\Psi(L)=\Gamma+\ol\Gamma$ of a twistor line $L$ through $C_n$ satisfies $\Pi(\Gamma) = \Pi(\ol\Gamma)$, or equivalently, $\tilde\Psi(L) = \Pi\inv(h)$ for some real evenly tangential hyperplane $h$, if and only if 
$L$ intersects $\gamma_n$.
This circle $\gamma_n$ passes through the two points $C_n\cap C_{n-1}$ and $C_n\cap C_{n+1}$.
\end{proposition}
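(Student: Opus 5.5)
The plan is to parametrize twistor lines through $C_n$ by the point $q=L\cap C_n\in C_n\simeq\PP^1$, and to study the map $q\mapsto \CCC_q$ into the pencil $|\CCC|$ on $\ms T$. By Proposition \ref{p:l}, $\CCC_q$ meets $\ol\ell_1,\dots,\ol\ell_n,\ell_{n+1},\dots,\ell_{2n}$ and misses the other $2n$ lines. So all $\CCC_q$ lie in a single linear system $|\CCC|$. This system is the pencil attached by Proposition \ref{p:gm} to the equal division $\{a_1,\dots,a_n\}\cup\{a_{n+1},\dots,a_{2n}\}$; its two $\CC^*_t$-invariant members are $\sum_{i=n+1}^{2n}\ell_i$ and $\sum_{i=1}^n\ol\ell_i$.

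The first step is to show that $q\mapsto\CCC_q$ is a biholomorphism $C_n\to|\CCC|\simeq\PP^1$. From the proof of Proposition \ref{p:l}, $\CCC_q$ is the image in $E_n\simeq\tilde{\ms T}$ of the fiber of $E_n\simeq C_n\times e\lras C_n$ over $q$, after the blowups at the points over $a_1,\dots,a_{2n}$. So $\CCC_q$ is exactly the strict transform of the ruling of $C_n\times e$ through $q$, and this family of rulings is the pencil $|\CCC|$, parametrized holomorphically by $C_n$. Under this identification the endpoints $q=C_n\cap C_{n-1}$ and $q=C_n\cap C_{n+1}$ go to the two degenerate members $\sum_{i\le n}\ell_i$ and $\sum_{i>n}\ell_i$ of Proposition \ref{p:itl}. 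These are the $\CC^*_t$-fixed members, consistent with the $\CC^*_t$-equivariance of the map.

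Next I compare the real structures. By the proof of Proposition \ref{p:gm}, $\Pi(\CCC_q)=\Pi(\ol\CCC_q)$ iff $\chi(\CCC_q)=\ol\CCC_q=\sigma(\CCC_q)$, i.e. iff $\CCC_q$ is fixed by the anti-holomorphic involution $\chi\circ\sigma$ of $|\CCC|$. That proof shows $\chi\circ\sigma$ is the standard conjugation (it has a fixed point), so its fixed set is a smooth circle $S\subset|\CCC|$. Pulling $S$ back by the biholomorphism of the first step gives a smooth circle $\gamma_n\subset C_n$, and a line $L$ has $\tilde\Psi(L)=\Pi^{-1}(h)$ with $h$ real and evenly tangential exactly when $L\cap C_n\in\gamma_n$.

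It remains to see that $\gamma_n$ contains the two points $C_n\cap C_{n\pm1}$. For these points the curves are $\sum_{i\le n}\ell_i+\sum_{i\le n}\ol\ell_i$ and its analogue. Since $\chi$ swaps $\ell_i$ and $\ol\ell_i$, these members are $\chi\circ\sigma$-invariant and correspond to the real hyperplanes $h_0,h_\infty$ of Proposition \ref{p:Dt}. The main obstacle is the first step: I must check that the identification $C_n\to|\CCC|$ is really an isomorphism of pencils on $E_n\simeq\tilde{\ms T}$, and not merely a continuous bijection, after all the blowups and blowdowns. I would verify this on $\tilde Z\uptwo$, where $E_n$ is explicit by Proposition \ref{p:El}, using Proposition \ref{p:mr} to transport it to $\ms T$.
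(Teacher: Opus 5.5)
Your proposal is correct and follows the paper's proof. You identify $C_n$ with the pencil $|\CCC|$, justifying this more carefully than the paper's ``naturally identified'' via the rulings of $E_n\simeq C_n\times e$ and Propositions \ref{p:El} and \ref{p:mr}; you then pull back the fixed circle of the anti-holomorphic involution $\chi\circ\sigma$ from the proof of Proposition \ref{p:gm}, and check the two endpoints via Proposition \ref{p:itl}. One harmless slip: the degenerate members of $|\CCC|$ itself are $\sum_{i\le n}\ell_i$ (at $C_n\cap C_{n-1}$) and $\sum_{i>n}\ol\ell_i$ (at $C_n\cap C_{n+1}$), while $\sum_{i>n}\ell_i$ and $\sum_{i\le n}\ol\ell_i$ lie in $|\ol\CCC|$; the correct members are still $\chi\circ\sigma$-fixed, so the argument is unaffected.
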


\proof
As in the proof of Proposition \ref{p:tan}, the curve $\Gamma$ as in the proposition generates a complete pencil $|\Gamma|$ on $\ms T$, and the parameter space of this pencil is naturally identified with the component $C_n$ as the intersection point with $L$.  From the proof of Proposition \ref{p:gm}, this pencil has the $S^1$-subfamily $\{\Gamma_t\set t\in S^1\}$ such that $\ol\Gamma_t=\chi(\Gamma_t)$ for any $t\in S^1$. Hence, if $L_t\subset Z$ is the twistor line that satisfies $\tilde\Psi(L_t) = \Gamma_t + \ol\Gamma_t$, then the circle $\gamma_n$ is given as the set of intersection points $\{L_t\cap C_n\set t\in S^1\}$.
The circle $\gamma_n$ passes through the point $C_n\cap C_{n-1}$ because $\tilde\Psi(L_n) = \Gamma_0+\ol\Gamma_0$ and $\ol\Gamma_0 = \chi(\Gamma_0)$ so that $\Gamma_0$ belongs to the $S^1$-family. Similarly, $\gamma_n$ passes through another point $C_n\cap C_{n+1}$ using $\tilde\Psi(L_{n+1}) = \Gamma_{\infty}+\ol\Gamma_{\infty}$ and $\ol\Gamma_{\infty} = \chi(\Gamma_{\infty})$.
\proofend

\medskip
From Proposition \ref{p:itl} or the last part of the proof of Proposition \ref{p:tan}, the two limiting members in the $S^1$-family of real divisors on $\ms T$ determined by twistor lines through $\gamma_n$ are given by 
$$\Gamma_0+\ol\Gamma_0 = \sum_{i=1}^n (\ell_i+\ol\ell_i)
\qandq
\Gamma_{\infty}+\ol\Gamma_{\infty}
=\sum_{i=1}^n (\ell_{n+i}+\ol\ell_{n+i}).
$$
Therefore, the equal division of the set $\{1,2,\dots,2n\}$ that corresponds to this $S^1$-subfamily is 
\begin{align}\label{ed}
\{1,2,\dots,n\}\cup\{n+1,n+2,\dots,2n\}.
\end{align}

From Proposition \ref{p:Dt}, if $\{h_t\set t\in S^1$ is an $S^1$-family of real evenly tangential hyperplanes to $\Sigma$, then the divisors $D_t$ ($t\in S^1$) of tangent points constitute a real pencil, which contains two divisors $\sum_{l=1}^nr_{i_l}$ and $\sum_{l=1}^nr_{j_l}$ as special members.
But we have to note:
\begin{itemize}\setlength{\itemsep}{-2pt}
\item There may exist $t\in S^1$ such that $D_t$ contains a multiple point. In other words, the contact order of $h_t$ to $\Sigma$ at a tangent point can be $2m$ with $m>1$.
\item The divisor $D_t$ is always real as a whole, but it can contain a pair of points of the form $q_t, \ol q_t$ with $q_t\neq \ol q_t$.
\end{itemize}
These issues are mutually related.
In fact, if $t_0\in S^1$ is such that $D_{t_0}$ contains $2q$ with $q=\ol q$, then typically, the following holds in a neighborhood of $t_0$.
\begin{itemize}\setlength{\itemsep}{-2pt}
\item If $t<t_0$ then $D_t$ contains $q_t+q'_t$ with both $q_t,q'_t$ real and $q_t\neq q'_t$.
\item If $t>t_0$, then $D_t$ contains $q_t+\ol q_t$ with $q_t\neq\ol q_t$.
\end{itemize}
In the present situation where the limiting divisors are $\sum_{i=1}^nr_{i}$ and $\sum_{i=1}^nr_{n+i}$, if $g=1$ (i.e.\,$n=2$), there exists no such $t_0$, because the points $r_1$ and $r_2$ (and also $r_3$ and $r_4$) belong to mutually distinct components of the real circle of the branch hyperelliptic curve $\Sigma$; see Section \ref{ss:he}). 
On the other hand, if $g>1$, such a $t_0$ really exists and the above phoenomena actually happen; see Section \ref{ss:sing}.
In contrast, in the Lorenzian case studied in \cite{H25}, the divisor $D_t$ never contains a multiple point and all its points are always real; see the next remark.

\begin{remark}
{\em
In the Lorenzian case studied in \cite{H25}, the key in the construction of the EW space was also the $S^1$-family of tangential real hyperplanes to the (same) hyperelliptic curve $\Sigma\subset\rm C(\Lmd)$.
The most typical division of the set $\{1,2,\dots,2n\}$ used there was
$$\{i\set i:{\rm{ odd}}\}\cup \{i\set i:{\rm{ even}}\}.$$
Including this case, all points of the divisor $D_t$ are always distinct and real.
This follows from the fact that each tangent point belongs to a mutually distinct connected component of the real locus $\Sigma\us$ of $\Sigma$ and therefore they cannot be equal.
}\end{remark}

In this section, we investigated the (meromorphic) images of twistor lines in $Z$ that intersect the chain $C_0\cup C_1\cup \dots\cup C_{2n}$, under the meromorphic quotient map $\tilde\Psi:\tilde Z\lras\ms T$.
It turns out that, if we consider the circle $\gamma_n$ obtained in Proposition \ref{p:ccl} instead of the entire central sphere $C_n$, then all these images are real hyperplane sections of $\ms T\subset\PP^{n+2}$, and moreover, all these hyperplanes are inverse images of real hyperplanes in $\PP^n$ under the projection $\pi:\PP^{n+1}\lras\PP^n$.

About the continuity of these hyperplanes,
as mentioned right after Corollary \ref{c:im2}, the (meromorphic) images of minitwistor lines through the chain $C_0\cup C_1\cup \dots\cup C_{2n}$ vary continuously except possibly when the intersection point varies across the points $C_{n-1}\cap C_n$ or $C_n\cap C_{n+1}$.
But since the hyperplane in $\PP^{n+1}$ which cuts out the meromorphic image $\tilde\Psi(L_n)$ (resp.\,$\tilde\Psi(L_{n+1})$) is the inverse image under $\pi$ of the hyperplane in $\PP^n$ spanned by the $n$ points $r_1,\dots,r_n$ (resp.\,$r_{n+1},\dots,r_{2n}$) as we have seen, and since these are limiting members of the $S^1$-family of real evenly tangential hyperplanes to $\Sigma$ arising from twistor lines intersecting $\gamma_n$, the jumping of the hyperplanes do not happen at the points $C_{n-1}\cap C_n$ or $C_n\cap C_{n+1}$.
Namely, the continuity of hyperplanes holds even at the intersections $C_{n-1}\cap \gamma_n$ and $\gamma_n\cap C_{n+1}$.

Moreover, a further continuity holds as follows.
By Proposition \ref{p:a}, if a twistor line $L$ intersects the end component $C_0$ or $C_{2n}$ of the chain (i.e.,\,the `axis'), then the image $\tilde\Psi(L)$ 
is of the form $n f_{\lmd}$, where $\lmd\in(-\infty,a_1)$ or $\lmd\in(a_{2n},\infty)$ according as $L\cap C_0\neq\emptyset$
or $L\cap C_{2n}\neq\emptyset$ respectively.
Hence, if $f_{\infty}$ means the fiber of $\pi\circ\Pi:\ms T\lras\Lmd$ over the point $\infty=-\infty\in\Lmd\us$, then we have the coincidence of the limits 
\begin{align}\label{infty}
\lim_{L\cap C_0\to \infty_0} \tilde\Psi(L) 
=
\lim_{L\cap C_{2n}\to \infty_{2n}} \tilde\Psi(L) = nf_{\infty},
\end{align}
where $\infty_0:=C_0\cap \bm D$ and $\infty_{2n} = C_{2n}\cap \ol{\bm D}$, which are the points of $C_0$ and $C_{2n}$ respectively that do not belong to $Z$.

As before, let $\tilde Z_0$ and $\tilde Z_1$ be the fibers of $\tilde p:\tilde Z\lras\PP^1$ over the points $u=0$ and $u=1$ respectively,
and $Z_0$ and $Z_1$ be its intersection with $Z$ respectively.
Then there is a natural diffeomorphism 
\begin{align}\label{alpha}
\aaa:Z_0\lras Z_1
\end{align}
that maps the point $Z_0\cap L$ to the point $Z_1\cap L$ where $L$ is an arbitrary (real) twistor line.
Also, on $Z_1$, there is a projection $\Phi|_{Z_1}:Z_1\lras\Lmd$ that takes the $z$-coordinate.
Write the point $-\infty=\infty$ of $\Lmd\us$ as $a_0$ and $a_{2n+1}$. Namely, $a_0 = a_{2n+1} =-\infty=\infty$.
From Proposition \ref{p:tl}, the composition $\Phi\circ\alpha:Z_0\lras\Lmd$ maps each component $C_i$ to the arc $I_i=[a_i,a_{i+1}]\subset\Lmd\us$ and this descends to a map from the quotient $C_i/S^1$ by the scalars.
If $i\neq n$, this map from $C_i/S^1$ to the arc $[a_i,a_{i+1}]$ is homeomorphic. 
If $i=n$, the restriction of the map to the circle $\gamma_n\subset C_n$ is two-to-one over the arc $[a_n,a_{n+1}]$ branched at the boundary points. 
Let $\ccc'_n$ be any one of the two semicircles in $\gamma_n$ bounded by the points $C_n\cap C_{n-1}$ and $C_n\cap C_{n+1}$. Then the map $\alpha|_{\gamma'_n}$ is a homeomorphism from $\gamma'_n$ to the arc $[a_n,a_{n+1}]\subset\Lmd\us$.

Therefore, we have obtained a continuous family of real hyperplanes in $\PP^{n+1}$ parameterized by the entire circle $\Lmd\us$, formed by twistor lines through the components $C_i$ with $i\neq n$ and the semicircle $\gamma'_n$ in $C_n$, and also the limit hyperplane as the points on $C_0$ and $C_{2n}$ go to infinity.
In the next section, we use this $\Lmd\us\simeq S^1$-family to give the family of minitwistor lines on $\ms T$ which induces the EW structure on the quotient space of the gravitational instanton by the scalar $S^1$-action.

\section{The complete family of minitwistor lines}\label{s:EW}
In this section, we first give a 2-dimensional family of real hyperplanes in $\PP^{n+1}$ whose generic member is tangent to the hyperelliptic curve $\Sigma\subset {\rm C}(\Lmd)$ at exactly $(n-1)$ points.
The parameter space of this family will be a quarter of the hyperelliptic curve $\Sigma$ and is
constructed using the (meromorphic) images of special twistor lines obtained in the previous section as boundary data.
This family will give minitwistor lines in $\ms T$
by pulling back their hyperplane sections of the cone by the double covering $\Pi:\ms T\lras {\rm C}(\Lmd)$.
We will show that this family constitutes a global slice of the residual $S^1$-action on the EW space of the minitwistor space induced from the tri-holomorphic $S^1$-action (Proposition \ref{p:ext}).
Then rotating the quarter, we will obtain the 3-dimensional family of minitwistor lines in $\ms T$ that corresponds to the EW space obtained from the gravitational instanton by the scalar $S^1$-action.

\subsection{The real and the pure imaginary circles of $\Sigma$}
\label{ss:he}
First, we define the notation and terminology.
Recall that for the toric ALE gravitational instanton of type $A_{2n-1}$, the genus $g$ of the hyperelliptic curve $\Sigma$ is $(n-1)$.
%The letter $g$ will always mean the genus of $\Sigma$, which equals $n-1$.
For each index $1\le i< 2n$, we denote $I_i =[a_i,a_{i+1}]$, a closed arc
in the real circle $\Lmd\us$ 
Further, we define a closed (connected) arc $I_0\subset\Lmd\us$ 
by $[a_{2n},a_1]:=[a_{2n},\infty]\cup [-\infty,a_1]$.
So $\Lmd\us$ is the union of the $2n$ arcs
$I_0,I_1,\dots,I_{2n-1}$.
We denote $I_i\uc$ for the interior of $I_i$.
Put $f(z):=\prod_{i=1}^{2n}(z-a_i)$, so the minitwistor space $\ms T$ is defined by $xy=f(z)$.
Recalling that the real structure on $\ms T$ is given by 
$(x,y,z)\longmapsto \big((-1)^n\ol y,(-1)^n\ol x,\ol z\big)$ as in \eqref{rs1}, for fixed $z\in\Lmd\us$, the fiber conic $xy =f(z)$ has a real circle iff 
\begin{equation}
  \begin{minipage}{0.9\linewidth}
  \begin{itemize}
    \item $z\in I\uc_1\cup I\uc_3\cup I\uc_5\cup\dots\cup I\uc_{2n-1}$ if $n$ is odd,
    \item $z\in I\uc_0\cup I\uc_2\cup I\uc_4\cup\dots\cup I\uc_{2n-2}$ if $n$ is even.
  \end{itemize}\label{itv}
  \end{minipage}
\end{equation}
In both cases these are exactly half of the $2n$ intervals and the $n$-th arc $I_n\uc$ is always included.
In the following, we denote 
%\begin{align}\label{Ts}
$\ms T\us_i$
%\end{align}
for the real locus of $\ms T$ lying over $I_i$.
Note that this is different from the notation used in \cite{H25}; this time the real spheres are $\ms T\us_{\rm odd}$ (resp.\,$\ms T\us_{\rm odd}$) if $n$ is odd (resp.\,even) and there are no $\ms T\us_{\rm even}$ (resp.\,$\ms T\us_{\rm even}$) in that case.
Namely, the real spheres are of the form $\ms T\us_i$, where $i$ has the same parity as $n$.
We call these the {\em real spheres}.
There are exactly $n$ real spheres and all of them are smooth spheres in $\ms T$. 
Moreover, all these are invariant under the residual $S^1$-action 
$(x,y,z)\longmapsto(tx,t\inv y,z)$.
The sphere $\ms T\us_n$ is also called the central sphere \cite{Hi25} and plays a distinguished role in the sequel.
The real locus $\ms T\us$ of $\ms T$ consists of these $n$ spheres.

As before, let $\pi:\Sigma\lras\Lmd\simeq\PP^1$ be the double covering of the hyperelliptic curve, which is the projection $(v,z)\longmapsto z$ in the coordinates used in \eqref{Sgm}.
Also, we denote $$r_1,r_2\dots, r_{2n}$$ for the ramification points of $\pi$ over
$a_1,\dots,a_{2n}\in\Lmd\us$ respectively.
Then if we let $\ell_i$ and $\ol\ell_i$ be the lines on $\ms T$ lying over the generating line over $a_i\in\Lmd$ as before, $r_i = \ell_i\cap \ol\ell_i$ for any index $i$.
The real structure on the cone is given by $(v,z)\longmapsto
((-1)^n\ol v,\ol z)$ in the above coordinates, and it preserves $\Sigma$.
For any index $i$, the inverse image $\pi\inv(I_i)\subset\Sigma$ is a smooth circle in $\Sigma$, and it is contained in the real locus $\Sigma\us$ of $\Sigma$ if and only if $i\equiv n\,(2)$.
In that case we denote it $\Sigma\us_i$ and call it a {\em real circle} of $\Sigma$.
So the real locus $\Sigma\us$ of $\Sigma$ consists of these $n$ circles. 
We call $\Sigma\us_n$ the {\em central circle}.
The central sphere $\ms T\us_n$ is a double cover of the disk in ${\rm C}(\Lmd)$ bounded by this circle.
For an index $i$ such that $\pi\inv(I_i)$ is not a real circle (i.e.,\,if $i\not\equiv n\,(2)$), we denote it by $\Sigma_i$ and call it 
{\em pure imaginary circle} because the real structure $\sigma$ acts on these circles as a reflection which fixes the two ramification points $r_i$ and $r_{i+1}$.
These are exactly the locus where $\sigma$ and the hyperelliptic involution coincide, and also the fixed points set of another anti-holomorphic involution $\sigma\circ\tau$ on $\Sigma$.
From these definitions, $\pi\inv(\Lmd\us)\subset\Sigma$ is a union of the $n$ real circles and $n$ pure imaginary circles.
Explicitl, from \eqref{itv},
\begin{equation}
  \begin{minipage}{0.9\linewidth}
  \begin{itemize}
    \item $\pi\inv(\Lmd\us) = 
    \Sigma_0\cup \Sigma_1\us \cup\Sigma_2\cup\Sigma\us _3\cup\dots\cup\Sigma_{2n-2}\cup\Sigma\us_{2n-1}$ if $n$ is odd,
    \item $\pi\inv(\Lmd\us) = 
    \Sigma\us_0\cup \Sigma_1 \cup\Sigma\us_2\cup\Sigma _3\cup\dots\cup\Sigma\us_{2n-2}\cup\Sigma_{2n-1}$ if $n$ is  even.
  \end{itemize}\label{itv2}
  \end{minipage}
\end{equation}
This is a cycle of the $2n$ circles joined at the ramification points, so that the two end circles are also joined. See Figure \ref{f:Rsurf}.

\begin{figure}
\includegraphics{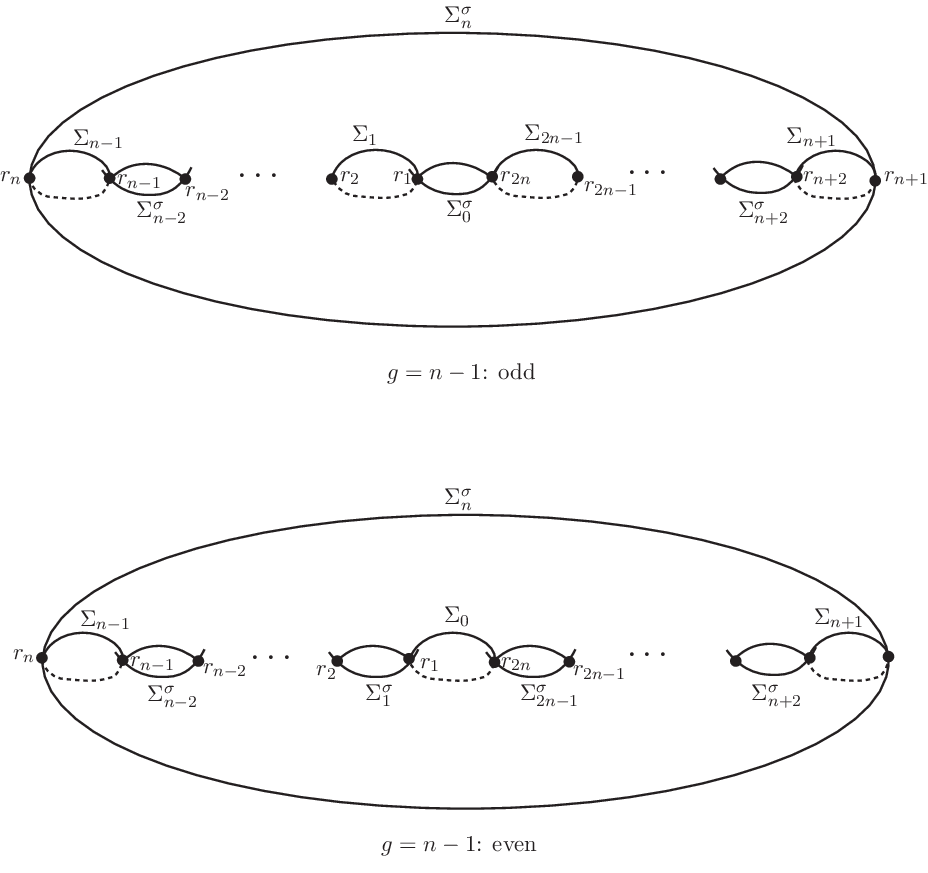}
\caption{The real and pure imaginary circles in $\Sigma$}
\label{f:Rsurf}
\end{figure}

\subsection{Minitwitor lines parameterized by the quarter of $\Sigma$}\label{ss:2mtl}
The curve $\Sigma$ is divided by $\sigma$ into two parts whose common boundary consists of the real circles.  
We denote by $\Sigma'$ one of these halves and we let $\Sigma'$ include the boundary of $\Sigma'$.  
Then
\[
\Sigma = \Sigma' \cup \sigma(\Sigma'), 
\qquad 
\Sigma' \cap \sigma(\Sigma') = \partial \Sigma',
\]
where $\partial \Sigma'$ denotes the boundary of $\Sigma'$.
In Figure~\ref{f:Rsurf}, the Riemann surface $\Sigma$ is drawn so that the $n$ ramification points $r_1,\dots,r_n$ (resp.\,$r_{n+1},\dots,r_{2n}$) lie on the left half (resp.\,the right half) of the surface.  
With this convention, the real structure $\sigma$ acts on $\Sigma$ as the reflection across the horizontal plane containing the real circles, while the hyperelliptic involution $\tau$ acts as a half-rotation about the line passing through $r_1,\dots,r_{2n}$.  
The half $\Sigma'$ may be regarded as the upper half of $\Sigma$.  
Note that in \cite[Figure~1]{H25}, $\Sigma$ was drawn slightly differently; as a result, $\sigma$ exchanged the front and back sides.

The circle over the interval $I_0$ is always located in the middle.  
If $g=n-1$ is even, then it is a real circle $\Sigma\us_0$ going once around the middle hole of $\Sigma$,
whereas if $g$ is odd, then it is a purely imaginary circle $\Sigma_0$.
On the other hand, the outer great circle is always real; it is the central circle $\Sigma\us_{g+1}=\Sigma\us_n$.  
Like the central sphere $C_n$, this circle will be directly related to the conformal infinity of the EW space.

We define two divisors $D_{\rm L}$ and $D_{\rm R}$ of degree $n=g+1$ on $\Sigma$ by 
\begin{align}\label{pen1}
D_{\rm L}= \sum_{i=1}^n r_i
\qandq 
D_{\rm R}= \sum_{i=n+1}^{2n} r_i.
\end{align}
As in Proposition \ref{p:Dt}, these are two limiting divisors of the real pencil arising from twistor lines intersecting the circle $\gamma_n\subset C_n$.
In particular, $D_{\rm L}$ and $D_{\rm R}$ are linearly equivalent and $\dim |D_{\rm L}|=1\,(=\dim |D_{\rm R}|)$.
The next proposition means that the parameter space of this pencil is naturally identified with the central circle $\Sigma\us_n$.

\begin{proposition}\label{p:rp}
Any member of the real pencil $|D_{\rm L}|\us$ has a unique point $q$ belonging to the central circle $\Sigma\us_n$, and $q$ is not a multiple point of the member.
\end{proposition}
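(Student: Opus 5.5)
We will work with the real pencil $|D_{\rm L}|^{\sigma}$ directly on $\Sigma$, using the map it defines rather than hyperplane geometry. Since $\dim|D_{\rm L}|=1$ and $D_{\rm L}\sim D_{\rm R}$ with $D_{\rm L}$, $D_{\rm R}$ having disjoint supports, the pencil is base point free. It therefore defines a holomorphic map
\[
\varphi:\Sigma\lras\PP^1
\]
of degree $n$, with $\varphi^{-1}(0)=D_{\rm L}$ and $\varphi^{-1}(\infty)=D_{\rm R}$. Because both divisors are real, we can normalize $\varphi$ so that $\varphi\circ\sigma=\ol{\varphi}$. Then the real members of the pencil are exactly the fibers $\varphi^{-1}(c)$ with $c\in\RR\cup\{\infty\}$. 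The statement reduces to showing that $\varphi$ restricts to a homeomorphism from the central circle $\Sigma^{\sigma}_n$ onto $\RR\cup\{\infty\}$, with nonvanishing derivative along the circle. Then every real fiber meets $\Sigma^{\sigma}_n$ in exactly one point, and that point is unramified, hence not a multiple point of the member.

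Next we locate the special points on $\Sigma^{\sigma}_n=\pi^{-1}(I_n)$. This circle passes through $r_n$ and $r_{n+1}$, and these are its only points in $D_{\rm L}\cup D_{\rm R}$. Each is a simple point, since $D_{\rm L}$ and $D_{\rm R}$ are reduced. So $\varphi|_{\Sigma^{\sigma}_n}:S^1\to\RR\cup\{\infty\}$ takes the value $0$ exactly once (at $r_n$) and $\infty$ exactly once (at $r_{n+1}$), and is transverse there. Consequently its degree as a map of circles is $\pm1$ (counting signed preimages of $0$). Degree $\pm1$ alone does not rule out folds, however, so we need an additional argument.

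\textbf{Main step.} We must exclude extra real preimages on $\Sigma^{\sigma}_n$, that is, show that $\varphi|_{\Sigma^{\sigma}_n}$ is monotone. The plan is a count of real preimages. For real $c$, the fiber $\varphi^{-1}(c)$ has degree $n$, and we claim each of the other $n-1$ real circles $\Sigma^{\sigma}_i$ ($i\equiv n$ mod $2$, $i\neq n$) contains at least one point of it. On such a circle $\Sigma^{\sigma}_i$, with $I_i=[a_i,a_{i+1}]$:
\begin{itemize}
\item if both $r_i$ and $r_{i+1}$ lie in $D_{\rm L}$ (the case $i+1\le n$), then $\varphi$ has two simple zeros and no poles on it;
\item if both lie in $D_{\rm R}$, it has two simple poles and no zeros;
\item for the middle circle over $I_0=[a_{2n},a_1]$, it has one zero ($r_1$) and one pole ($r_{2n}$).
\end{itemize}
In the first two cases the real-valued function $\varphi$ changes sign at each simple zero or pole, so on each of the two arcs of the circle it takes the values in a full half-line, $[0,\infty)$ or $(-\infty,0]$, with opposite signs on the two arcs. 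Hence it attains every real value $c$ on that circle, and by the sign change at each simple zero/pole it does so at least twice for $c\neq0$. In the third case the restriction has nonzero degree and is therefore surjective onto $\RR\cup\{\infty\}$. This gives at least $n-1$ points of each real fiber off $\Sigma^{\sigma}_n$ (in fact more, which we would recheck carefully). Together with at least one point on $\Sigma^{\sigma}_n$ from the degree-$\pm1$ statement, and the total being $n$, the fiber meets $\Sigma^{\sigma}_n$ in exactly one point, with multiplicity one.

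We expect the delicate part to be making this count precise. If the two-preimage bound on the first two kinds of circles overcounts beyond $n$, that is not a contradiction: it would mean those preimages are forced onto fewer circles, and we would need to sort out which circles genuinely contribute. The fallback is to count only one preimage per circle ($n-1$ in total), which is exactly enough. We also need the multiplicity-one statement at fibers over $0$ and $\infty$; this is immediate, since $D_{\rm L}$ and $D_{\rm R}$ are reduced.
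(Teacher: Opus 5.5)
There is a genuine gap in the main step. The reduction itself is sound: the pencil is base point free because $D_{\rm L}$ and $D_{\rm R}$ are disjoint members, $\varphi$ can be normalized so that $\varphi\circ\sigma=\ol{\varphi}$, and the statement is equivalent to $\varphi|_{\Sigma^{\sigma}_n}$ being a diffeomorphism onto $\RR\cup\{\infty\}$.

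The counting argument fails. Take a non-central real circle $\Sigma^{\sigma}_i$ with two simple zeros and no poles. There $\varphi$ is a continuous real function on a compact set, hence bounded. On each arc it covers only a compact interval $[0,M]$ or $[-M',0]$, not a half-line. So $\varphi(\Sigma^{\sigma}_i)$ is a compact interval around $0$. In the two-pole case, the image is a neighbourhood of $\infty$ that omits a whole interval around $0$.

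For real $c$ outside these images, the fibre $\varphi^{-1}(c)$ has no point on $\Sigma^{\sigma}_i$, and the missing points are $\sigma$-conjugate non-real pairs. This really happens. Proposition~\ref{p:n1} states that these circles are mapped doubly onto closed intervals containing $s_{\rm L}$ or $s_{\rm R}$. In the $g=2$ discussion at the end of the paper, the real members $D_t$ with $t_1<t<t_2$ are $Q_t+p_t+\ol p_t$ with $Q_t\in\Sigma^{\sigma}_3$, and they have nothing on $\Sigma^{\sigma}_1$ or $\Sigma^{\sigma}_5$.

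So neither your ``at least twice'' bound nor the fallback of one point per circle holds. The count therefore gives no upper bound on the number of points on $\Sigma^{\sigma}_n$. Your overcount $2+2+1>3$ for $n=3$ was a real contradiction exposing this, not something to sort out later. As written, the argument is complete only for $n=2$, where the only non-central real circle lies over $I_0$ and carries one zero and one pole.

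To close the gap, prove monotonicity on $\Sigma^{\sigma}_n$ directly. Two ways:

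\emph{Explicit function.} The function $\psi=\prod_{i=1}^n(z-a_i)/v$ has divisor $D_{\rm L}-D_{\rm R}$, so $\varphi$ is a constant multiple of $\psi$. On $\Sigma^{\sigma}_n$ one has $|\psi|^2=\prod_{i\le n}(z-a_i)/\prod_{i>n}(a_i-z)$. This is strictly increasing from $0$ to $\infty$ on $(a_n,a_{n+1})$, with positive logarithmic derivative. Since $\psi\circ\tau=-\psi$, the two sheets go to opposite half-lines. This gives bijectivity and $d\psi\neq0$.

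\emph{Riemann--Hurwitz, closer to your counting idea.} Count ramification points instead of preimages. Since $\tau$ fixes every $r_i$, $\varphi\circ\tau=\pm\varphi$, and the sign is $-$ because a simple zero at a ramification point cannot be pulled back from $\PP^1$. Hence $\varphi$ is imaginary on the pure imaginary circles, where $\sigma=\tau$. Each of the $2n-2$ circles $\pi^{-1}(I_i)$ with $i\neq 0,n$, real or pure imaginary, carries two zeros or two poles. So each has an interior extremum on each of its two arcs. Such an extremum is a ramification point of $\varphi$, because $d\varphi$ is complex linear. These $4n-4$ distinct points exhaust the Riemann--Hurwitz total $2g-2+2n=4n-4$. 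Hence $\varphi$ is unramified along $\Sigma^{\sigma}_n$, so $\varphi|_{\Sigma^{\sigma}_n}$ is a covering of degree $\pm1$, that is, a diffeomorphism.
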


\proof
From \cite[Proposition 2.3]{H25}, the pencil $|D_{\rm L}|$ is base point free.
Hence, for any point $q\in\Sigma$, there exists a unique member $D\in|D_{\rm L}|$ such that $q\le D$. But the pencil has the member $\sum_{i=1}^n r_i$ and among these $n$ points, only the point $r_n$ belongs to the central circle $\Sigma\us_n$.
Since the real circles $\Sigma\us_1,\dots,\Sigma\us_n$ are mutually disjoint, by continuity, this implies that for any real member $D\in|D_{\rm L}|$, there exists a unique point $q\in \Sigma\us_n$ such that $q\le D$. Further, the multiplicity of $q$ is obviously one. 
\proofend

\medskip

Next, for the circle $\Sigma_i$ which is a pure imaginary one, we put
$$
\Sigma'_i:=\Sigma_i\cap \Sigma'.
$$
This is half of $\Sigma_i$.
These $g+1$ arcs divide the half $\Sigma'$ into further
halves.
Let $\Sigma''$ be (any) one of these halves and call it the {\em quarter} of $\Sigma$.
If $\Sigma\us_i$ is a real circle (i.e.,\,if $i\equiv n\,(2)$), then 
we define half of $\Sigma\us_i$ by  
$$(\Sigma_i^{\sigma})':=\Sigma\us_i\cap \Sigma''.$$
Then the boundary of the quarter $\Sigma''$ is a  ``cycle'' of $2g+2=2n$ semicircles, half of which are real and the remaining half are pure imaginary. (See Figure \ref{f:quarter}.)

\begin{figure}
\includegraphics{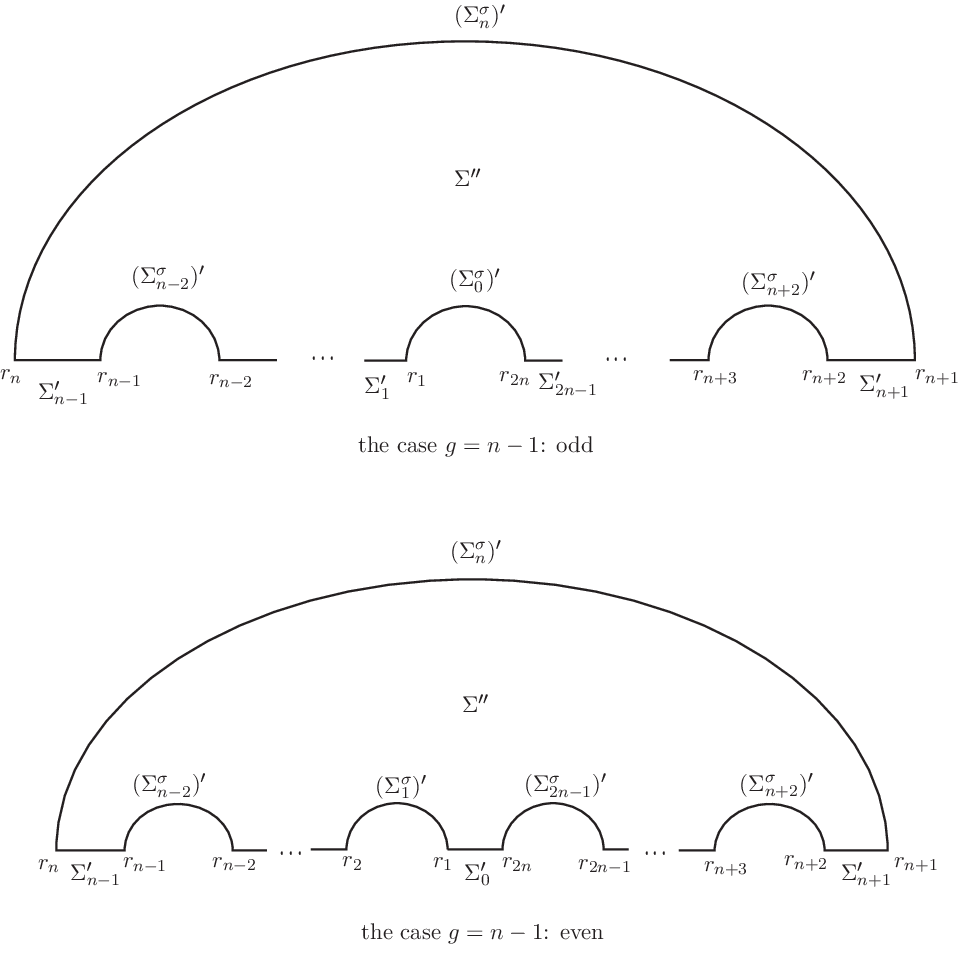}
\caption{The quarter $\Sigma''$ of $\Sigma$}
\label{f:quarter}
\end{figure}

We define 
\begin{align}\label{bmA}
\bm A':=(\ptl\Sigma'')\minus(\Sigma\us_n)'.
\end{align}
Among the semicircles of $\bm A'$,
$\Sigma'_0$ or $(\Sigma\us_0)'$ is at the middle.
Note that this is at the opposite side of the central semicircle. 
Evidently the double covering $\pi:\Sigma\lras\Lmd$ gives a homeomorphism from $(\Sigma\us_i)'$ or $\Sigma'_i$
to the arc $I_i=[a_i,a_{i+1}]$ for any index $0\le i<2n$.
Hence, $\ptl\Sigma''$ is half of $\pi\inv(\Lmd\us)$ and $\pi$ induces a homeomorphism $\ptl\Sigma''\simeq\Lmd\us$.
Note that, if we consider $\Sigma$ as a subset of $\tilde Z_1=\tilde p\inv(1)=\tilde{\ms T}$, then $\pi:\Sigma\lras\Lmd$
is identified with the restriction $\Phi|_{\Sigma}$, where as before $\Phi:Z\lras\ms O(2)$ is the projection to the standard minitwistor space.

We recall that at the end of Section \ref{s:mtl}, after choosing a half $\ccc'_n$ of the circle $\ccc_n$ in the central sphere $C_n$, we have obtained 
the family of real hyperplanes in $\PP^{n+1}$ which are parameterized by $\Lmd\us\simeq S^1$.
The sections of the cone ${\rm C}(\Lmd)$ by these hyperplanes were the images under the double covering map $\Pi:\ms T\lras {\rm C}(\Lmd)$ of the meromorphic images $\tilde\Psi(L)$ of $L\subset Z$ intersecting components $C_i$ with $i\neq n$ or the semicircle $\gamma'_n\subset C_n$.
From the above identification $\Sigma'_i\simeq I_i$ or $(\Sigma\us_i)'\simeq I_i$ induced by the cone projection, these together give a homeomorphism $\ptl\Sigma''\lras\Lmd\us$.
In the following, we regard the parameter space of the above family of hyperplanes as $\ptl\Sigma''$ rather than $\Lmd\us$ through this homeomorphism.

In the rest of this paper, for any point $q\in \ptl\Sigma''$, we denote by $h_q\subset\PP^{n+1}$ the real hyperplane in the family, determined by the point $q$ (i.e.,\,by the point $\pi(q)\in\Lmd\us$).
Then from their explicit description given in the previous section, $h_q$ always passes through $q$.
Moreover, the hyperplane $h_q$ satisfies the following property, which is important for us.

\begin{proposition}\label{p:bdry}
For any point $q\in\ptl\Sigma''$, the real hyperplane $h_q\subset\PP^{n+1}$ satisfies
\begin{align}\label{hqS}
h_q|_{\Sigma} = q + \ol q + 2D'_q
\end{align}
for some real effective divisor $D'_q$ of degree $g=n-1$ on $\Sigma$.
\end{proposition}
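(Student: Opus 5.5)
The plan is to prove \eqref{hqS} case by case, following how the boundary $\ptl\Sigma''$ was divided into the semicircles $(\Sigma\us_i)'$ and $\Sigma'_i$ with $\pi$-images $I_i$. For each case I will compute $h_q|_\Sigma$ directly from the explicit descriptions of $h_q$ in Corollaries \ref{c:im1}, \ref{c:im2} and Propositions \ref{p:Dt}, \ref{p:ccl}. Throughout, $h_q=\pi\inv(\underline h_q)$ for a hyperplane $\underline h_q\subset\PP^n$, except on the central semicircle. So $h_q|_\Sigma$ is the $\pi$-pullback of $\underline h_q|_\Lmd$, a real divisor of degree $n$ on $\Lmd$. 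The guiding principle is this: the pullback of a point $a_j$ is $2r_j$; the pullback of a point $\lmd\in\Lmd\us\minus\{a_j\}$ is $q+\tau(q)$, and $\tau(q)=\ol q$ for $q$ in a pure imaginary circle. On a real circle, instead, $\tau(q)=\ol{\tau(q)}$ but $\tau(q)\neq \ol q$ in general, so there the form $q+\ol q$ with $\ol q=q$ must be obtained by a different bookkeeping.

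First, consider the non-central case $\pi(q)=\lmd\in I_i\uc$ with $i\ne n$. By Corollary \ref{c:im2} (or \ref{c:im1} for $i=0$), $\underline h_q|_\Lmd=\sum_{j\in J}a_j+|n-i|\,\lmd$, where $J$ is $\{1,\dots,i\}$ or $\{i+1,\dots,2n\}$, and $|J|+|n-i|=n$. Pulling back gives
$$h_q|_\Sigma=2\sum_{j\in J}r_j+|n-i|\,(q+\tau(q)).$$
If $|n-i|$ is odd, then $i\not\equiv n$, so $q$ lies on a pure imaginary circle and $\tau(q)=\ol q$. Hence $h_q|_\Sigma=q+\ol q+2\bigl(\sum_J r_j+\tfrac{|n-i|-1}{2}(q+\ol q)\bigr)$. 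If $|n-i|$ is even, then $q$ is real, and I write $|n-i|(q+\tau q)=q+\ol q+2\bigl(\tfrac{|n-i|-2}{2}(q+\tau q)+\tau q\bigr)$, using $\ol q=q$; this works because $|n-i|\ge2$ when $i\neq n$. In both cases $D'_q$ is real of degree $n-1$: $\tau(q)$ is real when $q$ is, and $q+\ol q$ is real. The endpoints $\lmd=a_j$ and the point at infinity are handled by the same formula with $q=r_j$, or by continuity, since $\underline h_q$ varies continuously.

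Second, consider the central semicircle, i.e.\ $q\in(\Sigma\us_n)'$. Here $h_q=h_t$ comes from the $S^1$-family of Proposition \ref{p:ccl}, and Propositions \ref{p:Dt} and \ref{p:rp} give $h_t|_\Sigma=2D_t$ with $D_t\in|D_{\rm L}|\us$. The member $D_t$ contains a unique point $q_t\in\Sigma\us_n$, with multiplicity one. I must check that the parametrization of $\ptl\Sigma''$ assigns $q=q_t$, i.e.\ that $\pi(q_t)$ equals $\Phi\circ\alpha$ of the corresponding point of $\gamma'_n$. I expect this identification to be the main obstacle. The approach is to use that $h_q$ passes through $q$ (stated before the proposition) together with uniqueness of the point of $D_t$ on $\Sigma\us_n$, and to check agreement at the endpoints $r_n,r_{n+1}$ and monotonicity. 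Then $2D_t=2q+2(D_t-q)=q+\ol q+2(D_t-q)$, and $D'_q=D_t-q$ is real of degree $n-1$ because both $D_t$ and $q$ are real.
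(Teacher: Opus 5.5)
Your proposal is correct and follows the paper's proof. Off the central semicircle you pull back the divisors from Corollaries \ref{c:im1} and \ref{c:im2} to get $h_q|_\Sigma=2\sum_{j\in J}r_j+|n-i|\,(q+\tau(q))$, then split according to whether $q$ lies on a pure imaginary or a real circle (your parity of $|n-i|$); on $(\Sigma\us_n)'$ you write $2D_t=q+\ol q+2(D_t-q)$. The identification you flag as the main obstacle is not needed: $h_q$ passes through $q$ and is evenly tangent, so $q\le D_t$ at once, and this is all the paper uses in that case.
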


\proof
The case $q\in (\Sigma\us_n)'$ is immediate since $h_q$ is a real evenly tangential hyperplane to $\Sigma$ through $q=\ol q$ and hence we can write $h_q|_{\Sigma}$ as in \eqref{hqS}.

For other cases, we write the condition \eqref{hqS} as $D'_q=\frac12(h_q|_{\Sigma} - q-\ol q)$. The RHS is real as $h_q$ is real. So $D'_q$ is real.
Since $\deg\Lmd=n$ and $\pi:\Sigma\lras\Lmd$ is a double cover, $\deg
(h_q|_{\Sigma} ) = 2n$. Therefore, the degree of $D'_q$ is $(n-1)$.
Hence, to prove the proposition, it is enough to show that $\frac12(h_q|_{\Sigma} - q-\ol q)$ is effective and integral. 

In the following, for simplicity of notation, even if the index $i$ is such that $\pi\inv(I_i)$ is a real circle (i.e.,\,if $i\equiv n\, (2)$), we write its half by $\Sigma'_i$ (instead of $(\Sigma\us_i)'$).
Let $\tau:\Sigma\lras\Sigma$ be the hyperelliptic involution.
Suppose that the index $i$ satisfies $0\le i<n$ and take any point $q\in \Sigma'_i$.
From Corollaries \ref{c:im1} and \ref{c:im2}, 
\begin{align}\label{hq}
h_q|_{\Sigma} = \sum_{j=1}^{i}2r_j+ (n-i)\big(q+\tau(q)\big).
\end{align}
(If $i=0$, then the sum disappears.)
This means 
\begin{align}\label{D'q}
D'_q &= \sum_{j=1}^{i}r_j + \frac{n-i}2\big(q+\tau(q)\big)-\frac12(q+\ol q)\\
&= \sum_{j=1}^{i}r_j + \frac{n-i-1}2q - \frac12\ol q + \frac{n-i}2\tau(q)
\end{align}
Assume moreover that $q$ is a real point. Then this can be written $\sum_{j=1}^{i}r_j + \frac12(n-i-2)q+\frac{n-i}2\tau(q)$.
On the other hand, from \eqref{itv}, the reality of $q$ means that the parities of $n$ and $i$ are equal.
Hence, noting that $i\neq n-1$ as $q$ is a real point, $D'_q$ is integral and effective. 

Next, assume that $q$ belongs to a pure imaginary circle. Then $\tau(q) = \ol q$. So from \eqref{D'q}, $D'_q = \sum_{j=1}^{i}r_j + \frac{n-i-1}2q+\frac{n-i-1}2\ol q$.
On the other hand, again from \eqref{itv}, the parities of $n$ and $i$ are not equal.
Again this readily means that the divisor $D'_q$ is integral and effective.

The case where the index $i$ satisfies $n<i< 2n$ can be seen in a similar way using
$$
h_q|_{\Sigma} = \sum_{j=i+1}^{2n}2r_j+ (i-n)\big(q+\tau(q)\big)
$$
instead of \eqref{hq}, which again follows from Corollaries \ref{c:im1} and \ref{c:im2}.
\proofend

\iffalse
\begin{proposition}\label{p:tbd}
In the situation of the previous proposition, if the point $q\in \ptl\Sigma''$ belongs to the central circle $\Sigma\us_n$, then the degree $n$ divisor $q + D'_q$ is linearly equivalent to the pencil $|D_{\rm L}|=|D_{\rm R}|$ on $\Sigma$.
\end{proposition}
\fi

\medskip
To extend the family $\{h_q \mid q\in \partial\Sigma''\}$ to a family parameterized by the whole quarter $\Sigma''$ while preserving the property~\eqref{hqS}, like the method employed in~\cite{H25}, we make use of the Jacobian variety and the Abel--Jacobi map of~$\Sigma$.
Let ${\rm J}_{\Sigma}$ be the Jacobian variety of the hyperelliptic curve $\Sigma$, and let $\mathfrak a:{\rm Div}(\Sigma)\to {\rm J}_{\Sigma}$ be the Abel--Jacobi map with base point $r_1$, one of the ramification points.
Since $r_1$ is a real point, the real structure $\sigma$ on $\Sigma$ induces a real structure on ${\rm J}_{\Sigma}$, which is simply complex conjugation.
Let $({\rm J}_{\Sigma})\us_{\mathfrak o}$ be the identity component of the real locus $({\rm J}_{\Sigma})\us$ of ${\rm J}_{\Sigma}$.
This is a $g$-dimensional real torus; see \cite[Section~2.2]{H25}.
Let $({\rm J}_{\Sigma})\us_{\mathfrak o}$ be the identity component 
of the real locus $({\rm J}_{\Sigma})\us$.
This is a $g$-dimensional real torus; see \cite[Section~2.2]{H25}.
Let $\mathfrak t:{\rm J}_{\Sigma}\to {\rm J}_{\Sigma}$ be 
the doubling morphism defined by $\mathfrak t(x)=2x$.
Since the base point $r_1$ is real, this map preserves the real structure and hence maps $({\rm J}_{\Sigma})\us_{\mathfrak o}$ to itself.
Let 
\[
\mathfrak t_{\mathfrak o}:({\rm J}_{\Sigma})\us_{\mathfrak o}\lras ({\rm J}_{\Sigma})\us_{\mathfrak o}
\]
be the restriction of\/ $\mathfrak t$ to $({\rm J}_{\Sigma})\us_{\mathfrak o}$.
This is the quotient map by the $2$-torsion subgroup and is therefore a $2^{g}$-fold covering of the torus.
The same statement holds for $-\mathfrak t_{\mathfrak o}$.
In the proof of the next proposition, we use $-\mathfrak t_{\mathfrak o}$ rather than $\mathfrak t_{\mathfrak o}$.

\begin{proposition}\label{p:ext}
The family of hyperplanes in Proposition~\ref{p:bdry}, originally parameterized by the boundary $\partial\Sigma''$, admits a natural extension to a family of real hyperplanes parameterized by the entire surface $\Sigma''$, while preserving the tangential property~\eqref{hqS}.
\end{proposition}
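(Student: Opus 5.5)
We translate the tangency condition \eqref{hqS} into a condition on the Jacobian, which lets us solve for $D'_q$ as $q$ moves. Every hyperplane section of ${\rm C}(\Lmd)$ restricts to $\Sigma$ in the fixed class $|h|_\Sigma|$, which has degree $2n$. The map $\pi:\Sigma\lras\Lmd$ is a double cover and $\pi^*\ms O(1)$ is the $g^1_2$, so $h|_\Sigma\sim n\,g^1_2$. Moreover $q+\tau(q)\sim g^1_2$ and $\ol q=\tau(q)$ on the pure imaginary circles. For a point $q\in\Sigma''$ we therefore look for a real effective divisor $D'$ of degree $g$ satisfying
\[
2D'\sim n\,g^1_2-q-\ol q .
\]
Applying $\mathfrak a$, this reads $2\,\mathfrak a(D')=\mathfrak a(n\,g^1_2)-\mathfrak a(q)-\mathfrak a(\ol q)$. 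Equivalently, $-\mathfrak t(\mathfrak a(D'))$ equals the explicit point
\[
\beta(q):=\mathfrak a(q)+\mathfrak a(\ol q)-\mathfrak a(n\,g^1_2).
\]
The class $\beta(q)$ is real, and it depends continuously on $q\in\Sigma''$.

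The plan has four steps.

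First, we check that $\beta(q)$ lies in $({\rm J}_{\Sigma})\us_{\mathfrak o}$ for all $q\in\Sigma''$. Since $\Sigma''$ is connected, it suffices to check this at one boundary point, for example at $q=r_1$, where $\beta(r_1)=2\mathfrak a(r_1)-\mathfrak a(n\,g^1_2)$ and $n\,g^1_2\sim 2\sum_{j\le n}r_j$. The boundary case in Proposition~\ref{p:bdry} already provides $D'_q$ with $-\mathfrak t(\mathfrak a(D'_q))=\beta(q)$. We must also confirm that these boundary classes $\mathfrak a(D'_q)$ lie in the identity component $({\rm J}_{\Sigma})\us_{\mathfrak o}$. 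We will do this by continuity along $\ptl\Sigma''$, starting from a point where $D'_q$ is a sum of ramification points whose class we can compute directly, such as the endpoint $r_n$ where $D'_q=\sum_{j<n}r_j$.

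Second, we lift. The quarter $\Sigma''$ is a closed disk, hence simply connected. Therefore the continuous map $\beta:\Sigma''\lras({\rm J}_{\Sigma})\us_{\mathfrak o}$ lifts through the finite covering $-\mathfrak t_{\mathfrak o}$. We fix the lift uniquely by requiring that at one boundary point it equals $\mathfrak a(D'_q)$ from Proposition~\ref{p:bdry}. Uniqueness of lifts along the boundary path then shows that the lift coincides with $\mathfrak a(D'_q)$ on all of $\ptl\Sigma''$, since the boundary family is itself a continuous lift. Call the lift $\delta(q)$.

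Third, we realize $\delta(q)$ by a real effective divisor $D'_q$ of degree $g$. By Jacobi inversion, $\mathfrak a$ restricted to $\Sigma^{(g)}$ is surjective. We need an effective real representative whose class is $\delta(q)$; the natural choice is $D'_q$ consisting of $g$ points, one on each of the real circles $\Sigma\us_i$ with $i\neq n$, or of conjugate pairs. Once $D'_q$ is found, the divisor $q+\ol q+2D'_q$ lies in $|n\,g^1_2|$. Every member of this linear system is cut out by a hyperplane section of ${\rm C}(\Lmd)$, because the restriction $H^0(\ms O_{{\rm C}(\Lmd)}(1))\to H^0(\Sigma,n\,g^1_2)$ is surjective. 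Here the dimensions are $n+2$ on both sides: for the hyperelliptic curve with $g=n-1$ we have $h^0(n\,g^1_2)=n+1+1$, since $n>g$. So there is a unique hyperplane $h_q$ with the required restriction, and uniqueness forces $h_q$ to be real. It agrees with the old $h_q$ on the boundary.

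The main obstacle is the third step: we must guarantee that $\delta(q)$ is represented by an \emph{effective} real divisor, and that this representative varies continuously. For $g$ points on a curve of genus $g$ the representative is generically unique, but it jumps along the theta-divisor locus. For the interior we would first try the argument of \cite{H25}: the map $\mathfrak a$ from the real torus $\prod_{i\neq n}\Sigma\us_i$ (or from its mixed analogue with conjugate pairs) to $({\rm J}_{\Sigma})\us_{\mathfrak o}$ is a degree-one map, hence onto. Any fibre degeneracies then only produce $g^1$'s, which are removable because they would give multiple tangent hyperplanes.
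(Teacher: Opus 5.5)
Your Steps 1--2 follow the paper's argument: the Abel--Jacobi map based at $r_1$, $\beta(q)=\mf a(q+\ol q)$, and a lift through $-\mf t_{\mf o}$ matched to the boundary data of Proposition~\ref{p:bdry}. Your lifting is in fact cleaner than the paper's. Since $\Sigma''$ is a closed disk, you lift $\beta|_{\Sigma''}$ directly and get agreement with $\mf a(D'_q)$ on $\ptl\Sigma''$ from uniqueness of lifts. This avoids the detour through the Seifert surface $\mf V$ and its fundamental group. Your remark that restriction identifies $|\ms O_{\PP^{n+1}}(1)|$ with $|h|_\Sigma|$ is also needed; the paper leaves it implicit and simply takes ``a degree $g$ divisor'' with the lifted class.

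The gap is Step~3, which you leave open, and the route you sketch would not work. The tangency divisors here are not one point per non-central real circle. Already at the corner $q=r_n$ one has $D'_q=\sum_{j<n}r_j$, which puts two points on some real circles and none on others. Section~\ref{ss:sing} also shows $D'_q$ acquiring double points and $\sigma$-conjugate pairs. Moreover, the parities of the degrees on the real circles are constant on each component of $({\rm J}_\Sigma)\us$. For $g\ge2$, a divisor $D$ with one point on each $\Sigma\us_i$, $i\neq n$, makes $D-gr_1$ of odd degree on some real circle, so its class does not lie in $({\rm J}_\Sigma)\us_{\mf o}$ at all. The degree-one map you borrow from \cite{H25} is a Lorentzian feature, and ``removable $g^1$'s'' is not an argument.

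The missing idea is that, for $q\in\Sigma''\minus\ptl\Sigma''$, the lifted class $\delta(q)$ avoids the special locus $W^1_g=\{\mf a(D)\set D\ge0,\ \deg D=g,\ h^0(D)\ge2\}$. This follows from the parity argument of Lemma~\ref{l:inn1}:
\begin{itemize}
\item On the hyperelliptic curve $\Sigma$, every special effective divisor of degree $g$ contains a full fibre $\pi\inv(x)$.
\item Suppose such a $D'$ had $\mf a(D')=\delta(q)$. By your surjectivity there is a hyperplane $h$ with $h|_\Sigma=q+\ol q+2D'$.
\item This $h$ contains both points of $\pi\inv(x)$, or, at a ramification point, the tangent line to $\Sigma$. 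In either case it contains the generating line over $x$, hence the vertex.
\item Then $h|_\Sigma$ would be $\tau$-invariant. But $q$ occurs with odd multiplicity and $\tau(q)$ with even multiplicity, because $q,\ol q,\tau(q)$ are pairwise distinct for interior $q$.
\end{itemize}
Hence $h^0(D'_q)=1$, so the Jacobi-inversion representative is unique. It is real, since $\sigma(D'_q)$ is effective in the same real class. It depends continuously on $q$, since $\mf a:\Sigma^{(g)}\to{\rm J}_\Sigma$ is biholomorphic over ${\rm J}_\Sigma\minus W^1_g$.

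This settles the interior. Continuity up to boundary points where the prescribed $D'_q$ is itself special still needs a separate check; an example is a pure imaginary arc $\Sigma'_i$ with $i\le n-3$, where $D'_q\ge q+\ol q$. There the fibre of $\mf a$ over $\delta(q)$ is positive-dimensional, so convergence of $\delta$ alone does not force $h_q$ to converge to the prescribed hyperplane. The paper does not address this point either.
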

\proof
We define a mapping $\beta:\Sigma\lras {\rm J}\us_{\Sigma}$ by $\beta(q) = \mf a(q+\ol q)$.
This is a continuous mapping and therefore the image is contained in the component 
$({\rm J}_{\Sigma})\us_{\mf o}$.

Suppose $g=1$. Then we can realize the elliptic curve $\Sigma$ as the quotient $\CC/(\ZZ+\sqrt{-1}b\ZZ)$, with some real $b\ge 1$. In this case, fixing $r_1$ as a base point, ${\rm J}_{\Sigma}$ is identified with $\Sigma$ itself by the Abel-Jacobi map $\mf a$, and $\mf a$ is the identity map.
Therefore, $\beta$ is concretely written 
\begin{align}\label{beta}
q=\big[x+\sqrt{-1}b\, y\big]\stackrel{\beta}\longmapsto [2x]\in ({\rm J}_{\Sigma})\us_{\mf o}=\big\{[x']\set x'\in\RR\big\}\simeq S^1.
\end{align}
If $h$ denotes the hyperplane class on $\Sigma\subset\PP^{n+1}=\PP^3$, then under the present choice of the base point,
the equation $q+\ol q + 2p\sim h$ (linear equivalence) is equivalent to $\beta(q)=-2\mf a(p)$.
From \eqref{beta}, if we write $q= [x+\sqrt{-1}by]$ and $p=[\xi+\sqrt{-1}b\eta]$, this can be rewritten as $[2x] = [-2\xi]$. This equation, where the unknown is $[\xi+\sqrt{-1}b\eta]$, has an obvious solution $[\xi]=[-x]$, and it is not difficult to see that this is exactly the solution we have obtained in Proposition \ref{p:bdry} for the case $g=1$.
The continuous extendability of this solution to the whole quarter $\Sigma''$ 
can be seen by gradually shrinking the lower-left quarter square in the fundamental domain 
$
\{\,x+\sqrt{-1}\,b y \mid 0\le x,y\le 1\,\}
$
of $\Sigma$, which represents the quarter $\Sigma''$, 
to the midpoint $\tfrac14(1+\sqrt{-1}\,b)$ of that square.

Next, suppose $g>1$. 
If a point $q\in\Sigma''$ belongs to a pure imaginary semicircle, then $\tau(q)=\ol q$.
As $\mf a(\tau(q)) = -\mf a(q)$, this implies $\mf a(q+\ol q) = \mf a(q+\tau( q)) =\mf a(q) - \mf a(q) = \mf o$. Hence, the map $\beta$ maps all points on pure imaginary semicircles to the origin.
Conversely, if two points $q$ and $q'$ of $\Sigma''$ satisfy $\beta(q)=\beta(q')$,
then by Abel's theorem, we have a linear equivalence $q+\ol q\sim q'+\ol q'$.
From \cite[Proposition 2.1]{H25}, this implies that $\ol q = \tau(q)$, which means that $q$ belongs to a pure imaginary semicircle.
Hence, $\beta$ is injective away from the pure imaginary semicircles.
So if $g>1$, then $\beta:\Sigma''\lras ({\rm J}_{\Sigma})\us_{\mf o}$ is simply the map that identifies all points on the pure imaginary semicircles.
We denote $\mf V:=\beta(\Sigma'')\subset ({\rm J}_{\Sigma})\us_{\mf o}$ for the image. %, and call it the Seifert surface 
This has the images of the real semicircles as its boundary, and all real semicircles of $\ptl\Sigma''$ become loops that contain $\mf o$.

Therefore, any point of the boundary $\ptl\mf V$ may be written $\mf a(2q)$ where $q$ belongs to a real semicircle of $\ptl\Sigma''$, and
from Proposition \ref{p:bdry}, the hyperplane $h_q$ satisfies $h_q|_{\Sigma} = 2q + 2D'_q$ for an effective divisor $D'_q$ of degree $g$.
Since $\mf a(h_q|_{\Sigma})=\mf a(2nr_1) = \mf o$, taking the image under $\mf a$, we obtain $\mf a(2D'_q) = -\mf a(2q)$.
As $\mf a(2q) = \mf t_{\mf o}(\mf a(q))$, this means $\mf a(2D'_q) = -\mf t_{\mf o}(\mf a(q))$. 
Thus, Proposition \ref{p:bdry} means that the mapping $\mf a(2q)\longmapsto \mf a(D'_q)$ from $\ptl\mf V$ to $({\rm J}_{\Sigma})\us_{\mf o}$ provides a lift of $\ptl\mf V$ to $({\rm J}_{\Sigma})\us_{\mf o}$ for the $2^g$-fold covering $-\mf t_{\mf o}:({\rm J}_{\Sigma})\us_{\mf o}\lras ({\rm J}_{\Sigma})\us_{\mf o}$.
Since the divisor $D'_q$ varies continuously as seen at the end of Section \ref{s:mtl}, this lift is continuous.
Furthermore, the lift is a closed curve.
Since the fundamental group of $\mf V$ is generated by the loops of the images under $\mf a$ of the real semicircles of $\Sigma$ from the above topological discription of $\mf V$, this implies that the lift bounds a simply connected domain in $({\rm J}_{\Sigma})\us_{\mf o}$.
Hence, the lift along $\ptl\Sigma''$ uniquely extends to the entire surface $\Sigma''$.
If $D'_q$ is a degree $g=n-1$ divisor such that $\mf a(D'_q)$ is the lift of a point $\mf a(\tau(q)+\tau(\ol q))=-\mf a(q+\ol q)\in\mf V$, then it satisfies $-\mf t_{\mf o}(\mf a(D'_q)) = \mf a(q+\ol q)$, which is equivalent to the linear equivalence $2D'_q+q+\ol q\sim h$, where $h$ is the hyperplane class.
This means the existence of the hyperplane $h_q$ in the proposition.
\proofend

\medskip
The surface $\mathfrak V$ in the Jacobian is an analogue of the surface $\mathfrak S_k$ used in~\cite{H25} to obtain a two-dimensional family of real minitwistor lines in the same minitwistor space $\ms T$, and we again call it the \emph{Seifert surface}, 
since it is determined by its boundary curve.

\subsection{The complete family of minitwistor lines}\label{ss:cplt}
To obtain all minitwistor lnes that correspond to the present EW space, let $M$ be a toric ALE gravitational instanton of type $A_{2n-1}$ we have considered and $\hat M:=M\cup\{\infty\}$ be its orbifold compactification. 
This has the scalar $S^1$-action in particular. We denote $\hat W:=\hat M/S^1$ for the quotient space by this action, and $\varpi:\hat M\lras\hat W$ for the quotient map.
The components $C_1,C_2,\dots, C_{2n-1}$ of the exceptional curves of the minimal resolution of the $A_{2n-1}$-singularity
are invariant under the scalar $S^1$-action. Only the central component $C_n$ is pointwise fixed by the scalar $S^1$-action and it is mapped isomorphically to the boundary $\ptl \hat W$ of $\hat W$ by $\varpi$.
Hence, there is a natural identification 
\begin{align}\label{bdry}
\ptl \hat W\simeq C_n \,\,(\simeq S^2).
\end{align}
We denote $W:=\hat W\minus\ptl \hat W$ for the interior of $\hat W$.
For each $i=1,2,\dots, 2n-1$ with $i\neq n$, we denote $\gamma_i:=\varpi(C_i)\simeq C_i/S^1$. These are segments in $\hat W$ and contained in the interior $W$ if $i\neq n-1,n+1$.
The two images $\ccc_{n-1}$ and $\ccc_{n+1}$ may also be identified with segments, but the images of the two points $C_{n-1}\cap C_n$ and $C_{n+1}\cap C_n$ belong to $\ptl \hat W$.
Both of the unions $\ccc_1\cup\ccc_2\cup\dots\cup \ccc_{n-1}$ and $\ccc_{n+1}\cup\ccc_{n+2}\cup\dots\cup \ccc_{2n-1}$ are connected but these two unions are mutually disjoint.
On the other hand, noting that the two components $C_0$ and $C_{2n}$ in $\hat M$ share the point $\infty$ at infinity,  the image $\varpi(C_0\cup C_{2n})$ is a single segment which has the point $\varpi(\infty)$ as an interior point.
We denote $\ccc_0:=\varpi(C_0\cup C_{2n})$ for this segment. 
(So $\ccc_0$ is not $\varpi(C_0)$.)
This connects the endpoint of $\ccc_{n+1}\cup\dots\cup \ccc_{2n-1}$ and the head point of $\ccc_1\cup\dots\cup \ccc_{n-1}$, and the union 
\begin{align}\label{axis}
\bm A:=(\ccc_{n+1}\cup\dots\cup \ccc_{2n-1})\cup\ccc_0\cup(\ccc_1\cup\dots\cup \ccc_{n-1})
\end{align}
is a connected long path whose endpoints belong to $\ptl \hat W$. (See Figure \ref{f:W}.)
The quotient group $T^2/S^1\simeq S^1$, where $S^1$ in the quotient is the subgroup of scalars, acts on $\hat W$ and this $S^1$-action can be considered as a rotation around the axis \eqref{axis}.
We call \eqref{axis} the {\em rotational axis} of $\hat W$ or $W$.

\begin{figure}
\includegraphics{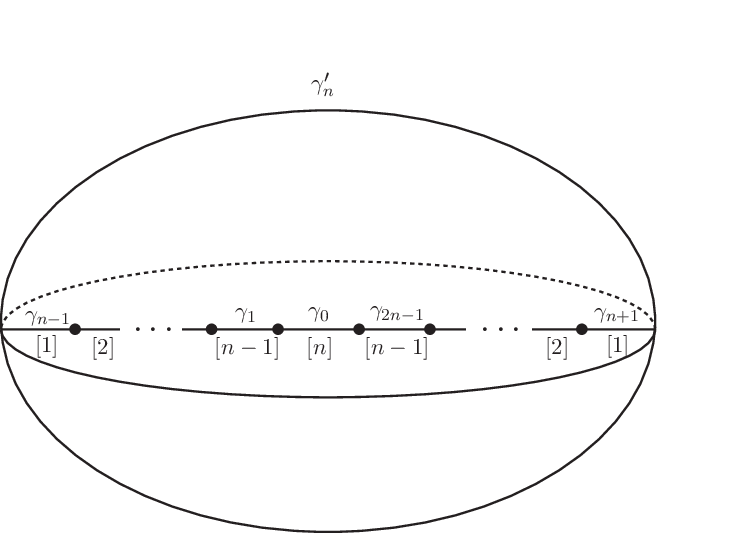}
\caption{The EW space $W$ and the axis of rotation. (The numbers in the brackets refer to the order of singularity.)}
\label{f:W}
\end{figure}

In Section \ref{s:mtl} we have defined a circle $\gamma_n$ and its half $\gamma'_n$ in the central sphere $C_n$.
By the quotient map $\varpi$, the latter is mapped isomorphically to its image and we denote it by the same letter $\gamma'_n$.
Then the union of the axis \eqref{axis} and $\gamma'_n$ is homeomorphic to $S^1$. We denote this circle in $\hat W$ by $\hat\gamma$.

As discussed at the end of Section \ref{s:mtl}, for any $0\le i\le 2n$, the composition $\Phi\circ\alpha:Z_0\lras \Lmd$ (see \eqref{alpha}) maps the sphere $C_i$ to the arc $I_i=[a_i,a_{i+1}]\subset\Lmd\us$ and if $i\neq n$ this descends to an isomorphism from $C_i/S^1\simeq\gamma_i$ to $I_i$.
Hence, using the isomorphism from $\Sigma'_i$ or $(\Sigma\us_i)'$ to $I_i$ induced by the double covering $\pi:\Sigma\lras\Lmd$, the path \eqref{axis} is naturally identified with the set $\bm A'=(\ptl\Sigma'')\minus(\Sigma\us_n)'$ defined in \eqref{bmA},
where each segment $\gamma_i$ is identified with the semicircle $\Sigma'_i$ or $(\Sigma\us_i)'$ depending on the parity of $n$.
Namely, there is a natural identification $\bm A\simeq\bm A'$.

For any index $i$, the weight of the scalar $S^1$-action on $C_i$ is $|n-i|$. 
Hence, for $i\neq n$, the curve $C_i$ has a non-trivial stabilizer subgroup unless $i=n\pm1$. 
Accordingly, the space $W$ has cyclic orbifold singularities of order $|n-i|$ along $\ccc_i$. 
The space $W$ is smooth along the two segments $\ccc_{n\pm1}$, except at their intersection points with the adjacent segments $\gamma_{n\pm2}$, respectively.
On the middle segment $\gamma_0$ of the axis~\eqref{axis}, the order of the singularity is exactly $n$, which is the largest among all segments.
Any intersection point of two adjacent segments of~\eqref{axis} is not an orbifold point, since the orders of the singularities along the two adjacent segments are different.

By a theorem of Jones--Tod \cite{JT85}, the interior $W$ admits an EW structure away from the non-orbifold points.
Since $\ms T$ is obtained by a dimensional reduction of the twistor space $Z$ of the gravitational instanton, $\ms T$ may be regarded as the minitwistor space corresponding to this EW space.
The minitwistor lines on $\ms T$ that induce the EW structure on $W$ are the images of twistor lines in $Z$.
In Section \ref{s:mtl}, we identified the images of twistor lines through the chain $C_0\cup\dots\cup C_{2n}$.
The images of twistor lines through $C_i$, $0\le i\le 2n$ and $i\neq n$, are minitwistor lines corresponding to points on the rotational axis \eqref{axis}.
In particular, from Propositions \ref{p:a} and \ref{p:i1}, which determine the images of twistor lines through $C_i$ for $i\neq n$, the multiplicity of the conic component $f_{\lmd}$ of the image is equal to the order $|n-i|$ of the singularities along $\gamma_i$.
This is consistent with a natural expectation that the minitwistor lines corresponding to orbifold points of an EW space should have multiplicity equal to the order of the orbifold singularities.
Also, in Propositions \ref{p:l} and \ref{p:tan}, we identified the (meromorphic) images of twistor lines through the central sphere $C_n$, and they consist of two rational curves that are mutually $\sigma$-conjugate.
They may be regarded as the ``minitwistor lines'' corresponding to points of the boundary $\partial \hat W$.
Here, we use quotation marks because they correspond to boundary points rather than interior points.
Thus, the minitwistor lines corresponding to points on the rotational axis \eqref{axis} and the ``minitwistor lines'' corresponding to the boundary \eqref{bdry} are explicitly identified.
Using the hyperplanes $h_q\subset\PP^{n+1}$ as in Proposition \ref{p:bdry}, if we put 
\begin{align}\label{Hq}
H_q:=\Pi\inv(h_q)\subset\PP^{n+2},
\end{align}
then the collection of minitwistor lines which correspond to points of the axis \eqref{axis} and the semicircle $\gamma'_n$ is the collection $\{H_q\cap \ms T\set q\in\ptl\Sigma''\}$.
From Proposition \ref{p:ext}, this family has a continuous extension $\{H_q\cap \ms T\set q\in\Sigma''\}$ to the whole quarter, where $H_q$ is still defined by $h_q\subset\PP^{n+1}$ as \eqref{Hq}.
For members of this family, we have the following

\begin{proposition}\label{p:inn}
Let $D'_q$ be the real degree $(n-1)$ divisor on $\Sigma$ that satisfies $h_q|_{\Sigma} = q+\ol q + 2D'_q$ as in Proposition \ref{p:ext}.
If a point $q\in \Sigma''\minus\ptl\Sigma''$ does not satisfy $q\le D'_q$, then $H_q\cap\ms T$ is a real minitwitsor line on $\ms T$ in the sense that it defines a smooth EW structure on the smooth locus of the space $W$.
\end{proposition}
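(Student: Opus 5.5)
The plan is to first describe the curve $H_q\cap\ms T$ explicitly as a nodal rational curve, and then to check that it satisfies the characterization of minitwistor lines from \cite{HN11} (and \cite{H25}). That characterization asks for a real rational curve with exactly $g=n-1$ nodes, lying in the correct component of the real locus of the relevant Hilbert scheme. It then yields a smooth EW structure on the corresponding part of the space of such curves.

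\textbf{Step 1: the curve $H_q\cap\ms T$ is irreducible with $g$ nodes.} Since $h_q$ is real, $H_q\cap\ms T=\Pi^{-1}(h_q\cap{\rm C}(\Lmd))$. Because $q$ is an interior point, $h_q$ should not pass through the vertex. Then $h_q\cap{\rm C}(\Lmd)$ is a smooth rational curve of degree $n$, and it meets the branch curve $\Sigma$ in the divisor $q+\ol q+2D'_q$. Now $q\notin\Sigma^\sigma$, so $q\neq\ol q$. By hypothesis $q\not\le D'_q$, and by reality $\ol q\not\le D'_q$ as well. Hence the double cover of $h_q\cap{\rm C}(\Lmd)$ is branched at exactly the two simple points $q,\ol q$, so it is irreducible and rational.

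Its singularities lie over the points of $D'_q$. If $D'_q$ is reduced, these are $g$ ordinary nodes. If $D'_q$ has a point of multiplicity $m$, one gets an $A_{2m-1}$-type point instead, which counts as $m$ nodes in the $\delta$-invariant. I would invoke \cite{H25} to say that such curves, as limits of $g$-nodal curves, are still acceptable minitwistor lines; this matches the node-collision phenomenon mentioned in the introduction. Alternatively, I would restrict to reduced $D'_q$ and use a continuity argument.

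\textbf{Step 2: check the characterization.} I would compute $\Gamma^2=2h_q^2=2n$ for $\Gamma=H_q\cap\ms T$ on $\ms T$ and verify the arithmetic genus, so that the curve has $g$ nodes and is of the type considered in \cite[Section 2]{H25}. By \cite{HN11}, the family of deformations of $\Gamma$ preserving its nodes is then a smooth complex $3$-fold near $\Gamma$. Its real points form a smooth real $3$-manifold carrying an EW structure; nondegeneracy follows from the absence of a real circle together with the two simple branch points. This gives a neighbourhood in the space of real minitwistor lines.

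\textbf{Step 3: identify this with $W$.} The curves $H_q\cap\ms T$, rotated by the residual $S^1$-action $(x,y,z)\mapsto(tx,t^{-1}y,z)$, vary continuously. By Proposition~\ref{p:bdry} they agree with images of twistor lines along $\ptl\Sigma''$. The real-analytic family of images of twistor lines under $\tilde\Psi$ and the family $\{H_q\cap\ms T\}$ are both solutions of the same tangency problem, and that problem has a discrete solution set for fixed $q$ (the $2^g$-fold covering $-\mf t_{\mf o}$). They coincide on the boundary, so by continuity and a lifting argument in the Jacobian they coincide throughout. Hence $H_q\cap\ms T$ is the image of a twistor line and induces the Jones--Tod EW structure at the corresponding smooth point of $W$.

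The main obstacle is this identification with genuine images of twistor lines at interior points. The route is a uniqueness-of-lift argument for the covering $-\mf t_{\mf o}$ on the simply connected set $\mf V$, combined with analyticity of $\tilde\Psi(L)$ in $L$.
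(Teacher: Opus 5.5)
\textbf{Nodal case matches the paper; the non-reduced case is a genuine gap.} For reduced $D'_q$, your Steps 1--2 are the paper's argument: irreducibility from the two simple branch points $q,\ol q$, then $g$ nodes and $C^2=2n=2g+2$, then \cite{HN11}. The gap is the case where $D'_q$ is not reduced. The statement covers it, and it really occurs: Proposition \ref{p:re} produces tacnodal curves when $g>1$.

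Saying such curves are ``limits of $g$-nodal curves'' does not give a smooth EW point. In this very family, the limits at $\ptl\Sigma''$ are the multiple curves of Propositions \ref{p:a} and \ref{p:i1} (orbifold points) and the reducible curves $\CCC+\ol\CCC$ (conformal infinity). Your fallback, restricting to reduced $D'_q$ and using continuity, proves only a weaker statement.

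The missing idea is local and uniform. Write $D'_q=\sum_i m_ip_i$. Each $p_i$ appears in $h_q|_{\Sigma}$ with even multiplicity $2m_i$, since $p_i\neq q,\ol q$ by hypothesis. So over $p_i$, the curve $C=H_q\cap\ms T$ has an $A_{2m_i-1}$-point made of two smooth branches with local intersection number $m_i$. Hence the construction of \cite{HN11} works verbatim: pass to the ``normalization'' of a neighbourhood of $C$, separating the two branches at each singular point. Each $p_i$ then lowers the self-intersection by $2m_i$. The compact curve in the normalized neighbourhood is therefore smooth rational with self-intersection $C^2-2\sum_i m_i=2n-2g=2$, i.e.\ it has normal bundle $\ms O(2)$. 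This is what makes $C$ a minitwistor line of a smooth EW point, whether or not $D'_q$ is reduced.

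\textbf{Two smaller points.} First, you only assert that $h_q$ misses the vertex of ${\rm C}(\Lmd)$. This is needed, both for $h_q\cap{\rm C}(\Lmd)$ to be smooth rational and for $C$ to avoid the singularities of $\ms T$. The paper proves it as Lemma \ref{l:inn1} by a parity argument:
\begin{itemize}
\item A hyperplane through the vertex is $\pi\inv(\ul h)$, so $h_q|_{\Sigma}$ would be $\tau$-invariant.
\item For interior $q$, the points $q,\ol q,\tau(q),\tau(\ol q)$ are distinct.
\item So in $q+\ol q+2D'_q$ the point $q$ has odd multiplicity while $\tau(q)$ has even multiplicity, a contradiction.
\end{itemize}

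Second, your Step 3 is not part of this proposition, whose content is local. The paper identifies the rotated family $\{t(H_q)\}$ with images of twistor lines only later, in Propositions \ref{p:im} and \ref{p:cplt}, by a different argument:
\begin{itemize}
\item Hitchin's result that $\tilde\Psi(L)$ is a hyperplane section;
\item rotating the point $H\cap l_\infty$ to $\bm a$;
\item injectivity of the induced map between the quotient disks.
\end{itemize}
As sketched, your lifting step also contains an error. $\mf V$ is not simply connected: its fundamental group is generated by the images of the real semicircles. The simply connected object over which one lifts through $-\mf t_{\mf o}$ is $\Sigma''$ itself.
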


See Remark \ref{r:cusp} for the assumption $q\not\le D'_q$.
If $g=1$, then this assumption is satisfied for any $q\in \Sigma''\minus\ptl\Sigma''$ because $\deg D'_q=1$ as $g=1$, which means that $D'_q$ consists of a single {\rm real} point $p$ of multiplicity one and therefore $q\neq p$ since $q\neq\ol q$ as $q\not\in\ptl\Sigma''$.
In the case $g>1$, we do not know whether $q\not\le D'_q$ holds for all $q\in \Sigma''\minus\ptl\Sigma''$.
Apart from this problem, we note that the proposition does not assert that $H_q\cap\ms T$ always has exactly $g\,(=n-1)$ nodes.
In fact, we will prove that it can have other types of singularities like a tacnode (Proposition \ref{p:re}). In that case the number of singularities of $H_q\cap\ms T$ will be smaller; see the proof of Proposition \ref{p:inn} below.
For a proof of Proposition \ref{p:inn}, we prepare a lemma.

\begin{lemma}\label{l:inn1}
If $q\in \Sigma''\minus\ptl\Sigma''$, then 
the real hyperplane $h_q\subset\PP^{n+1}$ does not pass through the vertex of the cone ${\rm C}(\Lmd)$.
\end{lemma}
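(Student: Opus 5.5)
The plan is to argue by contradiction using the shape of a hyperplane section through the vertex. The tool is the identity $h_q|_{\Sigma}=q+\ol q+2D'_q$ from Proposition \ref{p:ext}, combined with a parity count of multiplicities at the four points $q,\ol q,\tau(q),\tau(\ol q)$.

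First I would record what it means for a hyperplane $h\subset\PP^{n+1}$ to pass through the vertex of ${\rm C}(\Lmd)$. Such an $h$ is $\pi\inv(\underline h)$ for a hyperplane $\underline h\subset\PP^n$, so $h\cap{\rm C}(\Lmd)$ is a union of generating lines of the cone. Consequently $h|_{\Sigma}=\pi^*(\underline h|_{\Lmd})$ is the pull-back of a degree $n$ divisor $A$ on $\Lmd$ under the double cover $\pi:\Sigma\lras\Lmd$. In particular $h|_{\Sigma}$ is invariant under the hyperelliptic involution $\tau$. More precisely, for every point $p\in\Sigma$ that is not a ramification point we have ${\rm mult}_p(h|_{\Sigma})={\rm mult}_{\tau(p)}(h|_{\Sigma})$, and both equal ${\rm mult}_{\pi(p)}(A)$.

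Next I would check that, for $q\in\Sigma''\minus\ptl\Sigma''$, the four points $q,\ol q,\tau(q),\tau(\ol q)$ are mutually distinct. This uses the description of the fixed loci in Section \ref{ss:he}.
\begin{itemize}
\item $q=\ol q$ iff $q$ lies on a real circle.
\item $q=\tau(q)$ iff $q$ is a ramification point $r_i$.
\item $\tau(q)=\ol q$, i.e. $q$ is fixed by $\sigma\circ\tau$, iff $q$ lies on a pure imaginary circle.
\end{itemize}
All three loci lie in $\pi\inv(\Lmd\us)$, and $\pi\inv(\Lmd\us)\cap\Sigma''=\ptl\Sigma''$, so none of these coincidences occurs for interior points. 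The remaining coincidences, namely $\tau(\ol q)\in\{q,\ol q,\tau(q)\}$, reduce to the same three conditions after applying $\sigma$ and/or $\tau$.

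Now suppose $h_q$ passed through the vertex and compare multiplicities in $h_q|_{\Sigma}=q+\ol q+2D'_q$. Since $q\neq\ol q$, we get ${\rm mult}_q(h_q|_{\Sigma})=1+2\,{\rm mult}_q(D'_q)$, which is odd. Since $\tau(q)\notin\{q,\ol q\}$, we get ${\rm mult}_{\tau(q)}(h_q|_{\Sigma})=2\,{\rm mult}_{\tau(q)}(D'_q)$, which is even. But $q$ is not a ramification point, so by the first step these two multiplicities must be equal. This contradiction shows that $h_q$ cannot pass through the vertex.

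The only step requiring care is the distinctness of the four points. It relies on identifying the fixed loci of $\sigma$, $\tau$ and $\sigma\circ\tau$ on $\Sigma$ with the real circles, the ramification points and the pure imaginary circles respectively, all of which lie on $\ptl\Sigma''$. I expect this to follow directly from the description in Section \ref{ss:he}, in particular the statement there that the pure imaginary circles are exactly the fixed locus of $\sigma\circ\tau$.
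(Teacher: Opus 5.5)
Your proof is correct and is essentially the paper's argument. The paper also uses that a hyperplane through the vertex has the form $\pi\inv(\underline h)$, so that $h_q|_{\Sigma}$ has equal multiplicities at $q$ and $\tau(q)$, together with the mutual distinctness of $q,\ol q,\tau(q),\tau(\ol q)$ for interior $q$. The only difference is cosmetic: the paper turns this into an unbounded-multiplicity bootstrap $h_q|_{\Sigma}\ge m(q+\ol q+\tau(q)+\tau(\ol q))$, whereas your odd/even comparison of the multiplicities at $q$ and $\tau(q)$ reaches the contradiction in one step.
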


\proof 
If $h_q$ would pass through the vertex of the cone, then $h_q$ would be of the form $\pi\inv(\ul h)$ for some hyperplane $\ul h\subset\PP^n$.
Therefore, the multiplicities of $h_q|_{\Sigma}$ of the points $q$ and $\tau(q)$ are equal.
Using this as well as the fact that the points $q,\ol q,\tau(q)$ and $\tau(\ol q)$ are mutually distinct as $q\not\in\ptl\Sigma''$,
from the condition $h_q|_{\Sigma} = q + \ol q + 2D'_q$, we readily obtain $h_q|_{\Sigma}\ge m(q + \ol q+\tau(q) + \tau(\ol q))$ for arbitrary $m>0$.
Of course, this cannot happen. Hence, $h_q$ does not pass through the vertex of the cone.
\proofend

\medskip
\noindent{\em Proof of Proposition \ref{p:inn}.}
By the previous lemma, the cut $h_q\cap {\rm C}(\Lmd)$ is a smooth rational curve.
Write the divisor $D'_q$ as $D'_q = m_1p_1+ \dots + m_lp_l$ where $p_1,\dots, p_l\in \Sigma$ are distinct. Then $\sum_{1\le i\le l} m_i = g$.
As $h_q|_{\Sigma} = q+\ol q+ 2 D'_q$, the branch points of the double covering $H_q\cap \ms T\lras h_q\cap {\rm C}(\Lmd)$ consists of $q, \ol q$ and $p_1,\dots, p_l$.
Since $q\neq \ol q$ as $q\not\in\ptl\Sigma''$, by the assumption $q\not\le D_q$, $q$ and $\ol q$ are simple branch points.
Hence, $C:= H_q\cap \ms T=\Pi\inv(h_q\cap {\rm C}(\Lmd))$ is an irreducible curve.
Using that $C$ is a double cover of $h_q\cap {\rm C}(\Lmd)\simeq\PP^1$ whose branch divisor is of degree $2n$, 
the arithmetic genus of $C$ is $n$. 
It is not difficult to see (without using the assumption $q\not\le D_q$) that the sum of genus drops of the present singularities of $C$ is also $n$.
Therefore, $C$ is a rational curve.

If the divisor $D'_q$ has no multiple components, then $C$ has exactly $g$ nodes.
Further, we always have $C^2 = 2\deg {\rm C}(\Lmd) = 2n = 2g+2$.
From \cite{HN11}, this implies that $C$ is a nodal minitwistor line and that the space of such rational curves is smooth 3-dimensional and has a natural (smooth) EW structure.
To investigate the general case, write $D'_q = m_1p_1+ \dots + m_lp_l$ for distinct points $p_1,\dots,p_l$ as above, with $\sum_{i=1}^l m_i = g$.
Then over a point $p_i$, using that $q\neq p_i$ for any $i$,  $C$ has $A_{2m_i}$-singularity.
Even if $m_i>1$ for some $i$, because the multiplicity of $h_q|_{\Sigma}$ at $p_i$ is even and hence $C$ has two components around $p_i$, the argument of taking the ``normalization'' of a neighborhood of $C$ in \cite{HN11} still works by just noticing that the self-intersection number in the ``normalization'' drops by $2m_i$ for each $i$.
Hence, the compact component of the inverse image of $C$ into the ``normalization'' has 
$$
2n - \sum_{i-1}^l 2m_i = 2(n - g) = 2
$$
as the self-intersection number.
Consequently, such a curve can also be regarded as a minitwistor line which corresponds to a smooth point of the EW space. \proofend

\begin{remark}\label{r:cusp}{\em
If a point $q\in \Sigma''\minus\ptl\Sigma''$ would satisfy $q\le D'_q$, then the divisor $h_q|_{\Sigma}$ would contain the points $q$ and $\ol q$ with odd ($\ge 3$) multiplicity. This implies that $\ms T\cap H_q$ has (not necessarily ordinary) cuspidal singularities over $q$ and $\ol q$. This seems to mean that the EW structure does not extend to the corresponding point of the quotient space, at least smoothly. However, by a theorem of Jones-Tod \cite{JT85}, the quotient space has a smooth EW structure on the smooth locus, and the space $W$ is indeed smooth at the point determined by $q\in \Sigma''\minus\ptl\Sigma''$.
Therefore, we suspect that the appearance of a point $q\in\Sigma''\minus\ptl\Sigma''$ that satisfies $q\le D'_q$.
}\end{remark}

We recall that the tri-holomorphic $S^1$-action on $\tilde Z$, which is given by \eqref{act-t}, induces an $S^1$-action on the linear system $|\bm L|^{\CC^*_s}\simeq\PP^{n+2}$ and it induces the residual $S^1$-action on $\ms T$ and $\PP^{n+2}$.

\begin{proposition}\label{p:slice}
Let $q_1$ and $q_2$ be any points of the interior $\Sigma''\minus\ptl\Sigma''$. Then the coincidence $H_{q_1} = t(H_{q_2})$ holds for some $t\in S^1\minus\{1\}$ only when $q_1=q_2$. 
\end{proposition}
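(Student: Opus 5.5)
The plan is to separate the hyperplanes $H_{q}$ by how they are positioned relative to the fibers of $\Pi$ and to the residual action, rather than by their tangency data. The residual $S^1$-action on $\ms T$ is $(x,y,z)\mapsto(tx,t\inv y,z)$. It commutes with $\Pi$ up to the covering transformation, because $\Pi$ only remembers $v=(x+y)/2$ up to sign once we pass to $w^2=f(z)-v^2$. So the first thing to do is to write down the induced action on $\PP^{n+2}$ concretely. In the coordinates $z_0,\dots,z_n,z_{n+1},z_{n+2}$ of the proof of Proposition \ref{p:ZT}, it fixes $z_0,\dots,z_n$ and acts by $z_{n+1}\mapsto tz_{n+1}$, $z_{n+2}\mapsto t\inv z_{n+2}$. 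This is because $\xi,\ol\xi$ are the only generators with nontrivial $\CC^*_t$-weights, and these weights are opposite.

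Next, every $H_q=\Pi\inv(h_q)$ is a hyperplane in $\PP^{n+2}$ that contains the center of the projection $\Pi$. In the $(w_{n+1},w_{n+2})$-coordinates this means its equation does not involve $w_{n+2}$. Hence $H_q$ has the form $\sum_{m}c_mz_m + b(z_{n+1}+z_{n+2})=0$ with real data. By Lemma \ref{l:inn1}, $h_q$ does not pass through the vertex of the cone, so $b\neq 0$.

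Now apply $t$. The hyperplane $t(H_{q_2})$ has equation $\sum c_m z_m + b_2(t\inv z_{n+1}+t z_{n+2})=0$. If this equals $H_{q_1}$, comparing the coefficients of $z_{n+1}$ and $z_{n+2}$ (both nonzero) gives $b_1=\kappa b_2 t\inv$ and $b_1=\kappa b_2 t$ for a common scalar $\kappa$. Therefore $t^2=1$, and since $t\neq1$ we get $t=-1$, $b_1=-\kappa b_2$ and $c^{(1)}=\kappa c^{(2)}$.

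The remaining case is $t=-1$, and I expect it to be the main obstacle. Here $t=-1$ acts as $(x,y)\mapsto(-x,-y)$, so on the cone it is $v\mapsto -v$. This is the hyperelliptic involution $\tau$ on $\Sigma$. Thus $h_{q_1}=\tau(h_{q_2})$, and restricting to $\Sigma$ gives $q_1+\ol q_1+2D'_{q_1}=\tau(q_2)+\tau(\ol q_2)+2\tau(D'_{q_2})$. The simple (odd-multiplicity) points must match, which forces $\{q_1,\ol q_1\}=\{\tau(q_2),\tau(\ol q_2)\}$. For this, use the assumption $q\not\le D'_q$, or argue by parity, since the points occurring with odd multiplicity on both sides must agree.

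So $q_1\in\{\tau(q_2),\tau\sigma(q_2)\}$. Both $q_1$ and $q_2$ lie in the interior of the quarter $\Sigma''$. The quarter is a fundamental domain for the group $\langle\sigma,\tau\rangle$, whose elements are $\tau$, $\sigma$ and $\sigma\tau$. Hence neither $\tau$ nor $\sigma\tau$ maps an interior point of $\Sigma''$ back into $\Sigma''$, which is a contradiction.

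To make the last step rigorous, I would check that $\Sigma''$ is indeed a fundamental domain. The point is that $\Sigma'$ is cut by $\sigma$, and $\Sigma''$ is cut from $\Sigma'$ by the pure imaginary semicircles, which are exactly the fixed locus of $\sigma\tau$. Finally, the matching of the odd-multiplicity points needs care when $D'_q$ contains $q$ or $\ol q$. If necessary I would fall back on the continuity of the family and restrict to generic points, then conclude by closedness.
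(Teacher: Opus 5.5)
Your proposal is correct and follows the paper's proof essentially step by step. You first reduce to $t=\pm1$ because $H_q$ meets the line $l_\infty$ only at the center $\bm a$ of $\Pi$; your coefficient comparison with $b\neq 0$ from Lemma~\ref{l:inn1} is an explicit version of the paper's ``weight two on $l_\infty$'' step. You then use that $t=-1$ acts as $\tau$ on $\Sigma$, match odd-multiplicity points to get $\{q_1,\ol q_1\}=\{\tau(q_2),\tau(\ol q_2)\}$, and contradict the fundamental-domain property of $\Sigma''$. Two side remarks are harmless: the parity matching needs only $q\neq\ol q$ for interior points (not $q\not\le D'_q$), so the continuity fallback is unnecessary; and your opening claim that the residual action commutes with $\Pi$ up to the covering involution is false for $t\neq\pm1$, though your argument never uses it.
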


\proof
Let $\bm a\in \PP^{n+2}$ be the center of the projection $\Pi:\PP^{n+2}\lras\PP^{n+1}$ and $l_{\infty}\subset\PP^{n+2}$ the line which is the center of the projection $\pi\circ\Pi:\PP^{n+2}\lras\PP^{n}$.
Then $\bm a$ belongs to $l_{\infty}$ and a hyperplane $H\subset\PP^{n+2}$ is of the form $\Pi\inv(h)$ for some hyperplane $h\subset\PP^{n+1}$ iff $\bm a\in H$.
The $S^1$-action preserves the line $l_{\infty}$ and its weight on $l_{\infty}$ is two.
Therefore only $\pm 1\in S^1$ fix the point $\bm a$. Hence, since $H_{q_1}$ and $H_{q_2}$ pass through $\bm a$, $t(H_{q_1}) = H_{q_2}$ implies $t=\pm 1$. 
Writing $H_{q_i}=\Pi\inv( h_{q_i})$ for $i=1,2$,
$(-1)H_{q_2} = (-1)\Pi\inv( h_{q_2}) = \Pi\inv((-1)h_{q_2}).
$
Since $\Pi\inv$ is injective, this implies that if $t=-1$ then $h_{q_1} = -h_{q_2}$. 
As in \cite[Proof of Proposition 3.6]{H25}, the element $-1\in S^1$ acts on the branch hyperelliptic curve $\Sigma\subset\PP^{n+1}$ as the hyperelliptic involution $\tau$.
Therefore, restricting the last equation to $\Sigma$, we obtain 
$$q_1+\ol q_1+ 2D'_{q_1} = \tau(q_2) + \tau(\ol q_2)+2\tau(D'_{q_2}).$$
Pulling all $q_i$ and $\ol q_i$ from $D'_{q_i}$ if any to obtain a divisor $D''_{q_i}\ge 0$ for $i=1,2$, this can be rewritten 
$$
(2m_1+1)\big(q_1+\ol q_1\big) + 2D''_{q_1}
=
(2m_2+1)\big(\tau(q_2) + \tau(\ol q_2)\big)+2\tau(D''_{q_2})
$$
for some non-negative integers $m_1,m_2$.
From parity of the coefficients, this means $m_1 = m_2$ and
$q_1+\ol q_1 = \tau(q_2) + \tau(\ol q_2)$.
But since $\Sigma''\minus\ptl\Sigma''$ is the interior of a fundamental domain of the action generated by $\tau$ and $\sigma$,
the four points $q_1,\ol q_1,\tau(q_2)$ and $\tau(\ol q_2)$ belong to mutually different domains. Hence $(-1)H_{q_1}\neq H_{q_2}$. Namely, $t\neq -1$. Therefore $H_{q_1}=H_{q_2}$. This means $h_{q_1} = h_{q_2}$, which implies $q_1=q_2$.
\proofend

\medskip
We have a (real) 2-dimensional family of hyperplanes $\{H_q=\Pi\inv(h_q)\set q\in\Sigma''\minus\ptl\Sigma''\}$ in $\PP^{n+2}$.
Using the residual $S^1$-action, we obtain from this a 3-dimensional family $\{t(H_q)\set q\in\Sigma''\minus\ptl\Sigma'', t\in S^1\}$.
From Proposition \ref{p:slice}, the former 2-dimensional family is a {\em slice} of the latter 3-dimensional family with respect to the residual $S^1$-action in the strict sense; namely all their orbits intersect the former family at exactly one point. 
Hence, the parameter space of this 3-dimensional family is $(\Sigma''\minus\ptl\Sigma'')\times S^1$. 
The boundary $\ptl\Sigma''$ is naturally attached to $\Sigma''\minus\ptl\Sigma''=(\Sigma''\minus\ptl\Sigma'')\times\{1\}$ and again by rotation, this gives a compactification of $(\Sigma''\minus\ptl\Sigma'')\times S^1$ such that the boudnary is $S^2$ which is formed by $S^1$-orbits through the central semicircle $(\Sigma\us_n)'=(\Sigma\us_n)'\times\{1\}$, and such that the residual locus $\bm A'=\ptl\Sigma''\minus (\Sigma\us_n)'$ is identified with the axis $\bm A$ of rotation. 
Consequently, this has the same structure as the quotient space $\hat W$ by the scalar $S^1$-action on $\hat M$.
We denote this space (obtained as the compactification of $(\Sigma''\minus\ptl\Sigma'')\times S^1$) by $\hat W'$.
The added locus in the compactification also parameterizes hyperplane sections of $\ms T$, and it provides a continuous extension of the family parameterized by $(\Sigma''\minus\ptl\Sigma'')\times S^1$.
The quarter $\Sigma''$ of $\Sigma$ is naturally embedded in $\hat W'$ as the closure of $(\Sigma''\minus\ptl\Sigma'')\times \{1\}$, and in the following we regard $\Sigma''$ as a subset of $\hat W'$.
By construction, the space $\hat W$ is just a quotient space of $\hat M$, while $\hat W'$ is a parameter space of curves in $\ms T$.

By construction, it would be natural to expect that there exists a natural isomorphism between $\hat W$ and $\hat W'$. In the following, we show that this is the case.
For this, we first show the following proposition. Let $W'$ be the interior of $\hat W'$ and recall that $\bm A'=(\ptl\Sigma'')\minus(\Sigma\us_n)'$ is the axis of rotation of $W'$ as above. 

\begin{proposition}\label{p:im}
Let $L\subset Z$ be any twistor line which does not pass through the chain $C_0\cup C_1\cup\dots\cup C_{2n}$. Then the image $\tilde\Psi(L)$ belongs to $W'\minus \bm A'$.
\end{proposition}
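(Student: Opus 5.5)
I plan to argue by continuity and connectedness, using the boundary data of Section~\ref{s:mtl} as anchors. Consider the set $\mathcal L$ of real twistor lines in $Z$ not meeting $C_0\cup\dots\cup C_{2n}$. For $L\in\mathcal L$, $\tilde\Psi$ has no indeterminacy on $L$ by Proposition~\ref{p:bs}, and $L$ is not $\CC^*_s$-invariant. Hence $\tilde\Psi(L)$ is an honest real curve, and it is a hyperplane section of $\ms T$ because $\bm L\cdot L=2n=\deg\ms T\cap H$ and the map restricted to $L$ is birational onto its image. First I will show that $\tilde\Psi(L)=H\cap\ms T$ with $\bm a\in H$, i.e.\ $H=\Pi\inv(h)$. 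I would get this by computing the coefficient of the generator $\xi-\ol\xi$, or rather of the coordinate $w_{n+2}$, as a function on the space of twistor lines, and showing that it vanishes identically by continuity from the boundary cases, where it vanishes by Corollaries~\ref{c:im1} and \ref{c:im2} and Proposition~\ref{p:ccl}. Alternatively I would argue that the equation of the hyperplane is linear in the sections and must be $\chi\circ\sigma$-compatible.

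Next I will show that $h|_\Sigma=q+\ol q+2D'$ for some $q\in\Sigma\setminus\pi\inv(\Lmd\us)$. The curve $C=\tilde\Psi(L)$ is rational of arithmetic genus $n$, so it has total genus drop $n$. Since $h|_\Sigma$ has degree $2n$, this forces all contact points of $h$ with $\Sigma$ to have even order except exactly two simple points. Reality of $h$ makes these two points a conjugate pair $\{q,\ol q\}$. They are not both real, since otherwise $\tilde\Psi(L)$ would pass through a real point of $\ms T$, i.e.\ through the image of a point of the axis or of the central sphere, contradicting $L\cap(C_0\cup\dots\cup C_{2n})=\emptyset$. I must also rule out $q$ lying on a pure imaginary circle; I would do this by Lemma~\ref{l:inn1}-type reasoning, since then $q+\ol q=q+\tau(q)$, which would push $h$ toward the vertex.

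Then I will identify $[h]$ with a member of the constructed family. Using the residual $S^1$-action and replacing $q$ by $\tau(q)$ via $t=-1$, I move $q$ into the interior of the quarter $\Sigma''$. The Abel--Jacobi relation $-\mf t_{\mf o}(\mf a(D'))=\mf a(q+\ol q)$ then says that $\mf a(D')$ is some lift of $\beta(q)$ under the $2^g$-fold covering. I must show that it is the lift chosen in Proposition~\ref{p:ext}. I expect this to be the main obstacle. My approach will be to connect $L$ to a boundary twistor line by a path in the space of real twistor lines, which is connected since $M$ is. Along this path the lift of $\beta$ varies continuously, and at the boundary it agrees with the lift given by Proposition~\ref{p:bdry}. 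Uniqueness of path lifting for the covering $-\mf t_{\mf o}$, together with simple connectivity of the domain from the proof of Proposition~\ref{p:ext}, then forces $[h]=h_q$ up to rotation. Finally, since $q\in\Sigma''\setminus\ptl\Sigma''$, the slice property of Proposition~\ref{p:slice} places $\tilde\Psi(L)$ at a point of $(\Sigma''\setminus\ptl\Sigma'')\times S^1=W'\setminus\bm A'$.
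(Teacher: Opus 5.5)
\emph{There is a genuine gap: your first step asserts something false.} It is not true that $\tilde\Psi(L)=H\cap\ms T$ with $\bm a\in H$ for every twistor line $L$ off the chain. So the $w_{n+2}$-coefficient cannot be shown to vanish identically, and no $\chi\circ\sigma$-compatibility can be forced.

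The residual $S^1$-action preserves the line $l_{\infty}$ through the two singular points of $\ms T$ and acts on it with weight two, so it fixes $\bm a$ only for $t=\pm1$. Now $W'\minus\bm A'$ consists of the hyperplanes $t(H_q)$, and $H_q\cap l_{\infty}=\{\bm a\}$ by Lemma~\ref{l:inn1}. Hence $t(H_q)\ni\bm a$ only for $t=\pm1$, as in the proof of Proposition~\ref{p:slice}. So a generic minitwistor line is not of the form $\Pi\inv(h)\cap\ms T$.

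This already fails on your boundary data. By Proposition~\ref{p:ccl}, only lines through the circle $\gamma_n$, not through all of $C_n$, satisfy $\Pi(\Gamma)=\Pi(\ol\Gamma)$. In any case, vanishing on the boundary of a $3$-dimensional real parameter space does not propagate to the interior.

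The missing idea, which is how the paper proceeds, is to rotate first.
\begin{itemize}
\item Since $H$ misses the singular points of $\ms T$, it does not contain $l_{\infty}$. So $H\cap l_{\infty}$ is a single point $\bm a'$, and it is real.
\item The weight-two action is transitive on the real circle $l_{\infty}\us$, which also contains $\bm a$. So there is $t\in S^1$, unique up to sign, with $t(\bm a')=\bm a$.
\item Then $t(H)=\Pi\inv(h)$ with $h$ real and not through the vertex. Your second and third steps must be run for $t(L)$, not $L$.
\item The sign ambiguity $\pm t$ is exactly what moves $q$ between $\Sigma''$ and $\tau(\Sigma'')$. The conclusion is $\tilde\Psi(L)=t\inv(H_q)\cap\ms T$ with $q\in\Sigma''\minus\ptl\Sigma''$.
\end{itemize}
Once the rotation is in place, your path-lifting identification of the correct lift of $\beta(q)$ is essentially the paper's argument.

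Your second step also needs repair in three places.
\begin{itemize}
\item Your reason that the two odd-order points of $h|_{\Sigma}$ are not real is wrong as stated. Minitwistor lines do pass through real points of $\ms T$: their real nodes lie on real spheres (Proposition~\ref{p:re}). The correct reason is different. At a real unibranch point of $\tilde\Psi(t(L))$, the unique preimage on the real curve $t(L)$ would be a real point, and twistor lines have none.
\item The double cover of $h\cap{\rm C}(\Lmd)\simeq\PP^1$ branched along a degree-$2n$ divisor has arithmetic genus $n-1$, not $n$. With $n$, every contact order would be even.
\item The genus count only shows that exactly two points have odd order, not that they are simple. The hyperplane-section property also does not follow from $\bm L\cdot L=2n$ alone. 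The paper handles both by applying \cite[Theorem 4.1]{Hi25} to a generic $L$, whose image has exactly $g$ nodes, and then extending to all $L$ by continuity.
\end{itemize}
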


\proof
It suffices to show that there exists an element $t\in S^1$ from the tri-holomorphic action such that $\tilde\Psi(t(L))=t(\tilde\Psi(L))$ belongs to $\Sigma''\minus\ptl\Sigma''\subset W'$. By Hitchin \cite[Theorem 4.1]{Hi25}, the image $\tilde\Psi(L)$ belongs to the hyperplane section class on $\ms T\subset\PP^{n+2}$. Let $H\subset\PP^{n+2}$ be the hyperplane such that $\tilde\Psi(L)=H\cap\ms T$. This is real as $\tilde\Psi$ preserves the real structure, and moreover, it does not pass through the two singularities of $\ms T$ as $L$ is assumed not to pass through the chain $C_0\cup C_1\cup\dots\cup C_{2n}$.
Hence, if as before $l_{\infty}\subset\PP^{n+2}$ denotes the line of the center of the projection $\pi\circ\Pi:\PP^{n+2}\lras\PP^n$, then since this is the line through the two singularities of $\ms T$, $H$ does not contain $l_{\infty}$. 
Therefore, $H$ intersects $l_{\infty}$ at one point.
Let $\bm a'$ be this point. This is a real point. So it belongs to the real locus $l_{\infty}\us\simeq S^1$.
On the other hand, the center $\bm a$ of the projection $\Pi:\PP^{n+2}\lras\PP^{n+1}$ also belongs to the same circle as $\pi$ preserves the real structure.
The tri-holomorphic $S^1$-action preserves $l_{\infty}$ and its weight on $l_{\infty}$ is two.
Hence, there exits an element $t\in S^1$ such that $t(\bm a') = (-t)(\bm a') = \bm a$.

We show that either $t(L)$ or $(-t)(L)$ belongs to $\Sigma''\minus\ptl\Sigma''\,(\subset W')$, which is sufficient to prove the proposition.
Since $\bm a\in t(H)$, there exists a hyperplane $h\subset\PP^{n+1}$ such that $t(H) = \pi\inv(h)$, and it is real.
Further, since $l_{\infty}\not\subset H$ as above, $h$ does not pass through the vertex of the cone. Hence, the cut $h\cap {\rm C}(\Lmd)$ is a smooth rational curve.
Therefore, $\tilde\Psi(t(L))$ is a double cover of this curve whose branch divisor is the restriction $h|_{\Sigma}$. 
Again by \cite[Theorem 4.1]{Hi25}, if the twistor line $L$ is sufficiently general, then the image $\tilde\Psi(t(L))$ has exactly $g$ nodes as its only singularities. 
Using irreducibility of $\tilde\Psi(t(L))$,
this means that $h|_{\Sigma}$ is of the form $q+\ol q + 2D'$ for some non-real point $q$ and some real divisor $D'$ of degree $g$ which does not contain $q, \ol q$, nor a multiple point.
By exchanging $q$ and $\ol q$ if necessary, we may suppose that $q$ belongs to the half $\Sigma'$
(which includes $\Sigma''$ and $\tau(\Sigma'')$ by definition).
As $\mf a(h|_{\Sigma})=\mf o$ from our choice of the base point on $\Sigma$, using the notations from the proof of Proposition \ref{p:ext}, this means that $q,\ol q$ and $D'$ satisfy the equation
\begin{align}\label{eq}
\beta(q+\ol q) = -\mf t_{\mf o}(\mf a(D')).
\end{align}
Further, the point $q$ does not belong to $\ptl\Sigma''$ because otherwise $\ol q=\tau(q)$ as $q$ is not real as above, which means that $h$ passes through the vertex of the cone.
Hence, $q$ belongs to either the open quarter $\Sigma''\minus\ptl\Sigma''$ or another open quarter $\tau(\Sigma'')\minus\ptl\Sigma''$.
Since $t=-1$ exchanges $\Sigma''$ and $\tau(\Sigma'')$, by replacing $t$ with $-t$ if necessary, we may suppose that $q\in \Sigma''\minus\ptl\Sigma''$.

The equation \eqref{eq} means that the point $\mf a(D')\in ({\rm J}_{\Sigma})\us$ is over the Seifert surface $\mf V=\{\mf a(q+\ol q)\set q\in\Sigma''\}$ under the covering map $(-\mf t_{\mf o}$).
Let $\tilde{\mf V}$ be the surface in $({\rm J}_{\Sigma})\us_{\mf o}$ bounded by the lift of the boundary $\ptl\mf V$ we used in the proof of Proposition \ref{p:ext}.
The restriction of $(-\mf t_{\mf o})$ to $\tilde{\mf V}$ is the map that identifies the $n$ singular points of $\ptl\tilde{\mf V}$.
Then since the twistor line $t(L)$ can be continuously moved to a twistor line that intersects the chain $C_0\cup C_1\cup\dots\cup C_{2n}$, the point $\mf a(D')$ belongs to the connected component of $(-\mf t_{\mf o})\inv(\mf V)$ which is bounded by $\ptl\tilde{\mf V}$.
This implies that the hyperplane $h$ belongs to $\Sigma''\minus\ptl\Sigma''$.
Once this is shown for a generic twistor line $L$, 
it holds for an arbitrary twistor line $L$ not through the chain 
$C_0\cup C_1\cup\dots\cup C_{2n}$ by continuity.
\proofend

\medskip
For any point $p\in M$, we denote by $L_p\subset Z$ the twistor line through $p$, and using Proposition \ref{p:im}, consider the mapping from $M$ to $\hat W'$ that sends a point $p\in M$ to $\tilde\Psi(L_p)\in \hat W'$. This is a continuous mapping that sends
\begin{itemize}[itemsep=0pt]
\item %[(i)]
the component $C_i$ with $0\le i\le 2n$ and $i\neq n$ to the semicircle $\Sigma'_i$ or $(\Sigma\us_i)'$ of $\ptl\Sigma''$, 
\item %[(ii)]
the central sphere $C_n$ to the boundary $\ptl \hat W'$,
\item %[(iii)]
the open set $M\minus (C_0\cup\dots\cup C_{2n})$ to $W' \minus \bm A'$.
\end{itemize}
Further, recalling that the point $\infty\in \hat M$ is mapped by the quotient map $\varpi:\hat W\lras \hat W$ to a point that belongs to the middle arc $\gamma_0$ in the rotational axis $\bm A$ in $\hat W$ (see \eqref{axis}), we assign this point to $\infty$.
Thus we have obtained a mapping from $\hat M$ to $\hat W'$.
From the property \eqref{infty}, this is continuous also at $\infty\in\hat M$.
Since $\tilde\Psi$ is scalar $S^1$-equivariant, this descends to a continuous mapping 
$$
\bm w:\hat W\lras \hat W'.
$$
Moreover, since $\tilde\Psi$ is also equivariant with respect to the $S^1$-actions induced by the tri-holomorphic $S^1$-action, 
$\bm w$ is equivariant under this $S^1$-action.
An $S^1$-action on $\hat W$ or $\hat W'$ always refers to this action.

\begin{proposition}\label{p:cplt}
The map $\bm w$ is homeomorphic.
\end{proposition}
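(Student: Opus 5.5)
The plan is to show that $\bm w$ is a continuous bijection between compact Hausdorff spaces; it is then automatically a homeomorphism. Compactness of $\hat W$ holds because $\hat M$ is compact. Compactness of $\hat W'$ holds by construction, as the compactification of $(\Sigma''\minus\ptl\Sigma'')\times S^1$ obtained by adjoining the boundary data. Both spaces are Hausdorff. Continuity of $\bm w$ was established just before the statement. The work is therefore bijectivity, and I will treat the three strata separately: the boundary sphere, the axis, and the complement of the axis. The map respects these strata by the three bulleted properties listed before the statement.

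\emph{Boundary and axis.} On $\ptl\hat W\simeq C_n$, the map sends a point $q\in C_n$ to $\tilde\Psi(L)=\Gamma+\ol\Gamma$ for the twistor line $L$ through $q$. By the discussion after Proposition \ref{p:itl}, the curve $\Gamma$ varies faithfully with $q$, and its parameter space is the pencil $|\Gamma|\simeq C_n$ (Proposition \ref{p:ccl}). Hence $\bm w|_{\ptl\hat W}$ is a bijection onto $\ptl\hat W'$, which is the $S^1$-sweep of $(\Sigma\us_n)'$. This uses the $S^1$-equivariance and the fact that $\gamma'_n$ maps bijectively onto $(\Sigma\us_n)'$.

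On the axis $\bm A$, Propositions \ref{p:a} and \ref{p:i1} together with Corollaries \ref{c:im1} and \ref{c:im2} show the following. For $C_i$ with $i\neq n$, the image of $\gamma_i=C_i/S^1$ is determined by $\lmd\in I_i$. It is the hyperplane $h_q$ with $\pi(q)=\lmd$, and these hyperplanes are pairwise distinct because the point of tangency of order $|n-i|$ moves. Thus $\bm w|_{\bm A}$ is exactly the identification $\bm A\simeq\bm A'$ already described. The point $\infty$ goes to $nf_\infty$ by \eqref{infty}. Distinct axis points therefore have distinct images, and axis images are distinguished from off-axis ones because they are non-reduced or reducible.

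\emph{Interior.} Here I use $S^1$-equivariance together with Proposition \ref{p:slice}. The open set $W\minus\bm A$ is a free $S^1$-space, as is $W'\minus\bm A'=(\Sigma''\minus\ptl\Sigma'')\times S^1$. The map $\bm w$ induces a continuous map $\bar{\bm w}$ between the orbit spaces, which are open 2-discs. For the slice $\Sigma''$, I first check that $\bm w$ is injective on each orbit. If $t$ fixes an image hyperplane section, then $t$ fixes a real twistor line modulo scalars, and Proposition \ref{p:slice} forces $t=1$ on interior points.

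For injectivity across orbits, I argue that the minitwistor correspondence is locally a diffeomorphism. Jones--Tod \cite{JT85}, together with the rigidity of minitwistor lines from \cite{HN11} used in Proposition \ref{p:inn}, shows that near a generic twistor line the images form an open 3-parameter family. Hence $\bar{\bm w}$ is a local homeomorphism on a dense open set.

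Properness then makes the interior part a covering map. Properness follows from compactness once we know that the boundary and the axis map onto the boundary and the axis, which was shown above. On $\ptl\hat W$ and on $\bm A$ the map is already known to be bijective, and on the boundary it is the identity on the semicircle $\gamma'_n\simeq(\Sigma\us_n)'$. A covering of a disc is trivial, and one that is injective near the boundary has degree one. Surjectivity comes from Proposition \ref{p:im} combined with the covering property.

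The hardest step is this degree-one, local-injectivity argument. I would first attempt it by a degree computation: near $\ptl\hat W$ the map is a homeomorphism, so the degree of $\bar{\bm w}$ relative to the boundary is $\pm1$. The delicate point is handling possible non-nodal (tacnodal) members in the interior, where the local-homeomorphism property must instead be obtained from continuity of $D'_q$ via the Jacobian lift in Proposition \ref{p:ext}.
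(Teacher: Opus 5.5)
\textbf{Gap: injectivity on the interior.} The boundary and axis steps are fine, and so is your compact-Hausdorff framework. What you never establish is injectivity across orbits: that two twistor lines off the chain, lying in different scalar $S^1$-orbits, have different images. Equivalently, the induced map $\bar{\bm w}$ on $(W\minus\bm A)/S^1$ must be injective.

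Being a local homeomorphism on a dense open set, together with properness and degree $\pm1$ relative to the boundary, does not give this. Such a map is surjective, but it can still fold, with local degrees $+1,-1,+1$ over some region. The covering-space argument needs $\bar{\bm w}$ to be a local homeomorphism at \emph{every} interior point. That is exactly what you cannot supply at tacnodal members, or at the possible cuspidal ones of Remark~\ref{r:cusp}. Continuity of $D'_q$ from the Jacobian lift in Proposition~\ref{p:ext} says nothing about local injectivity. Two smaller points: you only know $\bm w$ is a homeomorphism \emph{on} $\ptl\hat W$, not on a collar of it. And Proposition~\ref{p:im} only says where images land; it does not give surjectivity.

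\textbf{The missing idea.} The paper avoids any local analysis with a global algebro-geometric argument. It first passes to orbit spaces. There $\hat W/S^1\cong\hat M_{\RR}$ via the moment polytope, and $\hat W'/S^1\cong\Sigma''$ via Proposition~\ref{p:slice}. Both are closed discs, and $[\bm w]$ restricts to a homeomorphism $\hat\gamma\to\ptl\Sigma''$ on the boundaries.

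For injectivity on the interior, take a twistor line $L$ off the chain and set $D=\tilde\Psi\inv(\tilde\Psi(L))$. Let $\tilde D$ be a resolution of the normalization of $D$. The strict transform of $L$ has self-intersection $0$ in $\tilde D$, since it moves inside $D$ under the scalar $S^1$; the same holds for any other real twistor line $L'\subset D$. So each is a fibre of a fibration $\tilde D\to\PP^1$. Apart from the two $\CC^*_s$-invariant members, the pencil of $L$ consists of the curves $s(L)$, $s\in\CC^*$, and these are twistor lines only when $|s|=1$. Hence if $L'$ is not of the form $s(L)$ with $s\in S^1$, the two fibrations differ, and $\tilde L$, $\tilde L'$ would have to meet. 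That is impossible, since distinct twistor lines are disjoint. A continuous injective map of closed discs that is a homeomorphism on the boundary is then a homeomorphism. With this injectivity argument in place of your local-homeomorphism and degree step, your bijection framework would go through.
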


\proof From the $S^1$-equivariancy, we have the following commutative diagram of continuous mappings:
\[
\begin{tikzcd}
\hat W \arrow[r,"\bm w"] \arrow[d] 
  & \hat W' \arrow[d] \\
\hat W/S^1 \arrow[r,"{[\bm w]}"] 
  & \hat W'/S^1
\end{tikzcd}
\]
where $[\bm w]$ is the induced map between the orbit spaces.
The $S^1$-actions on $\hat W$ and $\hat W'$ are both semi-free in the sense that the action is free away from the fixed locus.
From the equivariance and the above properties of the map $\bm w$, this means that $\bm w$ maps $S^1$-orbit to $S^1$-orbit bijectively. 
Hence, to prove the proposition, it is enough to show that $[\bm w]$ is homeomorphic.

First, we show that both $\hat W/S^1$ and $\hat W'/S^1$ are manifolds with corners and homeomorphic to a closed disk.
For the former quotient, we recall that $\hat M=M\cup\{\infty\}$ and $M$ is a (smooth) toric surface.
Let $M_{\RR}\subset M$ be the fixed locus of the standard real structure induced by complex conjugation.
By the moment map, $M_{\RR}$ can be naturally identified with the associated polytope of the toric surface $M$. In particular, $M_{\RR}$ is identified with the quotient space $M/T^2$ and is a manifold with corners. The point $\infty$ can be naturally attached to $M_{\RR}$ to give a compactification, and let $\hat M_{\RR}=M_{\RR}\cup\{\infty\}$ be the resulting surface. This is still a manifold with corners, and as a topological space, it is homeomorphic to a closed disk. 
The quotient $\hat M/S^1$ is identified with $\hat M_{\RR}$ under the quotient map $\hat M\lras \hat M/T^2\simeq\hat W/S^1$.
Obviously, as a subset of $\hat M$, the boundary $\ptl \hat M_{\RR}$ is contained in $C_0\cup C_1\cup\dots\cup C_{2n}\cup\{\infty\}$ and it is mapped isomorphically to $\hat\gamma$, which is by definition the union of the axis $\bm A$ and the image of the semicircle $\gamma'_n\subset C_n$ by the quotient map $\varpi:\hat M\lras\hat W$.

For the latter quotient $\hat W'/S^1$,  the quarter $\Sigma''$ is naturally a manifold with corners and embedded to $\hat W'$ from our construction of $\hat W'$.
Further, $\Sigma''$ is mapped homeomorphically to the quotient $\hat W'/S^1$ by Proposition \ref{p:slice}. 
Hence, the latter quotient is also identified with a manifold with corners, and obviously, it is homeomorphic to a closed disk.
Both of the two quotients $\hat W/S^1$ and $\hat W'/S^1$ have exactly $2n$ corners and $2n$ edeges, which correspond to $T^2$-fixed points and 1-dimensional orbits, respectively.

Again by the property of the map $\bm w$, it maps the circle $\hat\gamma\subset\hat W$ to $\ptl\Sigma''\subset\hat W'$ homeomorphically, where we are thinking $\Sigma''$ as a subset of $\hat W'$ as before.
Therefore, $[\bm w]$ maps $\ptl(\hat W/S^1)$ to $\ptl(\hat W'/S^1)$ homeomorphically.
Hence, $[\bm w]$ maps the open set $\hat W\minus\ptl\hat W$ to the open set $\hat W'\minus\ptl\hat W'$.
We show that this map (from $\hat W\minus\ptl\hat W$) is injective.
It is easy to see that this is equivalent to the following: for any twistor line $L\subset Z$
that is not through $C_0\cup\dots\cup C_{2n}$, the divisor $D:=\tilde\Psi\inv(\tilde\Psi(L) )\subset\tilde Z$ does not contain a (real) twistor line $L'\subset Z$ such that $L'\not\in \{s(L)\set s\in S^1\}$. 
Let $\tilde D\lras D$ be the composition of the normalization of $D$ and a resolution of singularities of the normalization, and $\tilde L$ and $\tilde L'$ be the strict transforms into $\tilde D$ of $L$ and $L'$ respectively.
Then as $L\cap L'\neq\emptyset$, $\tilde L\cap \tilde L'\neq\emptyset$.
Further, by varying $L$ and $L'$ by the scalar $S^1$-action in $D$, we readily see that both $\tilde L$ and $\tilde L'$ have zero as self-intersection number in $\tilde D$.
So $\tilde L$ and $\tilde L'$ induce surjective holomorphic maps onto $\PP^1$.
If $L'\not\in \{s(L)\set s\in S^1\}$, then $L'$ and $L$ are not linearly equivalent because except for two $\CC^*$-invariant members, all members of the pencil generated by $L$ can be written $s(L)$ for some $s\in\CC^*$ but such a curve can be a twistor line only when $s\in S^1$.
Hence, if $L'\not\in \{s(L)\set s\in S^1\}$, then the two maps to $\PP^1$ are mutually distinct.
This contradicts $\tilde L\cap \tilde L'\neq\emptyset$. Hence, such a twistor line $L'$ does not exist and $[\bm w]$ is injective on $\hat W\minus\ptl\hat W$.

Hence, $[\bm w]$ is injective on the whole of $\hat W$.
Namely, it is a continuous injective map between closed disks that maps the boundary to the boundary homeomorphically. It is well known that such a map between closed disks is always a homeomorphism. Hence, the proposition is proved.
\proofend

\medskip
Thus, we have obtained the following.

%\begin{theorem}
%The hyperplane sections of\/ $\ms T$ parameterized by $W'$ and 
%$\ptl\hat W' \simeq S^2$ consist precisely of the minitwistor lines that 
%induce, respectively, the EW structure on $W$ obtained from a toric ALE 
%gravitational instanton of type $A_{2n-1}$ by reduction with respect to the 
%scalar $S^1$-action, and the conformal infinity of the EW space $W$.
%\end{theorem}

\begin{theorem}\label{t:main}
The hyperplane sections of $\ms T$ parameterized by $W'$ consist precisely of 
the minitwistor lines that induce the EW structure on the space $W$, obtained from a 
toric ALE gravitational instanton of type $A_{2n-1}$ by reduction with respect 
to the scalar $S^1$-action.
The hyperplane sections of $\ms T$ parameterized by $\ptl\hat W' \simeq S^2$ 
consist precisely of the minitwistor lines corresponding to points of the conformal 
infinity of the EW space $W$.
\end{theorem}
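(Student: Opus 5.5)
The theorem follows from the homeomorphism $\bm w:\hat W\lras\hat W'$ of Proposition \ref{p:cplt}, combined with the identification of minitwistor lines of the Jones--Tod EW structure with images of twistor lines. Jones--Tod \cite{JT85} says the minitwistor lines inducing the EW structure on $W$ are exactly the curves $\tilde\Psi(L)$, with $L\subset Z$ a real twistor line and the image taken in the meromorphic sense where needed. Each such curve depends only on the scalar $S^1$-orbit of $L$, that is, on the point $\varpi(p)\in\hat W$ with $L=L_p$. So the task is to show that the assignment $\varpi(p)\mapsto\tilde\Psi(L_p)$ identifies $W$ with $W'$ and $\ptl\hat W$ with $\ptl\hat W'$, as families of hyperplane sections. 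This assignment is precisely $\bm w$.

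\textbf{Step 1: the defining property of $\bm w$.} By construction, for every $p\in\hat M$ the hyperplane section of $\ms T$ labelled by $\bm w(\varpi(p))\in\hat W'$ equals $\tilde\Psi(L_p)$, read as $nf_\infty$ when $p=\infty$.
\begin{itemize}
\item On $M\minus(C_0\cup\dots\cup C_{2n})$ this is Proposition \ref{p:im} together with the definition of $\hat W'$ via the slice $\Sigma''$ and the residual rotation.
\item On the axis it is Propositions \ref{p:a} and \ref{p:i1}, with Corollaries \ref{c:im1} and \ref{c:im2}.
\item On $\gamma'_n$ it is Propositions \ref{p:l} and \ref{p:ccl}.
\item At $\infty$ it is \eqref{infty}.
\end{itemize}
Tri-holomorphic equivariance of $\tilde\Psi$ propagates this from $\Sigma''$ to all of $\hat W'$.

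\textbf{Step 2: surjectivity and exhaustion.} Proposition \ref{p:cplt} makes $\bm w$ bijective and boundary preserving. Hence every hyperplane section labelled by a point of $W'$ equals $\tilde\Psi(L)$ for some twistor line $L$ through a point of $M\minus C_n$, so it is a genuine minitwistor line of the EW space. Conversely, every minitwistor line of the EW structure on $W$ arises this way. This gives the first sentence of the theorem.

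For the second sentence, $\ptl\hat W'$ is formed by the $S^1$-orbits of the central semicircle $(\Sigma\us_n)'$. Under $\bm w$ it corresponds to $\ptl\hat W\simeq C_n$ via \eqref{bdry}. Its members are the curves $\Gamma\cup\ol\Gamma$ of Propositions \ref{p:l} and \ref{p:tan}, which are images of twistor lines through the central sphere. Since $C_n$ is pointwise fixed by the scalar action and is mapped to the conformal infinity, this identifies these sections with the ``minitwistor lines'' of the conformal infinity.

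\textbf{Main obstacle.} The delicate point is that the members parameterized by $\ptl\hat W'$ are a priori the $S^1$-rotations of real evenly tangential hyperplane sections $\Pi\inv(h_q)$, $q\in(\Sigma\us_n)'$. These are real hyperplane sections. The images of twistor lines through $C_n\minus\gamma_n$, however, are $\Gamma\cup\ol\Gamma$ with $\Pi(\Gamma)\ne\Pi(\ol\Gamma)$, and by the remark after Proposition \ref{p:tan} such curves are not hyperplane sections. I would handle this as follows:
\begin{itemize}
\item Use the tri-holomorphic action. Rotating $\gamma'_n$ by $S^1$ sweeps out all of $C_n$, since the $\CC^*_t$-orbits in $C_n$ are circles through the two fixed points.
\item Check that $t(\Pi\inv(h_q))=t(H_q)$ is still a hyperplane section of $\ms T$ not containing $\bm a$, and that it equals the meromorphic image $t(\Gamma)\cup t(\ol\Gamma)=\tilde\Psi(t(L))$.
\end{itemize}
Thus every point of $C_n$, not just those on $\gamma_n$, yields a hyperplane section $H\cap\ms T$. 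This confirms the identification, the only departure being that $H$ need no longer pass through $\bm a$.
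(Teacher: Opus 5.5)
Your proposal is correct and follows the paper's route. The paper states Theorem~\ref{t:main} as an immediate consequence of the homeomorphism $\bm w$ of Proposition~\ref{p:cplt}, whose construction already contains your Step~1: $\bm w(\varpi(p))$ labels the section $\tilde\Psi(L_p)$ by Propositions~\ref{p:im}, \ref{p:a}, \ref{p:i1}, \ref{p:l}, \ref{p:ccl}, \eqref{infty} and tri-holomorphic equivariance, and Jones--Tod then identifies these sections with the minitwistor lines.

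Your boundary discussion makes explicit something the paper uses silently when it says $\bm w$ maps $C_n$ onto $\ptl\hat W'$: the remark after Proposition~\ref{p:tan} should be read as ``not cut out by a hyperplane through $\bm a$'', because $t(H_q)\cap\ms T=\tilde\Psi(t(L))$ is a genuine hyperplane section. One small slip there: the $S^1$-orbits in $C_n$ are circles around the two fixed points, not through them, and $\gamma'_n$ sweeps out $C_n$ because it joins those two points.
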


\subsection{Non-real singularities of minitwistor lines}\label{ss:sing}
Finally, we discuss the reality of the nodes of minitwistor lines.
For each minitwitor line, the set of nodes is clearly real as a whole, but the reality of each node is a non-trivial issue.
Regarding this problem, based on the investigations we have conducted so far, we see that if $g>1$ then $\ms T$ has minitwistor lines having non-nodal singularities, as touched immediately after Proposition \ref{p:inn}:

\begin{proposition}\label{p:re}
If $g>1$, then there exists an $S^1$-orbit on the smooth EW space $W\minus\bm A$ such that the minitwistor lines in $\ms T$ corresponding to points on the orbit have at least one $\sigma$-conjugate pair of non-real nodes.
If $g=1$, then for any point of the EW space $W\minus\bm A$, the corresponding minitwistor line in $\ms T$ has exactly one singularity and it is a real ordinary node.
\end{proposition}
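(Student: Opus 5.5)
Both parts of Proposition \ref{p:re} reduce to the description of minitwistor lines in Proposition \ref{p:inn}. By the slice property (Proposition \ref{p:slice}) and Theorem \ref{t:main}, every $S^1$-orbit in $W\minus\bm A$ meets the open quarter $\Sigma''\minus\ptl\Sigma''$ in exactly one point $q$. The corresponding minitwistor line is $H_q\cap\ms T$, the double cover of the smooth conic $h_q\cap{\rm C}(\Lmd)$ (Lemma \ref{l:inn1}), branched along $h_q|_\Sigma=q+\ol q+2D'_q$. Its singularities lie exactly over the support of $D'_q$: a point $p$ of multiplicity $m$ gives an $A_{2m-1}$-type singularity (a node if $m=1$). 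Since $q\ne\ol q$ and $\Pi$ commutes with $\sigma$, a singular point over $p$ is real iff $p$ is a real point of $\Sigma$. The whole question is therefore whether the real divisor $D'_q$ of degree $g$ contains a conjugate pair $p,\ol p$ with $p\ne\ol p$.

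For $g=1$, $D'_q$ is a single real point $p$ of multiplicity one. Hence $H_q\cap\ms T$ has exactly one singularity, an ordinary node over $p$, and this node is real: as a singular point of a $\sigma$-invariant curve lying over a real point of the real conic, it must be $\sigma$-fixed because it is unique. Moreover $q\not\le D'_q$ automatically, as remarked after Proposition \ref{p:inn}. This settles the second assertion for every orbit.

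For $g>1$ I will argue by continuity from the boundary data. Consider the central semicircle $(\Sigma\us_n)'\subset\ptl\Sigma''$. By Propositions \ref{p:Dt} and \ref{p:rp}, for $q$ there we have $h_q|_\Sigma=2D_t$, where $D_t$ runs over the real members of the pencil $|D_{\rm L}|$ with $D_{\rm L}=\sum_{i=1}^n r_i$ and $D_{\rm R}=\sum_{i=n+1}^{2n}r_i$, and $D'_q=D_t-q$. The key step, and the main obstacle, is to show that for some $t$ the divisor $D_t-q$ contains a non-real conjugate pair. The strategy is a parity count on the real circles. At $t=0$ the residual divisor $\sum_{i<n}r_i$ has one point on each of the $\lfloor\cdot\rfloor$ real circles among $\Sigma\us_i$, $i<n$, lying in the left half, and at $t=\infty$ the residual divisor $\sum_{i>n+1}r_i$ lies on the right-hand circles. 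A real degree-$(n-1)$ divisor moving continuously while staying totally real must keep its number of points on each real circle constant mod $2$, since points can only leave a circle through a collision. The real circles carrying points at $t=0$ and at $t=\infty$ differ, and this is where $g>1$ is used: for $g\ge2$ there is at least one non-central real circle whose occupancy parity changes between the two ends. Hence at some $t_0$ two real points collide and, on one side of $t_0$, become a conjugate pair $q_t,\ol q_t$ with $q_t\ne\ol q_t$. This is exactly the phenomenon anticipated after Proposition \ref{p:ccl}.

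Finally I move off the boundary. For such a boundary point $q_0$ with $D'_{q_0}\ge p+\ol p$, $p\ne\ol p$, and $p$ simple, the continuity of $q\mapsto D'_q$ from Proposition \ref{p:ext} shows that a non-real pair persists for interior $q$ near $q_0$. Choose such a $q$ with $D'_q$ reduced (so there are $g$ nodes) and $q\not\le D'_q$, which holds on an open dense set. The minitwistor line then has a $\sigma$-conjugate pair of non-real nodes, and by $S^1$-equivariance so does every line on the orbit of $q$.

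If the parity count fails for small $n$, the fallback is to check $n=3$ explicitly and compare positions of the branch points on the circles.
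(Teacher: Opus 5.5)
Your reduction to the real divisor $D'_q$ and your $g=1$ argument are fine; the latter is the remark after Proposition~\ref{p:inn}. (Incidentally, $h_q\cap{\rm C}(\Lmd)$ is a rational normal curve of degree $n$, not a conic.) The gap is the key step for $g>1$: your parity count is false, and no parity count can work.

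Each real circle $\Sigma^{\sigma}_j$ ($j\equiv n$ mod $2$) contains exactly two ramification points, namely $r_j$ and $r_{j+1}$, with $r_{2n},r_1$ when $j=0$. So you miscounted the endpoints:
\begin{itemize}
\item At $t=0$, the divisor $r_1+\dots+r_{n-1}$ puts two points on every real circle $\Sigma^{\sigma}_j$ with $0<j<n$. If $n$ is even, it also puts the single point $r_1$ on $\Sigma^{\sigma}_0$.
\item At $t=\infty$, the divisor $r_{n+2}+\dots+r_{2n}$ puts two points on every real circle with $n<j<2n$. If $n$ is even, it also puts the single point $r_{2n}$ on $\Sigma^{\sigma}_0$.
\end{itemize}
Hence every real circle has the same occupancy parity at both ends, for every $n$. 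This is forced. The parity of the number of points on a fixed real circle is invariant under \emph{any} continuous family of real divisors, totally real or not. Indeed, the points of a real divisor lying in a thin $\sigma$-invariant annulus around the circle, but off the circle, come in conjugate pairs. Meanwhile the total count in the annulus is locally constant. So parity can never detect the birth of a conjugate pair, and checking $n=3$ by hand would not rescue the argument.

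What you need is the exact count, which is what the paper uses. The totally real effective divisors of degree $g$ split into disjoint closed pieces according to how many points lie on each real circle. Hence, if $D'_{q(t)}$ stayed totally real along the central semicircle, this distribution would be constant in $t$. For $g>1$, i.e.\ $n\ge 3$, the circle $\Sigma^{\sigma}_{n-2}$ carries $r_{n-2}+r_{n-1}$ at $t=0$ and nothing at $t=\infty$. Therefore some $D'_{q(t)}$ contains a non-real point $p$, and hence $p+\ol p$. For $g=1$, the residual points $r_1$ and $r_4$ both lie on $\Sigma^{\sigma}_0$, which is consistent with the second assertion.

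The paper runs this comparison along an interior path joining the corners $H_{\rm L}$ and $H_{\rm R}$. Your route is equally valid once the invariant is fixed: run it along the boundary semicircle, where $D'_q=D_t-q$ is controlled by the real pencil $|D_{\rm L}|$, and then push inward, since non-reality is an open condition. For the final choice of an interior $q$ with $D'_q$ reduced, you should justify density rather than assert it. You can do this by transporting Hitchin's genericity result (generic minitwistor lines have exactly $g$ nodes) through Theorem~\ref{t:main}.
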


\proof
We recall that in \eqref{pen1} using the ramification points, we defined the following two divisors of degree $n$ on $\Sigma$:
\begin{align}\label{pen2}
D_{\rm L}= \sum_{i=1}^n r_i
\qandq 
D_{\rm R}= \sum_{i=n+1}^{2n} r_i.
\end{align}
If $\ul h_{\rm L}$ (resp.\,$\ul h_{\rm R}$) is the hyperplane in $\PP^n$ spanned by the $n$ points $a_1,\dots, a_n\in\Lmd$ (resp.\,$a_{n+1},\dots,a_{2n}\in\Lmd$), and further put $h_{\rm L}=\pi\inv(\ul h_{\rm L})$ (resp.\,$h_{\rm R}=\pi\inv(\ul h_{\rm R})$), then we have $h_{\rm L}|_{\Sigma} = 2D_{\rm L}$ and $h_{\rm R}|_{\Sigma} = 2D_{\rm R}$.
By Proposition \ref{p:itl}, the hyperplanes $H_{\rm L}=\Pi\inv(h_{\rm L})$ and $H_{\rm R}=\Pi\inv(h_{\rm R})$ cut out the meromorphic images $\tilde\Psi(L_n)$ and $\tilde\Psi(L_{n+1})$ respectively from $\ms T\subset\PP^{n+2}$, where $L_n$ and $L_{n+1}$ are the twistor lines through $C_n\cap C_{n-1}$ and $C_n\cap C_{n+1}$ respectively.
As points of $\hat W'$, they are exactly the two corners of $\Sigma''$ which belong to $\ptl\hat W'$, or equivalently, the two ends of the axis $\bm A'\simeq\bm A$ of rotation.
The points $r_1,\dots, r_n$ 
(resp.\,$r_{n+1},\dots,r_{2n}$)
are ordinary nodes of the cut $H_{\rm L}\cap \ms T$
(resp.\,$H_{\rm R}\cap \ms T$), and all of them are real.

Take a continuous path in the interior of the quarter $\Sigma''$ which connects the two corners $H_{\rm L}$ and $H_{\rm R}$ of $\Sigma''$. From Proposition \ref{p:ext}, this determines a family of minitwistor lines continuously varying from 
$H_{\rm L}\cap \ms T$ to $H_{\rm R}\cap \ms T$ and since we are avoiding the boundary $\ptl\Sigma''$, none of these minitwistor lines pass through the singularities of $\ms T$ except for the initial and the end ones.
By the equation \eqref{hqS}, if a point on the path is sufficiently close to $H_{\rm L}$
(resp.\,$H_{\rm R}$), then the node $r_n$ (resp.\,$r_{n+1}$) of $H_{\rm L}\cap \ms T$
(resp.\,$H_{\rm R}\cap \ms T$)
will be smoothed out as it belongs to the central circle, and the singularities of the minitwistor line have to be the moves of the residual nodes $r_1,\dots,r_{n-1}$ (resp.\,$r_{n+2},\dots,r_{2n}$). By continuity, all of them are distinct, real and ordinary nodes.
If $g=n-1$ is even, then $r_1$ and $r_{2n}$ belong to distinct real circles in $\Sigma$ and there exist no pair $r_i$ and $r_j$ of points such that $i\le n$, $j\ge n+2$, and $r_i$ and $r_j$ belong to the same real circle. 
Again by continuity, this transition can happen only when the nodes vary in the following manner:
\begin{itemize}\setlength{\itemsep}{-2pt}
\item
[(1)]
A pair of nodes lying on the same real circle collide to be a single real point of multiplicity two.
\item
[(2)]
Next, they again separate to become a pair of $\sigma$-conjugate points.
\item
[(3)] 
Next, the pair of $\sigma$-conjugate points will again collide to be a single real point of multiplicity two.
\item
[(4)] 
Finally, the double point again separates to become a pair of real points on the same real circle.
\end{itemize}
(See Figure \ref{f:Rsurf2} for this process in the case $g=2$.)
The pair of $\sigma$-conjugate nodes arises in step (2).
Obviously, such a pair of nodes occurs along an $S^1$-orbit in $W\simeq W'$.

If $g=n-1$ is odd and greater than one, then although $r_1$ and $r_{2n}$ belong to the same real circle $\Sigma\us_0$, the same transition has to occur for the residual points $r_2,r_3,\dots,r_{n-1}$
and $r_{n+2},\dots, r_{2n-1}$.

On the other hand, if $g=1$, then except for the initial curve $H_{\rm L}\cap \ms T$ and the final curve $H_{\rm R}\cap \ms T$, all minitwistor lines along the path have only one node and by continuity they always have to belong to the central real sphere $\ms T\us_0$. 
Hence, the singularity of all minitwistor lines has to be a real node.
\proofend
 
\medskip
Thus, if $g>1$, the situation is quite different from the Lorentzian case studied in \cite{H25}.
In that case, as long as the corresponding point of the EW space lies off the axis of rotation (i.e., as long as it is a regular minitwistor line in the terminology of \cite{H25}), a minitwistor line has exactly $g$ ordinary nodes, all of which are real.
The main reason for this difference is that, in the Lorentzian case, the nodes lie on mutually distinct real spheres, so that the above type of collision can never occur.

In the above proof, we only use the two corners $2D_{\rm L}$ and $2D_{\rm R}$ of $\Sigma''$.
But a little more information on the distribution of minitwistor lines having non-real or non-nodal singularities can be derived if we consider the central semicircle of $\ptl\Sigma''$ as follows.

The degree $2n$ divisors on $\Sigma$ that correspond to points on the central semicircle $(\Sigma\us_n)'\subset\ptl\Sigma''$ are twice of divisors in (a half of) the real pencil $|D_{\rm L}|\us = |D_{\rm R}|\us$, and from Proposition \ref{p:rp}, any such a divisor has a unique point $q$ that belongs to the central semicircle.
If a point $q\in\Sigma''\minus\ptl\Sigma''$ is sufficiently close to the central semicircle, then the singularities of a minitwistor line determined by the point $q$ are over points of the divisor $D'_q$, where $h_q|_{\Sigma} = q + \ol q + 2D'_q$ as before.
Hence, information about members of the real pencil $|D_{\rm L}|\us = |D_{\rm R}|\us$ gives that of the singularities of a minitwistor line whose corresponding point of the EW space $W$ is sufficiently close to the conformal infinity.

Such information can be derived from Proposition \ref{p:n1} below.
Recall that the pencil $|D_{\rm L}|$ is base point free.
So it induces a surjective holomorphic map $\psi:\Sigma\lras\PP^1$
of degree $n$.
Let $s_{\rm L}$ and $s_{\rm R}$ denote the points
of $\PP^1$ that satisfy
$$
\psi\inv(s_{\rm L}) = D_{\rm L}
\qandq
\psi\inv(s_{\rm R}) = D_{\rm R}.
$$
These are real points. 
The target $\PP^1$ of $\psi$ has a natural real structure which is the complex conjugation.

\begin{proposition}\label{p:n1}
The $n:1$ map $\psi:\Sigma\lras\PP^1$ has the following properties.
\begin{itemize}\setlength{\itemsep}{-2pt}
\item
If $\Sigma\us_i$ is a real circle (i.e.,\,if $i\equiv n\,(2)$),
then $\psi(\Sigma\us_i)\subset\RP^1$ and
if $\Sigma_i$ is a pure imaginary circle (i.e.,\,if $i\not\equiv n\,(2)$), then $\psi(\Sigma_i)\subset \sqrt{-1}\RP^1:=\{(1:u)\in\PP^1\set u\in i\RR\}\cup\{(0:1)\}$.
\item When $g$ is odd, the two real circles $\Sigma\us_0$ and $\Sigma\us_n$ (= the central one) are mapped isomorphically onto the real circle $\RP^1\subset\PP^1$, while the remaining real circles $\Sigma\us_i$ are mapped doubly to closed intervals in $\RP^1$ containing the point $s_{\rm L}$ if $i<g$
and closed intervals in $\RP^1$ containing the point $s_{\rm R}$ if $i> g+2$.
\item
When $g$ is even, only the central circle $\Sigma\us_n$ is mapped isomorphically onto $\RP^1\subset\PP^1$, while the remaining real circles $\Sigma\us_i$ are mapped doubly to closed intervals in $\RP^1$ containing the point $s_{\rm L}$ if $i< g$
and closed intervals containing the point $s_{\rm R}$ if $i>g+2$.
\end{itemize}

\end{proposition}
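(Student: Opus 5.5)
The plan is to normalize the real structure on the target $\PP^1$ using the two real members $D_{\rm L}$ and $D_{\rm R}$. Then I read off the first bullet from how $\psi$ intertwines $\sigma$ and $\tau$, and obtain the degree statements from Proposition \ref{p:rp} together with a ramification count.

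\emph{Step 1: $\psi$ intertwines $\sigma$ and $\tau$.} Since $|D_{\rm L}|$ is a real pencil, we have $\psi\circ\sigma=\ol{\psi}$ once $\PP^1$ is given the real structure induced from $|D_{\rm L}|$. Because $s_{\rm L}$ and $s_{\rm R}$ are real, I choose a real affine coordinate with $s_{\rm L}=0$ and $s_{\rm R}=\infty$.

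Next, $\tau$ fixes every $r_i$, so it preserves $D_{\rm L}$ and $D_{\rm R}$. Hence $\tau$ acts on the pencil by an involution of $\PP^1$ fixing $0$ and $\infty$. This involution is not the identity. Otherwise $\psi$ would factor through $\pi:\Sigma\lras\Lmd$, and then $D_{\rm L}=\psi\inv(0)$ would be $\tau$-saturated with even multiplicity at ramification points, which is impossible for the reduced divisor $\sum_{i\le n} r_i$. Therefore $\psi\circ\tau=-\psi$.

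\emph{Step 2: the first bullet.} On a real circle, $\sigma=\id$, so $\psi(p)=\ol{\psi(p)}\in\RP^1$. On a pure imaginary circle, $\sigma(p)=\tau(p)$, so $\ol{\psi(p)}=\psi(\tau p)=-\psi(p)$, which gives $\psi(p)\in\sqrt{-1}\RP^1$.

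\emph{Step 3: the central circle and $\Sigma\us_0$.} By Proposition \ref{p:rp}, every real member of $|D_{\rm L}|$ meets $\Sigma\us_n$ in exactly one simple point. So $\psi|_{\Sigma\us_n}:\Sigma\us_n\lras\RP^1$ is bijective and unramified, hence an isomorphism.

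For the other real circles, I use their position relative to the $r_j$. The circle $\Sigma\us_i=\pi\inv(I_i)$ contains exactly $r_i$ and $r_{i+1}$, where indices are read cyclically, so that $I_0=[a_{2n},a_1]$ contains $r_{2n}$ and $r_1$.

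When $g$ is odd, $\Sigma\us_0$ is real and contains $r_1\in D_{\rm L}$ and $r_{2n}\in D_{\rm R}$. It therefore hits both $0$ and $\infty$, each transversally and exactly once, since $D_{\rm L}$ and $D_{\rm R}$ are reduced. Hence $\psi|_{\Sigma\us_0}$ has odd, and I expect unit, degree onto $\RP^1$.

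Every other real circle $\Sigma\us_i$ with $i\neq 0,n$ contains two points of a single fiber: two points of $D_{\rm L}$ if $i<n$, or two of $D_{\rm R}$ if $i>n$. It misses the other fiber entirely, so its image in $\RP^1$ avoids $\infty$ (respectively $0$). Thus $\psi|_{\Sigma\us_i}$ has degree zero and its image is a closed interval containing $s_{\rm L}$ (respectively $s_{\rm R}$).

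\emph{Step 4, the main obstacle: showing each interval is covered exactly doubly.} A degree-zero circle map onto an interval has at least two fold points, namely the preimages of the endpoints, which are critical points of $\psi$. I would run a global count. Riemann--Hurwitz gives $2g-2+2n=4n-4$ ramification points of $\psi$, counted with multiplicity.

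The circle maps account for lower bounds on these ramification points. Each real circle of degree zero contributes at least $2$. Each pure imaginary circle $\Sigma_i$ also contributes: its image lies in $\sqrt{-1}\RP^1$, and by the same position argument it contains $r_i,r_{i+1}$ lying in one fiber, or, for $\Sigma_0$ when $g$ is even, one point in each fiber. The $\sigma\tau$-symmetry doubles contributions off these circles.

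I expect these lower bounds to add up exactly to $4n-4$. That would force every fold count to be minimal, so each degree-zero real circle covers its interval $2:1$ with exactly two folds, and $\Sigma\us_0$ has degree exactly one in the odd case. The bookkeeping of this count, separately for $g$ odd and $g$ even, is where I expect the real work.

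The index ranges $i<g$ versus $i>g+2$ in the statement should then come out of the parity description \eqref{itv}: real circles have $i\equiv n$ modulo $2$, so the circles adjacent to $\Sigma\us_n$ are imaginary, and the real ones on the left have $i\le n-2=g-1$ while those on the right have $i\ge n+2=g+3$.
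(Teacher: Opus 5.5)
Your argument is correct, and it takes a different route from the paper. The paper identifies $\psi$ explicitly with the rational function $\prod_{i=1}^n(z-a_i)/v$ on $\Sigma$, which has simple zeros at $r_1,\dots,r_n$ and simple poles at $r_{n+1},\dots,r_{2n}$. It then declares the three bullets an elementary consequence and omits the details. You instead work intrinsically: you derive $\psi\circ\sigma=\ol\psi$ and $\psi\circ\tau=-\psi$ from the action of $\sigma$ and $\tau$ on the pencil, and you get the covering degrees from a Riemann--Hurwitz saturation count.

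The count you left as an expectation does close.
\begin{itemize}
\item For $g$ odd, the degree-zero circles are the $n-2$ real circles $\Sigma^{\sigma}_i$ with $i\neq 0,n$, together with all $n$ pure imaginary circles. Each $\Sigma_i$ with $i$ odd contains two points of $D_{\rm L}$ or two of $D_{\rm R}$, including $\Sigma_{n\pm1}$.
\item For $g$ even, the degree-zero circles are the $n-1$ real circles other than $\Sigma^{\sigma}_n$ and the $n-1$ pure imaginary circles other than $\Sigma_0$.
\end{itemize}
In both cases the lower bound is $2(2n-2)=4n-4=2g-2+2n$. The circles meet only at the $r_i$, where $\psi$ is unramified because $D_{\rm L}$ and $D_{\rm R}$ are reduced, so critical points on different circles are distinct. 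Equality therefore forces the following: each degree-zero circle has exactly two simple folds; $\Sigma^{\sigma}_n$, and $\Sigma^{\sigma}_0$ (for $g$ odd) or $\Sigma_0$ (for $g$ even), have no critical points; and there is no ramification off $\pi\inv(\Lmd^{\sigma})$. This gives all three bullets.

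What your route buys:
\begin{itemize}
\item It fixes the coordinate normalization intrinsically. With the real structure $(v,z)\mapsto((-1)^n\ol v,\ol z)$, the paper's function satisfies $\psi\circ\sigma=(-1)^n\ol\psi$. So for $n$ odd it must be multiplied by $\sqrt{-1}$ before the first bullet holds with the standard conjugation.
\item It shows that all $4n-4$ ramification points are simple and lie on the circles.
\item It makes your appeal to Proposition \ref{p:rp} unnecessary and in fact reproves it. The circle $\Sigma^{\sigma}_n$ contains $r_n\in D_{\rm L}$ and $r_{n+1}\in D_{\rm R}$, so it has degree one, and the saturation shows it has no folds. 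This replaces the paper's continuity argument there.
\end{itemize}
The paper's explicit formula is quicker to state. Via $\psi^2=\prod_{i\le n}(z-a_i)/\prod_{i>n}(z-a_i)$, it reduces everything to the behaviour of a degree-$n$ rational function of $z$ on each arc $I_i$. That is essentially the same kind of critical-point count, carried out on $\Lmd$ rather than on $\Sigma$.
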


\proof
We take a holomorphic coordinate on $\PP^1$ which places the two points $s_{\rm L}$ and $s_{\rm R}$ at $0$ and $\infty$ respectively.
Then since $\psi$ is of degree $n$, both $s_{\rm L}$ and $s_{\rm R}$ are regular values of $\psi$ and $\psi$ can be regarded as a rational function on $\Sigma$ which has $D_{\rm R}$ and $D_{\rm L}$ as the zero divisor and the pole divisor respectively.

In the previous coordinates $(v,z)$ on $\ms O(2)$ (see \eqref{Sgm}),
noting the relation 
$$
\frac{\prod_{i=1}^n (z-a_i)}v = \frac v{\prod_{i=n+1}^{2n} (z-a_i)},
$$ 
which holds on $\Sigma$, we readily see that the function 
$$
g := \frac{\prod_{i=1}^n (z-a_i)}v\quad{\text{on $\Sigma$}}
$$
is a rational function which has simple zeroes at $z=a_1,\dots,a_n$ and simple poles at $z=a_{n+1},\dots,a_{2n}$
and has no other zeroes nor poles.
Therefore, $\psi= g$.
All the present assertions can be obtained using this explicit presentation of $g$ in an elementary way, and we omit the details.
(See the proof of Proposition \ref{p:tl} where the same function appears.)
\proofend

\begin{remark}
{\em
Topologically, $\psi$ can be regarded as the stereographic projection from the ``center'' of $\Sigma$ onto a sphere enclosing $\Sigma$.}
\end{remark}

By Proposition \ref{p:n1}, if $g=2$ for example, then that half of the real pencil has the following structure. 
As in Section \ref{s:mtl}, write  $|D_{\rm L}|\us=\{D_t\set t\in S^1\}$ and take the parameter $t$ in a way that $D_0 = D_{\rm L}$ and $D_{\infty} = D_{\rm R}$ hold.
Then the half of the real pencil can be written $\{D_t\set 0\le t\le \infty\}$.
Proposition \ref{p:n1} means that there exist positive real numbers $t_1<t_2$ and two points $P_1\in \Sigma\us_1$ and $P_2\in\Sigma\us_5$  such that $D_{t_i} = 2P_i$ for $i=1,2$ and the following hold. (See Figure \ref{f:Rsurf2}.)

\begin{figure}
\includegraphics{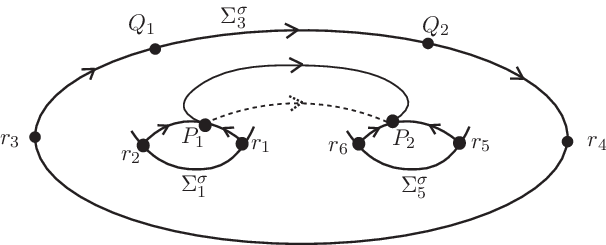}
\caption{How non-real nodes of a minitwistor line arise.}
\label{f:Rsurf2}
\end{figure}

\begin{itemize}\setlength{\itemsep}{-2pt}

\item 
As $t$ increases from $0$ to $t_1$, the divisor $D_0 = r_1 + r_2 + r_3$ evolves so that the two points $r_1$ and $r_2$ approach the point $P_1$ from opposite directions, while $r_3$ moves along $\Sigma\us_3$ in one direction. 
At $t = t_1$, it takes the form $Q_1 + 2P_1$ for some point $Q_1 \in \Sigma\us_3$.

\item
As $t$ increases from $t_1$ to $t_2$, the divisor $D_{t_1} = Q_1 + 2P_1$ changes in such a way that the component $2P_1$ splits into a divisor of the form $p_t + \overline{p}_t$ with $p_t \neq \overline{p}_t$, while $Q_1$ continues to move along $\Sigma\us_3$ in the same direction. 
At $t = t_2$, the divisor $D_t$ becomes $Q_2 + 2P_2$ for some point $Q_2 \in \Sigma\us_3$.

\item
As $t$ increases from $t_2$ to $\infty$, the divisor $D_{t_2} = Q_2 + 2P_2$ changes so that $2P_2$ splits into two distinct points on $\Sigma\us_5$, which move toward $r_5$ and $r_6$, respectively, in opposite directions, while $Q_2$ continues to move along $\Sigma\us_3$ toward $r_4$.

\end{itemize}
Hence, in a neighborhood of the conformal infinity of the EW space 
$W$, the singularities of a minitwistor line are of one of the following types
and all of them really occur:
\begin{itemize}\setlength{\itemsep}{-2pt}
\item  two real ordinary nodes lying on the real sphere $\ms T\us_1$,
\item  one tacnode lying on the real sphere $\ms T\us_1$,
\item  two ordinary nodes that are mutually $\sigma$-conjugate,
\item  one tacnode lying on the real sphere $\ms T\us_5$,
\item  two real ordinary nodes lying on the real sphere $\ms T\us_5$.
\end{itemize}
Even if $g>2$, similar information on the singularities of minitwistor lines can be derived from Proposition~\ref{p:n1}, again at least in a neighborhood of the conformal infinity. However, determining the singularities for arbitrary points of the EW space appears not to be easy.

\end{document}